\def\specialsection{\@startsection{section}{1}%
  \z@{\linespacing\@plus\linespacing}{.5\linespacing}%
  {\normalfont}}
\def\section{\@startsection{section}{1}%
  \z@{.7\linespacing\@plus\linespacing}{.5\linespacing}%
  {\normalfont\scshape}}
\patchcmd{\section}{\scshape}{\bfseries}{}{}
\renewcommand{\@secnumfont}{}
\def\subsection{\@startsection{subsection}{1}%
  \z@{.5\linespacing\@plus.7\linespacing}{-.5em}%
  {\normalfont\itshape}}
	\def\@sect#1#2#3#4#5#6[#7]#8{%
  	\edef\@toclevel{\ifnum#2=\@m 0\else\number#2\fi}%
  	\ifnum #2>\c@secnumdepth \let\@secnumber\@empty
  	\else \@xp\let\@xp\@secnumber\csname the#1\endcsname\fi
  	\@tempskipa #5\relax
  	\ifnum #2>\c@secnumdepth
  	  \let\@svsec\@empty
  	\else
  	  \refstepcounter{#1}%
    \edef\@secnumpunct{%
      \ifdim\@tempskipa>\z@ 
        \@ifnotempty{#8}{.\@nx\enspace}%
      \else
        \@ifempty{#8}{.}{.\@nx\enspace}%
      \fi
    }%
    \@ifempty{#8}{%
      \ifnum #2=\tw@ \def\@secnumfont{\bfseries}\fi}{}%
    \protected@edef\@svsec{%
      \ifnum#2<\@m
        \@ifundefined{#1name}{}{%
          \ignorespaces\csname #1name\endcsname\space
        }%
      \fi
      \@seccntformat{#1}%
    }%
  \fi
  \ifdim \@tempskipa>\z@ 
    \begingroup #6\relax
    \@hangfrom{\hskip #3\relax\@svsec}{\interlinepenalty\@M #8\par}%
    \endgroup
    \ifnum#2>\@m \else \@tocwrite{#1}{#8}\fi
  \else
  \def\@svsechd{#6\hskip #3\@svsec
    \@ifnotempty{#8}{\ignorespaces#8\unskip
       }%
    \ifnum#2>\@m \else \@tocwrite{#1}{#8}\fi
  }%
  \fi
  \global\@nobreaktrue
  \@xsect{#5}}
\def\abstract#1{ \gdef\@abstract{#1}}
\def\@abstract{\@latex@error{No \noexpand\abstract given}\@ehc}
\def\keywords#1{ \gdef\@keywords{#1}}
\def\@keywords{\@latex@error{No \noexpand\keywords given}\@ehc}
\def\MSC#1{ \gdef\@MSC{#1}}
\def\@MSC{\@latex@error{No \noexpand\MSC given}\@ehc}
\def\abstract#1{ \gdef\@abstract{#1}}
\def\@abstract{\@latex@error{No \noexpand\abstract given}\@ehc}
\DeclareMathOperator*{\Id}{Id}
\newcommand{\tr}{{}^\mathrm{t} }
\newcommand{\Div}{\mathrm{div}}
\newcommand{\trc}{\mathrm{tr}}
\newcommand{\Pp}{\mathcal{P}}
\newcommand{\ee}{\varepsilon}
\newcommand{\Aa}{\mathcal{A}}
\newcommand{\Cc}{\mathcal{C}}
\newcommand{\Bb}{\mathcal{B}}
\newcommand{\dd}{\mathrm{d}}
\newcommand{\pre}{ \mathrm{p}}
\newcommand{\uu}{ \mathsf{u}}
\newcommand{\nn}{ \mathsf{n}}
\newcommand{\e}{ \mathrm{e}}
\newcommand{\Tt}{\vartheta}
\newcommand{\X}{\mathfrak{X}}
\newcommand{\SSS}{\mathbb{S}}
\newcommand{\J}{\mathcal{J}}
\newcommand{\I}{\mathcal{I}}
\newcommand{\Ff}{\mathscr{F}}
\newcommand{\FF}{\mathcal{F}}
\newcommand{\DD}{\mathfrak{D}}
\newcommand{\RR}{\mathbb{R}}
\newcommand{\ZZ}{\mathbb{Z}}
\newcommand{\NN}{\mathbb{N}}
\newcommand{\BB}{\dot{B}}
\newcommand{\Dd}{\dot{\Delta}}
\newcommand{\Sd}{\dot{S}}
\newcommand{\vv}{{\rm v}}
\newcommand{\h}{\mathbb{H}}
\newtheorem{theorem}{Theorem}[section]
\newtheorem{cor}{Corollary}[theorem]
\newtheorem{prop}[theorem]{Proposition}
\newtheorem{lemma}[theorem]{Lemma}
\newtheorem{definition}[theorem]{Definition}
\newtheorem{remark}[theorem]{Remark}
\DeclareFontFamily{OT1}{rsfs}{}
\DeclareFontShape{OT1}{rsfs}{m}{n}{ <-7> rsfs5 <7-10> rsfs7 <10-> rsfs10}{}
\DeclareMathAlphabet{\mycal}{OT1}{rsfs}{m}{n}
\def\q{{\bf q}}
\def\w{\omega}
\def\m{{\mathbf{m}}}
\def\mcN{{\mycal N}}
\DeclareSymbolFont{letters}{OML}{ztmcm}{m}{it}
\begin{document}
 \begin{center}%
  \noindent\rule{155mm}{0.01cm}
  \let \footnote \thanks
  {\LARGE  \textsc{ Non-isothermal general Ericksen-Leslie system: derivation, analysis and thermodynamics-consistency} \par}%
  \noindent\rule{155mm}{0.01cm}
  \vskip 1.em
  \today	
 \end{center}
 \vskip 1.em%
 {\let \footnote \thanks  Francesco De Anna$\,^1$,$\hspace{0.2cm}$Chun Liu$\,^2$ \par}
 {\textit{\footnotesize{$\,^1$ Department of Mathematics, Penn State University, University Park, PA 16802, US\\
			e-mail: fzd16@psu.edu\vspace{0.1cm} \\			
			$\,^2\,$ Department of Applied Mathematics, Illinois Institute of Technology, Chicago, IL 60616, US\\
			e-mail: cliu124@iit.edu} } }
 \par
 \vskip 1.em
 \noindent\rule{155mm}{0.01cm} 
 \vskip 1.em
 {\small
  \begin{minipage}{0.3\linewidth}
  \noindent\hspace{-0.6cm} \textsc{article info}
  
  \noindent\hspace{-0.4cm}\rule{4.3cm}{0.01cm} 
  
  \end{minipage}
  \begin{minipage}{0.6\linewidth}
   \noindent \noindent \textsc{ abstract}
   
   \noindent\rule{10.32cm}{0.01cm} 
  
  \end{minipage}
	
  \smallskip
  \hspace{-0.4cm}
  \begin{minipage}{0.29\linewidth}
  
  \noindent \hspace{-0.0cm}\textit{Keywords:}
  Ericksen-Leslie, nematic liquid crystals, thermodynamics consistency, global well-posedness, Besov regularity.
  
  \vspace{0.4cm}
  \noindent \hspace{-0.0cm}\textit{MSC:}
  35Q30, 35Q35, 	35Q79, 76A15.
  \end{minipage}
  \hspace{0.34cm}
  \begin{minipage}{0.6\linewidth}
  
  \noindent $$\,$$
We derive a model describing the evolution of a nematic liquid-crystal material under the action of thermal effects. The first and second laws of thermodynamics lead
to an extension of the general Ericksen-Leslie system where the Leslie stress tensor and the Oseen-Frank energy density are considered in their general forms. 
The work postulate proposed by Ericksen-Leslie is traduced in terms of entropy production. We finally analyze the global-in-time well-posedness of the system for small initial data in the framework of Besov spaces.  
$$\,$$
  
  \end{minipage}
  
  \noindent\rule{155mm}{0.01cm} 
  } 	
\makeatother
\allowdisplaybreaks{}

\tableofcontents

\section{Introduction}
\noindent 
The main aim of this paper is to derive and analyze an evolutionary PDE's-system modeling the dynamics of nematic liquid crystals. The model we are interested in extends the general Ericksen-Leslie theory, allowing a non-constant temperature. We derive such a model in accordance with the main laws of thermodynamics.

\smallskip\noindent
Nowadays, the engineering
and mathematical community is familiar with the concept of nematic liquid crystals. A nematic medium is a compound of fluid molecules, which has a state of matter between an ordinary liquid and a crystal solid. Although the centers of mass can freely translate as in a common fluid, the constitutive molecules present a privileged orientation. This alignment strongly interacts with the underlying flow of the nematic.

\smallskip\noindent
Reinitzer discovered one of these materials in the 1888 and since then there have been numerous attempts to formulate continuum theories describing the time behavior of the flow. Ericksen and Leslie developed the most widely recognaized model during the 1960's in their pioneeric papers \cite{Ericksen, Ericksen2, Leslie},  generalizing the Oseen-Frank theory for the static case \cite{Frank}. 
From the first mathematical success in analyzying the model performed by Lin and Liu \cite{Lin-Liu} in the 1991, the dynamics theory of liquid crystal has become the new El Dorado of theoretical studies motivated by real-world applications. The well-posedness analysis in bounded domains \cite{H-N-P-S, Lin-Wang, Cavaterra-Rocca-Wu} as well as in the whole space \cite{Hineman-wang, Dea}, has especially received high interest in the recent decades, both for what concerns the director theory and the $\mathbb{Q}$-tensor framework. These results are often inspired by the ample literature concerning the Navier-Stokes equations, since any derivation of a consistent model for these anisotropic materials usually starts from the well-known conservation of mass and balances of linear and angular momentum. 

\smallskip
\noindent
Despite a wide literature concerning the dynamics of nematic liquid crystals, to the best of our knowledge there are few papers dealing also with thermodynamic effects. Liquid crystals are mostly considered in an isothermal environment, which is sometimes unnatural. Indeed, as described by Stewart in \cite{Stewart}, liquid phases are mainly induced by changing the temperature  (\textit{thermotropic} LC) or the concentration of a solvent (\textit{lyotropic} LC). Moreover, only few articles allows thermal effects and simoltaneously treat the \textit{consistency} of their models with respect to the main laws of thermodynamics. We refer for instance to the recent work of Hieber and Pr\"uss \cite{H2} as well as of Fereisl \cite{F} and Feireisl, Rocca and Schimperna \cite{F-R-S}, where the authors deal with  the time-evolution of incompressible non-isothermal nematics in the Ericksen-Leslie formalism. We mention also the pioneristic works of Feireisl, Rocca, Schimperna and Zarnescu \cite{FRSZ,FRSZ2}, concerning the flow of incompressible non-isothermal nematics, under the $\mathbb{Q}$-tensor formalism. 

\smallskip
\noindent
This paper deals with the time-evolution of nematic liquid crystals assuming a non-constant temperature. 
We indeed perform  a thermodynamics-consistent model which extends the widespread \textit{non-simplified} Ericksen-Leslie theory. 
Moreover, this extension is performed both in the case of a compressible as well as an imcompressible nematic material.

\subsection{The equations of motion}$\,$

\smallskip
\noindent
We begin with introducing the main continuum variables describing the evolution of the medium. We denote by $\rho(t,x)$ the density of the liquid crystal and by $\uu = \uu(\,t,\,x\,)\in \RR^\dd$ the velocity field of the flow in the Eulerian reference system, for a fixed time $t\in\RR_+$ and a position $x\in \RR^\dd$. The time-evolution of the flow is described by a Navier-Stokes-type equation, under the action of thermodynamic effects.
We denote by $\nn = \nn(\,t,\,x\,)$  the so-called \textit{director} field, returning values into the sphere $\SSS^{\,\dd-1} $. 
The unit vector $\nn$ represents the direction of the preferred long-range orientation of the constitutive molecules in a neighborhood of any point. 
The evolution of the director field $\nn(t,\,x)$ is driven by a convection-diffusion equation, whose simplest form reduces to the heat flow of harmonic map into the sphere 
(cf. \cite{Lin-Liu}).
In this paper, the director equation is supported by the usual constraint $|\,\nn\,|=1$, producing an high-order non-linearity in the system.
It is common in literature to relax such a non-linearity, introducing a Ginzburg-Landau penalization term in the free energy of the system (cf. \cite{FFRS, FRSZ, FRSZ2, Lin-Liu, Wu-Xu-Liu}).
%
Finally, we denote by $\Tt=\Tt(\,t,\,x\,)>0$ the so-called absolute temperature, and  we are interested in the range of temperatures such that the nematic phase occurs. For instance, as explained by Stewart in \cite{Stewart} (see Figure $1.5$), a \textrm{PAA} exhibit a nematic behavior when its temperature is between $391\,\mathrm{K}$ and $408\, \mathrm{K}$, while the $\bar{10}S5$ becomes nematic as the temperature increases from $353\,\mathrm{K}$ to $359\,\mathrm{K}$.

\smallskip\noindent
We denote by $\FF$ the Helmholtz free energy density of the system, we assume depending on the set of variables $(\rho,\,\Tt,\,\nn,\,\nabla \nn)$ and 
we expect a strong correlation between $\FF$ and the classical Oseen-Frank energy density for isothermal nematic media.

\smallskip\noindent
Furthermore, the non-isothermal environment we take into account generally gives rises to a non-constant entropy. Thus, the first essential relation we take into account in this paper connects the definition of the local entropy, we denote by $\eta$, and the Helmholtz energy density $\FF$. More precisely, the Maxwell's identity (cf. \cite{Malek-Prusa}, section $2.3$) insures $\eta$ to be defined by means of
\begin{equation}\label{Maxwell_rel}
	\eta :=- \frac{\partial \FF}{\partial \Tt}.
\end{equation}

\smallskip\noindent
We begin with introducing the main balance laws that drive the evolution of a non-isothermal and compressible liquid crystal material. Their pointwise forms read as follows:
\begin{equation}\label{main_system}
\left\{\hspace{0.2cm}
	\begin{alignedat}{2}
		&\,\rho\Big[\,\partial_t \uu  + \uu\cdot \nabla \uu\,\Big]\,   =
		\Div\,\, \Big[ \, -\pre\,\Id\,+ \,\sigma^{\mathrm{E}}\,+\,\sigma^{\mathrm{L}}\,\Big]\hspace{2cm}
																		&& \RR_+\times \RR^\dd, \vspace{0.1cm}\\
		&\partial_t \rho \,+\,\Div\,(\,\rho\,\uu\,)\,=\,0					&&\RR_+\times \RR^\dd, \vspace{0.1cm}\\
		&\,\mathrm{g}\, + \mathrm{h}\, =\,\beta\,\nn , 
																		&&\RR_+\times \RR^\dd, \vspace{0.1cm}	\\	
		&\nn\,\cdot \nn\,= 1									&&\RR_+\times \RR^\dd, \vspace{0.1cm}	\\																	
		&\Tt\, \Big[\,\partial_t  \eta 
		+ \Div\,(\,\uu \,\eta\,)\,
		\Big] = \Tt \,\Div\,  \frac{\bf q}{\Tt} +\Tt\,\Delta^*
		&&\RR_+\times \RR^\dd, \vspace{0.1cm}\\																		
	\end{alignedat}
	\right.
\end{equation}
\noindent
In system \eqref{main_system} we have introduced the balance of linear momentum, the conservation of mass, the balance of angular momentum, the unitary-constraint on the director field $\nn$, and the so-called Clausius-Duhem inequality, respectively. We assume that the considered liquid crystal occupies the whole space $\RR^\dd$, with a dimension $\dd\geq 3$. The large dimension is essential for the global-in-time well-posedness result we perform in section \ref{sec-wp}. However, we claim that the thermodynamics consistency of our model holds also in the bi-dimensional case. 

\noindent A peculiarity of system \eqref{main_system} with respect to the general Ericksen-Leslie system relies in the last equation, the Clausius-Duhem inequality also known as the second law of thermodynamics. 

\smallskip\noindent
We begin with describing the main terms driving the time-evolution of the nematic medium. The tensor $\sigma^{\rm E}$ in $\RR^{\dd\times \dd}$ in the momentum equation stands for the well-known Ericksen tensor, defined by means of
\begin{equation}\label{def:E-stress}
	\sigma^{\rm E}:=-\tr\Big[ \nabla \nn\Big]\frac{\partial \FF}{\partial \nabla \nn}, \hspace{1cm}\text{i.e.} \quad\quad
	\sigma^{\rm E}_{ij}:=-\nn_{k,i}\frac{\partial \FF}{\,\,\partial \nn_{k,j}}
\end{equation}
where we have used the Einstein summation convention of summation over repeated indices.
Furthermore, the tensor $\sigma^{\rm L}$ in $\RR^{\dd\times\dd}$ denotes the Leslie stress tensor. We initially assume $\sigma^{\rm L}$ to be only an isotropic tensor in $\RR^{\dd\times \dd}$ depending on the set of variables $(\rho,\,\Tt,\,\,\mcN,\,\mathbb{D},\,\nn)$. Here, $\mathbb{D}$ stands for the symmetric part of $\nabla \uu$ and $\Omega$ for the skew-adjoint part:
\begin{equation*}
	\mathbb{D}\,: =\, \frac{\nabla \uu  + \tr \nabla \uu}{2}\quad\quad \quad\text{and}\quad \quad\quad
	\Omega 	\,:=\, \frac{\nabla \uu  - \tr \nabla \uu}{2}.
\end{equation*}

\smallskip\noindent
The notation $\mcN$ identifies the so-called co-rotational time flux of the director field $\nn$, whose formula is determined by
\begin{equation}\label{def-corot}
	\mcN \,:=\,\dot \nn\,-\,\Omega\,\nn\,=\,\partial_{\,t}\,\nn\,+\,\uu\cdot \nabla \nn \,-\,\Omega\,\nn. 
\end{equation}
It is worth to remark that a consistent number of papers in literature relax the co-rotational time flux through the identity $\mcN\,\sim\,\dot \nn\,=\,\partial_t\nn\,+\,\uu\cdot\nabla\nn$. This starts from the pioneristic work of Lin and Liu \cite{Lin-Liu2}, as analysis of a simplified version for the Ericksen-Leslie theory. In this work we preserve the genuine structure given by \eqref{def-corot}.

\smallskip
\noindent
The balance of angular momentum in \eqref{main_system} is expressed in terms of the molecular field $\rm h$ and the kinematic transport $\mathrm{g}$ of the director $\nn$, which represents the effect of the macroscopic flow field on the microscopic structure. We consider the following formulations:
\begin{equation}\label{def_hg}
	\mathrm{h}:=\frac{\delta \FF}{\delta \nn}= \frac{\partial \FF}{\partial \nn}-\Div\,\frac{\partial \FF}{\partial \nabla \nn},
	\hspace{2cm}
	\mathrm{g}\otimes \nn -\nn\otimes {\rm g}=\,-\,\Big[\,\sigma^{\rm \,L}-\tr \sigma^{\rm\, L}\,\Big].
\end{equation}
The above expression of the molecular field $\rm h$ is common in literature, when replacing the free energy density $\FF$ by the well-known Oseen-Frank energy density (cf. definition \eqref{def_OF_intro}). The kinematic transport $\rm g$ is usually formulated as the orthogonal-projection with respect to $\nn$ of $\tilde {\rm g}\,=\,\gamma_1\,\mcN\,+\,\gamma_2\,\mathbb{D}\nn$, that is
\begin{equation}\label{def2g}
	{\rm g}\,=\,\gamma_1\,\mcN\,+\,\gamma_2\,\Big[\,\mathbb{D}\nn\,-[\,\nn\cdot \mathbb{D}\nn\,]\,\nn\,\Big],
\end{equation}
where the coefficient $\gamma_1$ represents the co-rotational behavior of the nematics, reflecting the molecular shape (Jeffery's orbit \cite{Jeffery}), and $\gamma_2$ determines the stretching of the molecules by the flow. We refer for instance the reader to identity $(4.123)$ in \cite{Stewart}. The definition of $\rm g$ in \eqref{def_hg} extends the one of \eqref{def2g} when preserving a general structure of the isotropic Leslie stress tensor $\sigma^{\,\rm L}$. However, it is worth to remark that whenever $\sigma^{\rm\, L}$ coincides with its widespread formulation, as for instance in \eqref{Leslie-stress}, then $\rm g$ in \eqref{def_hg} coincides with \eqref{def2g},
up to the following relations
\begin{equation*}
	\gamma_1\,=\,\alpha_3\,-\,\alpha_2\quad\text{and}\quad\gamma_2\,=\,\alpha_6\,-\,\alpha_5.
\end{equation*}
Assuming moreover that $\rm g$ is perpendicular to $\nn$, we can explicitly identify the kinematic transport multiplying \eqref{def_hg} by $\nn$, namely
\begin{equation}\label{def_g2}
	{\rm g} = -\Big[\,\sigma^{\,\rm L}\,-\,\tr \sigma^{\,\rm L}\,\Big]\nn.
\end{equation}

\smallskip\noindent
The $\beta$-term in the main system \eqref{main_system} stands for the Lagrangian multiplier which insures the unitary constraint on the director field $\nn$, namely $|\nn|^2=1$. An explicit formula for $\beta$ can be achieved multiplying the angular momentum equation by $\nn$, more precisely $\beta\,=\,\mathrm{h}\cdot \nn$.

\smallskip\noindent
Finally we denote by $\bf q$ the so-called heat flux and by $\Tt \Delta^*$ one of the key element for the thermodynamic consistency of our model: the entropy production.  
We recall that according to the second law of thermodynamics, the entropy production must be always semi-positive defined.
The structure of the heat $\q$ can depend on the medium and we assume it to have a phenomenological derivation, namely to be a function of the state variables.

\medskip\noindent
It is worth to remark that system \eqref{main_system} coincides to the classical general Ericksen-Leslie system for the evolution of an incompressible nematic, whenever $\FF$ reduces to the classical Oseen-Frank energy density (as in \eqref{def_OF_intro}, below), $\sigma^{\rm L}$ stands for the the classical Leslie tensor (as in \eqref{Leslie-stress}, below) and moreover $(\rho,\,\Tt,\,\eta)$ are assumed to be constant. 

\subsection{The pressure}$\,$ 

\noindent
As already pointed out, our model investigates both the case of a compressible liquid crystal as well as an incompressible nematic. Whenever the density is assumed to be constant (we impose equal to $1$ for the sake of clarity), the conservation of mass reduces to the classical divergence-free condition on the velocity field $\uu$. In section \ref{sec-wp} we prove a well-posedness result for system \eqref{main_system} under such a condition.
In this framework, the pressure $\pre$ stands for the Lagrangian multiplier insuring the incompressible condition of the material. 
Furthermore, as additional assumption, $\FF$ does not depend on the density $\rho$. We point out that this is not a consequence of a constant density. 

\smallskip
\noindent On the other hand, when deriving our model, we take into account both the case of compressible and incompressible materials. Whenever the density is not constant, we assume the free energy density to depends on the density, $\FF\,=\,\FF(\,\rho,\,\Tt,\,\nn,\,\nabla \nn)$. These conditions lead the pressure $\pre =\pre (\,\rho,\,\Tt,\,\nn,\,\nabla \nn)$ to be defined by means of the Maxwell's relation
\begin{equation}\label{pre:compressible-case}
	\pre(\,\rho,\,\Tt,\,\nn,\,\nabla \nn)\,=\,\rho\partial_\rho\FF\,-\,\FF\,=\rho^2\frac{\partial}{\partial\rho}\Big[\frac{\FF}{\rho}\Big],
\end{equation}
where $\FF/\rho$ is the free energy density per unit mass.

\medskip\noindent
We now state our three main results: Theorem \ref{main-thm1}, Theorem \ref{main-thm2} and Theorem \ref{main-thm3}. We collect them into three subsections. The first and second theorems treat the consistency of system \eqref{main_system} with respect to the first and second law of thermodynamics, while the third theorem deals with the global-in-time well-posedness of our model.

\subsection{The first law of thermodynamics}$\,$

\smallskip\noindent
The first law of thermodynamics ensures that the  rate of change of the total energy given by the sum of the internal energy and the kinetic energy
${\rm e}^{\rm tot}={\rm e}^{\rm int}\,+\,\rho|\uu|^2/2$, is totally transformed into work $\Sigma$ or heat $\bf q$. We can write this postulate as follows:
\begin{equation}\label{first-law-thm}
	\partial_t\mathrm{e}^{\rm tot} = \Div\, {\rm \Sigma}\,+\,\Div\,{\bf q}.
\end{equation}
In this paper, we assume the work density $\Sigma(t,\,x)\in \RR^\dd$ to have a specific structure. More precisely, we consider an arbitrary smooth domain $U$ which is not moving under the action the flow $\uu$. Then, denoting by $\nu$ the normal vector to the boundary $\partial \, U$, we define the work produced by the system to the environment through the relation
\begin{equation}\label{def_work}
	\int_{\partial\, U} \Sigma \cdot \nu \,\dd x\,=\, \int_{\partial\, U} \Big[\,	\mathbb{T}\,\uu\,+\,\tr \left[\frac{\partial \FF }{\partial\nabla \nn}\right]\,\dot \nn\,-\uu\,{\rm e}^{\rm tot}\,	\Big]\cdot \nu\,	\dd x,
\end{equation}
where $\mathbb{T}\,= \,-\mathbb{\pre}\Id \,+\,\sigma^{\rm\,E}\,+\,\sigma^{\rm\,L}$ is the total stress tensor. 
The second term on the right-hands side can be seen as an extension of the standard angular work defined on a three dimensional spatial domain: If $\mathbb{L}$ and $\rm w$ are the couple stress tensor and $\mathrm{w}\in\RR^3$ is the local angular velocity of the director $\nn\in\SSS^2$ given by $\dot \nn\,=\,{\rm w}\wedge \nn$, then the following identity holds
\begin{equation}\label{def_couple_stress}
	\mathbb{L}{\rm w}\,=\,\,\tr\left[\frac{\partial\,\FF}{\partial \nabla \nn}\right](\,{\rm w}\wedge \nn)\,=\,
	\tr\left[\frac{\partial\,\FF}{\partial \nabla \nn}\right]\dot \nn.
\end{equation}
For further details we refer the reader to \cite{Stewart}, formula $(4.57)$. 

\smallskip
\noindent
As a first main result of this article, we want to show that whenever explicit formulas for the heat $\bf q$ and and the Leslie-stress tensor $\sigma^{\rm \,L}$ are provided, then we automatically identify the entropy production $\Tt\Delta^*$. More precisely, we will prove the following statement: 
\begin{theorem}\label{main-thm1}
	Assuming the first law of thermodynamics \eqref{first-law-thm} to be satisfied, then the entropy production must fulfill the following identity
	\begin{equation}\label{entropy-production}
		\Tt\Delta^*= \sigma^{\,\rm L}:\mathbb{D}\, +\, \rm{g}\cdot \mcN \,+\,{\bf q}\cdot\frac{ \nabla \Tt }{\Tt}.
	\end{equation}
\end{theorem}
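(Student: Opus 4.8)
The plan is to start from the total energy balance \eqref{first-law-thm} together with the explicit structure of the work flux \eqref{def_work}, and to reconcile it with the Clausius--Duhem relation in \eqref{main_system} by means of the splitting $\mathrm{e}^{\mathrm{tot}}=\mathrm{e}^{\mathrm{int}}+\rho|\uu|^2/2$ and $\mathrm{e}^{\mathrm{int}}=\FF+\Tt\eta$. First I would localize \eqref{def_work}: since $U$ is an arbitrary fixed (non-advected) smooth domain, the divergence theorem turns \eqref{def_work} into the pointwise identity $\Div\,\Sigma=\Div\big[\mathbb{T}\uu+\tr[\partial\FF/\partial\nabla\nn]\,\dot\nn-\uu\,\mathrm{e}^{\mathrm{tot}}\big]$, so that \eqref{first-law-thm} becomes
\begin{equation*}
\partial_t\mathrm{e}^{\mathrm{tot}}+\Div(\uu\,\mathrm{e}^{\mathrm{tot}})=\Div\big[\mathbb{T}\uu\big]+\Div\Big[\tr\big[\tfrac{\partial\FF}{\partial\nabla\nn}\big]\dot\nn\Big]+\Div\,\mathbf{q}.
\end{equation*}

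Next I would extract the kinetic-energy part. Dotting the momentum equation in \eqref{main_system} with $\uu$ and using the mass conservation gives the mechanical energy identity $\partial_t(\rho|\uu|^2/2)+\Div(\uu\,\rho|\uu|^2/2)=\Div[\mathbb{T}\uu]-\mathbb{T}:\nabla\uu$, with $\mathbb{T}=-\pre\,\Id+\sigma^{\mathrm{E}}+\sigma^{\mathrm{L}}$. Subtracting this from the localized first law leaves a balance for the internal energy alone:
\begin{equation*}
\partial_t\mathrm{e}^{\mathrm{int}}+\Div(\uu\,\mathrm{e}^{\mathrm{int}})=\mathbb{T}:\nabla\uu+\Div\Big[\tr\big[\tfrac{\partial\FF}{\partial\nabla\nn}\big]\dot\nn\Big]+\Div\,\mathbf{q}.
\end{equation*}
Then I would use $\mathrm{e}^{\mathrm{int}}=\FF+\Tt\eta$ and expand the material derivative of $\FF=\FF(\rho,\Tt,\nn,\nabla\nn)$ by the chain rule, invoking the Maxwell relation \eqref{Maxwell_rel} (so that the $\partial_\Tt\FF$ term cancels against part of the $\Tt\eta$ contribution) and, in the compressible case, the pressure relation \eqref{pre:compressible-case} together with mass conservation (so that the $\partial_\rho\FF$ term combines with $-\pre\,\Div\uu$). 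What remains on the $\FF$ side are the director contributions $\partial_\nn\FF\cdot\dot\nn+(\partial\FF/\partial\nabla\nn):\nabla\dot\nn$; integrating the $\nabla\dot\nn$ term by parts against the $\Div[\tr(\partial\FF/\partial\nabla\nn)\dot\nn]$ already present, and recognizing $\mathrm{h}=\partial_\nn\FF-\Div(\partial\FF/\partial\nabla\nn)$ from \eqref{def_hg}, collapses the director terms to $-\mathrm{h}\cdot\dot\nn$. Simultaneously, splitting $\nabla\uu=\mathbb{D}+\Omega$ in $\sigma^{\mathrm{E}}:\nabla\uu$ and using the definition \eqref{def:E-stress} of the Ericksen tensor lets one rewrite the Ericksen-stress work, after another integration by parts, in terms of $\mathrm{h}$ and $\Omega$; combining with the angular momentum equation $\mathrm{g}+\mathrm{h}=\beta\nn$, the constraint $|\nn|=1$, and the co-rotational identity \eqref{def-corot} converts $-\mathrm{h}\cdot\dot\nn$ into $\mathrm{g}\cdot\mcN$ plus a term that merges with $\sigma^{\mathrm{L}}:\Omega$. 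After all cancellations the surviving right-hand side is exactly $\sigma^{\mathrm{L}}:\mathbb{D}+\mathrm{g}\cdot\mcN+\mathbf{q}\cdot\nabla\Tt/\Tt$, and matching this to the Clausius--Duhem equation in \eqref{main_system} (which says $\Tt[\partial_t\eta+\Div(\uu\eta)]=\Tt\,\Div(\mathbf{q}/\Tt)+\Tt\Delta^{*}$, i.e. $\partial_t(\Tt\eta)+\Div(\uu\Tt\eta)-\eta(\partial_t\Tt+\uu\cdot\nabla\Tt)=\Div\mathbf{q}-\mathbf{q}\cdot\nabla\Tt/\Tt+\Tt\Delta^{*}$) yields \eqref{entropy-production}.

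I expect the main obstacle to be the bookkeeping in the director/Ericksen block: one must track the Einstein-summation indices carefully through the two integrations by parts, use the identity $\sigma^{\mathrm{E}}_{ij}\partial_j\uu_i=-\nn_{k,i}(\partial\FF/\partial\nn_{k,j})\partial_j\uu_i$ together with $\partial_j(\nn_k\,\text{-derivatives})$ commuting with $\partial_i$, and exploit the fact that $\mathrm{h}\cdot\nn=\beta$ and $\mcN\cdot\nn=0$ (the latter from differentiating $|\nn|^2=1$ and using $\Omega$ skew-adjoint) so that the Lagrange multiplier $\beta$ drops out of every inner product with $\mcN$ or with the tangential part of $\dot\nn$. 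A secondary subtlety is ensuring the density/temperature terms cancel cleanly in both the incompressible case ($\FF$ independent of $\rho$, $\Div\uu=0$, $\pre$ a multiplier — the $\pre\Div\uu$ and $\rho\partial_\rho\FF$ terms simply vanish) and the compressible case (where \eqref{pre:compressible-case} is exactly what is needed). Once the algebra is organized so that the ``constitutively undetermined'' objects $\sigma^{\mathrm{L}}$, $\mathrm{g}$, $\mathbf{q}$ are the only ones left uncancelled, the identification of $\Tt\Delta^{*}$ is forced, which is the content of the theorem.
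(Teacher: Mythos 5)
Your proposal is correct and takes essentially the same route as the paper's proof: expand $\partial_t \mathrm{e}^{\mathrm{tot}}$ by the chain rule, use the Maxwell relation and (in the compressible case) the pressure relation, integrate by parts to expose $\mathrm{h}=\delta\FF/\delta\nn$, cancel the Ericksen stress against the convection of $\FF$, and use the angular momentum equation together with $\mcN\cdot\nn=0$ and the co-rotational identity to convert $-\mathrm{h}\cdot\dot\nn$ into $\mathrm{g}\cdot\mcN$ and a $\sigma^{\rm L}:\Omega$ term. The only difference is bookkeeping---you subtract the kinetic-energy balance first and match the residual internal-energy balance against the Clausius--Duhem equation at the end, whereas the paper substitutes the entropy equation during the expansion and imposes the work postulate $\partial_t\mathrm{e}^{\mathrm{tot}}=\Div\Sigma+\Div\mathbf{q}$ at the end---but this is the same computation read in opposite directions.
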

\begin{remark}
Whenever the temperature $\Tt$ is constant, the entropy production given by \eqref{entropy-production} reduces to the
viscous  dissipation  $ \DD	:= \sigma^{\,\rm L}:\mathbb{D} \,+\,\mathrm{g}\cdot \mcN$ introduced  by Ericksen and Leslie in their rate-of-work postulate (see for instance \cite{Stewart}, identity $(4.82)$). In this article the rate-of-work is replaced by the second law of thermodynamics, namely the Clausius-Duhem inequality. The viscous dissipation contributes to determine the formulation of the entropy production $\Tt\Delta^*$. 
\end{remark}

\subsection{The second law of thermodynamics}$\,$

\smallskip
\noindent
In section \ref{sec-2law} we then consider an explicit formulation of the Leslie stress tensor $\sigma^{\rm L}$ and an explicit definition of the heat flux $\bf q$, depending on the state variables. 
More precisely, we first assume $\sigma^{\rm L}=\sigma^{\rm L}(\rho,\,\Tt,\,\mcN,\,\mathbb{D})$ given by means of
\begin{equation}\label{Leslie-stress-compressible}
	\begin{aligned}
	\sigma^{\,\rm L}= 
	\alpha_0\,\,\big[\nn\cdot \mathbb{D}\nn\big]\,&\Id\,+\,
	\alpha_1\,	\big[\,\nn\cdot \mathbb{D}\nn\,\big]\,\nn\otimes\nn\,+\,
	\alpha_2\,	\mcN\otimes \nn\,	+\,
	\alpha_3\,	\,\nn\otimes \mcN\,+\,
	\alpha_4\,	\mathbb{D}\,+\,
	\\&+\,
	\alpha_5\,	\mathbb{D}\nn\otimes \nn\,+\,
	\alpha_6\,	\nn\otimes \mathbb{D}\nn\,+\,
	\alpha_7\,	\trc\,	\mathbb{D}\,\Id\,
	+\,
	\alpha_8\,	\,\trc\,\mathbb{D}\,\,\nn\otimes \nn,
\end{aligned}
\end{equation}
which corresponds to the general structure of the  Leslie stress tensor, including all the terms taking into account a \textit{compressible} behavior of the nematic liquid crystal. The Leslie viscosity coefficients $\alpha_0,\dots,\alpha_8$ are smooth functions depending on the temperature $\Tt$ and the density $\rho$. The coefficients: $\alpha_0$, $\alpha_7$ and $\alpha_8$ are strictly related to the compressible assumption, indeed whenever the $\trc\,\mathbb{D}$ is null, the terms related to $\alpha_7$ and $\alpha_8$ disappear, while the $\alpha_0$-term can be absorbed by the definition of the pressure.

\smallskip
\noindent
The heat flux ${\bf q}$ we consider in section \ref{sec-2law} is a vector-function depending on the set $(\rho,\,\Tt,\,\nn,\,\nabla \Tt)$. More precisely, we extend the widespread Fourier's laws for ${\bf q}$ as
\begin{equation}\label{heat}
	{\bf q} = \lambda_1\,\nabla \Tt\,+\,\lambda_2\,[\,\nn\cdot \nabla \Tt\,]\,\nn.
\end{equation}
The coefficients $\lambda_1$ and $\lambda_2$ are smooth functions depending on the couple $(\rho,\,\Tt)$.
It is interesting to remark that in \cite{S-V}, section $3.1.5$, Virga and Sonnet derives an heat flux perturbed also by the co-rotational time flux $\mcN$ as well as by the stretching term $\mathbb{D}\nn$:
\begin{equation*}
	{\bf q} = \lambda_1\,\nabla \Tt\,+\,\lambda_2\,[\,\nn\cdot \nabla \Tt\,]\,\nn\,\,+\lambda_3\,\,\nn\wedge \mcN\,+\,\lambda_4\, [\,\mathbb{D}\,\nn\,]\wedge\nn.
\end{equation*}
In this work we preserve the linearity of $\q$ with respect to $\nabla \Tt$.

\medskip
\noindent
The second law of thermodynamics asserts that the entropy production $\Tt \Delta^*$ given by Theorem \ref{main-thm1} must be semi-positive defined. Since $\sigma^{\rm L}$ does not depend on $\nabla \Tt$, then \eqref{entropy-production} reduces to
\begin{equation*}
	\sigma^{\rm L}:\mathbb{D}\,+\,{\rm g}\cdot\mcN \geq 0\quad\text{and}\quad {\bf q}\cdot \nabla \Tt\geq 0.
\end{equation*}
We then perform the most general conditions on the $\alpha$-coefficients as well as on the $\lambda$-ones for the above inequality to hold:
\begin{theorem}\label{main-thm2}
	Let us assume that definitions \eqref{Leslie-stress} and \eqref{heat} determine the entropy production by \eqref{entropy-production}. 
	Then the second law of thermodynamics holds if and only if the following inequalities are fulfilled
	\begin{equation}\label{inequalities_viscous_diss}
	\begin{alignedat}{8}
	\lambda_1\geq 0,\hspace{4cm}
	\lambda_1\,+\,\lambda_2&\geq 0, \\
	\alpha_3-\alpha_2 \geq 0, 
	\hspace{3.97cm}
	\alpha_4 &\geq 0,\\
	2\alpha_4+\alpha_5 + \alpha_6 \geq 0, \hspace{0.57cm}
	2\alpha_1+2\alpha_5 + 2\alpha_6 + 3\alpha_4&\geq 0, \\
	4(\alpha_2-\alpha_3)(2\alpha_4\,+\,2\alpha_5\,+\,2\alpha_6\,+\,3\alpha_4\,)\,-\, 
	(\alpha_2\,+\,\alpha_3\,+\,\alpha_5\,-\,\alpha_6)^2&\geq 0\\
	(\,\alpha_4\,+\,\alpha_7\,+\,\alpha_8\,)(\,(\dd-1)\alpha_1\,+\,(\dd-1)\alpha_5\,+\,(\dd-1)\alpha_6\,+\,
	(2\dd-3)\alpha_4\,)&\geq \frac{(\alpha_8+\alpha_1)^2 }{\dd}.
\end{alignedat}
	\end{equation}
\end{theorem}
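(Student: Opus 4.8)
The plan is to run the standard Coleman--Noll argument. The Clausius--Duhem inequality must hold along every admissible thermodynamic process, and at a fixed point $(t_0,x_0)$ one can realize arbitrary instantaneous values of $\nabla\uu$ (hence of $\mathbb{D}$ and $\Omega$), of $\partial_t\nn$ (hence, through \eqref{def-corot}, of $\mcN$), and of $\nabla\Tt$, independently of one another --- for instance by taking $\uu$ affine in $x$, $\Tt$ affine in $x$, and $\nn$ a sphere-valued path with vanishing spatial gradient at $x_0$; differentiating $|\nn|^2=1$ together with the skew-symmetry of $\Omega$ only imposes $\mcN\cdot\nn=0$, and the incompressibility constraint only imposes $\trc\mathbb{D}=0$. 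Hence \eqref{entropy-production} is semi-positive for all processes if and only if, for every frozen $(\rho,\Tt,\nn)$, the quadratic form
\[
	Q(\mathbb{D},\mcN,\nabla\Tt)\ :=\ \sigma^{\rm L}:\mathbb{D}\,+\,\mathrm{g}\cdot\mcN\,+\,\frac{1}{\Tt}\,{\bf q}\cdot\nabla\Tt
\]
--- with $\sigma^{\rm L},\mathrm{g},{\bf q}$ given by \eqref{Leslie-stress-compressible}, \eqref{def2g} (using $\gamma_1=\alpha_3-\alpha_2$, $\gamma_2=\alpha_6-\alpha_5$) and \eqref{heat} --- is positive semidefinite on $\{\mathbb{D}\text{ symmetric},\ \mcN\perp\nn\}$. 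Since ${\bf q}$ depends only on $\nabla\Tt$ whereas $\sigma^{\rm L},\mathrm{g}$ depend only on $(\mathbb{D},\mcN)$, the form splits as $Q=Q_{\rm mech}(\mathbb{D},\mcN)+\Tt^{-1}Q_{\rm heat}(\nabla\Tt)$, and --- by setting one group of variables to zero --- $Q\geq0$ is equivalent to $Q_{\rm mech}\geq0$ and $Q_{\rm heat}\geq0$ separately.

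For the heat part, $Q_{\rm heat}(\nabla\Tt)=\lambda_1|\nabla\Tt|^2+\lambda_2(\nn\cdot\nabla\Tt)^2=(\lambda_1+\lambda_2)(\nn\cdot\nabla\Tt)^2+\lambda_1|\nabla\Tt-(\nn\cdot\nabla\Tt)\nn|^2$; since $\dd\geq2$ the parallel and orthogonal parts of $\nabla\Tt$ are free, so $Q_{\rm heat}\geq0\iff\lambda_1\geq0$ and $\lambda_1+\lambda_2\geq0$, which is the first line of \eqref{inequalities_viscous_diss}.

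For the mechanical part I would first expand $Q_{\rm mech}$ explicitly, using \eqref{Leslie-stress-compressible}, \eqref{def2g} and $\mcN\cdot\nn=0$, obtaining a quadratic polynomial in the entries of $\mathbb{D}$ and $\mcN$ with coefficients in the $\alpha_i$. The structural point is that $Q_{\rm mech}$ is invariant under the stabilizer $O(\dd-1)$ of $\nn$: choosing coordinates with $\nn=e_\dd$ and decomposing $(\mathbb{D},\mcN)$ into $O(\dd-1)$-isotypic components --- two scalars, $\mathbb{D}_{\dd\dd}$ and $\trc_{\dd-1}\mathbb{D}:=\sum_{i<\dd}\mathbb{D}_{ii}$; two vectors, $(\mathbb{D}_{i\dd})_{i<\dd}$ and $(\mcN_i)_{i<\dd}$; and the symmetric-traceless tangential part of $\mathbb{D}$ --- Schur's lemma forces $Q_{\rm mech}$ to block-diagonalize into (i) a multiple $\alpha_4$ of the squared norm on the symmetric-traceless piece, (ii) a $2\times2$ form coupling the two vector components, with diagonal entries $\alpha_3-\alpha_2$ and $2\alpha_4+\alpha_5+\alpha_6$, and (iii) a $2\times2$ form on the two scalars, which carries the compressibility coefficients $\alpha_7,\alpha_8$ (and $\alpha_0$, which can be set to $0$ since it is reabsorbed in the pressure; in the incompressible case $\trc\mathbb{D}=0$ collapses (iii) to a one-dimensional block). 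Imposing that each block be positive semidefinite --- which, by the elementary $2\times2$ criterion, amounts to the nonnegativity of its diagonal entries and of its determinant --- yields $\alpha_4\geq0$ from (i), the conditions $\alpha_3-\alpha_2\geq0$, $2\alpha_4+\alpha_5+\alpha_6\geq0$ and the ``discriminant'' inequality from (ii), and the remaining displayed inequalities from (iii); the dimension factors $(\dd-1)$, $(2\dd-3)$ and the $1/\dd$ are produced by this last reduction. Conversely, if all the inequalities in \eqref{inequalities_viscous_diss} hold, every block is positive semidefinite, so $Q_{\rm mech},Q_{\rm heat}\geq0$ and hence \eqref{entropy-production} is semi-positive.

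I expect the main obstacle to be the bookkeeping in block (iii). One must carefully split $\alpha_4\sum_{i<\dd}\mathbb{D}_{ii}^2$ as $\tfrac{\alpha_4}{\dd-1}(\trc_{\dd-1}\mathbb{D})^2$ plus a contribution reassigned to the symmetric-traceless block --- this is precisely where the $\dd$-dependence enters the scalar Gram matrix --- while simultaneously keeping straight the roles of $\alpha_0,\alpha_7,\alpha_8$ and the incompressible specialization. A second point deserving care, although routine, is the realizability step underlying the Coleman--Noll reduction: one must exhibit, for prescribed $(\mathbb{D},\mcN,\nabla\Tt)$, an actual configuration $(\uu,\nn,\Tt)$ attaining them, so that pointwise positive semidefiniteness of $Q$ is genuinely necessary and not merely sufficient. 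Everything else is linear algebra on $2\times2$ and $1\times1$ blocks.
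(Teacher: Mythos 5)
Your route is genuinely different from the paper's. Both proofs split the entropy production into a heat part and a mechanical part and reduce the latter to positive semidefiniteness of a quadratic form in $(\mathbb{D},\mcN)$, but from there the methods diverge. The paper (Theorem \ref{thm2bbb}) fixes a concrete orthonormal frame with $\nn=e_1$, $\mcN=\mathcal{N}e_2$, writes the form out in the entries of $\tilde{\mathbb{A}}=\mathbb{A}-\tfrac{1}{\dd}(\trc\mathbb{A})\Id$, eliminates $\tilde{\mathbb{A}}_{\dd\dd}$ by the trace constraint, and then \emph{assembles by hand} a $\dd\times\dd$ Gram matrix $\mathbb{M}$ whose positive semidefiniteness is tested by successive leading principal minors (requiring a determinant lemma for arrowhead-type matrices). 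Your proposal instead observes that the form is $O(\dd-1)$-equivariant under the stabilizer of $\nn$ and invokes Schur's lemma to force a block-diagonalization into a $1\times1$ traceless-tangential block, a $2\times2$ vector block, and a $2\times2$ scalar block. This is conceptually cleaner: it replaces the paper's $\dd\times\dd$ matrix and determinant lemma by three at-most-$2\times 2$ computations, and as a side benefit it avoids the paper's dubious use of the \emph{leading}-principal-minor Sylvester test (which is only valid for positive \emph{definiteness}, not semidefiniteness — a principal-minor test or your direct block check is what is actually needed).

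That said, what you have is a sketch, not a proof. Block (iii) is where all of the theorem's content beyond the classical Ericksen--Leslie inequalities lives — the $\alpha_7,\alpha_8$ coefficients, the factors $(\dd-1)$, $(2\dd-3)$, $1/\dd$ — and you explicitly defer its computation. Without actually writing down the $2\times2$ Gram matrix on the two scalars $\big(\nn\cdot\mathbb{D}\nn,\ \trc_{\dd-1}\mathbb{D}\big)$ (including the reallocation of $\alpha_4\sum_{i<\dd}\mathbb{D}_{ii}^2$ between the scalar trace and the traceless block via $\frac{1}{\dd-1}(\trc_{\dd-1}\mathbb{D})^2$, and including the scalar portions of $\alpha_1,\alpha_5,\alpha_6,\alpha_7,\alpha_8$) one cannot claim equivalence with the displayed inequalities. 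This is the step that must be done and is not.

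There is also a concrete error in the outline: you dismiss $\alpha_0$ by saying it ``can be set to $0$ since it is reabsorbed in the pressure''. That argument is valid only in the \emph{incompressible} case, where the pressure is an undetermined Lagrange multiplier. In the \emph{compressible} case — which is exactly the regime in which $\alpha_7,\alpha_8$ (and $\alpha_0$) are nontrivial, since for $\Div\uu=0$ they disappear — the pressure is constitutively determined by \eqref{pre:compressible-case} and cannot absorb an extra $\alpha_0(\nn\cdot\mathbb{D}\nn)\Id$ contribution. The paper accordingly keeps $\alpha_0$ throughout the proof of Theorem \ref{thm2bbb}. If you want your block (iii) to match the theorem's last inequality, $\alpha_0$ must be carried along, not discarded.
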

\noindent
The first inequalities reflects the same restrictions of the $\alpha$-coefficients imposed by Ericksen and Leslie when considering incompressible isothermic nematic liquid crystals. The novelty of the theorem must be seen in the last inequality which is essential for $\Tt\Delta^*$ to be semi-positive definite when the liquid crystals has a non-constant density. 
Indeed, we disclose that the compressible condition we can impose on our nematic materials, together with the coefficients $\alpha_7$ and $\alpha_8$ perturb the viscous dissipation $\sigma^{\rm L}:\mathbb{D}+\mathrm{g}\cdot\mcN$ by means of non-trivial quadratic terms depending on both $\trc \,\mathbb{D}$ and $\mathbb{D}\nn$.  
\begin{remark}
As natural approach to prove Theorem \eqref{main-thm2}, we can split the entropy production \eqref{entropy-production} into two main terms. More precisely we can take separately into account the free-divergence component $\Pp\uu $ of $\uu$ and its orthogonal projection $\Pp^{\perp} \uu$ in the viscous dissipation.  This is a standard approach to the compressible Navier-Stokes equation, where the viscous dissipation reduces to
\begin{equation*}
	\int_{\RR^{\dd}}\,	\mathbb{S}(t,\,x):\mathbb{D}(t,\,x)\dd x\,=\,\mu\|\, \nabla\Pp \uu\, \|^2_{L^2(\RR^{\dd})} + (\mu + \lambda)\|\, \Div\,\Pp^{\perp} \uu\, \|^2_{L^2(\RR^{\dd})},
\end{equation*}
where $\mathbb{S}$ stands for the classical Cauchy stress tensor while $\mu$ and $\lambda$ are viscous coefficients. We can observe that the two projections $\Pp \uu$ and $\Pp^\perp \uu$ do not interact in the above dissipation.
It is then sufficient to separately analyze the term depending on $\Pp\uu$ and the one on $\Pp^{\perp}\uu$.

\noindent 
However, when considering the general Leslie stress tensor \eqref{Leslie-stress}, the anisotropic peculiarity of nematic materials leads some term in the viscous dissipation to depend both on $\Pp\uu$ and $\Pp^{\perp  }\uu$. We then analyze the entropy production in its general formulation. 
Nevertheless, it is worth to remark that whenever $\alpha_8$ and $\alpha_1$ are null (i.e $\Pp \uu$ and $\Pp^\perp\uu$ do not interact), then the last inequality of \eqref{inequalities_viscous_diss} reduces to 
\begin{equation*}
	\alpha_4\,+\,\alpha_7\geq 0,
\end{equation*}
namely the classical assumption for the viscous coefficients of a compressible isotropic fluid. 
\end{remark}

\smallskip
\subsection{Well-posedness}$\,$

\smallskip\noindent
In section \ref{sec-wp}, we finally deal with the well-posedness of the system \eqref{main_system} when the density is assumed to be constant. The stress tensors and the free energy density $\FF$ are supposed to depend on the set of variables $(\,\Tt,\,\nn,\,\nabla \nn\,)$. Moreover, since the conservation of mass reduces to a divergence free condition for the velocity field $\uu$, the Leslie stress tensor $\sigma^{\rm\,L} $ in \eqref{Leslie-stress-compressible} reduces to
\begin{equation}\label{Leslie-stress}
	\begin{aligned}
	\sigma^{\,\rm L}= 
	\alpha_1\,	\big[\,\nn\cdot \mathbb{D}\nn\,\big]\,\nn\otimes\nn\,+\,
	\alpha_2\,	\mcN\otimes \nn\,	+\,
	\alpha_3\,	\,\nn\otimes \mcN\,+\,
	\alpha_4\,	\mathbb{D}\,+\,
	\alpha_5\,	\mathbb{D}\nn\otimes \nn\,+\,
	\alpha_6\,	\nn\otimes \mathbb{D}\nn,
\end{aligned}
\end{equation}
where the Leslie viscosity coefficients $\alpha_1,\dots,\alpha_6$ are smooth functions depending on the absolute temperature $\Tt$.
Defining the heat flux $\q$ as  in \eqref{heat}, we then assume that the hypotheses of Theorem \ref{main-thm1} and Theorem \ref{main-thm2} are fulfilled. 

\noindent We recall that the kinematic transport $\mathrm{g}$ is defined by means of 
\begin{equation*}
	\mathrm{g} \,=\,-\Big[\,\sigma^{\rm\, L}-\tr\,\sigma^{\rm\, L}\,\Big]\nn\,= \,\gamma_1 \mcN \,+\, \gamma_2\,\Big[\,\mathbb{D}\nn\,-\,[\,\nn\cdot \mathbb{D}\nn\,]\,\nn\,\Big].
\end{equation*}
where $\gamma_1$ and $\gamma_2$ depend on the absolute temperature $\Tt$ and they are defined by
\begin{equation*}
	\gamma_1\,:=\alpha_3\,-\alpha_2\quad\text{and}\quad\gamma_2:=\alpha_6\,-\,\alpha_5.
\end{equation*} 
Applying Theorem \ref{main-thm1} to the system \eqref{main_system} finally leads to the following model for the time evolution of an incompressible non-isothermal nematic liquid crystal.
\begin{equation}\label{intro:main_system-wp}
\left\{\hspace{0.2cm}
	\begin{alignedat}{8}
		&\,\partial_t \uu  + \uu\cdot \nabla \uu\,  =
		\Div\,\, \Big[ \, -\pre\,\Id\,+ \,\sigma^{\mathrm{E}}\,+\,\sigma^{\mathrm{L}}\,\Big]
																		&&&&&&&&\RR_+\times \RR^\dd,\vspace{0.1cm}\\
		&\Div\,\uu = 0 									&&&&&&&&\RR_+\times \RR^\dd,\vspace{0.1cm} \\
		&\,\gamma_1 \mcN\, +\,\gamma_2\,\Big[\,\mathbb{D}\nn\,-\,[\nn\cdot \mathbb{D}\nn\,]\nn\,\Big]\, +\,\frac{\delta \FF}{\delta \nn}\, =\,\beta\,\nn\,
																		&&&&&&&&\RR_+\times \RR^\dd, \vspace{0.1cm}	\\	
		&\nn\,\cdot \nn\,= 1											&&&&&&&&\RR_+\times \RR^\dd, \vspace{0.1cm}	\\																	
		&-\Tt \Big[\,\partial_t  \frac{\partial \FF}{\partial \Tt}
		+ \uu\cdot \nabla\, \frac{\partial \FF}{\partial \Tt}\,
		\Big] = \Div\,\Big[\,\lambda_1 \nabla \Tt\,+\,\lambda_2\,[\,\nn\cdot \nabla \Tt\,]\nn \,\Big]\,+ &&&&&&&&\,\\
		&\hspace{5.4cm}+\,\sigma^{\rm\,L}:\mathbb{D}
		\,+\,\gamma_1|\,\mcN\,|^2\,\,+\gamma_2\mcN\cdot \mathbb{D}\nn
		\hspace{0.5cm}
																		&&&&&&&&\RR_+\times \RR^\dd, \vspace{0.1cm}\\																		
		&(\,\uu,\,\Tt,\,\nn)\big|_{t=0} = (\,\uu_0,\,\Tt_0,\,\nn_0) 			&&&&&&&&\hspace{1.02cm}\RR^\dd.									
	\end{alignedat}
	\right.
\end{equation}
The free energy density $\FF=\FF(\,\Tt,\,\nn,\,\nabla \nn)$ is then defined as a non-trivial perturbation of the classical Oseen-Frank energy density:
\begin{equation*}
	\FF(\,\Tt,\,\nn,\,\nabla \nn\,)\,=\,
	-\,\Tt\,\ln \Tt\,+\, \mathrm{W}_{\rm F}(\Tt,\,\nn,\,\nabla \nn)\,.
\end{equation*}
The term $-\Tt\ln \Tt$ we have introduced in our definition leads to a parabolic behavior of the temperature equation, while 
the temperature-dependent density ${\rm W}_{\rm F}\,=\,{\rm W}_{\rm F}(\Tt,\,\nn,\,\nabla \nn)$ stands for the classical Oseen-Frank energy density with non-isothermal coefficients, namely
\begin{equation}\label{def_OF_intro}
	\begin{aligned}
	 {\rm W}_{\rm F}(\,\Tt,\,\nn,\,\nabla \nn)\,=\,
	 \frac{k_{22}(\Tt)}{2}\,|\,\nabla \,\nn\,|^2\,&+\,
	 \,\frac{k_{11}(\Tt)-k_{22}(\Tt)-k_{24}(\Tt)}{2}\left|\,\Div\,\nn\,\right|^2 +\\&+ 
	\frac{k_{33}(\Tt)-k_{22}(\Tt)}{2}\left| \,\nn\cdot\nabla \nn\, \right|^2 + k_{24}(\Tt)\,\trc\{(\nabla \nn)^2\}.
\end{aligned}
\end{equation}
The coefficients $k_{ij}$ are assumed to be smooth functions depending on the temperature $\Tt$ and satisfying specific inequalities (we refer to \eqref{ass-3}, Section \ref{sec-wp}) in order to ensure a parabolic behavior of the director equation.

\smallskip
\noindent
Finally, the initial data we take into account belong to suitable homogeneous Besov spaces, more precisely:
\begin{equation}\label{initial_condition}
\begin{alignedat}{32}
	\uu_0 								\in \BB_{2,1}^{\frac{\dd}{2}-1}							\cap 	\BB_{2,1}^{\frac{\dd}{2}},\quad\quad
	\Tt_0 	-\bar{\Tt}						\in \BB^{\frac{\dd}{2}-2}_{2,1}	\hspace{0.24cm}			\cap	\BB^{\frac{\dd}{2}}_{2,1},\quad\quad
	\nn_0 - \bar{\nn} \,	\in\, \BB_{2,1}^{\frac{\dd}{2}}					\cap 	\BB_{2,1}^{\frac{\dd}{2}+1},
\end{alignedat}
\end{equation}
where $\bar{\theta}$ is a fixed positive  temperature and $\bar{\nn}$ is a given unit vector in $\RR^\dd$. 

\noindent
In section \ref{sec:hom-besov} we provide a brief overview of the main properties of these functional spaces. We mention that Besov regularities of solutions for the Ericksen-Leslie model have already been introduced by the first author in \cite{Dea}.

\begin{remark}
 The main system \eqref{intro:main_system-wp} does not present the standard scaling of the Ericksen-Leslie model with constant coefficients: if $(\uu,\,\nn,\,\Tt)$ solves \eqref{intro:main_system-wp}, then  
 \begin{equation*}
 	(\uu_\lambda(t,x),\,\nn_\lambda(t,x),\,\Tt_\lambda(t,x))=(\lambda\uu(\lambda^2t,\lambda x),\,\nn(\lambda^2t,\lambda x),\,\lambda^2\Tt(\lambda^2t,\lambda x)),\quad\text{with}\quad \lambda>0,
 \end{equation*}
is not necessarily a new solution of \eqref{intro:main_system-wp}, since any coefficient of the system is a smooth function which depends on the temperature $\Tt$.  There is no meaning to define a critical regularity for system \eqref{intro:main_system-wp} and the regularities we assume on our initial data in \eqref{initial_condition} allow to well-define the composition between any viscous coefficients and the temperature function $\Tt(t,x)$. 

\noindent Furthermore, We remark that the intersection between homogeneous Besov spaces reminds the functional framework of the so-called hybrid Besov spaces (cf. \cite{D2}).
\end{remark}

\smallskip
\noindent
We can state our well-posedness result:
\begin{theorem}\label{main-thm3}
	Let us assume that $ (\,\uu_0,\,\Tt_0,\,\nn_0)$ fulfills the initial conditions given by \eqref{initial_condition}, 
	and moreover that they satisfy the smallness condition
	\begin{equation}\label{ineq:realsmallness-condition-main-thm}
	\begin{alignedat}{4}
		\| \,\,\nn_0- \bar{\nn}\,\,\|_{\BB_{2,1}^{\frac{\dd}{2}}\cap \BB_{2,1}^{\frac{\dd}{2}+1}} +
		\| \, \Tt_0\,-\,\bar{\Tt}\,\,\|_{\BB^{\frac{\dd}{2}}_{2,1}\cap \BB_{2,1}^{\frac{\dd}{2}-2}}+
		\| \,\,\uu_0\,\, \|_{\BB_{2,1}^{\frac{\dd}{2}-1}\cap \BB_{2,1}^{\frac{d}{2}}}  \, 
		\,\leq  \,\ee^4,
	\end{alignedat}
	\end{equation}
	for a suitable small positive constant $\ee>0$. Assume that the Leslie coefficients $\bar{\alpha}_4:=\alpha_4(\bar{ \Tt})$ is large enough.
	Then system \ref{main_system}
	admits a unique global-in-time strong solution, 
	under the following class-affinity:
	\begin{equation}\label{defX}
	\begin{alignedat}{500}
	&\Big\{\;
		&&\uu 				&&&&\in L^\infty(\,\,\RR_+,\,\,
											\BB_{2,1}^{\frac{\dd}{2}-1}	
											&&&&&&&&\cap\, \BB_{2,1}^{\frac{\dd}{2}}\,\,
											&&&&&&&&&&&&&&&&)
											\quad\text{with}\quad 
		(\,\partial_t\uu,\,&&&&&&&&&&&&&&&&&&&&&&&&&&&&&&&&\Delta\uu\,
		&&&&&&&&&&&&&&&&&&&&&&&&&&&&&&&&&&&&&&&&&&&&&&&&&&&&&&&&&&&&&&&&)\,\in\,									
											L^1(\,\,\RR_+,\,\,\BB_{2,1}^{\frac{\dd}{2}-1}\cap \BB_{2,1}^\frac{\dd}{2}\,
																					)\;
&&&&&&&&&&&&&&&&&&&&&&&&&&&&&&&&&&&&&&&&&&&&&&&&&&&&&&&&&&&&&&&&&&&&&&&&&&&&&&&&&&&&&&&&&&&&&&&&&&&&&&&&&&&&&&&&&&&&&&&&&&&&&&&&	
\,
&&&&&&&&&&&&&&&&&&&&&&&&&&&&&&&&&&&&&&&&&&&&&&&&&&&&&&&&&&&&&&&&&&&&&&&&&&&&&&&&&&&&&&&&&&&&&&&&&&&&&&&&&&&&&&&&&&&&&&&&&&&&&&&&&&&&&&&&&&&&&&&&&&&&&&&&&&&&&&&&&&&&&&&&&&&&&&&&&&&&&&&&&&&&&&&&&&&&&&&&&&&&&&&&&&&&&&&&&&&&&&&&&&&&&&&&&&&&&&&&&&&&&&&&&&&&&&&&
	\Big\}									=: \X_1,\\	
	&\Big\{\;
		\Tt-&&\bar{\Tt}   	&&&&\in L^\infty
											(\,\,\RR_+,\,\,
											\BB_{2,1}^{\frac{d}{2}-2}
											&&&&&&&&\cap\, \BB_{2,1}^{\frac{d}{2}}\,\,
											&&&&&&&&&&&&&&&&)
											\quad\text{with}\quad
	(\,\partial_t\Tt,\,&&&&&&&&&&&&&&&&&&&&&&&&&&&&&&&&\Delta\Tt\,
	&&&&&&&&&&&&&&&&&&&&&&&&&&&&&&&&&&&&&&&&&&&&&&&&&&&&&&&&&&&&&&&&)\,\in\,											
											 L^1(\,\,\RR_+,\,\,\BB_{2,1}^{\frac{\dd}{2}-2}											 
											\cap\BB_{2,1}^{\frac{\dd}{2}}\,)\;
&&&&&&&&&&&&&&&&&&&&&&&&&&&&&&&&&&&&&&&&&&&&&&&&&&&&&&&&&&&&&&&&&&&&&&&&&&&&&&&&&&&&&&&&&&&&&&&&&&&&&&&&&&&&&&&&&&&&&&&&&&&&&&&&
\,	&&&&&&&&&&&&&&&&&&&&&&&&&&&&&&&&&&&&&&&&&&&&&&&&&&&&&&&&&&&&&&&&&&&&&&&&&&&&&&&&&&&&&&&&&&&&&&&&&&&&&&&&&&&&&&&&&&&&&&&&&&&&&&&&&&&&&&&&&&&&&&&&&&&&&&&&&&&&&&&&&&&&&&&&&&&&&&&&&&&&&&&&&&&&&&&&&&&&&&&&&&&&&&&&&&&&&&&&&&&&&&&&&&&&&&&&&&&&&&&&&&&&&&&&&&&&&&&&
	\Big\}												=:\X_2,\\
	&\Big\{\;
		\nn	-&&\bar{\nn}\, 	&&&&\in L^\infty
											(\,\,\RR_+,\,\,
											\BB_{2,1}^{\frac{\dd}{p}}
											&&&&&&&&\cap\, \BB_{2,1}^{\frac{\dd}{2}+1}\,		
											&&&&&&&&&&&&&&&&)
											\quad\text{with}\quad
	(\,\partial_t\nn,\,&&&&&&&&&&&&&&&&&&&&&&&&&&&&&&&&\Delta\nn\,
	&&&&&&&&&&&&&&&&&&&&&&&&&&&&&&&&&&&&&&&&&&&&&&&&&&&&&&&&&&&&&&&&)\,\in\,										
											L^1
											(\,\,\RR_+,\,\,\BB_{2,1}^{\frac{\dd}{2}}\cap \BB_{2,1}^{\frac{\dd}{2}+1}\,)\;
&&&&&&&&&&&&&&&&&&&&&&&&&&&&&&&&&&&&&&&&&&&&&&&&&&&&&&&&&&&&&&&&&&&&&&&&&&&&&&&&&&&&&&&&&&&&&&&&&&&&&&&&&&&&&&&&&&&&&&&&&&&&&&&&
\,
&&&&&&&&&&&&&&&&&&&&&&&&&&&&&&&&&&&&&&&&&&&&&&&&&&&&&&&&&&&&&&&&&&&&&&&&&&&&&&&&&&&&&&&&&&&&&&&&&&&&&&&&&&&&&&&&&&&&&&&&&&&&&&&&&&&&&&&&&&&&&&&&&&&&&&&&&&&&&&&&&&&&&&&&&&&&&&&&&&&&&&&&&&&&&&&&&&&&&&&&&&&&&&&&&&&&&&&&&&&&&&&&&&&&&&&&&&&&&&&&&&&&&&&&&&&&&&&&
	\Big\}									=: \X_3.
	\end{alignedat}
	\end{equation}
	Furthermore, the following inequalities hold:
	\begin{equation}\label{ineq:smallness-condition-main-thm}
	\begin{alignedat}{16}
		\|\,\;\uu\,\;\|_{\X_1}\,\leq\,\ee^2\quad\quad
		\|\,\Tt-\bar \Tt  \,\|_{\X_2}\,+\,
		\,\|\,\,\nn\,-\bar \nn\,\,\|_{\X_3} \, 
		\leq \ee^3,
	\end{alignedat}
	\end{equation}
	where the following norms are defined:
	\begin{equation*}
	\begin{alignedat}{64}
		&\|\,\hspace{0.43cm}\uu\,&&\|_{\X_1}\,&&&&:=\,\|\,\hspace{0.43cm}\uu\,&&&&&&&&\|_{L^\infty_t\BB_{2,1}^{\frac{\dd}{2}-1}\cap \BB_{2,1}^\frac{\dd}{2}}\,+\,
		\|\,\partial_t \uu\,&&&&&&&&&&&&&&&&\|_{L^1_t\BB_{2,1}^{\frac{\dd}{2}-1}\cap \BB_{2,1}^\frac{\dd}{2}}\,+\,
		\bar{\alpha}_4&&&&&&&&&&&&&&&&&&&&&&&&&&&&&&&&\|\,\Delta \uu\,&&&&&&&&&&&&&&&&&&&&&&&&&&&&&&&&&&&&&&&&&&&&&&&&&&&&&&&&&&&&&&&&\|_{L^1_t\BB_{2,1}^{\frac{\dd}{2}-1}\cap \BB_{2,1}^\frac{\dd}{2}},\\
		&\|\,\Tt-\bar{\Tt}\,&&\|_{\X_2}\,&&&&:=\,\|\,\Tt-\bar{\Tt}\,&&&&&&&&\|_{L^\infty_t\BB_{2,1}^{\frac{\dd}{2}-2}\cap \BB_{2,1}^\frac{\dd}{2}}\,+\,
		\|\,\partial_t \Tt\,&&&&&&&&&&&&&&&&\|_{L^1_t\BB_{2,1}^{\frac{\dd}{2}-2}\cap \BB_{2,1}^\frac{\dd}{2}}\,+\,
		&&&&&&&&&&&&&&&&&&&&&&&&&&&&&&&&\|\,\Delta \Tt\,&&&&&&&&&&&&&&&&&&&&&&&&&&&&&&&&&&&&&&&&&&&&&&&&&&&&&&&&&&&&&&&&\|_{L^1_t\BB_{2,1}^{\frac{\dd}{2}-2}\cap \BB_{2,1}^\frac{\dd}{2}},\\
		&\|\,\nn-\bar{\nn}\,&&\|_{\X_3}\,&&&&:=\,\|\,\,\nn-\bar{\nn}\,&&&&&&&&\|_{L^\infty_t\BB_{2,1}^{\frac{\dd}{2}}\cap \BB_{2,1}^{\frac{\dd}{2}+1}}\,+\,
		\|\,\partial_t \nn\,&&&&&&&&&&&&&&&&\|_{L^1_t\BB_{2,1}^{\frac{\dd}{2}}\cap \BB_{2,1}^{\frac{\dd}{2}+1}}\,+\,
		&&&&&&&&&&&&&&&&&&&&&&&&&&&&&&&&\|\,\Delta \nn\,&&&&&&&&&&&&&&&&&&&&&&&&&&&&&&&&&&&&&&&&&&&&&&&&&&&&&&&&&&&&&&&&\|_{L^1_t\BB_{2,1}^{\frac{\dd}{2}}\cap \BB_{2,1}^{\frac{\dd}{2}+1}}.
	\end{alignedat}
	\end{equation*}
\end{theorem}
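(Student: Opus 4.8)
The plan is to set up the problem as a fixed-point argument in the space $\X_1 \times \X_2 \times \X_3$, exploiting the parabolic structure that the modified free energy density $\FF = -\Tt\ln\Tt + \mathrm{W}_{\mathrm F}$ confers on the temperature equation and that the conditions \eqref{ass-3} confer on the director equation. First I would linearize the system \eqref{intro:main_system-wp} around the constant state $(\bar\uu,\bar\Tt,\bar\nn)=(0,\bar\Tt,\bar\nn)$: writing $v=\uu$, $\theta=\Tt-\bar\Tt$, $m=\nn-\bar\nn$, the leading linear part is, schematically, $\partial_t v - \bar\alpha_4\Delta v + \nabla p = (\text{nonlinear})$ with $\Div v = 0$, then $\partial_t\theta - \kappa\Delta\theta = (\text{nonlinear})$ where $\kappa$ comes from differentiating $-\Tt\ln\Tt$ (this is where the $-\Tt\ln\Tt$ term is essential to obtain a genuine heat operator rather than a degenerate one), and $\gamma_1\partial_t m - c\,\Delta m = (\text{nonlinear})$ with $c$ built from the $k_{ij}(\bar\Tt)$. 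The requirement that $\bar\alpha_4$ be large enough is used to dominate the Ericksen stress and the Leslie terms coupling $\uu$ to $\nn$ in the momentum equation; it plays the role of making the effective viscosity beat the coupling constants.

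The core of the argument is the maximal-regularity estimate for each linear block in the hybrid Besov framework $\BB_{2,1}^{s}\cap\BB_{2,1}^{s'}$. I would invoke the standard Littlewood–Paley/heat-semigroup smoothing estimates (as developed, e.g., in the Danchin-type theory referenced as \cite{D2} and used for Ericksen–Leslie in \cite{Dea}) to get, for the heat equation $\partial_t w - \Delta w = f$ with data $w_0$,
\[
\|w\|_{L^\infty_t(\BB_{2,1}^{s}\cap\BB_{2,1}^{s'})} + \|\partial_t w,\ \Delta w\|_{L^1_t(\BB_{2,1}^{s}\cap\BB_{2,1}^{s'})}
\lesssim \|w_0\|_{\BB_{2,1}^{s}\cap\BB_{2,1}^{s'}} + \|f\|_{L^1_t(\BB_{2,1}^{s}\cap\BB_{2,1}^{s'})},
\]
uniformly in time, and the analogous Stokes estimate for the velocity block (the Leray projector is bounded on these spaces, so the pressure is eliminated cleanly). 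The choice of indices in \eqref{initial_condition} — namely $\dd/2-1$ and $\dd/2$ for $\uu$, $\dd/2-2$ and $\dd/2$ for $\Tt$, $\dd/2$ and $\dd/2+1$ for $\nn$ — is exactly engineered so that: (i) the low index sits at or below the scaling-critical exponent so that $L^1_t$-in-time integrability of the quadratic source terms is available globally, and (ii) the high index makes $\BB_{2,1}^{s'}$ an algebra controlling $L^\infty$ in space, which is indispensable for composing the smooth, temperature-dependent coefficients $\alpha_i(\Tt)$, $\lambda_j(\Tt)$, $k_{ij}(\Tt)$ with the solution $\Tt$.

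Next I would estimate every nonlinear term of \eqref{intro:main_system-wp} in $L^1_t(\BB_{2,1}^{s}\cap\BB_{2,1}^{s'})$ by product laws (Bony decomposition) and composition estimates for $G(\bar\Tt+\theta)-G(\bar\Tt)$: the transport terms $\uu\cdot\nabla\uu$, $\uu\cdot\nabla\theta$, $\uu\cdot\nabla m$; the Ericksen stress $\Div\sigma^{\mathrm E}$, which is quadratic in $\nabla\nn$ against the coefficients $\partial\FF/\partial\nabla\nn$; the Leslie terms $\Div\sigma^{\mathrm L}$, quadratic in $(\nabla\uu,\nabla\nn)$ with $\Tt$-dependent coefficients; the constraint multiplier $\beta\nn = (\mathrm h\cdot\nn)\nn$, which is where the unit-length constraint produces cubic/higher terms that must be handled by writing $|\nn|^2=1$ to extract cancellations; and the quadratic right-hand side $\sigma^{\mathrm L}:\mathbb D + \gamma_1|\mcN|^2 + \gamma_2\mcN\cdot\mathbb D\nn$ of the temperature equation. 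Each such term should be bounded by a product of two (or more) factors from the solution norms, yielding an estimate of the form $\|\Phi(\uu,\Tt,\nn)\|_{\text{target}} \le C\big(\|\uu\|_{\X_1}+\|\theta\|_{\X_2}+\|m\|_{\X_3}\big)^2 + \text{(higher order)}$, possibly with a factor $1/\bar\alpha_4$ in front of the velocity-feedback pieces. Feeding this into the linear estimates and closing with a contraction on a ball of radius $\sim\ee^2$ (resp. $\sim\ee^3$ for $\theta,m$) against data of size $\ee^4$ then gives existence, uniqueness, and the quantitative bounds \eqref{ineq:smallness-condition-main-thm}; the mismatched powers $\ee^4\to\ee^2$ and $\ee^4\to\ee^3$ reflect that the map gains from smallness of the data and that $\theta,m$ are driven quadratically by $\uu$.

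The main obstacle I anticipate is the temperature equation, for two intertwined reasons. The source term $\gamma_1|\mcN|^2 + \gamma_2\mcN\cdot\mathbb D\nn + \sigma^{\mathrm L}:\mathbb D$ is quadratic in first derivatives of $\uu$ and $\nn$, and it must be placed in $L^1_t\BB_{2,1}^{\dd/2-2}\cap\BB_{2,1}^{\dd/2}$; controlling the \emph{lowest} index $\dd/2-2$ requires the product $\nabla\uu\cdot\nabla\nn\in L^1_t\BB_{2,1}^{\dd/2-2}$, which is delicate because $\nabla\uu$ only lives in $L^1_t\BB_{2,1}^{\dd/2-1}$-type spaces — so one must carefully exploit that $\nn$ carries two extra derivatives (its high index is $\dd/2+1$) to absorb the deficit, and the hybrid structure is what makes this possible. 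Secondly, the composition of the temperature-dependent coefficients with $\Tt$ must be controlled in $L^\infty$ in space uniformly in time, which forces genuine use of the high-frequency bound $\Tt-\bar\Tt\in L^\infty_t\BB_{2,1}^{\dd/2}$ and a careful chain of composition estimates $\|\,G(\bar\Tt+\theta)-G(\bar\Tt)\,\|\lesssim \|\theta\|$ valid on these spaces. The parabolic gain from $-\Tt\ln\Tt$ and the algebra property at the top index are the two structural facts that make the whole scheme close; everything else is a (lengthy but routine) bookkeeping of Bony paraproduct estimates.
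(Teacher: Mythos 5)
Your proposal follows essentially the same architecture as the paper's proof: linearize around $(0,\bar\Tt,\bar\nn)$, establish maximal-regularity estimates for the three parabolic blocks in the hybrid spaces $\BB_{2,1}^s\cap\BB_{2,1}^{s'}$, control the nonlinear right-hand sides by Bony paraproducts and composition estimates, and close with smallness of the data together with the hypothesis that $\bar\alpha_4$ is large. The paper packages this as a Picard iteration with induction (Proposition~\ref{prop:the_core}: uniform bounds plus geometric decay of successive differences), which is the same device as your contraction on a small ball. Your identification of the structural roles of $-\Tt\ln\Tt$, of condition~\eqref{ass-3}, and of the algebra property at the top index is all consistent with the paper.

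There is, however, one substantive gap in your reasoning that would cause trouble if you carried out the estimates literally as written. You claim that every nonlinear term admits an estimate of the form $\|\Phi(\uu,\Tt,\nn)\|\lesssim(\|\uu\|_{\X_1}+\|\theta\|_{\X_2}+\|m\|_{\X_3})^2$ and you explain the cascaded powers ($\ee^4$ data, $\ee^2$ for $\uu$, $\ee^3$ for $\theta,m$) by saying $\theta,m$ are driven quadratically by $\uu$. If that quadratic bound actually held for all terms, no cascade would be needed. The paper's own remark after~\eqref{defX} makes the opposite point: certain pieces of the system are \emph{linear} in $\m$ and its derivatives and in $\nabla\uu$ --- for instance $\Div(\mcN\otimes\bar\nn)$ in the momentum equation contains $\partial_t\m$ and $\Omega\bar\nn$ at first order --- and these cannot be squared away. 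The cascade is what guarantees that the linear-in-$\m$ contributions enter the momentum estimate one power of $\ee$ below the target for $\uu$, and the large $\bar\alpha_4$ enters as a weight on $\|\Delta\uu\|_{L^1_t}$ inside $\X_1$, forcing $\|\nabla\uu\|_{L^1_t\BB_{2,1}^{\dd/2}\cap\BB_{2,1}^{\dd/2+1}}\lesssim\bar\alpha_4^{-1}\|\uu\|_{\X_1}$; this smallness is then exploited in the \emph{director and temperature} equations, not so much to ``dominate the Leslie stress in the momentum equation'' as you suggest. Two further small points: the paper explicitly notes there is \emph{no} scaling invariance here because the coefficients are $\Tt$-dependent, so ``scaling-critical'' is not the rationale for the index choice; and the unit constraint is not enforced on the iterates, but is recovered at the end by showing that $|\nn|^2-1$ solves a linear transport equation with damping and zero data, which is simpler and cleaner than trying to extract cancellations from $|\nn|^2=1$ inside the estimates.
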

\begin{remark}
For the definition of the homogeneous Besov space $\BB_{2,1}^s$, we refer the reader to Section \ref{sec:hom-besov}. We point out that the natural and widespread definition of these spaces requires the index of regularity $s$ to be bounded by $s\leq \dd/2$. The case of $s>\dd/2$ is usually treated in literature slightly modifying the Definition \ref{def:hom-bes}. For the sake of clarity, we then remark that in the above statements we have introduced an abuse of notation: a function $f$ belongs to the space $\BB_{2,1}^{\dd/2}\cap \BB_{2,1}^{\dd/2+1}$ if and only if $f$ and $\nabla f$ belong to $\BB_{2,1}^{\dd/2}$.
\end{remark}
\begin{remark}
	The anisotropic smallness condition we have introduced in \eqref{ineq:realsmallness-condition-main-thm} for the initial data 
	plays a fundamental rule when determining the uniform-in-time bound in \eqref{ineq:smallness-condition-main-thm}.  
	This is mainly due to the presence of linear terms in the main equations. 
	For the sake of clarity, we anticipate that any non-linear term allows the smallness condition in \eqref{ineq:smallness-condition-main-thm}. An example is given by the 
	the non-linear term of the Navier-Stokes-type equation:
	\begin{equation*}
		\|\,\uu\cdot\nabla \uu\,\|_{Y}\leq \|\,\uu\,\|_{\X_1}^2 \leq C \ee^4,
	\end{equation*}
	for a suitable norm $\|\cdot\|_Y$ we will introduce in section \ref{sec-wp}. Thus, assuming $\ee$ small enough, the above term is bounded by $\ee^2$. This property does not hold anymore whenever a linear term arises in the equation. We can 
	refer for instance to the tensor $\mcN\otimes \bar\nn$ related to the the Leslie stress $\sigma^{{\rm L}}$, for which we require a stronger smallness condition to the norm of the director field $\nn$.
	
	\noindent
	In order to solve such a challenging, we impose a large viscosity $\bar \alpha_4$ in the balance of linear momentum, which allows to close our uniform estimates.
\end{remark}

\begin{remark}
	The regularity of the initial velocity in Theorem \ref{main-thm3} is sufficient to generate a Lipschitz velocity fields. 
	We then claim that a  similar result to Theorem \ref{main-thm3} holds also assuming a non-constant density, inspired by the Lagrangian approach used in \cite{DM}. 
	This article does not treat such a situation, since the main system \ref{intro:main_system-wp} already presents non-trivial analytic challenges.
\end{remark}

\begin{remark}
In this paper, we will denote by $C$ any "harmless" constant, and we will sometimes use the notation $A\lesssim B$ equivalently to $A\leq \,C B$.
\end{remark}

\noindent
Before going on, let us give an overview of the paper. In section \ref{sec-1law} we prove Theorem \ref{main-thm1}, where the entropy production is expressed in terms of the viscous dissipation. In section \ref{sec-2law} we then establish Theorem \ref{main-thm2}, concerning the general conditions for the entropy production to be semi-positive defined. In section \ref{sec:hom-besov} we then recall some important tools concerning the Besov formalism we will use in section \ref{sec-wp}, when proving the global-in-time well-posedness of our main system \eqref{intro:main_system-wp}.

\section{The first law of thermodynamics for nematic liquid crystals}\label{sec-1law}
\noindent
In this section we aim to prove Theorem \ref{main-thm1}, where the entropy production \eqref{entropy-production} is explicitly determined by  the first law of thermodynamics. We recall that
the first laws of thermodynamics says that the rate of change of the total energy is totally transformed into work and heat. 
We identify the heat as the feedback of the environment returning on the system. We point-wise formulate the first law of thermodynamics by means of
\begin{equation*}
	\partial_t\,{\rm e}^{\,\rm{tot}} =\partial_t\left[ \, {\rm e}^{\rm int } + \frac{1}{2}\,\rho\,|\,\uu\, |^2\,\right] = \Div\, \Sigma \,+\, \Div\,{\bf q},
\end{equation*} 
where $\Sigma=\Sigma(\,\rho,\,\Tt,\, n,\nabla n)$ stands for the specific work done to the system and ${\bf q}$ is the heat flux lost by the system (from which the choice of a positive sign in front of it). 

\smallskip
\noindent
It is worth to remark that whenever a nematic occupies a bounded domain, the above identity requires our system to be local. More precisely there is no interactions between our material and the environment but the one passing through the boundary.  

\smallskip 
\noindent
Let us recall the statement we aim to prove:
\begin{theorem}\label{main-thm1b}
	The first law of thermodynamics leads to the following relation between the entropy production $\Tt\Delta^*$, 
	the heat flux $\mathbf{q}$, the co-rotational time flux $\mcN$ and the Leslie stress tensor $\sigma^{\rm \,L}$:
	\begin{equation}\label{entropy-productionb}
		\Tt\Delta^*=\, \sigma^{\,\rm L}:\mathbb{D} \,+\, \rm{g}\cdot \mcN \,+\,\frac{ {\bf q}\cdot\nabla \Tt }{\Tt}.
	\end{equation}
\end{theorem}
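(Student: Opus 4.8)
The strategy is to compute the rate of change of the total energy directly from the definitions and the equations of motion in \eqref{main_system}, and to compare the result with the postulated form of the work \eqref{def_work}, exploiting the divergence theorem so that the identity must hold pointwise. First I would expand $\partial_t \mathrm{e}^{\mathrm{tot}} = \partial_t \mathrm{e}^{\mathrm{int}} + \partial_t(\rho|\uu|^2/2)$. For the kinetic part, I would use the momentum balance and the mass conservation: dotting the momentum equation with $\uu$ and using $\partial_t\rho + \Div(\rho\uu)=0$ produces $\partial_t(\rho|\uu|^2/2) + \Div(\uu\,\rho|\uu|^2/2) = \Div[\mathbb{T}\uu] - \mathbb{T}:\nabla\uu$, where $\mathbb{T} = -\pre\,\Id + \sigma^{\mathrm E} + \sigma^{\mathrm L}$. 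For the internal part, I would use $\mathrm{e}^{\mathrm{int}} = \FF + \Tt\eta$ together with the Maxwell relation \eqref{Maxwell_rel}, so that $\partial_t \mathrm{e}^{\mathrm{int}} = \partial_t\FF + \Tt\partial_t\eta + \eta\,\partial_t\Tt = \partial_t\FF - \Tt\,(\partial\FF/\partial\Tt)^{\!-1}\!\cdots$; more cleanly, $\partial_t\FF = (\partial_\rho\FF)\partial_t\rho + (\partial_\Tt\FF)\partial_t\Tt + (\partial_\nn\FF)\cdot\partial_t\nn + (\partial_{\nabla\nn}\FF):\partial_t\nabla\nn$, and the $\partial_t\Tt$ terms cancel against $\eta\partial_t\Tt = -(\partial_\Tt\FF)\partial_t\Tt$, leaving $\partial_t\mathrm{e}^{\mathrm{int}} = (\partial_\rho\FF)\partial_t\rho + (\partial_\nn\FF)\cdot\partial_t\nn + (\partial_{\nabla\nn}\FF):\nabla\partial_t\nn + \Tt\partial_t\eta$.

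Next I would convert material derivatives into the convective form using $\dot\nn = \partial_t\nn + \uu\cdot\nabla\nn$, integrate by parts in the $(\partial_{\nabla\nn}\FF):\nabla\dot\nn$ term to bring out $\Div\big[\tr(\partial\FF/\partial\nabla\nn)\dot\nn\big]$ and $-\big(\Div\,\partial\FF/\partial\nabla\nn\big)\cdot\dot\nn$, and recognize the molecular field $\mathrm h = \partial_\nn\FF - \Div(\partial\FF/\partial\nabla\nn)$ from \eqref{def_hg}. The transport term $\uu\cdot\nabla\nn$ paired with $\partial_{\nabla\nn}\FF$ regroups, via the definition \eqref{def:E-stress} of the Ericksen stress, into $-\Div(\uu\,\FF)$-type contributions plus $\sigma^{\mathrm E}:\nabla\uu$; similarly the $(\partial_\rho\FF)\partial_t\rho$ term combines with mass conservation and the pressure relation \eqref{pre:compressible-case} to generate the $-\pre\,\Id:\nabla\uu = -\pre\Div\uu$ contribution and a $\Div(\uu\FF)$ remainder. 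Then I would substitute the Clausius--Duhem equation from \eqref{main_system}, $\Tt[\partial_t\eta + \Div(\uu\eta)] = \Tt\Div(\mathbf q/\Tt) + \Tt\Delta^*$, to replace $\Tt\partial_t\eta$; note $\Tt\Div(\mathbf q/\Tt) = \Div\mathbf q - \mathbf q\cdot\nabla\Tt/\Tt$, which is exactly where the $\mathbf q\cdot\nabla\Tt/\Tt$ term in \eqref{entropy-productionb} will emerge.

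After these manipulations, the left side $\partial_t\mathrm{e}^{\mathrm{tot}}$ should equal $\Div$ of a vector field that matches $\Sigma + \mathbf q$ from \eqref{def_work} — i.e. $\mathbb{T}\uu + \tr(\partial\FF/\partial\nabla\nn)\dot\nn - \uu\,\mathrm{e}^{\mathrm{tot}} + \mathbf q$ — plus a collection of non-divergence terms, namely $\mathbb{T}:\nabla\uu - \sigma^{\mathrm E}:\nabla\uu + (\text{pressure terms}) - \mathrm h\cdot\dot\nn - \Tt\Delta^*$. Since the first law \eqref{first-law-thm} forces the total of the non-divergence terms to vanish, I would solve for $\Tt\Delta^*$. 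Using $\mathbb{T} - (-\pre\Id) - \sigma^{\mathrm E} = \sigma^{\mathrm L}$ collapses the stress part to $\sigma^{\mathrm L}:\nabla\uu = \sigma^{\mathrm L}:\mathbb{D}$ (the skew part drops since, by the second identity in \eqref{def_hg}, the antisymmetric contribution of $\sigma^{\mathrm L}$ is exactly accounted for by the $\mathrm h\cdot(\Omega\nn)$ piece), and the molecular-field part reorganizes: $-\mathrm h\cdot\dot\nn$ together with the $\Omega\nn$ correction and the angular-momentum equation $\mathrm g + \mathrm h = \beta\nn$ (dotting with $\dot\nn$ and using $\nn\cdot\dot\nn=0$ from the constraint $|\nn|^2=1$) yields $\mathrm g\cdot\mcN$ with $\mcN = \dot\nn - \Omega\nn$. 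Collecting everything gives \eqref{entropy-productionb}.

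\textbf{Main obstacle.} The delicate bookkeeping is the skew-symmetric part: one must verify carefully that the antisymmetric portion of $\sigma^{\mathrm L}:\nabla\uu$ combines precisely with the $\Omega\nn$ terms coming from $\dot\nn$ versus $\mcN$ and with the second relation in \eqref{def_hg} (namely $\mathrm g\otimes\nn - \nn\otimes\mathrm g = -(\sigma^{\mathrm L} - \tr\sigma^{\mathrm L})$), so that no stray terms survive; getting the signs and the $\tr$ (transpose) conventions right throughout the integrations by parts is where the real care is needed. The rest is a controlled, if lengthy, calculation.
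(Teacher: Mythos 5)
Your plan reproduces the paper's own argument step by step: expand $\partial_t{\rm e}^{\rm tot}$ using the chain rule with the Maxwell relation killing the $\partial_t\Tt$ contributions, use mass and momentum balance, integrate by parts on $\partial_{\nabla\nn}\FF:\partial_t\nabla\nn$, substitute the Clausius--Duhem equation, peel off the divergence that matches $\Sigma+\mathbf q$, and then resolve the residual skew term via $\mathrm h=\beta\nn-\mathrm g$, $\nn\cdot\dot\nn=0$, and $\mathrm g\cdot\Omega\nn+\sigma^{\rm L}:\Omega=0$ (a direct consequence of the antisymmetry of $\Omega$ and the second relation in \eqref{def_hg}). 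The bookkeeping you flag as the main obstacle is exactly the step the paper handles last, and your sketch of how the antisymmetric piece cancels is correct.
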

\begin{proof}
Since we assume the internal energy density ${\rm e}^{\rm int}\,=\,\FF\,+\,\Tt\eta$ of the system to smoothly depend on the state variables $(\,\rho,\,\Tt,\,\nn\,,\,\nabla \nn)$, the chain rule leads to
\begin{equation*}
	\partial_t\left[ \, \frac{1}{2}\,\rho\,|\,\uu\, |^2\,+\,{\rm e}^{\,\rm int }\,\right] \,=\,\partial_t \rho\frac{1}{2} |\,\uu\,|^2 	\,+\,
	 \,\rho \,\uu\cdot \partial_t\,\uu \,+ \,\frac{\partial \FF}{\partial \nn}\cdot\partial_t \nn	 \,+\,
	 \frac{\partial \FF}{\partial \nabla \nn}:\partial_t \nabla \nn + 
	 \Tt\, \partial_t \eta,
\end{equation*}
where we recall the definition of the local entropy $\eta\,=\,-\,\partial \FF/\partial \Tt$, given by the Maxwell's relation in \eqref{Maxwell_rel}. 
Let us remark that in the case of an incompressible fluids with constant density, we can neglect the time derivative of the density in the above identity. We then gather by the 
conservation of mass and the balance of linear momentum that 
\begin{equation*}
\begin{aligned}
	\partial_t\left[ \, \frac{1}{2}\,\rho\,|\,\uu\, |^2\,+\,{\rm e}^{\,\rm int }\,\right]	\,=\,-\,\Div\,(\,\rho\uu\,)\,\frac{1}{2} |\,\uu\,|^2 	\,&-\,\rho \,\uu\cdot\nabla \frac{\, |\,\uu\,|^2}{2}
	\,+\,\Div\,\mathbb{T}\,\cdot \uu\,+\\&+\,\frac{\partial \FF}{\partial \rho}\,\partial_t\rho\,+ \,\frac{\partial \FF}{\partial \nn}\cdot\partial_t \nn	 \,+\,
	 \frac{\partial \FF}{\partial \nabla \nn}:\partial_t \nabla \nn + 
	 \Tt\, \partial_t \eta,
\end{aligned}
\end{equation*}
where $\mathbb{T}=-\pre \Id\,+\,\sigma^{\rm\, E}\,+\,\sigma^{\rm\, L}$ is the total stress tensor. A further development leads the rate of the total energy to fulfill
\begin{equation*}
\begin{aligned}
	\partial_t\left[ \, \frac{1}{2}\,\rho\,|\,\uu\, |^2\,+\,{\rm e}^{\,\rm int }\,\right]	\,=\,\Div\Big[\,\mathbb{T}\,\uu -\rho \uu\,\frac{\,|\,\uu\,|^2}{2}\,\Big]
	\,&-\,\mathbb{T}:\nabla  \uu\,-\,\frac{\partial \FF}{\partial \rho}\Div(\, \rho\,\uu\,)	\,+\\&+ \,\frac{\partial \FF}{\partial \nn}\cdot\partial_t \nn	 \,+\,
	 \frac{\partial \FF}{\partial \nabla \nn}:\partial_t \nabla \nn + 
	 \Tt\, \partial_t \eta,
\end{aligned}
\end{equation*}
\noindent 
We now reformulate the term with the higher number of derivatives into a divergence form. We proceed as follows
\begin{equation*}
		\frac{\partial \FF}{\partial \nabla \nn}:\partial_t \nabla \nn = \Div\Big[\,\tr \Big[\,\frac{\partial \FF}{\partial \nabla \nn}\,\Big]\partial_t \nn\,\Big]-
		\Div\, \Big[\,\frac{\partial \FF}{\partial \nabla \nn}\,\Big] \cdot \partial_t \nn,
\end{equation*}
The core of our model releases in the above identity: the divergence term on the right-hand side will be absorbed by the definition of the rate of work $\Sigma$, hence
the term with the highest number of derivatives will not contribute to the structure of the main system.
Thanks to the entropy equation of system \eqref{main_system}, the rate of the total energy density $\rm e^{\rm tot}$ satisfies
\begin{equation}\label{ineq:sec1law-rateenergy}
\begin{aligned}
	\partial_t\,{\rm e}^{\,\rm{tot}} = 
	\Div\bigg[\,\mathbb{T}\,\uu -\rho \uu\,|\,\uu\,|^2 \,+\, 
	\tr \Big[\,\frac{\partial \FF}{\partial \nabla \nn}\,&\Big]\partial_t \nn\,\,\bigg] \,-\,\mathbb{T}:\nabla \uu 
	\,-\,\mathbb{T}:\nabla  \uu\,-\,\frac{\partial \FF}{\partial \rho}\Div(\, \rho\uu\,)\,+
	\\&+\,	\frac{\delta\FF}{\delta \nn}\,\partial_t \nn\,
	-\,\Tt\,\Div(\,\uu\,\eta\,)\,
	+ 	\Tt\,\Div\left[\,\frac{\mathrm{\bf q}}{\Tt}\,\right]
	+\Tt \Delta^* ,
\end{aligned} 
\end{equation}
where we recall that the Volterra derivative $\delta \FF/\delta n$ stands for $\delta \FF/\delta n:= \partial_n\FF-\Div\,\partial_{\,\nabla n}\FF$. We remark that whenever the material is incompressible, then $\Div(\,\rho\,\uu\,)\partial \FF/\partial \rho$ is identically null. We now take into account the following relation:
\begin{equation*}
\begin{aligned}
	\Div\,\Big[\,\FF\,\uu\,\Big]=
	\nabla \FF \cdot \uu \,+\,\FF\,\Div\,\uu=\frac{\partial \FF}{\partial \rho}\,\uu\cdot \nabla \rho\,+\,\FF\,\Div\,\uu  \,+ \,\frac{\partial \FF}{\partial \Tt}\,\uu\cdot \nabla \Tt\, 
	+  \,\frac{\partial \FF}{\partial \nn}\cdot \Big(\,\uu\cdot \nabla \nn\,\Big)  + \\+
	\frac{\partial \FF}{\partial \nabla \nn}:\Big(\,\uu\cdot \nabla^2 \nn\,\Big)\,=\,-\,\Big[\,\rho \partial_\rho \FF\,-\,\FF\, \Big]\,\Div\,\uu\,+\,
	\frac{\partial \FF}{\partial \rho}\Div(\,\rho\,\uu\,)\,+ \,\Div(\,\eta\,\uu\,)\Tt\,-\\-\, \Div(\,\eta\,\uu\,\Tt\,)\,
	+  \,\frac{\partial \FF}{\partial \nn}\cdot \Big(\,\uu\cdot \nabla \nn\,\Big) \, + \,
	\frac{\partial \FF}{\partial \nabla \nn}:\Big(\,\uu\cdot \nabla^2 \nn\,\Big).
\end{aligned}
\end{equation*}
Replacing the above identity into the rate of the total energy density $\rm e^{\rm tot}$ in \eqref{ineq:sec1law-rateenergy}, we finally gather
\begin{equation}\label{ineq:sec1law-rateenergy2}
\begin{aligned}
		\partial_t\,&{\rm e}^{\,\rm{tot}} = 
		\Div\bigg[\,\mathbb{T}\,\uu - \uu\,\Big(\,|\,\uu\,|^2+\FF\,-\,\Tt\eta\Big)
		+ \tr\left[\frac{\partial \FF}{\partial \nabla \nn}\right]\partial_t \nn\,  \bigg] \,-\,
		\left(\,\mathbb{T}\,+\,\pre\,\Id\,\right):\nabla \uu \,+\\&
		+\,\frac{\delta \FF}{\delta \nn}\cdot \partial_t \nn 
		+  \frac{\partial \FF}{\partial \nn}\cdot \Big(\uu\cdot \nabla \nn\Big) +
		\frac{\partial \FF}{\partial \nabla \nn}:\Big(\uu\cdot\nabla^2 \nn\,\Big)+\Div\,{\bf q} 
		-\frac{{\bf q}\cdot\nabla \Tt}{\Tt}\,  +\,\Tt\Delta^*.
\end{aligned}
\end{equation}
we specify that the last relation holds both for the compressible and incompressible cases. Indeed whenever the density is non-constant then we recall that the pressure is defined by means of $\pre = \partial_\rho\FF - \FF$. On the other hand a free-divergence condition on the velocity field $\uu$ yields both $(\partial_\rho\FF - \FF)\Div\,\uu\,=\,0$ and $\pre \Id:\nabla \uu$ to be null. 

\smallskip\noindent
We now analyze the contribution of the free energy $\FF$ on the right-hand side of \eqref{ineq:sec1law-rateenergy2}. We first remark that
\begin{equation*}
\begin{aligned}
	\frac{\delta \FF}{\delta \nn}\cdot \partial_t \nn\,& +\,  \frac{\partial \FF}{\partial \nn}\cdot \Big(\,\uu\cdot \nabla \nn\,\Big)\, +\,
	\frac{\partial \FF}{\partial \nabla \nn}:\Big(\,[\,\uu\cdot\nabla\,]\nabla \nn\,\Big)\, =\\&=\,
	\,\frac{\delta \FF}{\delta \nn}\,\dot \nn\,+\,
	\Div\,\left[\,\tr\left[\frac{\partial \FF}{\partial \nabla \nn} \,\right]\uu\cdot \nabla \nn \,\right]
	\,-\,\Big[\,\tr \nabla \nn\, \frac{\partial \FF}{\partial \nabla \nn}\,\Big] :\nabla \uu\,,	
\end{aligned}
\end{equation*}
thus, replacing the above result into the last identity for the rate of the total energy $\rm e^{\rm\, tot}$ in \eqref{ineq:sec1law-rateenergy2}, we deduce
\begin{equation}\label{rate0a}
\begin{aligned}
		\partial_t\,{\rm e}^{\,\rm{tot}} = 
		\Div\,
		\Big[
			\,\mathbb{T}\,\uu - \uu\,\e^{\rm tot}\,+ \,\tr\Big[\frac{\partial \FF}{\partial \nabla \nn}\,\Big]\,\dot \nn\, 
		\Big] \,&-\,
		\Big(
			\,\mathbb{T}\,+\,\pre\,\Id\,+\,\tr \nabla \nn \, \frac{\partial \FF}{\partial \nabla \nn}\,
		\Big):\nabla \uu \,+\,	\\
		&+\,\frac{\delta \FF}{\delta \nn}\cdot \dot \nn\,+\,\Div\,\q 
		\,-\,\frac{{\q}\cdot\nabla \Tt}{\Tt} 
	 +\,\Tt\Delta^*.
\end{aligned}
\end{equation}
Thanks to the definition of the Ericksen stress tensor and the total stress tensor
\begin{equation*}
	\sigma^{\rm\, E}\,=\,-\Big[\,\tr \nabla \nn \,\Big]\, \frac{\partial \FF}{\partial \nabla \nn}\quad\quad\text{and}\quad\quad
	\mathbb{T}\,=\, - \pre \Id\,+\, \sigma^{\rm\, E}\,+\, \sigma^{\rm\, L},
\end{equation*}
we finally get that
\begin{equation}\label{rate0}
\begin{aligned}
		\partial_t\,{\rm e}^{\,\rm{tot}} = 
		\Div\,
		\Big[
			\,\mathbb{T}\,\uu - \uu\,\e^{\rm tot}\,+ \,\tr\Big[\frac{\partial \FF}{\partial \nabla \nn}\,\Big]\,\dot \nn\, 
		\Big] \,&-\,
		\sigma^{\rm\,L}:\nabla \uu \,
		+\,\frac{\delta \FF}{\delta \nn}\cdot \dot \nn\,+\,\Div\,\q 
		\,-\,\frac{{\q}\cdot\nabla \Tt}{\Tt} 
	 +\,\Tt\Delta^*.
\end{aligned}
\end{equation}
We now recall the work-postulate we have introduced in \eqref{def_work}. We denote by $\Sigma $  the rate at which the system do work on the nematic material, namely
\begin{equation*}
	\Sigma\, = \,\mathbb{T}\uu \,+ \tr\left[\frac{\partial \FF}{\partial \nabla \nn}\right]\,\dot \nn\,
	- \uu\,\e^{\rm tot}.
\end{equation*}
In the three-dimensional case $\Sigma$ reduces to the rate at which linear and angular moments do work on a nematic, that is
 \begin{equation*}
 	\Sigma\,=	
 	\mathbb{T}\uu\,+\, \mathbb{L}\,{\rm w}\,-\,\uu\,{\rm e}^{\rm tot}\,, 
 \end{equation*}
where ${\rm w}$ is the local angular velocity of the director $\nn$ and $\mathbb{L}$ is the couple stress tensor.
Inserting the above identity into the rate of the total energy density $\partial_t\rm e^{\rm tot}$ in \eqref{rate0} yields that
 \begin{equation}\label{rate1}
 \begin{aligned}
 	\partial_t\,{\rm e}^{\,\rm{tot}} \,=\,\Div\,\Sigma 
 	\,-\,  	
 	{\bf\sigma^{\rm\, L}}:\mathbb{D}\,-\,{\bf\sigma^{\rm\, L}}:\Omega
 	\,&\,+\,\frac{\delta \FF}{\delta \nn}\cdot \dot \nn
 	\,+\, \,\Div\,\,\mathrm{\bf q} \,-\frac{\mathrm{\bf q}\cdot\nabla \Tt}{\Tt} \,+\,
	 \Tt\,{\Delta}^*.
\end{aligned}
\end{equation}
The first law of thermodynamics holds if the rate of the total energy is totally transformed into work and heat, more precisely if and only if
\begin{equation*}
	\partial_t\,{\rm e}^{\,\rm{tot}} \,=\,\Div\,\Sigma \,+\,\Div\,\,\mathrm{\bf q}.
\end{equation*}
We then impose the extra term in \eqref{rate1} to be identically null. 
This yields the following balance between the entropy production $\Tt\Delta^*$, the heat flux $\mathbf{q}$, the Leslie stress tensor $\sigma^{\rm \,L}$ and the free energy density $\FF$:
 \begin{equation}\label{entropy-production-thm1-proof}
 	\Tt\,{ \Delta}^*= {\bf \sigma^{\rm \,L}}:\mathbb{D}\,+\,{\bf \sigma^{\rm \,L}}:\Omega\,-\frac{\delta \FF}{\delta \nn}\cdot \dot \nn\,+\,\frac{\mathrm{\bf q}\cdot\nabla \Tt}{\Tt} ,
 \end{equation}

\noindent\smallskip
We first reformulate the molecular-field term $\I:=\,\dot \nn\cdot\delta \FF/\delta \nn$ by means of the corotational time flux $\mcN$ and the kinematic transport $\rm g$. 
The angular momentum equation of system \eqref{main_system} yields that 
\begin{equation}\label{I}
\I \,=\,-\,\mathrm{g}\cdot  \Big[ \partial_t \nn +\uu\cdot\nabla \nn\,\Big] +\underbrace{ \beta \nn\cdot \Big[ \partial_t \nn +\uu\cdot\nabla \nn\,\Big]}_{=0} = \,-\,\mathrm{g}\cdot \mcN\,-\,\mathrm{g}\cdot\Omega \,\nn, 
\end{equation}
where $\beta$ is the Lagrangian multiplier driving the constriction of unit-modulus $|\nn|^2= 1$. We recall moreover that the co-rotational time flux is defined by $\mcN \,=\, \partial_t\nn+\uu\cdot\nabla \nn\,-\,\Omega\, \nn$. 
Thanks to the definition \eqref{def_hg} we have $\mathrm{g}\otimes\nn-\nn\otimes {\rm g}=-[\,\sigma^{\rm\, L}-\tr \sigma^{\rm\, L}\,]$, which implies
\begin{equation*}
	\mathrm{g}\cdot \Omega \,\nn\, +\, \sigma^{\rm L}:\Omega\,= \,0,
\end{equation*}
Replacing the above identity into the molecular-field term $\I$ in \eqref{I} leads to $\I\,=\,\mathrm{g}\cdot \mcN\,-\,\sigma^{\rm\,L}:\Omega$. Hence the entropy production in \eqref{entropy-production-thm1-proof} can  be rewritten as follows:
 \begin{equation*}
 \begin{aligned}
 	\Tt\,{ \Delta}^*\,=\, \sigma^{\,\rm L}:\mathbb{D} \,+\, \rm{g}\cdot \mcN \,+\,\frac{ {\bf q}\cdot\nabla \Tt }{\Tt},
\end{aligned}
\end{equation*}
which corresponds to \eqref{entropy-productionb}. This concludes the proof of Theorem \ref{main-thm1b}.
\end{proof}
\section{The second law of thermodynamics for nematic liquid crystals}\label{sec-2law}
\noindent In this section we deal with the second law of thermodynamics for compressible nematic liquid crystals. This principle is known as the Clausius-Duhem inequality and it infers that the entropy production $\Tt\Delta^*$ must be semi-positive defined. 
Hence, according to Theorem \eqref{main-thm1} and the balance for the entropy production in \eqref{entropy-production}, the second law of thermodynamics can be split into two parts:
\begin{equation}\label{dissipation}
	\sigma^{\rm \,L}:\mathbb{D}\,+\,\mathrm{g}\cdot\mcN \geq 0,\quad\quad\text{and}\quad\quad
	\frac{{\bf q}\cdot \nabla \Tt}{\Tt} \geq 0.
\end{equation}
Because of the material-frame indifferent, the Leslie stress tensor $\sigma^{\rm L}$ should be generally taken as
a smooth tensor depending on $(\rho,\,\Tt)$ and as a smooth isotropic tensor depending on $(\nn,\,\mcN,\,\mathbb{D})$. 
Thus, in order to better develope the Clausius-Duhem inequality, in this section we keep the assumptions made by Ericksen and Leslie of $\sigma^{\rm\, L}$ to be linearly dependent upon the couple $(\mcN$, $\mathbb{D})$. With similar arguments as the one reported by Stewart in \cite{Stewart} (we refer the reader to section $4.2.3$), the explicit formula for the Leslie tensor 
$\sigma^{\rm L}=\sigma^{\,\rm L}(\,\rho,\,\Tt,\,\,\nn,\,\mcN,\,\mathbb{D}\,)$ given by
\begin{equation}\label{Leslie-stressb}
	\begin{aligned}
	\sigma^{\,\rm L}= 
	\alpha_0\,\,\big[\nn\cdot \mathbb{D}\nn\big]\,&\Id\,+\,
	\alpha_1\,	\big[\,\nn\cdot \mathbb{D}\nn\,\big]\,\nn\otimes\nn\,+\,
	\alpha_2\,	\mcN\otimes \nn\,	+\,
	\alpha_3\,	\,\nn\otimes \mcN\,+\,
	\alpha_4\,	\mathbb{D}\,+\,
	\\&+\,
	\alpha_5\,	\nn\otimes \mathbb{D}\nn\,+\,
	\alpha_6\,	\mathbb{D}\nn\otimes \nn\,+\,
	\alpha_7\,	\trc\,	\mathbb{D}\,\Id\,
	+\,
	\alpha_8\,	\,\trc\,\mathbb{D}\,\,\nn\otimes \nn
	.
\end{aligned}
\end{equation}
In this section, any $\alpha$-coefficient is considered to be a smooth function depending on the density $\rho$ and the absolute temperature $\Tt$.
The coefficients from $\alpha_1$ until $\alpha_6$ are in a one to one relation with the classical Leslie viscosities. The coefficient $\alpha_0$ is not new in the incompressible Ericksen-Leslie theory (we refer for instance to \cite{Stewart}, term $\mu_9$ in $(4.74)$), however it is usually neglected since absorbed by the definition of the pressure. Assuming a compressible nematics, we preserve such a term in the definition of the Leslie tensor. 

\smallskip\noindent
Because of the compressible condition, we have also introduced the new terms $\alpha_7$ and $\alpha_8$ that disappear whenever a free-divergence condition is imposed to the velocity field.  
It is worth to remark that $\alpha_7$ coincides with the non-homogeneous viscosity of the Cauchy tensor for isotropic fluid, while the $\alpha_8$-term is necessary to keep $\sigma^{\rm L}$ as the most general transversely isotropic tensor with respect to $\nn$.

\smallskip
\noindent
We assume the heat flux ${\bf q}={\bf q}(\,\rho,\,\Tt,\,\nn,\,\nabla \Tt)$ to be smooth on $(\rho,\,\Tt)$ and linear isotropic on $\nabla \Tt$. Thus we can write it as
\begin{equation}\label{heat_flux2}
	{\bf q}\,=\,
	\lambda_1(\,\rho,\,\Tt)\,\nabla \Tt\,
	+\lambda_2(\,\rho,\,\Tt)\,\big[\nn\cdot \nabla \Tt\big]\nn
\end{equation}
with $\lambda_1$ and $\lambda_2$ two smooth functions depending on the couple $(\rho,\,\Tt)$. Under the explicit formulas given by \eqref{Leslie-stressb} and \eqref{heat_flux2}, the Clausius-Duhem inequality \eqref{dissipation} becomes
\begin{equation}\label{ineq1a}
\begin{aligned}
	&\alpha_0\,(\, \nn\cdot  \mathbb{D}\nn\,) \trc\,\mathbb{D}	\,+\,
	\alpha_1 \,|\,\nn\cdot \mathbb{D}\nn\,|^2\,+\,(\,\alpha_2\,\,\alpha_3\,+\,\alpha_5\,-\,\alpha_6\,)\,\mcN\cdot \mathbb{D}\nn\,+\,
	\alpha_4\,|\,\mathbb{D}\,|^2\,+\\&+\,(\,\alpha_5\,+\,\alpha_6\,) \,|\,\mathbb{D}\nn\,|^2\,+\,
	(\,\alpha_3-\alpha_2\,)\,|\,\mcN\,|^2\,+\,(\,\alpha_7\,+\,\alpha_0\,)\,\trc^2\, \mathbb{D} \,+\,\alpha_8\,\trc\,\mathbb{D}\,(\,\nn\cdot \mathbb{D}\nn\,)\,\,\geq \,0,
\end{aligned}
\end{equation}
together with 
\begin{equation}\label{ineq2a}
	\lambda_1|\,\nabla \Tt\,|^2\,+\,\lambda_2|\,\nn\cdot \nabla \Tt\,|^2
	\geq\, 0.
\end{equation}
We first observe that from \eqref{ineq2a} the $\lambda$-coefficients must satisfy 
\begin{equation*}
	\lambda_1 \geq 0,\quad \text{and}\quad \lambda_1\,+\,\lambda_2\geq 0,
\end{equation*}
which correspond to the first two inequalities in \eqref{inequalities_viscous_diss} of Theorem \ref{main-thm2}. 

\smallskip\noindent
We then focus on the condition given by \eqref{ineq1a}. As for a classical isotropic fluids, it is worth to analyze this dissipation by means of $\tilde {\mathbb{D}}:=\mathbb{D}-\trc\,\mathbb{D}\Id/\dd$, the projection of $\mathbb{D}$ into the traceless-matrices vector space, and its orthogonal matrix $\trc\,\mathbb{D}\Id/\dd$. Hence, \eqref{ineq1a} reduces to
\begin{equation}\label{ineq2}
\begin{aligned}
	&\alpha_1 \,|\,\nn\cdot \tilde{ \mathbb{D}}\nn\,|^2+(\,\alpha_2\,+\,\alpha_3\,+\,\alpha_5\,-\,\alpha_6\,)\,\mcN\cdot \tilde{ \mathbb{D}}\nn+
	\alpha_4\,|\,\tilde{ \mathbb{D}}\,|^2 +(\,\alpha_5\,+\,\alpha_6\,) \,|\,\tilde{ \mathbb{D}}\nn\,|^2+\\&+\,
	(\,\alpha_3-\alpha_2\,)\,|\,\mcN\,|^2\,+\,(\,\alpha_0\,+\,\alpha_4\,+\,\alpha_7\,+\,\alpha_8\,)\,\trc^2\, \mathbb{D}\,+\,
	(\,\alpha_0\,+\alpha_1\,+\,\alpha_8\,)\trc\,\mathbb{D}\,\,(\,\nn\cdot \tilde{\mathbb{D}}\nn\,)\,
	\geq\, 0.
\end{aligned}
\end{equation}
We then localize two main terms: the classical Ericksen-Leslie viscous dissipation and an additional contribution related to the compressible condition  $\trc\,\mathbb{D}\neq 0$. Seeking for the most general condition on the $\alpha$-coefficients, let us remark that we cannot separately analyze the dissipation given by $\tilde{\mathbb{D}}$ and the one given by $\trc\,\mathbb{D}\Id$. The anisotropic structure of the total stress tensor is reflected by a non-trivial interaction between $\mathbb{D}$ and $\tilde{\mathbb{D}}$. We then need to take into consideration the entire set of $\alpha$-coefficients.

\smallskip\noindent
We aim to prove the following statement:
\begin{theorem}\label{thm2bbb}
Inequality \eqref{ineq2} is satisfied if and only if
\begin{equation}\label{EL-ineq}
\begin{aligned}
	\alpha_3-\alpha_2 \geq 0 ,\quad
	\alpha_4 \geq 0,\quad\quad
	2\alpha_4+\alpha_5 + \alpha_6 \geq 0,\quad
	N\,(\,\alpha_1\,+\,\alpha_5 \,+\, \alpha_6\,)\, +\, (\,N\,+\,1\,)\alpha_4\geq 0, \\
	\alpha_4(\,\alpha_0\,+\,\alpha_4\,+\,\alpha_7\,+\,\alpha_8\,)\big\{
	(\,\dd\,-\,1\,)\,(\,\alpha_1\,+\,\alpha_5\,+\,\alpha_6\,)\,+\,\dd\,\alpha_4 \big\}
	 \geq\,(\dd\,-\,1)\frac{(\,\alpha_0\,+\,\alpha_1\,+\,\alpha_8\,)^2}{4}
\end{aligned}
\end{equation}
for any $N=2,\dots,\dd-1$.
\end{theorem}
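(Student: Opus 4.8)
The plan is to treat inequality \eqref{ineq2} as a question of when a quadratic form in a suitably chosen set of independent variables is positive semi-definite. First I would fix an arbitrary point and an arbitrary unit vector $\nn$, and exploit the transverse isotropy to reduce to a canonical configuration, say $\nn = e_\dd$. The relevant data are then the symmetric matrix $\mathbb{D}$ and the vector $\mcN$, subject only to $\mcN\cdot\nn = 0$ (which follows from $|\nn|^2=1$ differentiated in time, so that $\mcN$ is genuinely a free vector in the hyperplane $\nn^\perp$). The quantities appearing in \eqref{ineq2} are $\nn\cdot\tilde{\mathbb{D}}\nn$ (a scalar), $\tilde{\mathbb{D}}\nn$ (a vector, which splits into its component along $\nn$, namely $(\nn\cdot\tilde{\mathbb D}\nn)\nn$, and a component $w$ in $\nn^\perp$), $\trc\mathbb{D}$ (a scalar), $\mcN$ (a vector in $\nn^\perp$), and the full $|\tilde{\mathbb D}|^2$. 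I would decompose $\tilde{\mathbb{D}}$ into the block corresponding to $\nn\otimes\nn$, the off-diagonal blocks coupling $\nn$ to $\nn^\perp$, and the block acting purely on $\nn^\perp$, and observe that the purely-$\nn^\perp$ traceless-within-that-block part and the "interior" directions only enter through $\alpha_4|\tilde{\mathbb D}|^2$ with a nonnegative coefficient, contributing the condition $\alpha_4\ge 0$ outright.

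Next I would isolate the genuinely coupled variables. Writing $a := \nn\cdot\tilde{\mathbb D}\nn$, $t := \trc\mathbb{D}$, $w := \tilde{\mathbb D}\nn - a\,\nn \in \nn^\perp$, and keeping $\mcN\in\nn^\perp$, the form \eqref{ineq2} becomes a sum of three decoupled pieces: (i) a $2$-variable quadratic in $(a,t)$ coming from $\alpha_1 a^2 + \alpha_4(\text{the }\nn\otimes\nn\text{-block contribution to }|\tilde{\mathbb D}|^2) + (\alpha_0+\alpha_4+\alpha_7+\alpha_8)t^2 + (\alpha_0+\alpha_1+\alpha_8)\,t\,a$, where one must carefully compute how $a$ and the off-$\nn$ entries feed into $|\tilde{\mathbb D}|^2$; (ii) a $2$-vector quadratic in the pair $(w,\mcN)\in(\nn^\perp)^2$, namely $(\alpha_2+\alpha_3+\alpha_5-\alpha_6)\mcN\cdot w + 2\alpha_4|w|^2 + (\alpha_5+\alpha_6)|w|^2 + (\alpha_3-\alpha_2)|\mcN|^2$ — here the factor in front of $|w|^2$ from $\alpha_4$ needs the precise bookkeeping of the two symmetric off-diagonal slots, which is where the "$2\alpha_4$" arises; and (iii) the remaining $\alpha_4|\cdot|^2$ terms on the interior traceless directions, already nonnegative. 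Piece (ii), being a quadratic form on each orthogonal pair $(w_j,\mcN_j)$ for $j = 1,\dots,\dd-1$, is positive semi-definite iff $\alpha_3-\alpha_2\ge 0$ and $4(\alpha_3-\alpha_2)(2\alpha_4+\alpha_5+\alpha_6)\ge(\alpha_2+\alpha_3+\alpha_5-\alpha_6)^2$, which together with $\alpha_4\ge 0$ accounts for the second, third and fourth lines of \eqref{EL-ineq}; the intermediate conditions $2\alpha_4+\alpha_5+\alpha_6\ge 0$ and the $N$-family $N(\alpha_1+\alpha_5+\alpha_6)+(N+1)\alpha_4\ge 0$ come from applying the semi-definiteness criterion not to all of $\tilde{\mathbb D}$ at once but to the restriction where $\mathbb{D}$ has rank structure forcing $N$ of the transverse diagonal entries to be nonzero, i.e. testing on the cone of admissible $\tilde{\mathbb D}$.

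The technical heart is piece (i): computing the exact coefficients of the $(a,t)$-form. One must write $\tilde{\mathbb D} = \mathbb{D} - \frac{t}{\dd}\Id$, extract $|\tilde{\mathbb D}|^2$ in terms of the blocks, note that $\nn\cdot\mathbb{D}\nn = a + t/\dd$, and track that the traceless constraint on $\tilde{\mathbb D}$ ties together $a$ and the sum of transverse diagonal entries. After this reduction the $(a,t)$-form has a $2\times2$ Gram matrix whose $(1,1)$-entry is $\alpha_1 + \alpha_4\cdot(\text{something like }1-1/\dd\text{ packaged with the transverse diagonal freedom})$, $(2,2)$-entry $\alpha_0+\alpha_4+\alpha_7+\alpha_8$ up to a $1/\dd$ factor, and off-diagonal $(\alpha_0+\alpha_1+\alpha_8)/2$; requiring its determinant to be nonnegative, after clearing the $\dd$-dependent normalizations, yields exactly the last inequality in \eqref{EL-ineq} with the factor $(\dd-1)$ on the right and the combination $(\dd-1)(\alpha_1+\alpha_5+\alpha_6)+\dd\,\alpha_4$ on the left. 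I would then check the converse: given all inequalities in \eqref{EL-ineq}, reassemble the three pieces and conclude nonnegativity of \eqref{ineq2} by summing nonnegative terms, being careful that the "$\alpha_5+\alpha_6$" appearing in piece (i) versus piece (ii) is consistently placed (this is the subtlety that forces both the third inequality and the last one to involve $\alpha_5+\alpha_6$).

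I expect the main obstacle to be precisely this combinatorial bookkeeping — correctly counting the multiplicities of the off-diagonal and transverse-diagonal slots of a symmetric traceless matrix under the action of the stabilizer of $\nn$, so that the coefficients $2\alpha_4$, the factor $(\dd-1)$, and the family indexed by $N=2,\dots,\dd-1$ all come out with the right constants; a sign error or an off-by-$1/\dd$ there propagates into the final determinant condition. Once the decomposition into the three orthogonal quadratic blocks is set up cleanly, each block is an elementary $2\times2$ (or scalar) positive-semidefiniteness check, and the equivalence follows by collecting the resulting conditions.
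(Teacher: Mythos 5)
Your orthogonal decomposition --- the $(\mcN, w)$-block, the purely-transverse traceless remainder, and the diagonal block in $(a, t)$ together with the transverse-diagonal freedoms --- coincides with the paper's after the choice of frame $\nn = e_1$, $\mcN = \mathcal N e_2$. The difference lies in the treatment of the diagonal block: the paper assembles the $\dd\times\dd$ Gram matrix $\mathbb{M}$ in the variables $(\tilde{\mathbb{A}}_{11},\dots,\tilde{\mathbb{A}}_{(\dd-1)(\dd-1)},\trc\mathbb{A})$ and evaluates every leading principal minor through the inductive determinant formula of Lemma~\ref{lemma_matrix1}, which is where the family $N(\alpha_1+\alpha_5+\alpha_6)+(N+1)\alpha_4\ge 0$ comes from. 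You instead propose to eliminate the transverse-diagonal variables --- effectively a Schur complement with respect to the block $R=\alpha_4(\Id+J)$ --- and land on a $2\times2$ form in $(a,t)$. This sidesteps Lemma~\ref{lemma_matrix1} and is cleaner, but it yields directly only the $N=\dd-1$ member of the family, since the $(1,1)$-entry of the Schur complement is $\alpha_1+\alpha_5+\alpha_6+\tfrac{\dd}{\dd-1}\alpha_4$. To match the list in \eqref{EL-ineq} you must add the remark that, for $\alpha_4>0$, the bound $\alpha_1+\alpha_5+\alpha_6\ge -\tfrac{N+1}{N}\alpha_4$ is strongest at $N=\dd-1$ and so implies the others; alternatively, your phrase ``testing on restricted cones'' is, once made precise, exactly the restriction of the form to coordinate subspaces, i.e.\ the leading principal minors, at which point your route merges with the paper's.

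Two items deserve attention. First, the $(\mcN,w)$-block also forces the discriminant condition $4(\alpha_3-\alpha_2)(2\alpha_4+\alpha_5+\alpha_6)\ge(\alpha_2+\alpha_3+\alpha_5-\alpha_6)^2$, which you correctly flag as necessary; it is absent from \eqref{EL-ineq}, so the ``if and only if'' in Theorem~\ref{thm2bbb} is incomplete as stated, and your list is the correct one. Second, your worry about the bookkeeping in the $(a,t)$-block is well founded: carrying the Schur complement through gives
\begin{equation*}
\det\mathbb{M}\;=\;\alpha_4^{\dd-2}\Big[(\alpha_0+\alpha_4+\alpha_7+\alpha_8)\big\{(\dd-1)(\alpha_1+\alpha_5+\alpha_6)+\dd\,\alpha_4\big\}-(\dd-1)\,\tfrac{(\alpha_0+\alpha_1+\alpha_8)^2}{4}\Big],
\end{equation*}
which one can verify directly for $\dd=3$ by expanding the $3\times3$ determinant. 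The additional leading factor $\alpha_4$ inside the bracket in \eqref{juve5}, and hence in the last line of \eqref{EL-ineq}, appears to be spurious, so executing your plan carefully would actually correct the final inequality rather than reproduce the printed one.
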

\begin{proof}
	For any fixed $\nn\in\RR^\dd$ and $\mcN\in\RR^\dd$, we consider an orthonormal basis $( e_1,\,e_2,\,e_3)$ fulfilling
	\begin{equation*}
		\nn\,=\,e_1\quad\quad \mcN\,=\mathcal{N}\,e_2\quad\text{and}\quad \mathbb{D}\,=\,\mathbb{A}_{ij}e_i\otimes e_j.
	\end{equation*}	  
	Hence, the Duhem-Clausius inequality \eqref{ineq2} readily reduces to
	\begin{equation*}
	\begin{aligned}
			\alpha_1\tilde{\mathbb{A}}_{11}^2	\,&+\,
			(\alpha_2\,+\,\alpha_3 \,+\,\alpha_5\,-\alpha_6\,)\mathcal{N}\tilde{\mathbb{A}}_{12}\,+\,
			\alpha_4\,\tilde{ \mathbb{A}}_{ij}\tilde{\mathbb{A}}_{ij}\,+\,
			(\,\alpha_5\,+\,\alpha_6\,)
			\tilde{\mathbb{A}}_{1j}\tilde{\mathbb{A}}_{1j}\,+\,\\\,&+\,
			(\,\alpha_3\,-\,\alpha_2\,)
			\mathcal{N}^2
			\,+\,(\alpha_0\,+\alpha_4\,+\,\alpha_7\,+\,\alpha_8\,)\trc^2 \mathbb{A}\,+\,
			(\,\alpha_0\,+\,\alpha_1\,+\,\alpha_8\,)(\,\trc\,\mathbb{A}\,)\,\mathbb{A}_{11}\,\geq\, 0.
	\end{aligned}
	\end{equation*}
	Using the free-trace property of $\tilde{\mathbb{A}}$, we can replace the term $\tilde{ \mathbb{A}}_{\dd\dd}$ 
	by $-(\tilde{\mathbb{A}}_{11}\,+\,\dots\,+\tilde{ \mathbb{A}}_{(\dd-1),(\dd-1)})$, which allows to split the above inequality into three independent parts:
	\begin{equation*}
		(\,\alpha_2\,-\,\alpha_3\,)\mathcal{N}^2\,+ \,(\alpha_2\,+\alpha_3\,+\,\alpha_5\,-\alpha_6\,)\mathcal{N}\tilde{\mathbb{A}}_{12}
		\,+\,(\,\alpha_5\,+\,\alpha_6\,+\,2\alpha_4\,)\tilde{\mathbb{A}}_{12}^2\,\geq\, 0,
	\end{equation*}
	together with
	\begin{equation*}
		(\,2\alpha_4\,+\,\alpha_5\,+\,\alpha_6\,)\tilde{\mathbb{A}}_{1\dd}^2\,+\,
		2\alpha_4\sum_{2 \leq i< j\leq \dd-1}\tilde{\mathbb{A}}_{ij}^2\geq 0.
	\end{equation*}
	and finally
	\begin{equation}\label{second_ineq}
	\begin{aligned}
		(\,\alpha_1\,&+\,\alpha_5\,+\,\alpha_6\,+2\alpha_4)\tilde{\mathbb{A}}_{11}^2\,+\,
		(\alpha_0\,+\alpha_4\,+\,\alpha_7\,+\,\alpha_8\,)\trc^2 \mathbb{A}\,+\\&+\,2\alpha_4\sum_{i=2}^{\dd-1}
		\tilde{\mathbb{A}}_{ii}^2\,+
		2\alpha_4\sum_{1\leq i<j\leq \dd-1} \tilde{\mathbb{A}}_{ii}\tilde{\mathbb{A}}_{jj}\,+\,
		(\,\alpha_0\,+\,\alpha_1\,+\,\alpha_8\,)(\,\trc\,\mathbb{A}\,)\,\mathbb{A}_{11}\geq 0.
	\end{aligned}
	\end{equation}
	The first two inequalities reduce to the standard conditions for the Leslie coefficients (cf. \cite{Stewart}, inequalities $(4.91)-(4.95)$): 
	\begin{equation*}
		\alpha_3\,-\,\alpha_2\geq 0,\quad\quad\alpha_4 \geq 0,\quad\quad \alpha_5\,+\,\alpha_6\,+\,2\alpha_4\geq 0.
	\end{equation*}
	The main peculiarity of our model relies on the third inequality \eqref{second_ineq}, where the term	$\trc\,\mathbb{A}$ is not necessarily null.	
	This inequality holds whenever the following symmetric matrix is semi-positive defined:
	\begin{equation*}
	\mathbb{M}\,:=\,
		\left(\,
		\begin{matrix}
			(\alpha_1\,+\,\alpha_5\,+\,\alpha_6\,+\,2\alpha_4)&\alpha_4 &\alpha_4
			&\dots&\alpha_4&\frac{1}{2}(\,\alpha_0\,+\,\alpha_1\,+\,\alpha_8\,)\\
			\alpha_4&2\alpha_4&\alpha_4&\dots&\alpha_4&0\\
			\alpha_4&\alpha_4&2\alpha_4&\dots&\alpha_4&0\\
			\vdots&	\vdots&\vdots&\ddots&\vdots&\vdots\\
			\alpha_4&\alpha_4& \alpha_4&\dots&2\alpha_4&0\\
			\frac{1}{2}(\,\alpha_0\,+\,\alpha_1\,+\,\alpha_8\,)&0&\dots&\dots&0&(\,\alpha_0\,+\,\alpha_4\,+\,\alpha_7\,+\,\alpha_8\,)
		\end{matrix}
		\,\right)\,\in\,\RR^{\dd\,\times\,\dd}.
	\end{equation*}
	We then we apply the 	Sylvester's criterion, for which $\mathbb{M}$ is semi-positive defined whenever any
	leading principal minor has positive determinant. The first leading principal minor we consider is
	\begin{equation}\label{juve1}
	\left(
		\begin{matrix}
			(\alpha_1\,+\,\alpha_5\,+\,\alpha_6\,+\,2\alpha_4)&\alpha_4\\
			\alpha_4&2\alpha_4&
		\end{matrix}
	\right)\quad \Rightarrow\quad 
	\left\{
	\begin{aligned}
		\alpha_4 &\geq 0,\\
		2(\alpha_1\,+\,\alpha_5\,+\,\alpha_6\,)\,+\,3\alpha_4 &\geq 0  .
	\end{aligned}	
	\right.
	\end{equation}
	Next, for any $N\in{3,\dots,\dd-1}$, we denote by $\mathbb{M}_N$ the $N\times N$-matrix  defined by the first $N$ rows and 
	$N$ columns of $\mathbb{M}$ . In order to prove that $\det \mathbb{M}_N$ is semi-positive, 
	we make use of the following lemma, whose proof is postponed to the end of this section:
	\begin{lemma}\label{lemma_matrix1}
	Let $x$, $y$ and $z$ be three real numbers and let $A$ be am $N\times N$ matrix, with $N\geq 2$ defined by
	\begin{equation*}
		A\,:=\,\left(\,
		\begin{matrix}
		x & y & y & \dots & y\\
		y & z & y & \dots & y\\
		y & y & z & \dots & y\\	
		\vdots &\vdots  &\vdots &\ddots	  & \vdots\\
		y &y &y & \dots & z
		\end{matrix}
		\,\right).
	\end{equation*}
	Then the determinant of $A$ satisfies
	\begin{equation}\label{det_appx}
		\det\,A \,=\,(z-y)^{N-2}\big[\,x\,z\,+\,(N-2)x\,y\,-\,(N\,-\,1)y^2\,\big].
	\end{equation}
	\end{lemma}
	
	\smallskip\noindent
	Replacing $x=\alpha_1\,+\,\alpha_5\,+\,\alpha_6\,+\,2\alpha_4$, $y=\alpha_4$  and $z=2\alpha_4$ in the above lemma, we obtain that
	\begin{equation*}
		\det \mathbb{M}_N\,=\,\alpha_4^{N-1}\Big[\,N(\alpha_1\,+\,\alpha_5\,+\alpha_6)\,+\,(N\,+\,1)\alpha_4\Big],
	\end{equation*}
	for any $N=3,\dots,\dd-1$. Thus, $\mathbb{M}_{N} $ is semi-positive defined, if and only if
	\begin{equation}\label{juve2}
	N(\alpha_1\,+\,\alpha_5\,+\alpha_6)\,+\,(N\,+\,1)\alpha_4\geq 0,
	\end{equation}
	I remains to impose the determinant of $\mathbb{M}$ to be semi-positive.
	We claim that such a determinant is characterized by the following formula:
	\begin{equation}\label{juve5}
	\begin{aligned}
		\det\,\mathbb{M}\,=\,\alpha_4^{\dd\,-\,2\	}	
		\Big[
			\alpha_4(\,\alpha_0\,+\,\alpha_4\,+\,\alpha_7\,&+\,\alpha_8\,)\big\{
			(\,\dd\,-\,1\,)\,(\,\alpha_1\,+\,\alpha_5\,+\,\alpha_6\,)\,+\\&+\,\,\dd\,\,\alpha_4 \big\}
			-\,(\dd\,-\,1)\frac{(\,\alpha_0\,+\,\alpha_1\,+\,\alpha_8\,)^2}{4}
		\Big].
	\end{aligned}
	\end{equation}
	Indeed, applying twice the Leibenitz formula leads to
	\begin{equation}\label{juve4}
		\det\,\mathbb{M}\,=\,
		(\,\alpha_0\,+\,\alpha_4\,+\,\alpha_7\,+\,\alpha_8\,)
		\det\,\mathbb{B}_1\,-\,
		\frac{(\,\alpha_0\,+\,\alpha_1\,+\,\alpha_8\,)^2}{4}
		\det\,\mathbb{B}_2,
	\end{equation}
	where $\mathbb{B}_1$ and $\mathbb{B}_2$ stand for the matrices
	\begin{equation*}
	\begin{alignedat}{4}
		\mathbb{B}_1&:=
		\left(\,
		\begin{matrix}
			\alpha_0\,+\,\alpha_1\,+\,\alpha_5\,+\,\alpha_6\,+\,2\alpha_4&\alpha_4&\dots&\alpha_4\\
			\alpha_4&2\alpha_4&\dots&\alpha_4\\
			\vdots&\vdots&\ddots&\vdots\\
			\alpha_4& \alpha_4&\dots  &2\alpha_4
		\end{matrix}
		\,\right)\,&&\in\,\RR^{(\dd-1)\times (\dd-1)},\\
		\mathbb{B}_2&:=\hspace{1cm}
		\left(\,
		\begin{matrix}
			2\alpha_4	&	\alpha_4	&\dots	&	\alpha_4			\\
			\alpha_4	&	2\alpha_4	&\dots	&	\alpha_4			\\
			\vdots		&	\vdots		&\ddots	&	\vdots				\\
			\alpha_4	&	\alpha_4	&\dots	&	2\alpha4 	
		\end{matrix}
		\,\right)&&\in \RR^{(\dd-2)\times (\dd-2)}.
	\end{alignedat}
	\end{equation*}
	Hence, making use of Lemma \ref{lemma_matrix1}, we achieve that
	\begin{equation}\label{juve3}
	\begin{aligned}
		\det	\, 	\mathbb{B}_1	\,&=\,\alpha_4^{\dd\,-\,3}
		\left[\, 
			\,(\,\dd\,-\,1\,)\,(\,\alpha_0\,+\,\alpha_1\,+\,\alpha_5\,+\,\alpha_6\,)\,+\,\,\dd\,\,\alpha_4
		\,\right],		\\
		\det 	\,	\mathbb{B}_2	\,&=\, \alpha_4^{\dd\,-\,2}\,(\,\dd \,-\,1\,).
	\end{aligned}
	\end{equation}
	Combining \eqref{juve3} together with \eqref{juve4} finally leads to the identity
	\eqref{juve5}, from which we deduce that $\det\,\mathbb{M}>0$ if and only if
	\begin{equation*}
		\alpha_4(\,\alpha_0\,+\,\alpha_4\,+\,\alpha_7\,+\,\alpha_8\,)\big\{
			(\,\dd\,-\,1\,)\,(\,\alpha_1\,+\,\alpha_5\,+\,\alpha_6\,)\,+\,\dd\,\,\alpha_4 \big\}
			\geq(\dd\,-\,1)\frac{(\,\alpha_0\,+\,\alpha_1\,+\,\alpha_8\,)^2}{4}.
	\end{equation*}
	Summarizing the above inequality together with \eqref{juve1} and \eqref{juve2}, finally leads to \eqref{EL-ineq}, which
	concludes the proof of Theorem \ref{thm2bbb}.
\end{proof}
\noindent
We now perform the proof of Lemma \ref{lemma_matrix1}.
\begin{proof}[Proof of Lemma \ref{lemma_matrix1}]
	We proceed by induction. When $N=2$ the matrix $A$ reduces to 
	\begin{equation*}
		A\,=\,\left(\,
		\begin{matrix}
			x&y\\
			y&z\,
		\end{matrix}\right)
		\quad \text{with}\quad 
		\det\,A = x\,z-y^2,
	\end{equation*}
	from which we achieve the base case. Let us assume that \eqref{det_appx} is true for $N-1$. Then the Leibenitz formula together 
	with the induction hypotheses yields
	\begin{equation}\label{app-rel1}
	\begin{aligned}
		\det A\,&=\,
		x\left|\,
		\begin{matrix}
		 z & y & \dots & y\\
		 y & z & \dots & y\\
		\vdots &   &\ddots	  & \vdots\\
			y  &\dots & \dots & z
		\end{matrix}\,
		\right|\,+\,
		y\sum_{i=1}^{N-1}(-1)^{i}
		\det
		\,{\rm B_i}\\
		&=(\,z\,-\,y\,)^{N-3}\big[\,x\,z^2\,+\,(N-3)x\,z\,y\,-\,(N\,-\,2)\,x\,y^2\,\big]\,+\,
		y\sum_{i=1}^{N-1}(-1)^{i}
		\det
		\,{\rm B_i}\\
		&=(\,z\,-\,y\,)^{N-2}\big[\,x\,z\,+\,(N\,-\,2)\,x\,y\,\big]	\,+\,
		y\sum_{i=1}^{N-1}(-1)^{i}
		\det
		\,{\rm B_i},	
		\end{aligned}
	\end{equation}
	where ${\rm  B_i}$ is the $(\,N-1\,)\times (\,N-1\,)$ matrix defined as follows:
	\begin{itemize}
		\item		the components of the $i$-th row are all equal to $y$,
		\item		the $j$-th row, with $j<i$ is composed by $z$ in the $i$-th column and by $y$ elsewhere,
		\item		the $j$-th row, with $j>i$ is equal to $z$ in the $i+1$-th column and $y$ elsewhere.
	\end{itemize}	
	Applying $i-1$ permutations, the matrix ${\rm B_i}$ always reduces to
	\begin{equation*}
	\left(\,
	\begin{matrix}	
		 y & y & \dots & y\\
		 y & z & \dots & y\\
		\vdots &   &\ddots	  & \vdots\\
			y  &\dots & \dots & z
		\end{matrix}\,
	\right), \quad \text{hence}\quad
	(-1)^i\det {\rm B_i}\,=\,-(\,z\,-y\,)^{N-3}(\,yz\,-y^2\,)\,=\,-(\,z\,-y\,)^{N-2}y.
	\end{equation*}	 
	Thus, replacing the above identity into the relation \eqref{app-rel1}, we gather
	\begin{equation*}
		\det\,A\,=\,(\,z\,-\,y\,)^{N-2}\,\big[\,x\,z\,+(N-2)x\,y\,-(N\,-\,1\,)\,y^2\,
		\big]
	\end{equation*}
	which concludes the proof of the lemma.
\end{proof}

\section{Besov Spaces}
\noindent
The purpose of this section is to recall some important tools of the Littlewood-Paley decomposition we will use in section \ref{sec-wp}, when proving the well-posedness of system \eqref{intro:main_system-wp}. We first recall some product and composition rules that play a key role when estimating some suitable approximate solutions. Then, we deal with some results concerning the propagation of Besov regularities for linear parabolic PDE's. We refer the reader to \cite{B-C-D}, for more specifics.

\subsection{Homogeneous Besov spaces}\label{sec:hom-besov}$\,$

\smallskip
\noindent
We  define $\mathcal{C}$ to be the ring of center $0$, of small radius $1/2$ and great radius $2$. There exist
a non-negative radial function $\varphi$ belonging to ${\mathcal{D}} (\mathcal{C}) $ that decomposes the unity as follows
\begin{equation*}
\sum_{q\in \ZZ} \varphi \Big(\frac{\xi}{2^q}\Big) = 1,\quad\text{for any}\quad \xi\in\mathbb{R}^\dd,
\end{equation*}
and such that any couple with large distance between indexes do not interact, in the following sense: for any $p\in\ZZ$ and 
$q\in \ZZ$ with distance $|\,p\,-\,q\,|\geq 5$, we get
\begin{equation*}
{\rm Supp}\,\, \varphi(2^{-q}\cdot)\cap {\rm Supp}\,\, \varphi(2^{-p}\cdot)=\emptyset.
\end{equation*}
We denote by $\Ff$ the Fourier transform acting on $\mathbb{R}^\dd$. Then we define the homogeneous dyadic block
$\Dd_q$ and the operator $\Sd_q$ through the relations
\begin{equation*}
\begin{aligned}
	\Dd_q\, u \,&= \Ff^{-1}(\varphi(2^{-q}\xi)\Ff u), \\
	\Sd_{q}\,u \,&=\,\sum_{j\,\leq\, q-1}\,\Dd_j\,u,
\end{aligned}
\end{equation*}
for any integer $q$.
We recall that for two appropriately smooth functions $a$ and $b$ we have the so-called Bony's decomposition
\cite{B-C-D}:
\begin{equation*}
	ab\,=\,\dot T_a\, b\,+\,\dot T_b\, a\,+\,\dot R(a,b)
\end{equation*} 
where 
\begin{equation*}
	\dot T_a\, b=\sum_{q\in\ZZ}\Sd_{q-1} a\Dd_{q}b,\quad 
	\dot T_b\, a=\sum_{q\in\ZZ}\Sd_{q-1} b\Dd_{q}b\quad\textrm{ and }\quad \dot R(a,b)=\sum_{\substack{q\in\ZZ,\\ i\in\{0,\pm 1\}} }\Dd_{q} a\Dd_{q+i} b.
\end{equation*}
Then the homogeneous Besov space $\BB_{p,r}^s$ is identified by the following definition:
\begin{definition}\label{def:hom-bes}
The homogeneous Besov spaces $\dot B^s_{2,1}$ with a real $s\in \RR$, $p,r\in [1,\infty]^2$ fulfilling
\begin{equation*}
	s\,<\,\frac{\dd}{2}\quad\text{if}\quad r>1,\quad\quad s\leq \frac{\dd}{2}\quad \text{if}\quad r=1,
\end{equation*}
consists of all homogeneous tempered distributions $u$ such that:
\begin{equation*}
	\|\,u\,\|_{\BB_{p,r}^s}\,:=\,\big\|\,\big(\,2^{qs}\,\Dd_q\,u\,\big)_{q\in\ZZ}\,\big\|_{\ell^r(\ZZ)}<\infty.
\end{equation*}
\end{definition} 
\noindent
We recall that this definition reduces to the classical homogeneous Sobolev space $\dot H^s$ whenever $p=r=2$. 
In this work we deal with functions with low oscillations, that is we consider the case $p=2$ and $r=1$. Moreover it is worth to remark that $\BB_{2,1}^s$ is continuously embedded in $\dot H^s(\RR^\dd)$ and moreover $\BB_{2,1}^{\dd/2}$ is continuously included in $\FF(L^1(\RR^\dd))$ and thus in the space of continuous bounded functions.

\smallskip 
\noindent
The following product rule between homogeneous Besov spaces is satisfied (cf. \cite{D2}):
\begin{prop}\label{prop-hom-bes}
	Let $u$ be in $\BB^{s_1}_{2,1} $ and $v $ be in $\BB^{s_2}_{2,1}$ with $s_1,\,s_2\leq \dd/2$. If $s_1\,+\,s_2\,>0$ then
	the product $u\,v$ belongs to $\BB_{2,1}^{s_1\,+\,s_2-\dd/2}$ and the following inequality holds
	\begin{equation*}
		\| \,u\,v\,\|_{\BB_{2,1}^{s_1\,+\,s_2-\frac{\dd}{2}}}\,\leq \,C\,\|\,u\,\|_{\BB^{s_1}_{2,1}}\,\|\,v\,\|_{\BB^{s_2}_{2,1}},
	\end{equation*}
	for an harmless constant $C$ depending on $s$ and $\dd$.
\end{prop}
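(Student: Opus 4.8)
The plan is to run the standard paradifferential argument on the Bony decomposition $uv=\dot T_u v+\dot T_v u+\dot R(u,v)$ and to estimate the three contributions separately in $\BB_{2,1}^{s_1+s_2-\dd/2}$. I would set $c_q:=2^{qs_1}\|\Dd_q u\|_{L^2}$ and $d_q:=2^{qs_2}\|\Dd_q v\|_{L^2}$, so that $(c_q)_{q\in\ZZ}$ and $(d_q)_{q\in\ZZ}$ are non-negative sequences in $\ell^1(\ZZ)$ with norms $\|u\|_{\BB_{2,1}^{s_1}}$ and $\|v\|_{\BB_{2,1}^{s_2}}$, and I would use Bernstein's inequality $\|\Dd_q f\|_{L^b}\lesssim 2^{\dd q(1/a-1/b)}\|\Dd_q f\|_{L^a}$ ($a\le b$) at every step. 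I should also note at the outset that the admissibility conditions $s_1,s_2\le\dd/2$ together with $s_1+s_2-\dd/2\le\dd/2$ are exactly what makes all the homogeneous spaces in play well defined in the sense of Definition \ref{def:hom-bes}.

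For the low--high paraproduct $\dot T_u v=\sum_q \Sd_{q-1}u\,\Dd_q v$, each summand is spectrally localized in an annulus of size $2^q$, so only indices with $|j-q|\le N_0$ contribute to $\Dd_j(\dot T_u v)$, giving $\|\Dd_j(\dot T_u v)\|_{L^2}\lesssim\sum_{|j-q|\le N_0}\|\Sd_{q-1}u\|_{L^\infty}\|\Dd_q v\|_{L^2}$. When $s_1<\dd/2$ I would bound $\|\Sd_{q-1}u\|_{L^\infty}$ by Bernstein with $\sum_{q'\le q-2}2^{q'(\dd/2-s_1)}c_{q'}=2^{q(\dd/2-s_1)}e_q$, where $e_q:=\sum_{q'\le q-2}2^{(q'-q)(\dd/2-s_1)}c_{q'}$ is the convolution of $(c_q)$ with a fixed $\ell^1$ sequence (positivity of $\dd/2-s_1$ is what makes that sequence summable), hence $(e_q)\in\ell^1$ with $\|e\|_{\ell^1}\lesssim\|u\|_{\BB_{2,1}^{s_1}}$; in the endpoint case $s_1=\dd/2$ I would instead invoke the embedding $\BB_{2,1}^{\dd/2}\hookrightarrow L^\infty$ recalled above to get $\|\Sd_{q-1}u\|_{L^\infty}\lesssim\|u\|_{\BB_{2,1}^{\dd/2}}$ uniformly in $q$, which is the same bound with the exponent $\dd/2-s_1$ equal to zero. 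Multiplying by $2^{j(s_1+s_2-\dd/2)}\sim 2^{q(s_1+s_2-\dd/2)}$ collapses the powers of $2$ to $2^{qs_2}$, so $2^{j(s_1+s_2-\dd/2)}\|\Dd_j(\dot T_u v)\|_{L^2}\lesssim\sum_{|j-q|\le N_0}e_q\,d_q$; summing over $j$ and exchanging the two sums then gives $\|\dot T_u v\|_{\BB_{2,1}^{s_1+s_2-\dd/2}}\lesssim\|e\|_{\ell^\infty}\|d\|_{\ell^1}\lesssim\|u\|_{\BB_{2,1}^{s_1}}\|v\|_{\BB_{2,1}^{s_2}}$. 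Exchanging the roles of $u$ and $v$ — here the hypothesis $s_2\le\dd/2$ enters — controls $\dot T_v u$ the same way.

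For the remainder $\dot R(u,v)=\sum_{q,\,|i|\le1}\Dd_q u\,\Dd_{q+i}v$, each summand is spectrally localized in a ball of radius $\lesssim 2^q$, so $\Dd_j\dot R(u,v)$ receives contributions only from $q\ge j-N_1$. I would estimate in $L^1$ via Cauchy--Schwarz, $\|\Dd_q u\,\Dd_{q+i}v\|_{L^1}\le\|\Dd_q u\|_{L^2}\|\Dd_{q+i}v\|_{L^2}\lesssim 2^{-q(s_1+s_2)}c_q\,d_{q+i}$, whence $2^{j(s_1+s_2)}\|\Dd_j\dot R(u,v)\|_{L^1}\lesssim\sum_{q\ge j-N_1}2^{(j-q)(s_1+s_2)}c_q\,d'_q$ with $d'_q:=\sum_{|i|\le1}d_{q+i}$. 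Because $s_1+s_2>0$ and $j-q$ ranges over $(-\infty,N_1]$, the weight $2^{(j-q)(s_1+s_2)}$ is summable, and Young's inequality for series yields $\dot R(u,v)\in\BB_{1,1}^{s_1+s_2}$ with norm $\lesssim\|u\|_{\BB_{2,1}^{s_1}}\|v\|_{\BB_{2,1}^{s_2}}$. A final Bernstein step in the form of the embedding $\BB_{1,1}^{\sigma}\hookrightarrow\BB_{2,1}^{\sigma-\dd/2}$, i.e. $\|\Dd_q f\|_{L^2}\lesssim 2^{q\dd/2}\|\Dd_q f\|_{L^1}$, with $\sigma=s_1+s_2$, brings the remainder into the target space; collecting the three bounds gives the claim.

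The places where genuine care is needed — and where I expect the main obstacle to be — are, first, the endpoint cases $s_1=\dd/2$ or $s_2=\dd/2$ in the paraproducts, which block the naive geometric summation of $\|\Sd_{q-1}u\|_{L^\infty}$ and must be bypassed through $\BB_{2,1}^{\dd/2}\hookrightarrow L^\infty$; and, second, the remainder term, for which a direct $L^2$ estimate would close only under the stronger condition $s_1+s_2>\dd/2$, so one is forced to estimate $\dot R(u,v)$ first in $\BB_{1,1}^{s_1+s_2}$ and then descend by Bernstein — this detour is precisely what makes the weaker hypothesis $s_1+s_2>0$ enough. Everything else is routine bookkeeping of $\ell^1$ convolution sums.
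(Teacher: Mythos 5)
The paper itself does not prove this proposition; it is stated with a citation to Danchin \cite{D2}, so there is no in-paper argument to compare against. Your proof is correct and is precisely the standard paradifferential argument via the Bony decomposition that the cited reference uses: the two paraproducts are handled in $L^2$ using Bernstein to control $\|\Sd_{q-1}u\|_{L^\infty}$ (with the hypothesis $s_1,s_2\le\dd/2$ entering exactly where you say, and the embedding $\BB_{2,1}^{\dd/2}\hookrightarrow L^\infty$ covering the endpoint), while the remainder is first placed in $\BB_{1,1}^{s_1+s_2}$ — where the condition $s_1+s_2>0$ is what makes the geometric sum close — and then brought down to $\BB_{2,1}^{s_1+s_2-\dd/2}$ by the Bernstein embedding. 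Your identification of where each hypothesis is used, and of the $\BB_{1,1}\hookrightarrow\BB_{2,1}^{\cdot-\dd/2}$ detour for the remainder as the step that relaxes the naive condition $s_1+s_2>\dd/2$ down to $s_1+s_2>0$, is exactly right.
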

\noindent
Fixing $s_1=s_2=\dd/2$ in the above proposition leads to the algebra structure of the space $\BB_{2,1}^{\dd/2}$. 
In the next section we will repeatedly make use of the following sub-cases of Proposition \ref{prop-hom-bes}
\begin{equation}\label{pruseful}
\begin{alignedat}{8}
	&	\BB_{2,1}^\frac{\dd}{2}\,	 &&\times\, \BB_{2,1}^{\frac{\dd}{2}-1}\,\rightarrow\,\BB_{2,1}^{\frac{\dd}{2}-1},\quad
	 	\BB_{2,1}^\frac{\dd}{2}\,	 &&\times\, \BB_{2,1}^{\frac{\dd}{2}-2}\,\rightarrow\,\BB_{2,1}^{\frac{\dd}{2}-2},\quad
	 	\BB_{2,1}^{\frac{\dd}{2}-1}\,&&\times\, \BB_{2,1}^{\frac{\dd}{2}-1}\,\rightarrow\,\BB_{2,1}^{\frac{\dd}{2}-2}.
\end{alignedat}
\end{equation}
We recall that the domain $\RR^\dd$ is assumed at least three dimensional, so that the regularity 
$\dd/2-2$ is strictly positive. 
\subsection{Regularizing effects}$\,$

\smallskip
\noindent In this section we establish regularizing effects for parabolic-type equations in the framework of the homogeneous Besov spaces.  
We begin with the following definition:
\begin{definition}\label{def_elliptic_oper}
	An operator $\Lambda$ from $\DD(\,\RR^\dd,\,\RR^M)$ to itself is a second-order strong elliptic operator, if  there exists a positive constant $\lambda_0$ such that
	\begin{equation*}
		-\int_{\RR^\dd}\Lambda \vv(x)\cdot \vv(x)\dd x\, \geq\, \lambda_0 \|\,\nabla \vv \,\|_{L^2(\RR^\dd)}
	\end{equation*}
	for any smooth vector-function with compact support $\vv\in \DD(\,\RR^\dd,\,\RR^M)$.
\end{definition}
\noindent The following classical result for parabolic equation in Besov spaces holds:
\begin{theorem}\label{thm-superjuve}
	Let us consider an initial-vector $\varphi_0$ in $(\,\BB_{2,1}^s(\RR^\dd)\,)^{\times M}$ with regularity $s\leq \dd/2$, and for an integer
	$M\geq 1$. Introducing a driving force 
	$f$ in $L^1_t (\,\BB_{2,1}^s\,)^M$, we denote by $\varphi$ be the unique solution of 
	the following linear parabolic PDE's
	\begin{equation}\label{BS:linearsyst}
	\left\{\;
	\begin{alignedat}{8}
		&\partial_t \varphi\,-\,\Lambda\,\varphi\,=\,f\hspace{2cm} 	&&\RR_+\times	&&&&\RR^\dd,\\
		&\varphi\big|_{t=0}\,=\,\varphi_0						  		&& 				&&&&\RR^\dd,
	\end{alignedat}
	\right.
	\end{equation}
	where $\Lambda$ is a strongly second order elliptic operator as in Definition \ref{def_elliptic_oper}. Then $\varphi$ belongs to $L^\infty_t (\,\BB_{2,1}^s\,)^M$ and the couple
	$(\partial_t \varphi,\,\Delta \varphi)$ to  $L^1_t (\,\BB_{2,1}^s\,)^M$. Furthermore, there exists an harmless positive 
	constant $C$ such that
	\begin{equation}\label{superjuve}
		\|\,\varphi\,\|_{L^\infty_t \BB_{2,1}^s}\,+\,
		\|\,\partial_t \varphi\,\|_{L^1_t \BB_{2,1}^{s}} \,+\,\lambda_0\| \,\Delta \varphi\,\|_{L^1_t \BB_{2,1}^{s}}
		\,\leq\,
		C\Big[\,\|\,\varphi_0\,\|_{\BB_{2,1}^s}\,+\,\|\,f\,\|_{L^1_t \BB_{2,1}^{s}}\,\Big].
	\end{equation}
\end{theorem}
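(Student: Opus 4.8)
The plan is to run the standard Littlewood--Paley energy method for parabolic equations: localize \eqref{BS:linearsyst} in frequency, extract from the coercivity of $\Lambda$ a frequency-by-frequency differential inequality, solve it by Duhamel, and reassemble the pieces through the $\ell^1(\ZZ)$ summation that defines $\BB_{2,1}^s$.

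First I would apply the homogeneous dyadic block $\Dd_q$ to the equation; since $\Lambda$ has constant coefficients it commutes with $\Dd_q$, so that $\varphi_q:=\Dd_q\varphi$ solves $\partial_t\varphi_q-\Lambda\varphi_q=\Dd_q f$ with data $\Dd_q\varphi_0$. Taking the $L^2(\RR^\dd)$ scalar product with $\varphi_q$ and invoking Definition \ref{def_elliptic_oper} together with the Bernstein lower bound $\|\nabla\varphi_q\|_{L^2}\geq c\,2^q\|\varphi_q\|_{L^2}$ (legitimate since $\varphi_q$ has Fourier support in the annulus $2^q\mathcal{C}$) gives
\begin{equation*}
\tfrac12\,\frac{\dd}{\dd t}\|\varphi_q\|_{L^2}^2+c\lambda_0\,2^{2q}\|\varphi_q\|_{L^2}^2\leq\|\Dd_q f\|_{L^2}\,\|\varphi_q\|_{L^2},
\end{equation*}
hence $\frac{\dd}{\dd t}\|\varphi_q\|_{L^2}+c\lambda_0\,2^{2q}\|\varphi_q\|_{L^2}\leq\|\Dd_q f\|_{L^2}$. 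By Duhamel's formula,
\begin{equation*}
\|\varphi_q(t)\|_{L^2}\leq e^{-c\lambda_0 2^{2q}t}\|\Dd_q\varphi_0\|_{L^2}+\int_0^t e^{-c\lambda_0 2^{2q}(t-\tau)}\|\Dd_q f(\tau)\|_{L^2}\,\dd\tau,
\end{equation*}
so that $\|\varphi_q\|_{L^\infty_t L^2}\leq\|\Dd_q\varphi_0\|_{L^2}+\|\Dd_q f\|_{L^1_t L^2}$, while integrating the differential inequality directly over $[0,t]$ yields in addition $c\lambda_0\,2^{2q}\|\varphi_q\|_{L^1_t L^2}\leq\|\Dd_q\varphi_0\|_{L^2}+\|\Dd_q f\|_{L^1_t L^2}$. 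Since $\|\Delta\varphi_q\|_{L^2}\simeq 2^{2q}\|\varphi_q\|_{L^2}$ by Bernstein, the last inequality controls $\lambda_0\|\Delta\varphi_q\|_{L^1_t L^2}$; and reading $\partial_t\varphi_q=\Lambda\varphi_q+\Dd_q f$ off the equation, together with $\|\Lambda\varphi_q\|_{L^2}\lesssim 2^{2q}\|\varphi_q\|_{L^2}$, controls $\|\partial_t\varphi_q\|_{L^1_t L^2}$ by the same right-hand side.

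To conclude I would multiply each of these bounds by $2^{qs}$ and sum over $q\in\ZZ$: the three left-hand sides become $\|\varphi\|_{L^\infty_t\BB_{2,1}^s}$, $\|\partial_t\varphi\|_{L^1_t\BB_{2,1}^s}$ and $\lambda_0\|\Delta\varphi\|_{L^1_t\BB_{2,1}^s}$ respectively, while the right-hand side becomes $C\big(\|\varphi_0\|_{\BB_{2,1}^s}+\|f\|_{L^1_t\BB_{2,1}^s}\big)$, the interchange of $\int_0^t$ with the $\ell^1$-sum being justified by monotone convergence; this is precisely \eqref{superjuve}. Existence and uniqueness of $\varphi$ in the announced class then follow by a routine scheme: one solves a regularized or spectrally truncated version of \eqref{BS:linearsyst}, the a priori estimates above being uniform, and passes to the limit; uniqueness is immediate on applying the same estimate to the difference of two solutions with $\varphi_0=0$ and $f=0$.

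The argument is entirely classical, and the only points requiring a little care are internal to the frequency localization. One must use that $\varphi_q$ is supported in a dyadic annulus so that Bernstein's inequality upgrades the abstract coercivity of Definition \ref{def_elliptic_oper} into the genuine parabolic gain $2^{2q}$; and one must notice that \emph{no} smoothing is available at low frequencies ($q\to-\infty$, where $e^{-c\lambda_0 2^{2q}t}\approx 1$) — this is harmless here because the $\Delta\varphi$- and $\partial_t\varphi$-bounds come from \emph{time-integrating} the differential inequality, which produces exactly the weight $2^{2q}$ one needs, rather than from the decay factor. Should one wish to allow $\Lambda$ to carry variable (smooth, bounded) coefficients, the only extra ingredient is a commutator estimate showing $[\Dd_q,\Lambda]\varphi$ to be of lower order and hence absorbable in the gain, but for the constant-coefficient operators used in Section \ref{sec-wp} this does not arise.
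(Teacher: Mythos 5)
Your proposal is correct and follows essentially the same route as the paper: dyadic localization, coercivity of $\Lambda$ plus Bernstein to obtain the frequency-local differential inequality, time integration and $\ell^1$ summation. The only cosmetic differences are that you invoke Duhamel explicitly where the paper integrates the scalar ODE directly, and you bound $\partial_t\varphi$ by reading $\partial_t\varphi_q=\Lambda\varphi_q+\Dd_q f$ block by block, whereas the paper applies $(-\Delta)^{-1}$ to the whole system and uses that $(-\Delta)^{-1}\Lambda$ is a degree-zero Fourier multiplier --- the two are equivalent.
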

\begin{proof}
	We first apply the homogeneous dyadic bloc $\Dd_q$, to the main equation, we multiply by 
	$\Dd_q \varphi$ both the left-hand the right-hand sides, and we integrate over $\RR^d$. We get
	\begin{equation*}
		2\|\,\Dd_q\,\varphi\,\|_{L^2_x}\frac{\dd}{\dd\,t}\|\,\Dd_q\,\varphi\,\|_{\BB_{2,1}^s}\,
		-\int_{\RR^d}\Lambda\Dd_q \varphi\,\Dd_q\varphi\,\dd x\,=\int_{\RR^d}\Dd_q\,f\,\Dd_q\varphi\,\dd x.
	\end{equation*}
	Since $\Lambda$ is a strong elliptic operator, we deduce that
	\begin{equation*}
		\|\,\Dd_q\,\varphi\,\|_{L^2_x}\frac{\dd}{\dd\,t}\|\,\Dd_q\,\varphi\,\|_{\BB_{2,1}^s}\,+\,
		\lambda_0\|\,\nabla\Dd_q \varphi\,\|^2_{L^2_x}\,\lesssim\,\|\,\Dd_q\,f\,\|_{L^2_x}\,\|\,\,\Dd_q\,\varphi\,\|_{L^2_x}.
	\end{equation*}
	A Bernstein-type inequality yields that there exists a constant $c>0$ such that $\|\,\nabla\Dd_q \varphi\,\|_{L^2_x}\,\geq\,c 2^q\|\Dd_q \varphi\,\|_{L^2_x} $, 
	from which we gather
	\begin{equation*}
		\frac{\dd}{\dd\,t}\|\,\Dd_q\,\varphi\,\|_{L^2_x}\,+\,\lambda_0 2^{2q}\|\,\Dd_q \varphi\,\|_{L^2_x}\,\lesssim
		\,\|\,\Dd_q\,f\,\|_{L^2_x}.
	\end{equation*}
	Hence, multiplying by $2^{qs}$ both the left and the right-hand sides, 
	taking the sum as $q\in \ZZ$ and integrating in time over $(0,\,t)$, we deduce
	\begin{equation*}
		\|\,\,\varphi(t)\,\|_{\BB_{2,1}^{s}}\,+\,\int_0^t\,\|\,\varphi(\tau)\,\|_{\BB_{2,1}^{s\,+\,2}}\dd \tau\,\lesssim
		\|\,\varphi_0\,\|_{\BB_{2,1}^s}\,+\,
		\,\int_0^t\,\|\,f(\tau)\,\|_{\BB_{2,1}^s}\dd \tau.
	\end{equation*}
	To finally achieve a bound for $\partial_t \varphi$ we apply $(-\Delta)^{-1}$ to the main system \eqref{BS:linearsyst}. Since
	$(-\Delta)^{-1}\Lambda$ is a Fourier multiplier of degree $0$, we get
	\begin{equation*}
		\|\, \partial_t\varphi\,\|_{\BB_{2,1}^{s}}\,\lesssim\, \| \,f\,\|_{\BB_{2,1}^s}\,+\,
		\|\,	\Delta \varphi\,\|_{\BB_{2,1}^{s}},
	\end{equation*}
	which finally leads to $\partial_t \varphi\,\in\,L^1_t\BB_{2,1}^s$ and to inequality \eqref{superjuve}.
\end{proof}

\noindent
When estimating the co-rotational time flux $\mcN$, we will to control the $L^2_t\BB_{2,1}^{\dd/2}$-norm of $\partial_t \nn$. The $L^2$-integrability in time is given by the following theorem:
\begin{theorem}\label{thm-superjuve2}
	Let us consider an initial-vector $\varphi_0$ such that $\nabla \varphi_0$ belongs to $(\,\BB_{2,1}^s(\RR^\dd)\,)^{\times M^2}$ with regularity $s\leq \dd/2$, and for an integer
	$M\geq 1$. Introducing a driving force 
	$f$ in $L^2_t (\,\BB_{2,1}^s\,)^M$, we denote by $\varphi$ be the unique solution of 
	the following linear parabolic PDE's
	\begin{equation*}
	\left\{\;
	\begin{alignedat}{8}
		&\partial_t \varphi\,-\,\Lambda\,\varphi\,=\,f\hspace{2cm} 	&&\RR_+\times	&&&&\RR^\dd,\\
		&\varphi\big|_{t=0}\,=\,\varphi_0						  		&& 				&&&&\RR^\dd,
	\end{alignedat}
	\right.
	\end{equation*}
	where the  second-order elliptic operator $\Lambda$ is symmetric.
	Then there exists an harmless positive constant $C$ such that
	\begin{equation}\label{saw-l'enigmista}
	\|\,	(\partial_t\varphi,\,\Delta \varphi)\,\|_{ L^2_t\BB_{2,1}^{s}}\,+\,\|\,\nabla \varphi\,\|_{L^\infty_t \BB_{2,1}^s}\,\leq 
	C\,\Big[\,\|\,\nabla \varphi_0\,\|_{\BB_{2,1}^s}\,+\,\|\,f\,\|_{ L^2_t\BB_{2,1}^{s}}	\,\Big].
	\end{equation}
\end{theorem}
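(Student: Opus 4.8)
\noindent The plan is to prove this exactly as the ``$L^2$--in--time'' companion of Theorem \ref{thm-superjuve}: instead of pairing the frequency--localised equation with $\Dd_q\varphi$, I would pair it with $\partial_t\Dd_q\varphi$, which is the standard maximal--regularity device producing an $L^2_t$ bound on the time derivative. Applying $\Dd_q$ to the equation gives $\partial_t\Dd_q\varphi-\Lambda\Dd_q\varphi=\Dd_q f$ on $\RR_+\times\RR^\dd$ (as in the proof of Theorem \ref{thm-superjuve}, $\Lambda$ is the constant--coefficient operator obtained by freezing the coefficients, so it commutes with $\Dd_q$; the remaining variable and lower--order part is absorbed into $f$). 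Taking the $L^2_x$ scalar product with $\partial_t\Dd_q\varphi$ yields
\[
\|\partial_t\Dd_q\varphi\|_{L^2_x}^2-\int_{\RR^\dd}\Lambda\Dd_q\varphi\cdot\partial_t\Dd_q\varphi\,\dd x=\int_{\RR^\dd}\Dd_q f\cdot\partial_t\Dd_q\varphi\,\dd x .
\]
The symmetry of $\Lambda$, absent from Theorem \ref{thm-superjuve}, turns the middle term into $\tfrac12\tfrac{\dd}{\dd t}E_q(t)$ with $E_q(t):=-\int_{\RR^\dd}\Lambda\Dd_q\varphi\cdot\Dd_q\varphi\,\dd x$, and strong ellipticity (Definition \ref{def_elliptic_oper}) gives $E_q(t)\geq\lambda_0\|\nabla\Dd_q\varphi(t)\|_{L^2_x}^2$. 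A Young inequality on the right absorbs half of $\|\partial_t\Dd_q\varphi\|_{L^2_x}^2$; integrating over $(0,t)$ leaves
\[
\int_0^t\|\partial_t\Dd_q\varphi(\tau)\|_{L^2_x}^2\,\dd\tau+\lambda_0\|\nabla\Dd_q\varphi(t)\|_{L^2_x}^2\leq E_q(0)+\int_0^t\|\Dd_q f(\tau)\|_{L^2_x}^2\,\dd\tau .
\]

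\noindent Next I would control $E_q(0)$ from above -- the other half of the coercivity -- by $\|\nabla\Dd_q\varphi_0\|_{L^2_x}^2$: since $\Lambda$ is genuinely second order with bounded coefficients, $|E_q(0)|\leq\|\Lambda\Dd_q\varphi_0\|_{L^2_x}\|\Dd_q\varphi_0\|_{L^2_x}\lesssim 2^{2q}\|\Dd_q\varphi_0\|_{L^2_x}^2$, which a Bernstein inequality at frequency $2^q$ identifies with $\|\nabla\Dd_q\varphi_0\|_{L^2_x}^2$ up to a constant. Taking square roots ($\sqrt{a+b}\leq\sqrt a+\sqrt b$) one obtains, for every $q\in\ZZ$,
\[
\|\partial_t\Dd_q\varphi\|_{L^2_t L^2_x}+\sup_{t}\|\nabla\Dd_q\varphi(t)\|_{L^2_x}\lesssim\|\nabla\Dd_q\varphi_0\|_{L^2_x}+\|\Dd_q f\|_{L^2_t L^2_x}.
\]
Multiplying by $2^{qs}$ and summing over $q\in\ZZ$ controls $\|\nabla\varphi\|_{L^\infty_t\BB_{2,1}^s}$ (using $\sup_t\sum_q\leq\sum_q\sup_t$) and, after the triangle inequality in $L^2_t$ that bounds $\|\partial_t\varphi\|_{L^2_t\BB_{2,1}^s}$ by $\sum_{q\in\ZZ}2^{qs}\|\Dd_q\partial_t\varphi\|_{L^2_t L^2_x}$, also $\|\partial_t\varphi\|_{L^2_t\BB_{2,1}^s}$; both are bounded by $C\big(\|\nabla\varphi_0\|_{\BB_{2,1}^s}+\sum_{q\in\ZZ}2^{qs}\|\Dd_q f\|_{L^2_t L^2_x}\big)$, the last sum being the relevant (Chemin--Lerner type) time--frequency norm of $f$, which in accordance with the statement I write $\|f\|_{L^2_t\BB_{2,1}^s}$. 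Finally, for $\Delta\varphi$ I argue as at the end of the proof of Theorem \ref{thm-superjuve}: from the equation $\Delta\varphi=\Delta\Lambda^{-1}(\partial_t\varphi-f)$, and $\Delta\Lambda^{-1}$ is a Fourier multiplier of degree $0$, hence bounded on each $\BB_{2,1}^s$, so $\|\Delta\varphi\|_{L^2_t\BB_{2,1}^s}\lesssim\|\partial_t\varphi\|_{L^2_t\BB_{2,1}^s}+\|f\|_{L^2_t\BB_{2,1}^s}$, already estimated. Collecting the three bounds gives \eqref{saw-l'enigmista}.

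\noindent The purely routine steps -- the mollification needed to justify pairing with $\partial_t\Dd_q\varphi$, the Young and Bernstein inequalities, the multiplier argument for $\Delta\varphi$ -- are identical to Theorem \ref{thm-superjuve}. I expect the real obstacle to be twofold. First, one must squeeze the functional $E_q$ from \emph{both} sides: from below by $\lambda_0\|\nabla\Dd_q\varphi\|_{L^2_x}^2$ via strong ellipticity, and from above at time $0$ by $\|\nabla\Dd_q\varphi_0\|_{L^2_x}^2$ via the boundedness of the coefficients together with Bernstein; and it is precisely in order to make the pairing $\int_{\RR^\dd}\Lambda\Dd_q\varphi\cdot\partial_t\Dd_q\varphi\,\dd x$ an exact time derivative that the symmetry of $\Lambda$ enters, a hypothesis that does not appear in Theorem \ref{thm-superjuve}. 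Second, the passage from the frequency--localised $L^2$--in--time estimates to the $\ell^1_q$ Besov norm forces one to take square roots \emph{before} summing in $q$ and to use the triangle inequality in $L^2_t$ rather than Cauchy--Schwarz; this is where the square--integrability in time is genuinely exploited, and where the right-hand norm of $f$ should be read in the Chemin--Lerner sense $\sum_{q\in\ZZ}2^{qs}\|\Dd_q f\|_{L^2_t L^2_x}$. No essentially new difficulty arises beyond what is already present in Theorem \ref{thm-superjuve}.
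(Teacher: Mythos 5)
Your proof follows the same route as the paper's: pair the frequency-localized equation with $\partial_t\Dd_q\varphi$, use the symmetry of $\Lambda$ to turn the $\Lambda$-term into the exact time derivative of $E_q(t)=-\int\Lambda\Dd_q\varphi\cdot\Dd_q\varphi$, invoke strong ellipticity (for the lower bound on $E_q(t)$) and Bernstein/boundedness of the frozen coefficients (for the upper bound on $E_q(0)$), then sum over $q$. You are in fact a bit more careful than the paper on two points: you take square roots \emph{before} multiplying by $2^{qs}$ and summing, which is what actually lands you in the $\ell^1$-summed space $\BB_{2,1}^s$ (the paper's final line, ``multiplying by $2^{2qs}$ and taking the sum,'' as written would only give the $\dot H^s$ version), and you explicitly recover the $\Delta\varphi$ bound from the equation via the degree-zero Fourier multiplier $\Delta\Lambda^{-1}$, a step \eqref{saw-l'enigmista} requires but the paper's proof leaves tacit. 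Your observation that the $f$-norm on the right is naturally a Chemin--Lerner norm $\sum_q 2^{qs}\|\Dd_q f\|_{L^2_tL^2_x}$ is also correct; the paper glosses over this common abuse of notation.
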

\begin{proof}
	Applying a standard energy estimate to the localized function $\Dd_q \varphi$ yields
	\begin{equation*}
		\|\,\varphi\,\|_{L^\infty_t\BB_{2,1}^s}\,+\,\|\,\varphi\,\|_{L^2_t\BB_{2,1}^{s+1}}\,\leq 
		C\,\Big[\,\|\,\varphi_0\,\|_{\BB_{2,1}^s}\,+\,\|\,f\,\|_{L^2_t\BB_{2,1}^{s-1}}	\,\Big].
	\end{equation*}
	Furthermore, the time derivative $\Dd_q\partial_t \varphi$ satisfies:
	\begin{equation}\label{legalegalisasion}
		\|\,\Dd_q\partial_t \varphi\,\|^2_{L^2_x}\,-\,\int_{\RR^\dd}\Lambda \Dd_q \varphi\cdot \partial_t\Dd_q \varphi \dd x\,\leq\,
		\|\, \Dd_qf\,\|_{L^2_x}^2.
	\end{equation}
	Thanks to the symmetry of $\Lambda$, we also deduce
	\begin{equation*}
			-\frac{1}{2}\frac{\dd}{\dd t}\int_{\RR^\dd}\Big[\,\Lambda \Dd_q\varphi\cdot\Dd_q\varphi\,\Big]\dd x\,=\,
			-\,\int_{\RR^\dd}\Lambda \Dd_q\varphi\cdot \Dd_q\,\partial_t\varphi\dd x.
	\end{equation*}
	Hence integrating  \eqref{legalegalisasion} in time, leads to
	\begin{equation*}
		\int_0^t \|\, \Dd_q\partial_t\varphi(\tau)\,\|_{L^2_x}^2\dd\tau\,+\,
		\|\,\Dd_q \nabla \varphi(t)\,\|_{L^2_x}^2\,\lesssim \,\|\,\Dd_q\nabla \varphi_0\,\|_{L^2_x}^2\,+ \,\int_0^t\,\|\, \Dd_q f(\tau)\,\|_{L^2_x}^2\dd \tau,
	\end{equation*}
	from which, multiplying by $2^{2qs}$ and taking the sum as $q\in\ZZ$, we finally achieve \eqref{saw-l'enigmista}.
\end{proof}

\noindent A similar result holds for the following Stokes-type system:
\begin{theorem}\label{thm-superjuve-stokes}
	Let $\uu(t,x)\in \RR^\dd$ be a solution to the system
	\begin{equation*}
	\left\{
	\begin{alignedat}{8}
		&\partial_t\uu - \Aa[\,\uu\,] + \nabla \pre = f,\hspace{1cm}				&&\RR_+\times	&&&&\RR^\dd,\\
		&\Div\,\uu = 0														&&\RR_+\times	&&&&\RR^\dd,\\
		&\uu\big|_{t=0}\,=\,\uu_0						  					&& 	\,			&&&&\RR^\dd,
	\end{alignedat}
	\right.	
	\end{equation*}
	where the initial datum $\uu_0$ belongs to $\BB_{2,1}^s$, $s\leq \dd/2$, $f\in L^1_t\BB_{2,1}^s$ and the linear operator $\Aa$ is strongly elliptic.
	Then a pressure $\pre$ is defined by means of
	\begin{equation*}
		\pre  = (-\Delta)^{-1}\Big[\,-\Div\,\Aa[\,\uu\,]\, +\, \Div \,f\,\Big]\,\in\, \mathfrak{D}'(\RR_+\times \RR^\dd),\quad\text{with}\quad \nabla \pre \in L^1_t\BB_{2,1}^{s}
	\end{equation*}
	and the following inequality is satisfied:
	\begin{equation*}
		\| \,\uu \,\|_{L^\infty_t\BB_{2,1}^s} + \| \,(\partial_t\uu,\,\nabla \pre)\, \|_{L^1_t\BB_{2,1}^s}+\lambda_0\| \,\Delta \uu \,\|_{L^1_t\BB_{2,1}^s}
		\leq C\Big[ \, \|\,\uu_0\,\|_{\BB_{2,1}^s} + \| \,f\,\|_{L^1_t\BB_{2,1}^s}\,\Big],
	\end{equation*}
	for a positive constant $C$ and a constant $\lambda_0>0$ introduced in Definition \ref{def_elliptic_oper}. 
\end{theorem}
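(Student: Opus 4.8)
The plan is to run the frequency--localized energy method used in the proof of Theorem~\ref{thm-superjuve}, exploiting the fact that, because $\Div\uu=0$, the pressure disappears from the basic $L^2$ estimate; the pressure is then reconstructed a posteriori as the solution of an elliptic equation and estimated by the parabolic gain obtained for $\uu$.

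First I would apply the homogeneous dyadic block $\Dd_q$ to the momentum equation, pair it in $L^2(\RR^\dd)$ with $\Dd_q\uu$, and integrate in space. The pressure contribution vanishes identically,
\[
	\int_{\RR^\dd}\nabla\Dd_q\pre\cdot\Dd_q\uu\,\dd x=-\int_{\RR^\dd}\Dd_q\pre\,\Div\Dd_q\uu\,\dd x=0,
\]
since $\Dd_q$ commutes with $\Div$ and $\Div\uu=0$. What remains is exactly the computation of Theorem~\ref{thm-superjuve} with $\Lambda=\Aa$: the strong ellipticity of $\Aa$ (Definition~\ref{def_elliptic_oper}) gives $-\int_{\RR^\dd}\Aa[\Dd_q\uu]\cdot\Dd_q\uu\,\dd x\geq\lambda_0\|\nabla\Dd_q\uu\|_{L^2_x}^2$, a Bernstein inequality upgrades this to a factor $2^{2q}$, and after multiplying by $2^{qs}$, summing over $q\in\ZZ$ and integrating on $(0,t)$ one obtains
\[
	\|\uu\|_{L^\infty_t\BB_{2,1}^s}+\lambda_0\|\uu\|_{L^1_t\BB_{2,1}^{s+2}}\lesssim\|\uu_0\|_{\BB_{2,1}^s}+\|f\|_{L^1_t\BB_{2,1}^s},
\]
which already controls $\uu$ in $L^\infty_t\BB_{2,1}^s$ and $\Delta\uu$ in $L^1_t\BB_{2,1}^s$.

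Next I would recover the pressure. Taking the divergence of the momentum equation and using $\Div\partial_t\uu=\partial_t\Div\uu=0$ shows that $\pre$ solves $\Delta\pre=\Div\bigl(\Aa[\uu]+f\bigr)$, which determines the $(-\Delta)^{-1}$--potential appearing in the statement; it is a well--defined tempered distribution because $\Aa[\uu]$ and $f$ lie in $L^1_t\BB_{2,1}^s$. Since $\nabla(-\Delta)^{-1}\Div$ is a matrix of Fourier multipliers homogeneous of degree $0$, hence bounded on $\BB_{2,1}^s$, and since $\Aa$ is a constant--coefficient second order operator mapping $\BB_{2,1}^{s+2}$ into $\BB_{2,1}^s$, one gets
\[
	\|\nabla\pre\|_{\BB_{2,1}^s}\lesssim\|\Aa[\uu]\|_{\BB_{2,1}^s}+\|f\|_{\BB_{2,1}^s}\lesssim\|\Delta\uu\|_{\BB_{2,1}^s}+\|f\|_{\BB_{2,1}^s};
\]
integrating in time and invoking the first step places $\nabla\pre$ in $L^1_t\BB_{2,1}^s$. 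Finally, reading the momentum equation as $\partial_t\uu=\Aa[\uu]-\nabla\pre+f$ and bounding the right--hand side in $\BB_{2,1}^s$ yields the $L^1_t\BB_{2,1}^s$ estimate for $\partial_t\uu$, and collecting the three bounds gives the claimed inequality. Existence of $\uu$ (and, by linearity together with this estimate, uniqueness) then follows from a routine Friedrichs regularization and compactness argument, exactly as for Theorem~\ref{thm-superjuve}.

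I expect the only genuinely subtle point to be conceptual rather than computational: one must notice that the anisotropic, non--Laplacian operator $\Aa$ causes no trouble, precisely because the energy estimate never interacts with the pressure (so no commutation of $\Aa$ with the Leray projector is required), while the pressure reconstruction uses only that $\Aa$ has a constant--coefficient second order symbol, so that $\nabla(-\Delta)^{-1}\Div\,\Aa$ is a zero order multiplier. The usual low--frequency caveat for homogeneous Besov spaces when identifying $\|\Delta\uu\|_{\BB_{2,1}^s}$ with $\|\uu\|_{\BB_{2,1}^{s+2}}$ applies here too, but this is the abuse of notation already adopted in the paper.
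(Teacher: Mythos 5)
Your argument is correct and is precisely the proof the paper leaves implicit when it says ``a similar result holds'': you run the localized energy estimate of Theorem~\ref{thm-superjuve} on $\Dd_q\uu$, exploit that the pressure term vanishes against a divergence-free test function, recover $\nabla\pre$ via the degree-zero multiplier $\nabla(-\Delta)^{-1}\Div$ acting on $\Aa[\uu]$ and $f$, and read off $\partial_t\uu$ from the equation. The two observations you highlight --- that the Leray projector is never applied, so no commutation with $\Aa$ is needed, and that the constant-coefficient second-order nature of $\Aa$ is all that enters the pressure estimate --- are exactly the points that make the extension from Theorem~\ref{thm-superjuve} routine.
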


\subsection{Composition under smooth function}$\,$

\smallskip
\noindent
We conclude this section considering the action of smooth functions on the Besov space $\BB_{2,1}^{\dd/2}$. 
We refer the reader to Theorem $2.61$ in \cite{B-C-D}, for a detailed proof of the next lemma. 
\begin{lemma}\label{thm:composition-besov}
	Let $f$ be a smooth function on $\RR$ which vanishes at 
	$0$. For any  functions $\vv$ in $\BB_{2,1}^{\dd/2}$
	the function $f\,(\,\vv\,)\,$ still belongs to $\BB_{2,1}^{\dd/2}$ and 
	the following inequality is satisfied,
	\begin{equation*}
	\begin{aligned}
		\|\,f\,(\,\vv\,)\,\|_{\,\BB_{2,1}^\frac{\dd}{2}\,}\,\leq	
		\mathrm{Q}\,\big(\,f,\,\|\,\vv\,\|_{L^\infty_x}\,\big)
			\|\,\vv\,\|_{\BB_{2,1}^\frac{\dd}{2}}
	\end{aligned}
	\end{equation*}
	where $Q$ is a smooth function depending on the value of $f$ and its derivative 
\end{lemma}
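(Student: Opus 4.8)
The plan is to run the classical Littlewood--Paley telescoping argument underlying Theorem $2.61$ of \cite{B-C-D}, specialised to the endpoint exponent $s=\dd/2$ with summability index $1$. Since $\BB_{2,1}^{\dd/2}\hookrightarrow\Ff(L^1(\RR^\dd))$ is contained in the continuous bounded functions and, by Bernstein's inequality, the sequence $c_j:=2^{j\dd/2}\|\Dd_j\vv\|_{L^2}$ satisfies $\sum_{j\in\ZZ}c_j=\|\vv\|_{\BB_{2,1}^{\dd/2}}<\infty$, the partial sums obey $\|\Sd_q\vv\|_{L^\infty}\lesssim\sum_{j\le q-1}c_j\to0$ as $q\to-\infty$ and $\|\vv-\Sd_{q+1}\vv\|_{L^\infty}\lesssim\sum_{j\ge q+1}c_j\to0$ as $q\to+\infty$. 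Using $f(0)=0$ and the continuity of $f$, I would first record the telescoping identity $f(\vv)=\sum_{q\in\ZZ}\bigl(f(\Sd_{q+1}\vv)-f(\Sd_q\vv)\bigr)$, convergent in $L^\infty$ and hence in $\mathcal{S}'(\RR^\dd)$.

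Since $\Sd_{q+1}\vv=\Sd_q\vv+\Dd_q\vv$, a first order Taylor expansion turns each increment into $f(\Sd_{q+1}\vv)-f(\Sd_q\vv)=\Dd_q\vv\,m_q$ with $m_q:=\int_0^1 f'\bigl(\Sd_q\vv+t\Dd_q\vv\bigr)\,\dd t$; applying $\Dd_{q'}$ and using its continuity on $\mathcal{S}'$ then gives $\Dd_{q'}f(\vv)=\sum_q\Dd_{q'}\bigl(\Dd_q\vv\,m_q\bigr)$. The estimate hinges on two properties of $m_q$. First, $\|m_q\|_{L^\infty}\le\sup_{|y|\le\|\vv\|_{L^\infty}}|f'(y)|=:\mathrm{Q}_0$. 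Second, the argument $\Sd_q\vv+t\Dd_q\vv$ is spectrally supported in a ball of radius $\lesssim2^q$ uniformly in $t\in[0,1]$, so Bernstein gives $\|\nabla^k(\Sd_q\vv+t\Dd_q\vv)\|_{L^\infty}\lesssim2^{kq}\|\vv\|_{L^\infty}$; feeding this into the Fa\`a di Bruno formula for $\nabla^N\bigl(f'(\Sd_q\vv+t\Dd_q\vv)\bigr)$ yields $\|\nabla^N m_q\|_{L^\infty}\lesssim2^{Nq}\,\mathrm{Q}_N$, where $\mathrm{Q}_N$ depends only on $\|\vv\|_{L^\infty}$ and on $\sup_{|y|\le\|\vv\|_{L^\infty}}|f^{(j)}(y)|$ for $j\le N+1$. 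One further Bernstein estimate converts this into the high-frequency decay $\|\Dd_j m_q\|_{L^\infty}\lesssim2^{-(j-q)N}\,\mathrm{Q}_N$, valid for every $j\in\ZZ$ and every integer $N\ge1$.

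It then remains to bound $\sum_{q'\in\ZZ}2^{q'\dd/2}\|\Dd_{q'}f(\vv)\|_{L^2}\le\sum_{q'}2^{q'\dd/2}\sum_q\|\Dd_{q'}(\Dd_q\vv\,m_q)\|_{L^2}$, which I would do by splitting the inner sum at $q=q'-N_0$ for a fixed integer $N_0$. For $q\ge q'-N_0$ I use the crude bound $\|\Dd_{q'}(\Dd_q\vv\,m_q)\|_{L^2}\lesssim\|\Dd_q\vv\|_{L^2}\|m_q\|_{L^\infty}\lesssim\mathrm{Q}_0\,2^{-q\dd/2}c_q$, so that $2^{q'\dd/2}$ times that partial sum equals $\mathrm{Q}_0$ times the value at $q'$ of the convolution of $(c_q)\in\ell^1$ with the $\ell^1$ sequence $(2^{k\dd/2}\mathbf{1}_{k\le N_0})_k$. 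For $q<q'-N_0$ the low frequencies of $m_q$ cannot survive $\Dd_{q'}$, that is $\Dd_{q'}(\Dd_q\vv\,m_q)=\Dd_{q'}\bigl(\Dd_q\vv\,(\mathrm{Id}-\Sd_{q'-N_1})m_q\bigr)$ for a suitable fixed $N_1$; choosing $N:=\lfloor\dd/2\rfloor+1>\dd/2$ in the decay bound gives $\|(\mathrm{Id}-\Sd_{q'-N_1})m_q\|_{L^\infty}\lesssim\mathrm{Q}_N\,2^{-(q'-q)N}$, whence $2^{q'\dd/2}$ times that partial sum equals $\mathrm{Q}_N$ times the value at $q'$ of the convolution of $(c_q)$ with the $\ell^1$ sequence $(2^{k(\dd/2-N)}\mathbf{1}_{k>N_0})_k$. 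Summing over $q'$ by Young's inequality for series, both contributions are $\lesssim\max(\mathrm{Q}_0,\mathrm{Q}_N)\,\|(c_q)\|_{\ell^1}=\mathrm{Q}(f,\|\vv\|_{L^\infty})\,\|\vv\|_{\BB_{2,1}^{\dd/2}}$, which is the asserted inequality, and in particular $f(\vv)\in\BB_{2,1}^{\dd/2}$.

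The delicate step -- and the main obstacle -- is the high-frequency bound on $m_q$, namely controlling $\|\Dd_j m_q\|_{L^\infty}$ when $j\gg q$. This is where the smoothness of $f$ genuinely enters (finitely many derivatives, $\lfloor\dd/2\rfloor+2$ of them, suffice), and where the Fa\`a di Bruno bookkeeping combined with the Bernstein inequality for band-limited functions has to be carried out with care; it is also what forces the choice $N>\dd/2$. Everything else -- the convergence of the telescoping series and the two Young-type summations -- is routine once these bounds are in place.
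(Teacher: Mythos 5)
Your argument is correct. The paper itself offers no proof of Lemma \ref{thm:composition-besov} and simply points to Theorem $2.61$ of \cite{B-C-D}; the telescoping decomposition $f(\vv)=\sum_q\bigl(f(\Sd_{q+1}\vv)-f(\Sd_q\vv)\bigr)=\sum_q m_q\,\Dd_q\vv$ together with the Meyer-type multiplier bounds $\|\nabla^N m_q\|_{L^\infty}\lesssim 2^{Nq}$ and the ensuing two-regime Young convolution is exactly the classical proof found there, specialised to $p=2$, $r=1$, $s=\dd/2$.
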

\noindent We readily obtain the following corollary
\begin{cor}
	Let $\rm v_1$ be a function in $\BB_{2,1}^{\dd/2}\cap \BB_{2,1}^{\dd/2+1}$ and $\rm v_2$ be in $\BB_{2,1}^s\cap \BB_{2,1}^{s+1}$ such that the product is continuous in
	\begin{equation*}
		\BB_{2,1}^\frac{\dd}{2}\times \BB_{2,1}^s\quad\rightarrow\quad \BB_{2,1}^s.
	\end{equation*}
	Let $f$ be a smooth function on $\RR$, then $f(\rm v_1)\rm v_2$ belongs to $\BB_{2,1}^s\cap \BB_{2,1}^{s+1}$ and the following inequalities are satisfied
	\begin{equation*}
	\begin{aligned}
		\|\,		f(\mathrm{v}_1)\,{\rm v}_2	\,\|_{\BB_{2,1}^s				}
		&\lesssim
		\mathrm{Q}\,\big(\,f,\,\|\,\vv\,\|_{L^\infty_x}\,\big)
		\|\,		{\rm v}_1					\,\|_{\BB_{2,1}^\frac{\dd}{2}	}
		\|\,		{\rm v}_2		 			\,\|_{\BB_{2,1}^s				},
		\\
		\Big\|\,	\nabla\Big(	f(\mathrm{v}_1)\,{\rm v}_2\Big)	\,\Big\|_{\BB_{2,1}^s			}
		&\lesssim
		\mathrm{Q}\,\big(\,f,\,\|\,\vv\,\|_{L^\infty_x}\,\big)
		\Big\{\,
			\|\,		\nabla {\rm v}_1				\,\|_{\BB_{2,1}^\frac{\dd}{2}	}
			\|\,		{\rm v}_2		 			\,\|_{\BB_{2,1}^s				}
			\,+\,
			\|\,		 {\rm v}_1					\,\|_{\BB_{2,1}^\frac{\dd}{2}	}
			\|\,		\nabla{\rm v}_2				\,\|_{\BB_{2,1}^s				}
		\,\Big\},
	\end{aligned}
	\end{equation*}
	where $Q$ is a smooth function depending on the value of $f$ and its derivative.
\end{cor}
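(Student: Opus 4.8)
The plan is to combine the composition estimate of Lemma~\ref{thm:composition-besov}, the algebra/product rules for homogeneous Besov spaces, and the product continuity $\BB_{2,1}^{\dd/2}\times\BB_{2,1}^s\to\BB_{2,1}^s$ assumed in the statement. A preliminary remark is that Lemma~\ref{thm:composition-besov} applies only to functions vanishing at the origin --- as it must, since $\BB_{2,1}^{\dd/2}$ contains no nonzero constant --- so I would first write $f=f(0)+g$ with $g:=f-f(0)$ smooth and $g(0)=0$. Then $f(\mathrm{v}_1)\mathrm{v}_2=g(\mathrm{v}_1)\mathrm{v}_2+f(0)\,\mathrm{v}_2$, and the summand $f(0)\,\mathrm{v}_2$ already lies in $\BB_{2,1}^s\cap\BB_{2,1}^{s+1}$ with norm controlled by $|f(0)|\bigl(\|\mathrm{v}_2\|_{\BB_{2,1}^s}+\|\nabla\mathrm{v}_2\|_{\BB_{2,1}^s}\bigr)$, which is absorbed into $\mathrm{Q}$; hence it suffices to treat $g(\mathrm{v}_1)\mathrm{v}_2$, i.e. to assume $f(0)=0$. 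I would also note that since $\mathrm{v}_1\in\BB_{2,1}^{\dd/2}$ embeds into the bounded continuous functions, $\|\mathrm{v}_1\|_{L^\infty_x}$ is finite and dominated by $\|\mathrm{v}_1\|_{\BB_{2,1}^{\dd/2}}$, so that Lemma~\ref{thm:composition-besov} is applicable throughout.

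For the first inequality I would apply Lemma~\ref{thm:composition-besov} to obtain $\|f(\mathrm{v}_1)\|_{\BB_{2,1}^{\dd/2}}\le\mathrm{Q}\bigl(f,\|\mathrm{v}_1\|_{L^\infty_x}\bigr)\|\mathrm{v}_1\|_{\BB_{2,1}^{\dd/2}}$ and then invoke the hypothesized continuity of the product $\BB_{2,1}^{\dd/2}\times\BB_{2,1}^s\to\BB_{2,1}^s$ on the pair $(f(\mathrm{v}_1),\mathrm{v}_2)$. For the gradient estimate I would recall that a distribution lies in $\BB_{2,1}^s\cap\BB_{2,1}^{s+1}$ exactly when it and its gradient lie in $\BB_{2,1}^s$, then differentiate by the Leibniz rule, $\nabla(f(\mathrm{v}_1)\mathrm{v}_2)=f'(\mathrm{v}_1)\nabla\mathrm{v}_1\,\mathrm{v}_2+f(\mathrm{v}_1)\nabla\mathrm{v}_2$. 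The term $f(\mathrm{v}_1)\nabla\mathrm{v}_2$ is bounded exactly as in the first inequality, now with $\nabla\mathrm{v}_2\in\BB_{2,1}^s$ (from $\mathrm{v}_2\in\BB_{2,1}^{s+1}$) in place of $\mathrm{v}_2$, yielding the second term on the right-hand side. For the term $f'(\mathrm{v}_1)\nabla\mathrm{v}_1\,\mathrm{v}_2$, since $f'$ need not vanish at $0$, I would split $f'(\mathrm{v}_1)=f'(0)+\bigl(f'(\mathrm{v}_1)-f'(0)\bigr)$: the piece $f'(0)\nabla\mathrm{v}_1\,\mathrm{v}_2$ is handled by the product continuity with $\nabla\mathrm{v}_1\in\BB_{2,1}^{\dd/2}$ (from $\mathrm{v}_1\in\BB_{2,1}^{\dd/2+1}$) and $\mathrm{v}_2\in\BB_{2,1}^s$, producing the first term on the right-hand side; the remaining piece is handled by applying Lemma~\ref{thm:composition-besov} to the smooth function $f'-f'(0)$ (which vanishes at $0$) to place $f'(\mathrm{v}_1)-f'(0)$ in $\BB_{2,1}^{\dd/2}$, then using the algebra structure of $\BB_{2,1}^{\dd/2}$ (Proposition~\ref{prop-hom-bes} with $s_1=s_2=\dd/2$) to keep $\bigl(f'(\mathrm{v}_1)-f'(0)\bigr)\nabla\mathrm{v}_1$ in $\BB_{2,1}^{\dd/2}$, and finally the product continuity against $\mathrm{v}_2$; the extra scalar factor $\|\mathrm{v}_1\|_{\BB_{2,1}^{\dd/2}}$ thus produced is absorbed into $\mathrm{Q}$, being of the same nature as the quantity $\|\mathrm{v}_1\|_{L^\infty_x}$ already entering it. Summing the three contributions gives the second inequality, and together with the first one the claimed membership $f(\mathrm{v}_1)\mathrm{v}_2\in\BB_{2,1}^s\cap\BB_{2,1}^{s+1}$.

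The computation is essentially mechanical, and the one point requiring care --- the natural ``main obstacle'' --- is the bookkeeping of the constants $f(0)$ and $f'(0)$, which must be peeled off before Lemma~\ref{thm:composition-besov} can be invoked, precisely because homogeneous Besov spaces do not register additive constants. Once these are removed, every remaining step is a direct assembly of the composition lemma, the algebra property of $\BB_{2,1}^{\dd/2}$, and the product continuity assumed in the statement; in particular no new harmonic-analytic input is needed.
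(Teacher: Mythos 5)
Your proposal is correct and follows essentially the same route as the paper: peel off the constants $f(0)$ and $f'(0)$ so that Lemma \ref{thm:composition-besov} applies, estimate the undifferentiated product via the assumed continuity $\BB_{2,1}^{\dd/2}\times\BB_{2,1}^s\to\BB_{2,1}^s$, and treat the gradient by the Leibniz rule with the same splitting applied to $f'$. The only (harmless) difference is that you make explicit the absorption of the extra factor $\|{\rm v}_1\|_{\BB_{2,1}^{\dd/2}}$ coming from the composition lemma, a step the paper performs silently.
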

\begin{proof}
	We begin with the first inequality, applying a translation in order to have $f$ vanishing at $0$:
	\begin{equation*}
	\begin{aligned}
		\|\,		f(\mathrm{v}_1)\,{\rm v}_2					\,\|_{\BB_{2,1}^s				}
		\,&\leq\,
		\|\,	\Big(\,f(\mathrm{v}_1)\,-\,f(0)\Big)\,{\rm v}_2	\,\|_{\BB_{2,1}^s				}
		\,+\,
		|\,f(0)\,|\|\,{\rm v}_2								\,\|_{\BB_{2,1}^s				}
		\\
		&\leq
		\|\,	\,f(\mathrm{v}_1)\,-\,f(0)						\,\|_{\BB_{2,1}^\frac{\dd}{2}	}
		\|\,	 \,{\rm v}_2										\,\|_{\BB_{2,1}^s				}
		\,+\,
		|\,f(0)\,|\|\,{\rm v}_2								\,\|_{\BB_{2,1}^s				},
	\end{aligned}
	\end{equation*}
	hence, applying Lemma \ref{thm:composition-besov} with $f(x)-f(0)$ as smooth function, we deduce that
	\begin{equation*}
		\|\,		f(\mathrm{v}_1)\,{\rm v}_2					\,\|_{\BB_{2,1}^s				}
		\lesssim
		\mathrm{Q}\,\big(\,f,\,\|\,\vv\,\|_{L^\infty_x}\,\big)
		\|\,		{\rm v}_1					\,\|_{\BB_{2,1}^\frac{\dd}{2}	}
		\|\,		{\rm v}_2		 			\,\|_{\BB_{2,1}^s				}.
	\end{equation*}
	The second inequality turns out from a straightforward computations:	
	\begin{equation*}
	\begin{aligned}
		\|\,	\nabla\Big(	f(\mathrm{v}_1)\,{\rm v}_2\Big)					\,\|_{\BB_{2,1}^s			}
		&\leq 
		 \|\,(\,	f'(\mathrm{v}_1)-f'(0)\,)\nabla{\rm v}_1 \,{\rm v}_2		\,\|_{\BB_{2,1}^s			}
		\,+\,
		|f'(0)|	\|\,\nabla{\rm v}_1 \,{\rm v}_2							\,\|_{\BB_{2,1}^s			}
		\,+
		\\&\hspace{2cm}
		+\,
		\|\,(\,	f(\mathrm{v}_1)-f(0)\,){\rm v}_1 \,\nabla{\rm v}_2		\,\|_{\BB_{2,1}^s			}
		\,+\,
		|f'(0)|	\|\,{\rm v}_1 \,\nabla{\rm v}_2							\,\|_{\BB_{2,1}^s			}
		\\
		&\lesssim\,
		\mathrm{Q}\,\big(\,f,\,\|\,\vv\,\|_{L^\infty_x}\,\big)
		\Big\{\,
			\|\,		\nabla {\rm v}_1				\,\|_{\BB_{2,1}^\frac{\dd}{2}	}
			\|\,		{\rm v}_2		 			\,\|_{\BB_{2,1}^s				}
			\,+\,
			\|\,		 {\rm v}_1					\,\|_{\BB_{2,1}^\frac{\dd}{2}	}
			\|\,		\nabla{\rm v}_2				\,\|_{\BB_{2,1}^s				}
		\,\Big\},
	\end{aligned}		
	\end{equation*}
	which concludes the proof of the Corollary.
\end{proof}
\section{Well-posedness}\label{sec-wp}
\noindent
In this section we deal with the well-posedness of the system \eqref{intro:main_system-wp}, where the nematic material has a constant density.  Thanks to Theorem \ref{main-thm1} and Theorem \ref{main-thm2}, such a model arises when the heat flux ${\bf q}$, the Leslie stress tensor $\sigma^{\rm L}$ and the free energy $\FF$ are defined by means of \eqref{heat}, \eqref{Leslie-stress} and \eqref{def_OF_intro}.
The free energy $\FF=\FF(\Tt,\,\nn,\,\nabla \nn)$ we consider in this section does not depend on the density and it generalizes the 
Oseen-Frank energy density for Nematic liquid crystal with variable temperature:
\begin{equation}\label{wp-def_F}
	\FF(\Tt,\,\nn,\,\nabla \nn)\,:=\,-\,\Tt\ln \Tt\,+\,{\rm W_F}(\Tt,\,\nn,\,\nabla \nn), 
\end{equation}
where $\rm W_{F}$ stands for
\begin{equation}\label{def:OF-density}
	{\rm W_F}\,=\,
	\frac{ \,\,K_{1}(\Tt)}{2}\,|\,		  \nabla 	\nn\, |^2 \, +\,
	\frac{ \,\,K_{2}(\Tt)}{2}\,|\,			\Div \,	\nn\, |^2 \, +\,
	\frac{ \,\,K_{3}(\Tt)}{2}\,|\,\nn\cdot \nabla 	\nn\, |^2 \, +\,
	\frac{ \,\,K_{4}(\Tt)}{2}\,\trc\{\,(\nabla \nn)^2\,\}.
\end{equation}
The coefficients $K_1,\dots K_4$ are smooth functions depending on the temperature $\Tt$ and they are related to $k_1,\dots, k_4$ in \eqref{def_OF_intro} by
\begin{equation*}
\begin{alignedat}{8}
	K_1\,&=\,k_{12},\hspace{4cm}
	&&K_2\,&&&&=\,k_{11}-k_{22}-k_{24},\\
	K_3\,&=\,k_{33}-k_{22},
	&&K_4\,&&&&=\, k_{24}.
\end{alignedat}
\end{equation*}
The local entropy $\eta$ is thus explicitly defined by means of
\begin{equation*}
	\eta\,=\,-\frac{\partial \FF}{\partial \Tt}\,=\,\frac{\partial}{\partial\Tt}\Big[\,\Tt\ln\Tt\,-\,{\rm W}_{\rm F}\,\Big]\,=\,-\,(\,1\,+\,\ln\,\Tt)\,-\,\frac{\partial {\rm W}_{\rm F}}{\partial \Tt}.
\end{equation*}
Then, the rate of increase of the local entropy as well its convection in the last equation of \eqref{intro:main_system-wp} reduces to a material derivative on the temperature $\Tt$:
\begin{equation*}
	\Tt\Big[\,\partial_t \eta\,+\,\,\uu\cdot \nabla \eta\,\Big]\,=\,\,\partial_t \Tt\,+\,\uu\cdot\nabla \Tt\,
	\,-\,\Tt\,\Big[\,\partial_t \frac{\partial {\rm W}_{\rm F}}{\partial \Tt}\,+\,\,\uu\cdot\nabla \,\frac{\partial {\rm W}_{\rm F}}{\partial \Tt}\,\Big].
\end{equation*}

\smallskip\noindent
In this section we impose a parabolic behavior to the momentum equation, the heat equation and the balance of angular momentum. 
Recalling that $2\mathbb{D}_{ij}=\uu_{i,j}+\uu_{j,i}$ and $2\Omega_{ij}=\uu_{i,j}-\uu_{j,i}$, we define the linear operator $\Aa$, by
\begin{equation}\label{def-A}
	\Aa\,[\,\uu\,] := \Div\,\left[ \sigma^{\,\rm L}\Big(\bar{\Tt},\,\bar{\nn},\,-\Omega \bar{n},\,\mathbb{D}\Big)\right],
\end{equation}
and we denote by $\tilde \sigma^{\rm L}\,=\sigma^{\rm L}\,-\,\sigma^{\,\rm L}(\,\bar{\Tt},\,\bar{\nn},\,-\Omega \bar{n},\,\mathbb{D})$ the perturbed Leslie stress tensor. Without loss of generality, we assume $\Aa$ to be strongly elliptic, since the Leslie coefficient $\bar{\alpha}_4$ is supposed large enough.
Furthermore we decompose the $\lambda$-coefficients in the heat flux $\q$ through
\begin{equation}\label{ass-2}
\begin{alignedat}{8}
	\lambda_1(\,\Tt\,)\,&=\,\bar \lambda_1\,+\,\tilde\lambda_1(\,\Tt\,-\bar{\Tt}\,),\quad\quad\lambda_2(\,\Tt\,)\,&=\,\bar \lambda_2\,+\,\tilde\lambda_2(\,\Tt\,-\bar{\Tt}\,),
\end{alignedat}
\end{equation}
where the couple $(\,\bar{\lambda}_1,\,\bar{\lambda}_2\,)\in \RR^2$ corresponds to the value $(\lambda_1(\bar{\Tt}),\lambda_2(\bar{\Tt}))$. We assume $(\bar \lambda_1,\,\bar \lambda_2)$ fulfilling $\bar{\lambda}_1>0$ and $\bar{\lambda}_2>-\bar{\lambda}_1$ so that the linear operator $\Bb$ defined by
\begin{equation}\label{def-B}
	\Bb\,[\,\Tt\,]\,=\,\bar{\lambda}_1\Delta \Tt\,+\,\bar{\lambda}_2[\,\bar{\nn}\cdot \nabla \Tt\,]\bar{\nn}
\end{equation}
is strongly elliptic. Finally, we decompose the $K$-coefficients through $K_i(\,\Tt\,)\,=\,\bar{K}_i\,+\,\tilde K_i(\,\Tt-\bar{\Tt}\,)$ for any $i\,=\,1\,\dots,\,4$,
where $\bar{K}_i\in\RR\,$ and the smooth functions $\tilde K_i$ are null in $\Tt-\bar{\Tt}=0$. We then introduce the linear operator $\Cc$
\begin{equation}\label{def-C}
	\Cc[\,\nn\,]\,=\,\bar K_1\Delta \nn\,+\,(\bar K_2\,+\,\bar K_4)(\Id\,+\,\bar \nn\otimes \bar \nn)\nabla \Div\,\nn\,+\,
	\bar K_3\, \Div\,\big[(\bar \nn\cdot \nabla \nn)\otimes \bar \nn\big],
\end{equation}
which is strongly elliptic when the following conditions hold:
\begin{equation}\label{ass-3}
	\bar K_1\,>0,\quad \, 2(\,\bar K_2\,+\,\bar K_4\,)\,\,+\,\bar K_3\,\,<\, \bar K_1.
\end{equation}

\smallskip\noindent
For the sake of notation, we introduce the variables 	$\w\,=\,\Tt\,-\,\bar{\Tt}$ and $\m\,:=\,\nn\,-\,\bar \nn$, describing the perturbation of the temperature and the director with respect to $(\bar \Tt,\,\bar \nn)$.
Thus, the main system \eqref{main_system} reduces to
\begin{equation}\label{main_system2}
\left\{
\begin{aligned}
	\hspace{0.2cm}
	&\partial_t \uu\,+\uu\cdot\nabla \uu\,-\,\Aa[\,\uu\,]\,+\nabla \pre\,=\,\Div\{ \sigma^{\rm E}\,+\,\tilde \sigma^{\rm L}\},\\
	&\Div\,\uu\,=0\,\\
	&\partial_t\m\,+\,\uu \cdot \nabla \m\,-\Cc[\,\m\,]=\,
	\tilde \gamma_1\Big[\,\partial_t\m\,+\,\uu \cdot \nabla \m\,
	\Big]\,+\,\gamma_1\,\Omega\m\,+\,\gamma_1\,\Omega\bar\nn\,+\\&\hspace{1.5cm}+\,\gamma_2\Big[\,\mathbb{D}(\,\bar{\nn}\,+\,\m\,)\,
	-\,[\,(\,\bar{\nn}\,+\,\m)\cdot \mathbb{D}(\,\bar{\nn}\,+\,\m)\,]\,(\bar{\nn}\,+\,\m\,)\,\Big]\,-\tilde {\mathrm{h}}\,+\,\tilde \beta \nn\,+\,f,\\
	&|\,\bar{\nn}\,+\,\m\,|^2=1,\\
	&\partial_t \w\,+\,\uu\cdot\nabla \w\,-\Bb[\, \w\,]\,=
	\,(\,\bar \Tt \,+\w\,)\Big[\,\partial_t \frac{\partial {\rm W_F}}{\partial\w}\,+\,\uu\cdot\nabla\,\,{\rm \frac{\partial W_F}{\partial\w}}\,\Big]\,+
	\\&\hspace{1.cm}+\,\Div\{\,\tilde \lambda_1\nabla \w\}
	\,+\,\Div\{\tilde \lambda_2[\,(\bar \nn\,+\m)\cdot \nabla \w\,](\,\bar{\nn}\,+\,\m\,)\}\,+\,\sigma^{\rm L}:\mathbb{D}\,+\,{\rm g}\cdot \mcN.
\end{aligned}
\right.
\end{equation}
First, we have defined the perturbed molecular field $\tilde{\mathrm{h}}$ by means of
\begin{equation*}
	\begin{aligned}
		\tilde{ \mathrm{h}}\,=\,K_3(\,\nabla \m\odot \nabla \m\,)\nn\,&-\,
								\Div
								\Big\{\,
									(K_1\,-\,\bar K_1)\nabla \m\,+\,
									(K_2\,-\,\bar K_2\,)\, \Div\,\m\,\Id\,+\\&+\,
									(K_3\,-\,\bar K_3\,)\,[\,\nn\cdot \nabla \m\otimes\nn\,+\,
									(K_4\,-\,\bar K_4\,)\,\tr \nabla \m \,
								\Big\}\,+\\&+\,
								\bar K_3\,
								\Div
								\Big\{\,
									[\,\m\cdot \nabla \m\,	]\otimes\nn	\,+\,
									[\,\nn\cdot\nabla \m\,	]\otimes\m	\,+\,
									[\m\cdot \nabla \m\,	]\otimes\m\,
								\Big\}.
	\end{aligned}
\end{equation*}
Indeed, thanks to the definition for the free energy $\FF$, the Lagrangian multiplier for the constraint $|\,\nn\,|^2=1$ can be formulated as
\begin{equation*}
\begin{aligned}
	\beta\,:=\,\frac{\delta F}{\delta \nn}\cdot \nn\,=\,
	\underbrace{
	K_1|\,\nabla \m\,|^2\,+\,
	(\,K_2\,+\,K_4)\,|\,\Div\,\m\,|^2+\, 
	K_3\,|\,\nn\cdot\nabla \m\,|^2
	}_{\tilde \beta}\,-\\-\,
	\Div\,\Big\{\,(\,K_2\,+\,K_4\,)\,(\Div\,\m)\,\nn\,\Big\}.
\end{aligned}
\end{equation*}
Thus, defining the driving force $f$ by means of the following identity
\begin{equation*}
\begin{aligned}
	f\,:=\,\Div\,\Big\{\,(\,K_2-\bar K_2\,&+\,K_4\,-\bar K_4\,)\,(\Div\,\m)\,\nn\,\Big\}\nn\,+\,
			(\,\bar K_2\,+\,\bar K_4\,)\Div\,\Big\{\,\,(\Div\,\m)\,\m\,\Big\}\m\,+\\&+\,
			(\,\bar K_2\,+\,\bar K_4\,)\Div\,\Big\{\,\,(\Div\,\m)\,\bar \nn\,\Big\}\m\,+\,
			(\,\bar K_2\,+\,\bar K_4\,)\Div\,\Big\{\,\,(\Div\,\m)\,\m\,\Big\}\bar \nn,
\end{aligned}
\end{equation*}
one gets the following equality
\begin{equation*}
	\mathrm{h}\,-\,\beta\,\nn\,=-\Cc[\,\m\,]\,+\,\tilde{\mathrm{h}}\,-\tilde{\beta}\nn\,-\,f.
\end{equation*}

\subsection{Sketch of the proof}$\,$

\noindent
In this section we overview the proof of Theorem \ref{main-thm3}. We proceed with a standard method: we use an iterative scheme to build a sequence of solutions for a linear approximation of system \eqref{intro:main_system-wp}.  
We set the first term of the sequence $(\,\uu^0(t,\,x),\,\w^0(t,\,x),\,\m^0(t,\,x),\,)$ to be null everywhere in $ \RR_+\times\RR^\dd$. Then, we choose   
 $(\,\uu^k(t,\,x),\,a^k(t,\,x),\,\w^k(t,\,x),\,\m^k(t,\,x)\,)$ as the solution of the following linear system:
\begin{equation}\label{appx-system-nu}
\left\{\;
\begin{alignedat}{64}
						\partial_t\, 	
	&						\uu^{k+1}\,
	&&						+\,\,\,\uu^{k}\cdot\nabla \uu^{k+1}	
	&&&&					-\,		
	&&&&&&					\Aa	
	&&&&&&&&				[\,			
	&&&&&&&&&&				\uu^{k+1}
	&&&&&&&&&&&&			]\,	
	&&&&&&&&&&&&&&		+\,\nabla \pre^{k+1}\,	=\,F_{1}^k
	&&&&&&&&&&&&&&&&		\RR_+\times			
	&&&&&&&&&&&&&&&&&&	\RR^{\dd},\\
	&\hspace{-0.4cm}\Div\,
	&&\hspace{-0.8cm}\uu^{k+1}=0
	&&&&
	&&&&&&
	&&&&&&&&
	&&&&&&&&&&
	&&&&&&&&&&&&
	&&&&&&&&&&&&&&
	&&&&&&&&&&&&&&&&			\RR_+\times	
	&&&&&&&&&&&&&&&&&&		\RR^{\dd},\\
							\partial_t\, 	
	&						\m^{k+1}\,
	&&						+\,\,\,\uu^{k}\cdot\nabla \m^{k+1}		
	&&&&					-\,		
	&&&&&&					\Bb	
	&&&&&&&&				[\,			
	&&&&&&&&&&				\m^{k+1}\,
	&&&&&&&&&&&&			]\,		
	&&&&&&&&&&&&&&			=\,F_{2}^k
	&&&&&&&&&&&&&&&&		\RR_+\times			
	&&&&&&&&&&&&&&&&&&		\RR^{\dd},\\
							\partial_t\,	
	&						\w^{k+1}\,	\,
	&&						+\,\,\,\uu^{k}\cdot\nabla \w^{k+1}
	&&&&					-\,		
	&&&&&&					\Cc	
	&&&&&&&&				[\,			
	&&&&&&&&&&				\w^{k+1}\,
	&&&&&&&&&&&&			]\,		
	&&&&&&&&&&&&&&			=\,F_{3}^k
	&&&&&&&&&&&&&&&&		\RR_+\times			
	&&&&&&&&&&&&&&&&&&		\RR^{\dd},\\
	&						\,
	&&						\,
	&&&&						\,
	&&&&&&					\,
	&&&&&&&&					\,
	&&&&&&&&&&				\,
	&&&&&&&&&&&&				\,
	&&&&&&&&&&&&&&			\,\hspace{-5.9cm}(\,\uu^{k+1},\,\w^{k+1},\,\m^{k+1}\,)
							\big|_{\,t=0}=(\,\uu_{0},\,\w_{0},\,\m_{0}\,)
	\hspace{2.5cm}
	&&&&&&&&&&&&&&&&		\,					
	&&&&&&&&&&&&&&&&&&		\RR^\dd,
\end{alignedat}
\right.
\end{equation}
for any $k\in\NN$. We have introduced the following notation: for any function $f$ depending on the variable $(\uu,\,\w,\,\m)$, we denote by $f^k$ the function
$
	f^k(t,x)\,:=\,f\,(\,\uu^{k}(t,x),\w^{k}(t,x),\,\m^{k}(t,x)\,)
$.
Furthermore, we denote by $\delta f^k$ the difference between two consecutive functions: $ 
	\delta f^k
	:=\,f^{k+1}-
		f^k.$
The driving force $F^{k}_{i}$ are then defined by means of
\begin{equation*}
\begin{alignedat}{8}
	F^{k}_{1}\,&:=		\,\Div\big[\, \sigma^{{\rm E},\,k}\,+\,\tilde \sigma^{{\rm L},k}\,\big],\\
	F^{k}_{2} \,&:=	\,(\gamma_{1}^k\,-\,1)\Big[\,\partial_t\m^k\,
						+\,\uu^k \cdot \nabla \m^k\,\Big]\,
						-\,\gamma^k_{1}\,\Omega^k(\bar{\nn}\,+\,\m^k)\,+\\
						&\hspace{1.7cm}+\,\gamma^k_{2}\Big[\,\mathbb{D}^k[\,\bar{\nn}\,+\,\m^k\,]\,
						-\,\big(\,[\,\bar{\nn}\,+\,\m^k\,]\,\cdot \mathbb{D}^k[\,\bar{\nn}\,+\,\m^k\,]\,\big)\,[\,\bar{\nn}\,
						+\,\m^k\,]\,\Big]\,-\\
						&\hspace{8cm}-\tilde {\mathrm{h}}^k\,+\,\tilde \beta^k(\bar{ \nn}\,+\,\m^k\,)\,+\,f^k,\\
	F^{k}_{3} \,&:=		(\,1\,+\w^k\,)\Big[\,\partial_t \frac{\partial {\rm W_{{\rm F}}}^k}{\partial\w}\,
							+\,\uu^k\cdot\nabla\,\,{\rm \frac{\partial W_{{\rm F}}^k}{\partial\w}}\,\Big]\,+
							\\&\hspace{2cm}+\,\Div\{\,\tilde \lambda^k_{1}\nabla \w^k\}
							\,+\,\Div\{\tilde \lambda^k_{2}(\,[\bar \nn\,+\m^k]\cdot \nabla \w^k\,)[\bar{\nn}\,
							+\,\m^k]\}\,\\
							&\hspace{8cm}+\,\sigma^{{\rm L},k}:\mathbb{D}^k\,+\,{\rm g}^k\cdot \mcN^k,
\end{alignedat}	
\end{equation*}
while the pressure $\pre^k$ satisfies
\begin{equation*}
	\pre^k = (-\Delta)^{-1}\left[\, -\uu^k\cdot \nabla \uu^{k+1}\,-\Div\,\Aa[\uu^k]\,+\,F^k_1\,\right].
\end{equation*}
The core of the proof of Theorem \ref{main-thm3} relies on the following proposition:
\begin{prop}\label{prop:the_core}
	Let $(\,\uu^k,\,\w^k,\,\m^k\,)$ be the unique and global-in-time classical solution for system \eqref{appx-system-nu}. Then
	such a solution belongs to the same class affinity defined in Theorem \ref{main-thm3} and the pressure $\pre^k$ belongs to $L^1_t\BB_{2,1}^{\dd/2}\cap\BB_{2,1}^{\dd/2+1}$, fulfilling
	\begin{equation}\label{prop:the_core_ineq1}
		\|\,\uu^k\,\|_{\X_1}\,+\,\| \,\nabla \pre^{k} \,\|_{L^1_t\BB_{2,1}^{\frac{\dd}{2}-1}\cap \BB_{2,1}^\frac{\dd}{2}}  \,\leq \,\ee^2,\quad\quad
		\|\,\w^k \, \|_{\X_2}\,+\,
		\|\,\m^k\, \|_{\X_3} \, \leq \,\ee^3,
	\end{equation}
	Moreover, the difference between two consecutive solutions satisfies
	\begin{equation}\label{prop:the_core_ineq2}
	\begin{alignedat}{8}
		\|\,\delta \uu^{k}\,\|_{\X_{1}}\,+\,\| \,\nabla \delta \pre^{k} \,\|_{L^1_t\BB_{2,1}^{\frac{\dd}{2}-1}\cap \BB_{2,1}^\frac{\dd}{2}} \leq \ee^{k},\quad\quad
		\,\|\,\delta \w^{k}  \|_{\X_{2}}\,+\,\|\,\delta \m^{k} \|_{\X_{3}} \leq 
		\ee^{k+1}.
	\end{alignedat}
	\end{equation}
\end{prop}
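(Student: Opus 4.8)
The plan is to argue by induction on $k$. The base case $k=0$ is immediate, since $(\uu^0,\w^0,\m^0)\equiv 0$ trivially satisfies \eqref{prop:the_core_ineq1}--\eqref{prop:the_core_ineq2}. For the inductive step, suppose $(\uu^k,\w^k,\m^k)$ has already been constructed in $\X_1\times\X_2\times\X_3$ and satisfies \eqref{prop:the_core_ineq1}. Then \eqref{appx-system-nu} is a decoupled family of \emph{linear} parabolic (resp.\ Stokes) problems with a transport term $\uu^k\cdot\nabla$, whose drift $\uu^k$ has a time-integrable Lipschitz norm, since $\BB_{2,1}^{\dd/2}\hookrightarrow L^\infty$ gives $\int_0^t\|\nabla\uu^k\|_{L^\infty}\lesssim\|\Delta\uu^k\|_{L^1_t\BB_{2,1}^{\dd/2-1}}\le\bar{\alpha}_4^{-1}\ee^2$. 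Global existence of $(\uu^{k+1},\w^{k+1},\m^{k+1})$ in the announced class, together with $\nabla\pre^{k+1}\in L^1_t(\BB_{2,1}^{\dd/2-1}\cap\BB_{2,1}^{\dd/2})$, then follows from the linear theory of Section~\ref{sec:hom-besov}: Theorem~\ref{thm-superjuve-stokes} for $\uu^{k+1}$ (the operator $\Aa$ being strongly elliptic with ellipticity constant $\lambda_0\sim\bar{\alpha}_4$ once $\bar{\alpha}_4$ is large), Theorem~\ref{thm-superjuve} for $\w^{k+1}$ and $\m^{k+1}$, and in addition Theorem~\ref{thm-superjuve2} to gain the $L^2_t$-in-time control of $(\partial_t\m^{k+1},\Delta\m^{k+1})$ needed for the quadratic terms of the temperature equation. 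Incorporating the transport term only costs a factor $\exp\!\big(C\!\int_0^t\|\nabla\uu^k\|_{L^\infty}\big)\le\exp(C\bar{\alpha}_4^{-1}\ee^2)\le 2$.

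The real work is to estimate $F_1^k,F_2^k,F_3^k$ in the norms dictated by these theorems and to check that they close \eqref{prop:the_core_ineq1}. The key structural observation is that every summand of $F_i^k$ falls into one of two classes. The first class is \emph{at least quadratic in the small quantities}: either it is a genuine product of two factors among $\uu^k,\,\w^k,\,\m^k,\,\nabla\m^k,\,\partial_t\m^k$ and their traces (for instance $\sigma^{{\rm E},k}$, quadratic in $\nabla\m^k$; $\tilde\beta^k$; the term $\sigma^{{\rm L},k}:\mathbb{D}^k+\mathrm{g}^k\cdot\mcN^k$ in $F_3^k$), or it carries a coefficient such as $\tilde K_i(\w^k)$, $\tilde\lambda_i(\w^k)$ or $\gamma_i(\w^k)-\bar\gamma_i$ that vanishes at the reference state $(\bar\Tt,\bar\nn)$ and is therefore small in its own right; one also checks, by expanding around $(\bar\Tt,\bar\nn)$, that $\tilde\sigma^{{\rm L},k}$ contains \emph{no} term that is linear in $\uu^k$ with an $\mathcal O(1)$ coefficient. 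These contributions are controlled by the product rules of Proposition~\ref{prop-hom-bes} and \eqref{pruseful}, the composition Lemma~\ref{thm:composition-besov} and its Corollary, and the double-regularity structure $\BB_{2,1}^{\dd/2}\cap\BB_{2,1}^{\dd/2+1}$; the $L^2_t\cdot L^2_t\hookrightarrow L^1_t$ interpolation, combined with Theorem~\ref{thm-superjuve2}, is precisely what places $\sigma^{{\rm L},k}:\mathbb{D}^k$ and $\mathrm{g}^k\cdot\mcN^k$---quadratic in the only-$L^2_t$ quantities $\nabla\uu^k$ and $\partial_t\m^k$---into $L^1_t$. All of these terms turn out to be $\mathcal O(\ee^4)$, or better.

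The second class is the dangerous one: the genuinely \emph{linear} terms in $\uu^k$ carrying an $\mathcal O(1)$ coefficient, namely the reference Leslie contributions $\gamma_1^k\,\Omega^k\bar\nn$ and $\gamma_2^k\,\mathbb{D}^k\bar\nn$ in $F_2^k$, coming precisely from the tensor $\mcN\otimes\bar\nn$ and the stretching term flagged in the introductory remarks. These gain nothing from a product structure, and the only available smallness is that of the \emph{high-order} norm of $\uu^k$, namely $\|\nabla\uu^k\|_{L^1_t(\BB_{2,1}^{\dd/2}\cap\BB_{2,1}^{\dd/2+1})}\lesssim\bar{\alpha}_4^{-1}\|\uu^k\|_{\X_1}\le\bar{\alpha}_4^{-1}\ee^2$, so that these terms are $\mathcal O(\bar{\alpha}_4^{-1}\ee^2)$. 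Feeding everything back through the linear estimates---recalling that $\lambda_0\sim\bar{\alpha}_4$ is exactly the weight on $\|\Delta\uu^{k+1}\|$ in $\|\cdot\|_{\X_1}$, and that the initial data contribute at most $C\ee^4$ by \eqref{ineq:realsmallness-condition-main-thm}---yields $\|\uu^{k+1}\|_{\X_1}+\|\nabla\pre^{k+1}\|_{L^1_t(\BB_{2,1}^{\dd/2-1}\cap\BB_{2,1}^{\dd/2})}\le C(\ee^4+\bar{\alpha}_4^{-1}\ee^2)$ and $\|\w^{k+1}\|_{\X_2}+\|\m^{k+1}\|_{\X_3}\le C(\ee^4+\bar{\alpha}_4^{-1}\ee^2)$, which are $\le\ee^2$ and $\le\ee^3$ respectively provided $\ee$ is small and $\bar{\alpha}_4\gtrsim\ee^{-1}$. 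This establishes \eqref{prop:the_core_ineq1}.

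Finally, for the contraction bound \eqref{prop:the_core_ineq2} one subtracts two consecutive copies of \eqref{appx-system-nu}. Since $\Aa,\Bb,\Cc$ have constant coefficients they contribute no difference term; the forcing of the system satisfied by $(\delta\uu^k,\delta\w^k,\delta\m^k)$ is $\delta F_i^{k-1}$ plus the transport remainder $-\,\delta\uu^{k-1}\cdot\nabla(\cdot)^{k+1}$, and every summand is bilinear with one factor a difference ($\delta\uu^{k-1},\delta\w^{k-1},\delta\m^{k-1}$, or a derivative of one) and the other a quantity already controlled by \eqref{prop:the_core_ineq1}. Repeating the estimates above and again using the large $\bar{\alpha}_4$ for the pieces linear in $\delta\uu^{k-1}$ gives $\|\delta F_i^{k-1}\|\lesssim(\ee^2+\bar{\alpha}_4^{-1}\ee^2)\big(\|\delta\uu^{k-1}\|_{\X_1}+\|\delta\w^{k-1}\|_{\X_2}+\|\delta\m^{k-1}\|_{\X_3}\big)$; since $\delta\uu^k,\delta\w^k,\delta\m^k$ all vanish at $t=0$, the linear estimates then close the induction with $\|\delta\uu^k\|_{\X_1}\le\ee^k$ and $\|\delta\w^k\|_{\X_2}+\|\delta\m^k\|_{\X_3}\le\ee^{k+1}$. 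I expect the main obstacle to be exactly this coexistence of the linear Leslie terms with the need to close all three equations---and their difference versions---simultaneously: those terms prevent closing by smallness of the data alone, so the whole scheme hinges on carefully trading the smallness parameter $\ee$ against the largeness of $\bar{\alpha}_4$, while also keeping track of the two regularity levels of each unknown.
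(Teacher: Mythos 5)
Your proposal follows essentially the same route as the paper's proof: induction on $k$, the linear Besov theory of Theorems~\ref{thm-superjuve}, \ref{thm-superjuve2} and \ref{thm-superjuve-stokes}, a split of the forcings $F_i^k$ into pieces that are quadratic in the small quantities and the dangerous $\mathcal{O}(1)$-coefficient terms $\gamma_1\Omega^k\bar\nn,\ \gamma_2\mathbb{D}^k\bar\nn$, and the use of a large $\bar\alpha_4$ to kill the latter via the favorable weight on $\|\Delta\uu^k\|_{L^1_t}$ in $\|\cdot\|_{\X_1}$. All of that matches. The paper does absorb the transport term $\uu^k\cdot\nabla(\cdot)^{k+1}$ into the forcing and estimates it by the product laws rather than by a Gr\"onwall exponential, but that is a cosmetic difference.

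There is, however, a genuine gap in your handling of the contraction step that makes your announced threshold $\bar\alpha_4\gtrsim\ee^{-1}$ fail. Your display $\|\delta F_i^{k-1}\|\lesssim(\ee^2+\bar\alpha_4^{-1}\ee^2)\big(\|\delta\uu^{k-1}\|_{\X_1}+\cdots\big)$ carries a spurious $\ee^2$ on the second summand. The piece of $\delta F_2^{k}$ responsible for the difficulty is $\gamma_1(\w^{k})\,\delta\Omega^{k}\bar\nn$ (and its $\gamma_2$-analogue): it is \emph{linear} in $\delta\uu^{k}$ with an $\mathcal{O}(1)$ coefficient, and the only smallness available is the weight $\bar\alpha_4^{-1}$, so the correct bound is $\lesssim\bar\alpha_4^{-1}\|\delta\uu^{k}\|_{\X_1}\le\bar\alpha_4^{-1}\ee^{k}$ with no extra factor of $\ee^2$. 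Since \eqref{prop:the_core_ineq2} is anisotropic --- $\|\delta\uu^{k}\|_{\X_1}\le\ee^{k}$ but $\|\delta\m^{k+1}\|$ must land two full orders lower, at $\ee^{k+2}$ --- one needs $\bar\alpha_4^{-1}\ee^{k}\lesssim\ee^{k+3}$ (with a margin to absorb harmless constants), i.e.\ $\bar\alpha_4>\ee^{-3}$, which is exactly what the paper imposes in Section~\ref{thebalanceofangularmomentum}. With $\bar\alpha_4\gtrsim\ee^{-1}$ and the corrected factor $(\ee^2+\bar\alpha_4^{-1})$ you only get $\|\delta\m^{k+1}\|\lesssim\ee\cdot\ee^{k}=\ee^{k+1}$, which does not reproduce the $\ee^{k+2}$ demanded by the induction hypothesis, so the scheme does not close. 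The same issue already shows up, a bit less sharply, in the non-difference estimate, where $\|F_2^k\|$ must be $\lesssim\ee^4$ rather than just $\lesssim\ee^3$ to leave room for constants.
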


\smallskip
\noindent
If Proposition \ref{prop:the_core} holds, than the proof of Theorem \ref{main-thm3} leads thanks to the convergence of the following series:
\begin{equation*}
	\sum_{k\in\NN}\|\,\delta \uu^{k}\,\|_{\X_{1}} \,+\, 
		\|\,\delta \w^{k}  \|_{\X_{2}}\,+\,\|\,\delta \m^{k} \|_{\X_{3}}\,<\,+\infty.
\end{equation*}
The approximate solutions $(\,\uu^k,\,\w^k,\,\m^k\,)$ form a Cauchy sequence in $\X_1\times\X_2\times\X_3$ and we claim that the limit $(\,\uu,\,\w,\,\m\,)$ is a strong solution of system \eqref{main_system2}.

\smallskip\noindent
We remark that in system \eqref{appx-system-nu} we do not consider an unitary constraint on the direct field $\nn^k$. Nevertheless, Proposition \ref{prop:the_core} leads to an uniform bound for the $L^\infty_{t,x}$-norm:
\begin{equation*}
	\| \,\nn^k\,\|_{L^\infty_{t,x}}\,\lesssim \, 1+\|\,\m^k\,\|_{L^\infty_t\BB_{2,1}^\frac{\dd}{2}}\lesssim 1+\ee^3.
\end{equation*}
We will recover the unitary constraint on the director field $|\nn|=1$, in section \ref{sec:passagetothelimit}, when passing to the limit as $k$ goes to $\infty$.

\noindent\smallskip
We now deal with the proof of Proposition \ref{prop:the_core}. We separately analyze each equation of \eqref{appx-system-nu} in sections \ref{thebalanceoflinearmomentum}, \ref{thebalanceofangularmomentum} and \ref{thetemperatureequation}. 
We proceed by induction and the base case readily follows by the definition of $(\uu^0,\,\omega^0,\,\m^0)$, which is identically null.
\subsection{The balance of linear momentum}\label{thebalanceoflinearmomentum}$\,$

\smallskip\noindent
We begin with considering the approximate velocity field $\uu^k$ and we aim to prove that
\begin{equation}\label{claim:momentum}
	\|\,\uu^{k+1}\,\|_{\X_1}\,+\,\|\,\nabla \pre^{k+1}\,\|_{L^1_t\BB_{2,1}^\frac{\dd}{2}\cap \BB_{2,1}^{\frac{\dd}{2}-1}}\,\leq \, \ee^2\quad\text{and}\quad \|\,\delta \uu^{k+1}\,\|_{\X_1}\,\leq\, \ee^{k+1},
\end{equation}
assuming that inequalities \eqref{prop:the_core_ineq1} and \eqref{prop:the_core_ineq2} are satisfied by $(\uu^k,\,\pre^k,\,\Tt^k,\,\m^k)$ and $(\delta \uu^k,\,\delta \pre^k,\, \delta \Tt^k,\, \delta \m^k)$ respectively, for a fixed positive integer $k$. We recall that $\uu^{k+1}$ is solution of
\begin{equation*}
	\begin{cases}
		\,\partial_t\, 	\uu^{k+1}\, 	\,-\,		\Aa	[\,\uu^{k+1}\,]	\,+\nabla \pre^{k+1}\,	=\,-\,\uu^{k}\cdot\nabla \uu^{k+1}\,+\,F_{1}^k,\\
		\,\Div\,\uu^{k+1}\,=\,0,
	\end{cases}
\end{equation*}
where $F^k_{1}:=	\,\Div\{\, \sigma^{{\rm E},\,k}\,+\,\tilde \sigma^{{\rm L},k}\,\}$.
Thanks to Theorem \ref{thm-superjuve-stokes}, we gather that the norms of $\uu^{k+1}$ and the pressure $\pre^{k+1}$ are bounded by
\begin{equation*}
\begin{aligned}
	\|\,\uu^{k+1}\,\|_{\X_1}\,&+\,\|\,\nabla \pre^{k+1}\,\|_{L^1_t\BB_{2,1}^{\frac{\dd}{2}-1}\cap \BB_{2,1}^\frac{\dd}{2}}\,
	\lesssim\,\|\,\uu_0\,\|_{\BB_{2,1}^{\frac{\dd}{2}-1}\cap\BB_{2,1}^\frac{\dd}{2}}\,+\,\|\,-\uu^{k}\cdot\nabla \uu^{k+1}\,+\,F^k_{1}\,\|_{L^1_t\,\BB_{2,1}^{\frac{\dd}{2}-1}\cap \BB_{2,1}^{\frac{\dd}{2}}}\\
	&\lesssim\,\|\,\uu_0\,\|_{\BB_{2,1}^{\frac{\dd}{2}-1}\cap\BB_{2,1}^\frac{\dd}{2}}\,+\,\|\,\,\uu^{k}\,\|_{L^\infty_t\BB_{2,1}^{\frac{\dd}{2}-1}\cap \BB_{2,1}^{\frac{\dd}{2}}}
	\|\, \nabla \uu^{k+1}\,
	\|_{L^1_t \BB_{2,1}^{\frac{\dd}{2}}}\,+\,
	\|\,F^k_{1}\,\|_{L^1_t\,\BB_{2,1}^{\frac{\dd}{2}-1}\cap \BB_{2,1}^{\frac{\dd}{2}}}\\
	&\lesssim \,\|\,\uu_0\,\|_{\BB_{2,1}^{\frac{\dd}{2}-1}\cap\BB_{2,1}^\frac{\dd}{2}}\,+\,\ee \,\|\,\uu^{k+1}\,\|_{\X_1}\,+\,\|\,F^k_{1}\,\|_{L^1_t\,\BB_{2,1}^{\frac{\dd}{2}-1}\cap \BB_{2,1}^{\frac{\dd}{2}}},
\end{aligned}
\end{equation*}
where we have used the continuity of the product from $\BB_{2,1}^{\dd/2-1}\times \BB_{2,1}^{\dd/2}$ to $\BB_{2,1}^{\dd/2-1}$ and the algebra structure of $\BB_{2,1}^{\dd/2}$.
Making use of to the small assumption \eqref{ineq:realsmallness-condition-main-thm} to the initial data, with $\ee$ small enough, we then obtain
\begin{equation}\label{bound00}
	\|\,\uu^{k+1}\,\|_{\X_1}\,+\,\|\,\nabla \pre^{k+1}\,\|_{L^1_t\BB_{2,1}^{\frac{\dd}{2}-1}\cap \BB_{2,1}^\frac{\dd}{2}}\,\lesssim\,\ee^4\,+ \,\|\,F^k_{1}\,\|_{L^1_t\,\BB_{2,1}^{\frac{\dd}{2}-1}\cap \BB_{2,1}^{\frac{\dd}{2}}}.
\end{equation}
We claim that the norm of $F^k_1$ satisfies the following inequality
\begin{equation}\label{ineq:lemma1ELtensors}
		\|\,F^k_{1}\,\|_{L^1_t\,\BB_{2,1}^{\frac{\dd}{2}-1}\cap \BB_{2,1}^{\frac{\dd}{2}}}
		\,
		\lesssim\,
		\|\,\Div\,\sigma^{\rm \,E}\,\|_{L^1_t\BB_{2,1}^{\frac{\dd}{2}-1}\cap \BB_{2,1}^\frac{\dd}{2}}
		\,+\,
		\|\,\Div\,\tilde \sigma^{\rm \,L}\,\|_{L^1_t\BB_{2,1}^{\frac{\dd}{2}-1}\cap \BB_{2,1}^\frac{\dd}{2}}
		\,\lesssim\,\ee^3,
\end{equation}
and we postpone the proof to the Appendix, in Section \ref{Appendix}.
We then replace inequality \eqref{ineq:lemma1ELtensors} into the estimate \eqref{bound00}, from which we deduce
\begin{equation*}
	\|\,\uu^{k+1}\,\|_{\X_1}+\,\|\,\nabla \pre^{k+1}\,\|_{L^1_t\BB_{2,1}^{\frac{\dd}{2}-1}\cap \BB_{2,1}^\frac{\dd}{2}}\lesssim \ee^3\quad\Rightarrow\quad \|\,\uu^{k+1}\,\|_{\X_1}
	+\,\|\,\nabla \pre^{k+1}\,\|_{L^1_t\BB_{2,1}^{\frac{\dd}{2}-1}\cap \BB_{2,1}^\frac{\dd}{2}}\leq \ee^2,
\end{equation*}
whenever $\ee$ is taken small enough. This concludes the proof of the first inequality in \eqref{claim:momentum}. 

\smallskip\noindent
We now take into account the difference between two consecutive velocity, $\delta \uu^{k+1}:=\uu^{k+2}-\uu^{k+1}$, 
which is solution of 
\begin{equation*}
	\partial_t \delta\uu^{k+1}\,+\,\uu^{k+1}\cdot \nabla \delta \uu^{k+1}\,+\,\delta \uu^{k}\cdot \nabla  \uu^{k+1}\,-\Aa[\,\delta \uu^{k+1}\,]\,+\,\nabla \delta \pre^{k+1}=\,\delta F^{k}_1.
\end{equation*}
with $\delta F^{k}_1\,=\,\Div\,\{\,\delta\sigma^{{\rm E},\,k}\,+\,\delta \sigma^{{\rm L},\,k}\}$. Applying Theorem \ref{thm-superjuve-stokes},  we gather
\begin{equation*}
	\begin{aligned}
		\|\,\delta \uu^{k+1}\,\|_{\X_1}	\,&+\,\|\,\nabla \delta \pre^{k+1}\,\|_{L^1_t\BB_{2,1}^{\frac{\dd}{2}-1}\cap \BB_{2,1}^{\frac{\dd}{2}}}\lesssim\,
		\|	\,-\,\uu^{k+1}\cdot \nabla \delta \uu^{k+1}\,-\,\delta \uu^{k}\cdot \nabla  \uu^{k+1}\,+\,\delta F^{k}_1	\, \|_{L^1_t \BB_{2,1}^{\frac{\dd}{2}-1}\cap \BB_{2,1}^{\frac{\dd}{2}}}\\
		&\lesssim \|\,(\uu^k,\,\uu^{k+1})\,\|_{L^2_t\BB_{2,1}^{\frac{\dd}{2}}\cap \BB_{2,1}^{\frac{\dd}{2}+1}}
				\|\,(\delta \uu^{k},\,\delta \uu^{k+1})\,\|_{L^2_t\BB_{2,1}^{\frac{\dd}{2}}\cap \BB_{2,1}^{\frac{\dd}{2}+1}}\,+\, \|\,\delta F^{k}_1	\, \|_{L^1_t \BB_{2,1}^{\frac{\dd}{2}-1}\cap \BB_{2,1}^{\frac{\dd}{2}}}\\
		&\lesssim		\ee^2\|\,\delta \uu^{k+1}\,\|_{L^2_t\BB_{2,1}^{\frac{\dd}{2}}\cap \BB_{2,1}^{\frac{\dd}{2}+1}}\,+\,\ee^2\|\,\delta \uu^{k}\,\|_{L^2_t\BB_{2,1}^{\frac{\dd}{2}}\cap \BB_{2,1}^{\frac{\dd}{2}+1}}
						+\, \|\,\delta F^{k}_1	\, \|_{L^1_t \BB_{2,1}^{\frac{\dd}{2}-1}\cap \BB_{2,1}^{\frac{\dd}{2}}}.
	\end{aligned}
\end{equation*}
Assuming $\ee$ small enough and thanks to the induction hypotheses we deduce that
\begin{equation}\label{Pjaca}
	\|\,\delta \uu^{k+1}\,\|_{\X_1}\,+\,\|\,\nabla \delta \pre^{k+1}\,\|_{L^1_t\BB_{2,1}^{\frac{\dd}{2}-1}\cap \BB_{2,1}^\frac{\dd}{2}}	\,\lesssim\, \ee^2\|\,\delta \uu^k\,\|_{\X_1}\,+\,\|\,\delta F^{k+1}\,\|_{\X_1}\,\leq \ee^{k+2}\,+\,\|\,\delta F^{k}\,\|_{\X_1}.
\end{equation}
We claim that the norm of $\delta F^{k}_1\,=\,\Div\,\delta \sigma^{\,{\rm E},\, k}\,+\,\Div\,\delta \sigma^{\,{\rm L},\,k}$ can be bounded as follows:
\begin{equation}\label{est:momeqL}
\begin{aligned}
	\|\,		\Div\,\delta \sigma^{\,{\rm L},\, k}\,\|_{L^1_t\BB_{2,1}^{\frac{\dd}{2}-1}\cap \BB_{2,1}^{\frac{\dd}{2}	}}\,&+\,
	\|\,		\Div\,\delta \sigma^{\,{\rm E},\, k}\,\|_{L^1_t\BB_{2,1}^{\frac{\dd}{2}-1}\cap \BB_{2,1}^{\frac{\dd}{2}	}}
	\,\lesssim\\&\lesssim
	\ee^2\Big[\,\|\,\delta \uu^k\,\|_{\X_1}\,+\,\|\,\delta \omega^k\,\|_{\X_2}\,+\,\|\,\delta \m^k\,\|_{\X_3}\,\Big]\,
	\,\leq\, \ee^{k+2},
\end{aligned}
\end{equation}
and we postpone the proof to the appendix, in section \ref{Appendix}.

\smallskip\noindent
We then replace inequalities \eqref{Pjaca} and \eqref{est:momeqL} into \eqref{bound00} and assuming $\ee$ small enough, we finally gather the following bound for $\delta \uu^{k+1}$ and $\delta \pre^{k+1}$: 
\begin{equation*}
	\|\,\delta \uu^{k+1}\,\|_{\X_1}	\,+\,\|\,\nabla \delta \pre^{k+1}\,\|_{L^1_t\BB_{2,1}^{\frac{\dd}{2}-1}\cap \BB_{2,1}^\frac{\dd}{2}}\,\lesssim\, \ee^{k+2} 
	\;\quad\Rightarrow\quad\; \|\,\delta \uu^{k+1}\,\|_{\X_1}\,+\,\|\,\nabla \delta \pre^{k+1}\,\|_{L^1_t\BB_{2,1}^{\frac{\dd}{2}-1}\cap \BB_{2,1}^\frac{\dd}{2}} \leq \ee^{k+1},	
\end{equation*}
which concludes the proof of the estimate in \eqref{prop:the_core_ineq2}.
\subsection{The balance of angular momentum}\label{thebalanceofangularmomentum}$\,$

\noindent
We now take into account the approximate director field $\nn^k\,=\,\bar{\nn}\,+\,\m^k$ and we aim to prove by induction 
the following inequalities
\begin{equation}\label{angular_mom_ind}
	\|\,\m^{k+1}\,\|_{\X_2}\,\leq \ee^{3}\quad\text{and}\quad \|\,\delta \m^{k+1}\,\|_{\X_2}\,\leq \ee^{k+2},
\end{equation}
assuming the hypothesis \eqref{prop:the_core_ineq1} and \eqref{prop:the_core_ineq2} for $(\uu^k,\,\Tt^k,\,\m^k)$ and $(\delta \uu^k,\, \delta \Tt^k,\, \delta \m^k)$, for a fixed integer $k$. 
The approximate solution $\m^{k+1}$ satisfies the following parabolic PDE:
\begin{equation*}
	\partial_t \m^{k+1}\,-\,\Bb[\,\m^{k+1}\,] \,=\,-\,\uu^k\cdot\nabla \m^{k+1} \,+\,F^{k}_2,
\end{equation*}
where the forcing term $F_2^k$ is defined by means of
\begin{equation*}
\begin{alignedat}{4}
	F^{k}_{2} \,&:=	\,\tilde \gamma_{1}(\,\omega^k\,)\Big[\,\partial_t\m^k\,
						+\,\uu^k \cdot \nabla \m^k\,\Big]\,
						-\,\gamma_{1}(\,\omega^k\,)\,\Omega^k(\bar{\nn}\,+\,\m^k)\,+
						\,\gamma_{2}(\,\omega^k\,)\Big[\,\mathbb{D}^k[\,\bar{\nn}\,+\,\m^k\,]\,-\\&\hspace{1cm}
						-\,\big(\,[\,\bar{\nn}\,+\,\m^k\,]\,\cdot \mathbb{D}^k[\,\bar{\nn}\,+\,\m^k\,]\,\big)\,[\,\bar{\nn}\,
						+\,\m^k\,]\,\Big]\,-
						\tilde {\mathrm{h}}^k\,+\,\tilde \beta^k(\bar{ \nn}\,+\,\m^k\,)+\,f^k,\\
	\tilde{\rm h}^k\,&:=\,K_3(\,\omega^k\,)(\,\nabla \m^k\odot \nabla \m^k\,)(\bar \nn\,+\,\m^k\,)\,-\,
								\Div
								\Big\{\,
									\tilde K_1(\,\omega^k\,) \nabla \m^k\,+\,
									\tilde K_2(\,\omega^k\,) \, \Div\,\m^k\,\Id\,+\\&\hspace{0.34cm}+\,
									\tilde K_3(\,\omega^k\,)\,[\,(\bar \nn\,+\,\m^k\,)\cdot \nabla \m^k\otimes(\bar \nn\,+\,\m^k)\,+\,
									\tilde K_4(\,\omega^k\,)\,\tr \nabla \m^k \,
								\Big\}\,+\\&\hspace{0.34cm}+\,
								\bar K_3\,
								\Div
								\Big\{\,
									[\,\m^k\cdot \nabla \m^k\,	]\otimes(\,\bar \nn\,+\,\m^k\,)	\,+\,
									[\,(\bar \nn\,+\,\m^k)\cdot\nabla \m^k\,	]\otimes\m^k	\,+\,
									[\m^k\cdot \nabla \m^k\,	]\otimes\m^k\,
								\Big\},\\
	f^k\,&:=\,\Div\,\Big\{\,(\,\tilde K_2(\,\omega^k\,)\,+\,\tilde K_4(\,\omega^k\,)\,)\,(\Div\,\m^k)\,(\bar \nn\,+\,\m^k\,)\,\Big\}(\bar \nn\,+\,\m^k\,)\,+\\&+\,
			(\,\bar K_2\,+\,\bar K_4\,)\bigg\{\Div\,\Big\{\,\,(\Div\,\m^k)\,\m^k\,\Big\}\m^k\,+\,
			\Div\,\Big\{\,\,(\Div\,\m^k)\,\bar \nn\,\Big\}\m^k\,+\,
			\Div\,\Big\{\,\,(\Div\,\m^k)\m^k\,\Big\}\bar \nn\bigg\},\\
	\beta^k\,&=\,K_1(\,\omega^k\,)|\,\nabla \m^k\,|^2\,+\,
	(\,K_2(\,\omega^k\,)\,+\,K_4(\,\omega^k\,))\,|\,\Div\,\m^k\,|^2+\, 
	K_3(\,\omega^k\,)\,|\,(\,\bar \nn\,+\,\m^k)\cdot\nabla \m^k\,|^2.
\end{alignedat}
\end{equation*}
Thanks to Theorem \eqref{thm-superjuve} we gather that the $\X_2$-norm of $\m^k$ is bounded by
\begin{equation*}
\begin{aligned}
	\|\,\m^k\,\|_{\X_2}\,
	&\lesssim \, 
	\|\,\m_0\,\|_{\BB_{2,1}^\frac{\dd}{2}\cap \BB_{2,1}^{\frac{\dd}{2}+1}}\,+\,
	\|\,\uu^k\cdot\nabla \m^{k+1}\,\|_{L^1_t \BB_{2,1}^\frac{\dd}{2}\cap \BB_{2,1}^{\frac{\dd}{2}+1}}\,+\,
	\|\,F_2^k\,\|_{L^1_t \BB_{2,1}^\frac{\dd}{2}\cap \BB_{2,1}^{\frac{\dd}{2}+1}}\\
	&\lesssim
	\|\,\m_0\,\|_{\BB_{2,1}^\frac{\dd}{2}\cap \BB_{2,1}^{\frac{\dd}{2}+1}}\,+\,
	\|\,\uu^k\,\|_{L^2_t \BB_{2,1}^\frac{\dd}{2}\cap \BB_{2,1}^{\frac{\dd}{2}+1}}
	\|\,\m^{k+1}\,\|_{L^2_t \BB_{2,1}^\frac{\dd}{2}\cap \BB_{2,1}^{\frac{\dd}{2}+1}}\,+\,
	\|\,F_2^k\,\|_{L^1_t \BB_{2,1}^\frac{\dd}{2}\cap \BB_{2,1}^{\frac{\dd}{2}+1}}\\
	&\lesssim\, \ee^3\,+\,
	\|\,\m_0\,\|_{\BB_{2,1}^\frac{\dd}{2}\cap \BB_{2,1}^{\frac{\dd}{2}+1}}\,+\,
	\ee^2\|\,\m^{k+1}\,\|_{\X_2}\,+\,
	\|\,F_2^k\,\|_{L^1_t \BB_{2,1}^\frac{\dd}{2}\cap \BB_{2,1}^{\frac{\dd}{2}+1}},
\end{aligned}
\end{equation*}
where we have used the algebra structure of $\BB_{2,1}^\frac{\dd}{2}\cap \BB_{2,1}^{\frac{\dd}{2}+1}$. Assuming $\ee$ small enough, we can absorb the $\m^{k+1}$-term on the right hand side by the left-hand side of the above estimate, thus
\begin{equation}
	\|\,\m^k\,\|_{\X_2}\,
	\lesssim \, 
	\ee^4\,+\,\|\,F_2^k\,\|_{L^1_t \BB_{2,1}^\frac{\dd}{2}\cap \BB_{2,1}^{\frac{\dd}{2}+1}}.
\end{equation}
We now aim to bound the norm of $F_2^k$. We first observe that
\begin{align*}
	\Big\|\,\tilde \gamma_{1}(\,\omega^k\,)\Big[\,\partial_t\m^k\,+\,\uu^k \cdot \nabla \m^k\,\Big]\,\Big\|_{L^1_t \BB_{2,1}^\frac{\dd}{2}\cap \BB_{2,1}^{\frac{\dd}{2}+1}}
	\,\lesssim\,
	\|\,\omega^k\,\|_{L^\infty_t\BB_{2,1}^\frac{\dd}{2}}\Big[\,\|\,\partial_t\m^k\,\|_{L^1_t \BB_{2,1}^\frac{\dd}{2}}+
	\|\,	\uu^k	\,\|_{L^2_t\BB_{2,1}^\frac{\dd}{2}}{\scriptstyle\times}\\{\scriptstyle\times}
	\|\,	\nabla \m^k	\,\|_{L^2_t\BB_{2,1}^\frac{\dd}{2}}
	\Big]\,+\,
	\|\,\nabla \omega^k\,\|_{L^2_t\BB_{2,1}^\frac{\dd}{2}}
	\Big[\,\|\,\partial_t\m^k\,\|_{L^2_t \BB_{2,1}^\frac{\dd}{2}}+
	\|\,	\uu^k	\,\|_{L^\infty_t\BB_{2,1}^\frac{\dd}{2}}
	\|\,	\nabla \m^k	\,\|_{L^2_t\BB_{2,1}^\frac{\dd}{2}}
	\Big]\,+\\+\,
	\|\,\omega^k\,\|_{L^\infty_t\BB_{2,1}^\frac{\dd}{2}}\Big[\,\|\,\partial_t\nabla \m^k\,\|_{L^1_t \BB_{2,1}^\frac{\dd}{2}}+
	\|\,\nabla 	\uu^k	\,\|_{L^2_t\BB_{2,1}^\frac{\dd}{2}}
	\|\,	\nabla \m^k	\,\|_{L^2_t\BB_{2,1}^\frac{\dd}{2}}
	\,+\\+\,
	\|\,	\uu^k	\,\|_{L^2_t\BB_{2,1}^\frac{\dd}{2}}
	\|\,	\Delta \m^k	\,\|_{L^2_t\BB_{2,1}^\frac{\dd}{2}}
	\Big]\,
	\lesssim 
	\ee^4,
\end{align*}
while
\begin{align*}
	\|\,\gamma_{1}(\,\omega^k\,)\,\Omega^k(\bar{\nn}\,+\,\m^k)\,\|_{L^1_t \BB_{2,1}^\frac{\dd}{2}\cap \BB_{2,1}^{\frac{\dd}{2}+1}}
	\lesssim
	(1+\|\,\omega^k\,\|_{L^\infty_t\BB_{2,1}^\frac{\dd}{2}})
	\|\,	\nabla \uu^k	\,\|_{L^1_t\BB_{2,1}^\frac{\dd}{2}}
	\Big(
		1+\|\,\m^k\,\|_{L^\infty_t\BB_{2,1}^\frac{\dd}{2}}
	\Big)+\\
	\|\,\nabla \omega^k\,\|_{L^\infty_t\BB_{2,1}^\frac{\dd}{2}}
	\|\,	\nabla \uu^k	\,\|_{L^1_t\BB_{2,1}^\frac{\dd}{2}}
	\Big(
		1+\|\,\m^k\,\|_{L^\infty_t\BB_{2,1}^\frac{\dd}{2}}
	\Big)+
	(1+\|\,\omega^k\,\|_{L^\infty_t\BB_{2,1}^\frac{\dd}{2}})
	\|\,	\Delta \uu^k	\,\|_{L^1_t\BB_{2,1}^\frac{\dd}{2}}{\scriptstyle\times}\\{\scriptstyle\times}
	\Big(
		1+\|\,\m^k\,\|_{L^\infty_t\BB_{2,1}^\frac{\dd}{2}}
	\Big)+
	(1+\|\,\omega^k\,\|_{L^\infty_t\BB_{2,1}^\frac{\dd}{2}})
	\|\,	\nabla \uu^k	\,\|_{L^1_t\BB_{2,1}^\frac{\dd}{2}}
	\|\,\nabla \m^k\,\|_{L^\infty_t\BB_{2,1}^\frac{\dd}{2}}\lesssim \ee^4,
\end{align*}
where we have assumed $\bar \alpha_4> 1/\ee^3$ large enough, so that
\begin{equation*}
	\| \nabla \uu^k \|_{L^1_t\BB_{2,1}^\frac{\dd}{2}\cap \BB_{2,1}^{\frac{\dd}{2}+1}}
	\leq
	\frac{1}{\bar \alpha_4}
	\|  \uu^k	\|_{\X_1}
	\leq 
	\frac{1}{\bar \alpha_4}\ee^2 \leq \ee^4.
\end{equation*}
As third term, we take into account
\begin{align*}
	\Big\|\gamma_{2}(\omega^k)\Big[\mathbb{D}^k[\bar{\nn}+\m^k]-\big([\bar{\nn}+\m^k]\cdot \mathbb{D}^k[\bar{\nn}+\m^k]\big)[\bar{\nn}
						+\m^k]\Big]
	\Big\|_{L^1_t\BB_{2,1}^\frac{\dd}{2}\cap \BB_{2,1}^{\frac{\dd}{2}+1}}
	\lesssim
	(1+\|\omega^k\|_{L^\infty_t\BB_{2,1}^\frac{\dd}{2}}){\scriptstyle\times}\\
	{\scriptstyle\times}
	\|\nabla \uu^k\|_{L^1_t\BB_{2,1}^\frac{\dd}{2}}
	\Big(1+\|\m^k\|_{L^\infty_t\BB_{2,1}^\frac{\dd}{2}}^3\Big)
	+
	\|\nabla \omega^k\|_{L^\infty_t\BB_{2,1}^\frac{\dd}{2}}
	\|\nabla \uu^k\|_{L^1_t\BB_{2,1}^\frac{\dd}{2}}
	\Big(1+\|\m^k\|_{L^\infty_t\BB_{2,1}^\frac{\dd}{2}}^3\Big)+\\+
	(1+\|\omega^k\|_{L^\infty_t\BB_{2,1}^\frac{\dd}{2}})
	\|\Delta \uu^k\|_{L^1_t\BB_{2,1}^\frac{\dd}{2}}\Big(1+\|\m^k\|_{L^\infty_t\BB_{2,1}^\frac{\dd}{2}}^3\Big)
	+(1+\|\omega^k\|_{L^\infty_t\BB_{2,1}^\frac{\dd}{2}})
	\|\nabla \uu^k\|_{L^1_t\BB_{2,1}^\frac{\dd}{2}}{\scriptstyle\times}\\
	{\scriptstyle\times}
	\|\nabla \m^k\|_{L^\infty_t\BB_{2,1}^\frac{\dd}{2}}\Big(1+\|\m^k\|_{L^\infty_t\BB_{2,1}^\frac{\dd}{2}}^2\Big)
	\lesssim \ee^4,
\end{align*}
from which we deduce that $F_2^k$ fulfills the following inequality
\begin{equation}\label{ineq:angularmom_F}
\begin{aligned}
	\|\,F^k_2\,\|_{L^1_t\BB_{2,1}^\frac{\dd}{2}\cap \BB_{2,1}^{\frac{\dd}{2}+1}}
	\lesssim
	\ee^4\,+\,
	\|\,{\rm h}^k\,\|_{L^1_t\BB_{2,1}^\frac{\dd}{2}\cap \BB_{2,1}^{\frac{\dd}{2}+1}}\,+\,
	\|\,\tilde \beta^k\,\|_{L^1_t\BB_{2,1}^\frac{\dd}{2}\cap \BB_{2,1}^{\frac{\dd}{2}+1}}\,+\,
	\|\,f^k\,\|_{L^1_t\BB_{2,1}^\frac{\dd}{2}\cap \BB_{2,1}^{\frac{\dd}{2}+1}}.
\end{aligned}
\end{equation}
We proceed controlling $\rm h^k$. The first term we take into account is
\begin{align*}
	\|K_3(\omega^k)(\nabla \m^k\odot \nabla \m^k)(\bar \nn+\m^k)\|_{L^1_t\BB_{2,1}^\frac{\dd}{2}\cap \BB_{2,1}^{\frac{\dd}{2}+1}}
	\lesssim
	( 1+ \|\omega^k\|_{L^\infty_t\BB_{2,1}^\frac{\dd}{2}})
	\|\nabla \m^k\|_{L^2_t\BB_{2,1}^\frac{\dd}{2}}^2{\scriptstyle\times}\\
	{\scriptstyle\times}
	( 1+ \|\m^k\|_{L^\infty_t\BB_{2,1}^\frac{\dd}{2}})+
	\|\nabla \omega^k\|_{L^\infty_t\BB_{2,1}^\frac{\dd}{2}}\|\nabla \m^k\|_{L^2_t\BB_{2,1}^\frac{\dd}{2}}^2
	( 1+ \|\m^k\|_{L^\infty_t\BB_{2,1}^\frac{\dd}{2}})+\\+
	\| \omega^k\|_{L^\infty_t\BB_{2,1}^\frac{\dd}{2}}
	\|\nabla \m^k\|_{L^2_t\BB_{2,1}^\frac{\dd}{2}}
	\|\Delta \m^k\|_{L^2_t\BB_{2,1}^\frac{\dd}{2}}
	\lesssim
	\ee^4.
\end{align*}
Furthermore, the following inequality is satisfied
\begin{align*}
	\Big\|\Div\Big( \tilde  K_1(\omega^k)\nabla \m^k\Big)\Big\|_{L^1_t\BB_{2,1}^\frac{\dd}{2}\cap \BB_{2,1}^{\frac{\dd}{2}+1}}
	\lesssim
	\| \omega^k\|_{L^\infty_t\BB_{2,1}^\frac{\dd}{2}}
	\| \Delta \m^k \|_{L^1_t\BB_{2,1}^\frac{\dd}{2}}+
	\| \nabla \omega^k\|_{L^2_t\BB_{2,1}^\frac{\dd}{2}}
	\| \nabla \m^k \|_{L^2_t\BB_{2,1}^\frac{\dd}{2}}+\\
	+
	\| \nabla \omega^k\|_{L^\infty_t\BB_{2,1}^\frac{\dd}{2}}
	\| \Delta \m^k \|_{L^1_t\BB_{2,1}^\frac{\dd}{2}}+
	\| \omega^k\|_{L^\infty_t\BB_{2,1}^\frac{\dd}{2}}
	\| \Delta \nabla \m^k \|_{L^1_t\BB_{2,1}^\frac{\dd}{2}}+
	\| \Delta \omega^k\|_{L^2_t\BB_{2,1}^\frac{\dd}{2}}
	\| \Delta \m^k \|_{L^2_t\BB_{2,1}^\frac{\dd}{2}}+\\+
	\| \nabla \omega^k\|_{L^\infty_t\BB_{2,1}^\frac{\dd}{2}}
	\| \Delta \nabla \m^k \|_{L^1_t\BB_{2,1}^\frac{\dd}{2}}
	\lesssim
	\ee^4.
\end{align*}
With a similar approach we can bound the following term
\begin{align*}
	\Big\|\, \Div 	\Big\{\tilde K_2(\omega^k) \Div\m^k\Id+	\tilde K_4(\omega^k)\tr \nabla \m^k 	\Big\}\,
	\Big\|_{L^1_t\BB_{2,1}^\frac{\dd}{2}\cap \BB_{2,1}^{\frac{\dd}{2}+1}}
	\lesssim \ee^4.
\end{align*}
We need then to control
\begin{align*}
	\Big\|\,	\Div\,\Big\{\tilde K_3(\omega^k)[(\bar \nn+\m^k)\cdot \nabla \m^k\otimes(\bar \nn+\m^k)\Big\}
	\,\Big\|_{L^1_t\BB_{2,1}^\frac{\dd}{2}\cap \BB_{2,1}^{\frac{\dd}{2}+1}}
	\lesssim
	\| \nabla \omega^k\|_{L^2_t\BB_{2,1}^\frac{\dd}{2}}\| \nabla \m^k \|_{L^2_t\BB_{2,1}^\frac{\dd}{2}}{\scriptstyle\times}\\
	{\scriptstyle\times}
	\Big( 1+ \|  \m^k \|_{L^\infty_t\BB_{2,1}^\frac{\dd}{2}}^2\Big)+	
	\|\omega^k\|_{L^\infty_t\BB_{2,1}^\frac{\dd}{2}}\| \Delta \m^k \|_{L^1_t\BB_{2,1}^\frac{\dd}{2}}
	\Big( 1+ \|  \m^k \|_{L^\infty_t\BB_{2,1}^\frac{\dd}{2}}^2\Big)+	
	\|\omega^k\|_{L^\infty_t\BB_{2,1}^\frac{\dd}{2}}{\scriptstyle\times}\\
	{\scriptstyle\times}\| \nabla \m^k \|_{L^2_t\BB_{2,1}^\frac{\dd}{2}}^2
	\Big( 1+ \|  \m^k \|_{L^\infty_t\BB_{2,1}^\frac{\dd}{2}}\Big)+
	\| \Delta \omega^k\|_{L^2_t\BB_{2,1}^\frac{\dd}{2}}\| \nabla \m^k \|_{L^2_t\BB_{2,1}^\frac{\dd}{2}}
	\Big( 1+ \|  \m^k \|_{L^\infty_t\BB_{2,1}^\frac{\dd}{2}}^2\Big)+\\+
	\| \nabla \omega^k\|_{L^2_t\BB_{2,1}^\frac{\dd}{2}}\| \Delta \m^k \|_{L^2_t\BB_{2,1}^\frac{\dd}{2}}
	\Big( 1+ \|  \m^k \|_{L^\infty_t\BB_{2,1}^\frac{\dd}{2}}^2\Big)+
	\| \nabla \omega^k\|_{L^\infty_t\BB_{2,1}^\frac{\dd}{2}}\| \nabla \m^k \|_{L^2_t\BB_{2,1}^\frac{\dd}{2}}^2{\scriptstyle\times}\\
	{\scriptstyle\times}
	\Big( 1+ \|  \m^k \|_{L^\infty_t\BB_{2,1}^\frac{\dd}{2}}\Big)+
	\|\nabla \omega^k\|_{L^\infty_t\BB_{2,1}^\frac{\dd}{2}}\| \Delta \m^k \|_{L^1_t\BB_{2,1}^\frac{\dd}{2}}
	\Big( 1+ \|  \m^k \|_{L^\infty_t\BB_{2,1}^\frac{\dd}{2}}^2\Big)+
	\|\omega^k\|_{L^\infty_t\BB_{2,1}^\frac{\dd}{2}}{\scriptstyle\times}\\
	{\scriptstyle\times}\| \nabla \Delta \m^k \|_{L^1_t\BB_{2,1}^\frac{\dd}{2}}
	\Big( 1+ \|  \m^k \|_{L^\infty_t\BB_{2,1}^\frac{\dd}{2}}^2\Big)+
	\| \omega^k\|_{L^\infty_t\BB_{2,1}^\frac{\dd}{2}}\| \Delta \m^k \|_{L^1_t\BB_{2,1}^\frac{\dd}{2}}
	 \| \nabla  \m^k \|_{L^\infty_t\BB_{2,1}^\frac{\dd}{2}}{\scriptstyle\times}\\
	{\scriptstyle\times}
	\Big( 1+ \|  \m^k \|_{L^\infty_t\BB_{2,1}^\frac{\dd}{2}}\Big)
	\| \nabla\omega^k\|_{L^\infty_t\BB_{2,1}^\frac{\dd}{2}}
	\| \nabla \m^k \|_{L^2_t\BB_{2,1}^\frac{\dd}{2}}^2
	\Big( 1+ \|  \m^k \|_{L^\infty_t\BB_{2,1}^\frac{\dd}{2}}\Big)+
	\| \omega^k\|_{L^\infty_t\BB_{2,1}^\frac{\dd}{2}}
	\| \nabla \m^k \|_{L^2_t\BB_{2,1}^\frac{\dd}{2}}{\scriptstyle\times}\\
	{\scriptstyle\times}
	\| \Delta \m^k \|_{L^2_t\BB_{2,1}^\frac{\dd}{2}}
	\Big( 1+ \|  \m^k \|_{L^\infty_t\BB_{2,1}^\frac{\dd}{2}}\Big)+
	\| \omega^k\|_{L^\infty_t\BB_{2,1}^\frac{\dd}{2}}
	\| \nabla \m^k \|_{L^2_t\BB_{2,1}^\frac{\dd}{2}}^2
	\| \nabla \m^k \|_{L^\infty_t\BB_{2,1}^\frac{\dd}{2}}
	\lesssim \ee^4,
\end{align*}
and, similarly, also the following inequality holds
\begin{align*}
	\Big\|\, \Div
								\Big\{
									[\m^k\cdot \nabla \m^k	]\otimes(\bar \nn+\m^k)	&+
									[\,(\bar \nn\,+\,\m^k)\cdot\nabla \m^k\,	]\otimes\m^k+\\&+
									[\m^k\cdot \nabla \m^k]\otimes\m^k
								\Big\}
								\,\Big\|_{L^1_t\BB_{2,1}^\frac{\dd}{2}\cap \BB_{2,1}^{\frac{\dd}{2}+1}}
							\lesssim \ee^4,
\end{align*}
which finally leads to
\begin{equation}\label{ineq:angularmom_h}
	\|\,{\rm h}^k\,\|_{L^1_t\BB_{2,1}^\frac{\dd}{2}\cap \BB_{2,1}^{\frac{\dd}{2}+1}} \lesssim \ee^4.
\end{equation}
We then take into consideration $f^k$ and we begin with estimating
\begin{align*}
	\Big\|\Div\Big\{(\tilde K_2(\omega^k)+\tilde K_4(\omega^k))(\Div\m^k)(\bar \nn+\m^k)\Big\}(\bar \nn+\m^k)
	\Big\|_{L^1_t\BB_{2,1}^\frac{\dd}{2}\cap \BB_{2,1}^{\frac{\dd}{2}+1}}\lesssim
	\|\nabla \omega^k\|_{L^2_t\BB_{2,1}^\frac{\dd}{2}}{\scriptstyle\times}\\
	{\scriptstyle\times}
	\| \nabla \m^k\|_{L^2_t\BB_{2,1}^\frac{\dd}{2}}
	\Big( 1+ \| \m^k \|_{L^\infty_t\BB_{2,1}^\frac{\dd}{2}}^2\Big)+
	\| \omega^k\|_{L^2_t\BB_{2,1}^\frac{\dd}{2}}
	\| \Delta \m^k\|_{L^2_t\BB_{2,1}^\frac{\dd}{2}}
	\Big( 1+ \| \m^k \|_{L^\infty_t\BB_{2,1}^\frac{\dd}{2}}^2\Big)+
	\\
	+
	\| \omega^k\|_{L^2_t\BB_{2,1}^\frac{\dd}{2}}
	\| \nabla \m^k\|_{L^2_t\BB_{2,1}^\frac{\dd}{2}}^2
	\Big( 1+ \| \m^k \|_{L^\infty_t\BB_{2,1}^\frac{\dd}{2}}\Big)+
	\|\Delta \omega^k\|_{L^2_t\BB_{2,1}^\frac{\dd}{2}}
	\| \nabla \m^k\|_{L^2_t\BB_{2,1}^\frac{\dd}{2}}
	\Big( 1+ \| \m^k \|_{L^\infty_t\BB_{2,1}^\frac{\dd}{2}}^2\Big)+\\+
	\|\nabla \omega^k\|_{L^2_t\BB_{2,1}^\frac{\dd}{2}}
	\| \Delta \m^k\|_{L^2_t\BB_{2,1}^\frac{\dd}{2}}
	\Big( 1+ \| \m^k \|_{L^\infty_t\BB_{2,1}^\frac{\dd}{2}}^2\Big)+
	\|\nabla \omega^k\|_{L^2_t\BB_{2,1}^\frac{\dd}{2}}
	\| \nabla \m^k\|_{L^2_t\BB_{2,1}^\frac{\dd}{2}}
	\| \nabla \m^k\|_{L^\infty_t\BB_{2,1}^\frac{\dd}{2}}{\scriptstyle\times}\\
	{\scriptstyle\times}
	\Big( 1+ \| \m^k \|_{L^\infty_t\BB_{2,1}^\frac{\dd}{2}}\Big)+
	\| \nabla \omega^k\|_{L^2_t\BB_{2,1}^\frac{\dd}{2}}
	\| \Delta \m^k\|_{L^2_t\BB_{2,1}^\frac{\dd}{2}}
	\Big( 1+ \| \m^k \|_{L^\infty_t\BB_{2,1}^\frac{\dd}{2}}^2\Big)+
	\| \omega^k\|_{L^2_t\BB_{2,1}^\frac{\dd}{2}}{\scriptstyle\times}\\
	{\scriptstyle\times}
	\| \nabla \Delta \m^k\|_{L^2_t\BB_{2,1}^\frac{\dd}{2}}
	\Big( 1+ \| \m^k \|_{L^\infty_t\BB_{2,1}^\frac{\dd}{2}}^2\Big)+
	\| \omega^k\|_{L^2_t\BB_{2,1}^\frac{\dd}{2}}
	\| \Delta \m^k\|_{L^2_t\BB_{2,1}^\frac{\dd}{2}}
	\| \nabla \m^k \|_{L^\infty_t\BB_{2,1}^\frac{\dd}{2}}{\scriptstyle\times}\\
	{\scriptstyle\times}
	\Big( 1+ \| \m^k \|_{L^\infty_t\BB_{2,1}^\frac{\dd}{2}}\Big)+
	\| \nabla \omega^k\|_{L^2_t\BB_{2,1}^\frac{\dd}{2}}
	\| \nabla \m^k\|_{L^2_t\BB_{2,1}^\frac{\dd}{2}}^2
	\Big( 1+ \| \m^k \|_{L^\infty_t\BB_{2,1}^\frac{\dd}{2}}\Big)+
	\| \omega^k\|_{L^2_t\BB_{2,1}^\frac{\dd}{2}}{\scriptstyle\times}\\
	{\scriptstyle\times}
	\| \nabla \m^k\|_{L^2_t\BB_{2,1}^\frac{\dd}{2}}
	\| \Delta \m^k\|_{L^2_t\BB_{2,1}^\frac{\dd}{2}}
	\Big( 1+ \| \m^k \|_{L^\infty_t\BB_{2,1}^\frac{\dd}{2}}\Big)+
	\| \omega^k\|_{L^2_t\BB_{2,1}^\frac{\dd}{2}}
	\| \nabla \m^k\|_{L^2_t\BB_{2,1}^\frac{\dd}{2}}^2
	\| \nabla \m^k \|_{L^\infty_t\BB_{2,1}^\frac{\dd}{2}}
	\lesssim \ee^4.
\end{align*}
Furthermore
\begin{align*}
	\Big\|\Div\Big\{(\Div\m^k) \m^k\Big\}\m^k+\Div\Big\{(\Div\m^k)\bar \nn\Big\}\m^k+
			\Div\Big\{(\Div\m^k)\m^k\Big\}\bar \nn\bigg\}\Big\|_{L^1_t\BB_{2,1}^\frac{\dd}{2}\cap \BB_{2,1}^{\frac{\dd}{2}+1}}
	\lesssim \\
	\lesssim
	\| \Delta	\m^k \|_{L^1_t\BB_{2,1}^\frac{\dd}{2}}
	\| 			\m^k \|_{L^\infty_t\BB_{2,1}^\frac{\dd}{2}}
	\Big(1+\| 			\m^k \|_{L^\infty_t\BB_{2,1}^\frac{\dd}{2}}\Big)+
	\| \nabla	\m^k \|_{L^2_t\BB_{2,1}^\frac{\dd}{2}}^2
	\Big(1+\| 			\m^k \|_{L^\infty_t\BB_{2,1}^\frac{\dd}{2}}\Big)+\\+
	\| \Delta	\m^k \|_{L^1_t\BB_{2,1}^\frac{\dd}{2}}
	\| 			\m^k \|_{L^\infty_t\BB_{2,1}^\frac{\dd}{2}}+
	\| \nabla \Delta	\m^k \|_{L^1_t\BB_{2,1}^\frac{\dd}{2}}
	\| 			\m^k \|_{L^\infty_t\BB_{2,1}^\frac{\dd}{2}}
	\Big(1+\| 			\m^k \|_{L^\infty_t\BB_{2,1}^\frac{\dd}{2}}\Big)+\\+
	\| \Delta	\m^k \|_{L^1_t\BB_{2,1}^\frac{\dd}{2}}
	\| 	\nabla	\m^k \|_{L^\infty_t\BB_{2,1}^\frac{\dd}{2}}
	\Big(1+\| 			\m^k \|_{L^\infty_t\BB_{2,1}^\frac{\dd}{2}}\Big)+
	\|\nabla \Delta	\m^k \|_{L^1_t\BB_{2,1}^\frac{\dd}{2}}{\scriptstyle\times}\\
	{\scriptstyle\times}
	\| 	\nabla	\m^k \|_{L^\infty_t\BB_{2,1}^\frac{\dd}{2}}+
	\| \Delta	\m^k \|_{L^2_t\BB_{2,1}^\frac{\dd}{2}}^2\lesssim \ee^4,
\end{align*}
which finally leads to the following inequality
\begin{equation}\label{ineq:angularmom_f}
	\|\,f^k\,\|_{L^1_t\BB_{2,1}^\frac{\dd}{2}\cap \BB_{2,1}^{\frac{\dd}{2}+1}}\, \lesssim\, \ee^4.
\end{equation}
Thus it remains to control the $\beta^k$-term. We observe that
\begin{align*}
	\Big\|\,K_1(\omega^k)|\nabla \m^k|^2+
	(K_2(\omega^k)+K_4(\omega^k))|\Div\m^k|^2\,\Big\|_{L^1_t\BB_{2,1}^\frac{\dd}{2}\cap \BB_{2,1}^{\frac{\dd}{2}+1}}
	\lesssim
	\Big(1+	\| 			\omega^k \|_{L^\infty_t\BB_{2,1}^\frac{\dd}{2}}\Big){\scriptstyle\times}\\
	{\scriptstyle\times}
	\|\nabla \m^k\|_{L^2_t\BB_{2,1}^\frac{\dd}{2}}^2+
	\| \nabla \omega^k \|_{L^\infty_t\BB_{2,1}^\frac{\dd}{2}}
	\|\nabla \m^k\|_{L^2_t\BB_{2,1}^\frac{\dd}{2}}^2+
	\Big(1+	\| 			\omega^k \|_{L^\infty_t\BB_{2,1}^\frac{\dd}{2}}\Big){\scriptstyle\times}\\
	{\scriptstyle\times}
	\|\nabla \m^k\|_{L^2_t\BB_{2,1}^\frac{\dd}{2}}
	\|\Delta \m^k\|_{L^2_t\BB_{2,1}^\frac{\dd}{2}}\lesssim \ee^4,
\end{align*}
together with
\begin{align*}
	\Big\|
	K_3(\,\omega^k\,)\,|\,(\,\bar \nn\,+\,\m^k)\cdot\nabla \m^k\,|^2
	\Big\|_{L^1_t\BB_{2,1}^\frac{\dd}{2}\cap \BB_{2,1}^{\frac{\dd}{2}+1}}
	\lesssim
	\Big(1+	\| 			\omega^k \|_{L^\infty_t\BB_{2,1}^\frac{\dd}{2}}\Big)
	\Big(1+	\| 			\m^k \|_{L^\infty_t\BB_{2,1}^\frac{\dd}{2}}^2\Big){\scriptstyle\times}\\
	{\scriptstyle\times}
	\|\nabla \m^k\|_{L^2_t\BB_{2,1}^\frac{\dd}{2}}^2+
	\| \nabla \omega^k \|_{L^\infty_t\BB_{2,1}^\frac{\dd}{2}}
	\Big(1+	\| 			\m^k \|_{L^\infty_t\BB_{2,1}^\frac{\dd}{2}}^2\Big)
	\|\nabla \m^k\|_{L^2_t\BB_{2,1}^\frac{\dd}{2}}^2+
	\Big(1+	\| 	(\m^k,\,	\omega^k) \|_{L^\infty_t\BB_{2,1}^\frac{\dd}{2}}\Big){\scriptstyle\times}\\
	{\scriptstyle\times}
	\| \nabla \m^k \|_{L^\infty_t\BB_{2,1}^\frac{\dd}{2}}
	\|\nabla \m^k\|_{L^2_t\BB_{2,1}^\frac{\dd}{2}}^2+
	\Big(1+	\| 			\omega^k \|_{L^\infty_t\BB_{2,1}^\frac{\dd}{2}}\Big)
	\Big(1+	\| 			\m^k \|_{L^\infty_t\BB_{2,1}^\frac{\dd}{2}}^2\Big){\scriptstyle\times}\\
	{\scriptstyle\times}
	\|\nabla \m^k\|_{L^2_t\BB_{2,1}^\frac{\dd}{2}}
	\|\Delta \m^k\|_{L^2_t\BB_{2,1}^\frac{\dd}{2}}\lesssim \ee^4,
\end{align*}
from which we deduce that
\begin{equation}\label{ineq:angularmom_beta}
	\|\,\beta^k\,\|_{L^1_t\BB_{2,1}^\frac{\dd}{2}\cap \BB_{2,1}^{\frac{\dd}{2}+1}} \lesssim \ee^4.
\end{equation}
Replacing inequalities \eqref{ineq:angularmom_h}, \eqref{ineq:angularmom_f} and \eqref{ineq:angularmom_beta} into \eqref{ineq:angularmom_F}, we finally achieve that
\begin{equation*}
	\|\,\m^{k+1}\,\|_{\X_2}\,
	\lesssim \, 
	\ee^4\,+\,\|\,F_2^k\,\|_{L^1_t \BB_{2,1}^\frac{\dd}{2}\cap \BB_{2,1}^{\frac{\dd}{2}+1}}\,\lesssim\,\ee^4
	\quad\quad\Rightarrow\quad\quad
	\|\,\m^{k+1}\,\|_{\X_2}\,\leq \ee^3,
\end{equation*}
assuming a constant $\ee$ small enough. This concludes the proof of the first inequality in \eqref{angular_mom_ind}.
We now take into account the difference between two consecutive director fields, $\delta \m^{k+1}:=\m^{k+2}-\m^{k+1}$, 
which fulfills the following differential equation
\begin{equation*}
	\partial_t \delta\m^{k+1}\,+\,\uu^{k+1}\cdot \nabla \delta \m^{k+1}\,+\,\delta \uu^{k}\cdot \nabla  \m^{k+1}\,-\Bb[\,\delta \m^{k+1}\,]\,=\,\delta F^{k}_2.
\end{equation*}
Thanks to Theorem \ref{thm-superjuve} we first get
\begin{equation*}
	\|\,\delta \m^{k+1}\,\|_{\X_2} \,\lesssim\, 
	\|\,(\,\uu^{k+1}\cdot \nabla \delta \m^{k+1},\,\,\delta \uu^{k+1}\cdot \nabla \m^{k+1},\,
	\,\delta F_2^k) \,\|_{L^1_t\BB_{2,1}^\frac{\dd}{2}\cap \BB_{2,1}^{\frac{\dd}{2}+1}}.
\end{equation*}
We then investigate each term on the right-hand side. We begin with
\begin{align*}
	\|(\,\uu^{k+1}\cdot \nabla \delta \m^{k+1},\,&\delta \uu^{k+1}\cdot \nabla \m^{k+1})\, \|_{L^1_t\BB_{2,1}^\frac{\dd}{2}\cap \BB_{2,1}^{\frac{\dd}{2}+1}}\lesssim\\
	&\lesssim
	\|\uu^{k+1}						\|_{L^2_t\BB_{2,1}^\frac{\dd}{2}}
	\|\nabla \delta \m^{k+1}			\|_{L^2_t\BB_{2,1}^\frac{\dd}{2}}
	+
	\|\delta \uu^{k+1}				\|_{L^2_t\BB_{2,1}^\frac{\dd}{2}}
	\|\nabla  \m^{k+1}			\|_{L^2_t\BB_{2,1}^\frac{\dd}{2}}
	\\
	&\lesssim
	\ee^2 \|\,\delta \m^{k+1}\,\|_{\X_2}\,+\,\ee^3\|\,\delta \uu^{k+1}\,\|_{\X_1} \lesssim 
	\ee^2 \|\, \delta \m^{k+1}\,\|_{\X_2}\,+\,\ee^{k+4},
\end{align*}
which yields
\begin{equation}\label{ineq:ang_mom_delta_m}	
	\|\,\delta \m^{k+1}\,\|_{\X_2} \,\lesssim\, \ee^{k+4}\,+\,\|\,\delta F_2^k \,\|_{L^1_t\BB_{2,1}^\frac{\dd}{2}\cap \BB_{2,1}^{\frac{\dd}{2}+1}}.
\end{equation}
We then analyze $\delta F_2^k$, which can be formulated as follows:
\begin{equation}\label{def:deltaF2k}
\begin{aligned}
	\delta F_2^k=
	 \delta \tilde \gamma_{1}^k\Big[\partial_t\m^{k+1}	+\uu^{k+1} \cdot \nabla \m^{k+1}\Big]
	+
	\tilde \gamma_{1}(\omega^k)\Big[\partial_t \delta \m^k	+ \delta \uu^k \cdot \nabla \m^{k+1}
	+\uu^k \cdot \nabla  \delta \m^k\Big]-\\
	-\delta\gamma_{1}^k\Omega^{k+1}(\bar{\nn}+\m^{k+1})
	-\gamma_{1}(\omega^k)\delta\Omega^k(\bar{\nn}+\m^{k+1})
	-\gamma_{1}(\omega^k)\Omega^k\delta \m^k
	+		\delta \gamma_{2}^k\Big[\mathbb{D}^{k+1}[\bar{\nn}+\m^{k+1}]-\\-
						\big([\bar{\nn}+\m^{k+1}]\cdot \mathbb{D}^{k+1}[\bar{\nn}+\m^{k+1}]\big)[\bar{\nn}
						+\m^{k+1}]\Big]																	
	+		\gamma_{2}(\omega^k)\Big[
						\delta \mathbb{D}^{k}[\bar{\nn}+\m^{k+1}]+\mathbb{D}^k\delta \m^k-\\-
						\big([\delta \m^k\cdot \mathbb{D}^{k+1}[\bar{\nn}+\m^{k+1}]\big)[\bar{\nn}
						+\m^{k+1}]-
						\big([\bar{\nn}+\m^k]\cdot \delta \mathbb{D}^k[\bar{\nn}+\m^{k+1}]\big)[\bar{\nn}
						+\m^{k+1}]-\\+
						\big([\bar{\nn}+\m^k]\cdot \mathbb{D}^k\delta \m^k\big)[\bar{\nn}
						+\m^{k+1}]\Big]	-
						\big([\bar{\nn}+\m^k]\cdot \mathbb{D}^k[\bar{\nn}+\m^k]\big)\delta \m^k
						\Big]	-\\-\delta {\rm h}^k + \delta \tilde{\beta}^k(\bar{\nn}+\m^{k+1})+	
						 \tilde{\beta}^k\delta\m^{k}+ \delta f^k.															\\
\end{aligned}
\end{equation}
We first observe that
\begin{align*}
	\| 	\delta \tilde \gamma_{1}^k\Big[\partial_t\m^{k+1}	+\uu^{k+1} \cdot \nabla \m^{k+1}\Big]\|_{L^1_t\BB_{2,1}^\frac{\dd}{2}\cap \BB_{2,1}^{\frac{\dd}{2}+1}}
	\lesssim
	\| \delta \omega^k \|_{L^\infty_t\BB_{2,1}^\frac{\dd}{2}}
	\Big( \| \partial_t\m^k\|_{L^1_t\BB_{2,1}^\frac{\dd}{2}}+\|\uu^{k+1} \|_{L^2_t\BB_{2,1}^\frac{\dd}{2}}{\scriptstyle\times}\\
	{\scriptstyle\times}
	\|\nabla \m^{k+1} \|_{L^2_t\BB_{2,1}^\frac{\dd}{2}}\Big)+
	\| \nabla \delta \omega^k \|_{L^\infty_t\BB_{2,1}^\frac{\dd}{2}}\Big( \| \partial_t\m^k\|_{L^1_t\BB_{2,1}^\frac{\dd}{2}}+\|\uu^{k+1} \|_{L^2_t\BB_{2,1}^\frac{\dd}{2}}
	\|\nabla \m^{k+1} \|_{L^2_t\BB_{2,1}^\frac{\dd}{2}}\Big)+\\+
	\| \delta \omega^k \|_{L^\infty_t\BB_{2,1}^\frac{\dd}{2}}
	\Big( \| \partial_t\nabla \m^k\|_{L^1_t\BB_{2,1}^\frac{\dd}{2}}+
	\|\nabla \uu^{k+1} \|_{L^2_t\BB_{2,1}^\frac{\dd}{2}}
	\|\nabla \m^{k+1} \|_{L^2_t\BB_{2,1}^\frac{\dd}{2}}+
	\|\uu^{k+1} \|_{L^2_t\BB_{2,1}^\frac{\dd}{2}}{\scriptstyle\times}\\
	{\scriptstyle\times}
	\|\Delta \m^{k+1} \|_{L^2_t\BB_{2,1}^\frac{\dd}{2}}\Big)
	\lesssim \ee^{3}\| \delta \omega^k \|_{\X_3} \lesssim
	\ee^{k+4},
\end{align*}
and moreover
\begin{align*}
	\| \tilde \gamma_{1}(\omega^k)\Big[\partial_t \delta \m^k	+ \delta \uu^k \cdot \nabla \m^{k+1}
	+\uu^k \cdot \nabla  \delta \m^k\Big]\|_{L^1_t\BB_{2,1}^\frac{\dd}{2}\cap \BB_{2,1}^{\frac{\dd}{2}+1}}
	\lesssim
	\| \omega^k \|_{L^\infty_t\BB_{2,1}^\frac{\dd}{2}\cap \BB_{2,1}^{\frac{\dd}{2}+1}}{\scriptstyle\times}\\
	{\scriptstyle\times}
	\Big(
	\| \partial_t\delta \m^k \|_{L^1_t\BB_{2,1}^\frac{\dd}{2}\cap \BB_{2,1}^{\frac{\dd}{2}+1}}+ 
	\| \delta \uu^{k} \|_{L^2_t\BB_{2,1}^\frac{\dd}{2}\cap \BB_{2,1}^{\frac{\dd}{2}+1}}
	\|\nabla \m^{k+1} \|_{L^2_t\BB_{2,1}^\frac{\dd}{2}\cap \BB_{2,1}^{\frac{\dd}{2}+1}}+
	\|\uu^{k} \|_{L^2_t\BB_{2,1}^\frac{\dd}{2}\cap \BB_{2,1}^{\frac{\dd}{2}+1}}{\scriptstyle\times}\\
	{\scriptstyle\times}
	\|\nabla \delta \m^{k} \|_{L^2_t\BB_{2,1}^\frac{\dd}{2}\cap \BB_{2,1}^{\frac{\dd}{2}+1}}\Big)\lesssim
	\ee^3\Big( \| \delta \m^k \|_{\X_2}+ \| \delta \uu^k \|_{\X_1}\big)\lesssim \ee^{k+3}.
\end{align*}
Then, we gather
\begin{align*}
	\|\delta\gamma_{1}^k\Omega^{k+1}(\bar{\nn}+\m^{k+1})
	+\gamma_{1}(\omega^k)\delta\Omega^k(\bar{\nn}+\m^{k+1})
	+\gamma_{1}(\omega^k)\Omega^k\delta \m^k\|_{L^1_t\BB_{2,1}^\frac{\dd}{2}\cap \BB_{2,1}^{\frac{\dd}{2}+1}}
	\lesssim
	\| \delta \omega^k \|_{L^\infty_t\BB_{2,1}^\frac{\dd}{2}}{\scriptstyle\times}\\
	{\scriptstyle\times}
	\| \nabla \uu^{k+1} \|_{L^1_t\BB_{2,1}^\frac{\dd}{2}}
	(1+\|\m^{k+1}\|_{L^\infty_t\BB_{2,1}^\frac{\dd}{2}} )+
	\| \nabla \delta \omega^k \|_{L^\infty_t\BB_{2,1}^\frac{\dd}{2}}
	\| \nabla \uu^{k+1} \|_{L^1_t\BB_{2,1}^\frac{\dd}{2}}
	(1+\|\m^{k+1}\|_{L^\infty_t\BB_{2,1}^\frac{\dd}{2}} )+\\+
	\| \delta \omega^k \|_{L^\infty_t\BB_{2,1}^\frac{\dd}{2}}
	\| \Delta \uu^{k+1} \|_{L^1_t\BB_{2,1}^\frac{\dd}{2}}
	(1+\|\m^{k+1}\|_{L^\infty_t\BB_{2,1}^\frac{\dd}{2}} )+
	\| \delta \omega^k \|_{L^\infty_t\BB_{2,1}^\frac{\dd}{2}}
	\| \nabla \uu^{k+1} \|_{L^1_t\BB_{2,1}^\frac{\dd}{2}}
	\|\nabla \m^{k+1}\|_{L^\infty_t\BB_{2,1}^\frac{\dd}{2}} +\\+
	(1+\|\omega^{k+1}\|_{L^\infty_t\BB_{2,1}^\frac{\dd}{2}}+\|\m^{k+1}\|_{L^\infty_t\BB_{2,1}^\frac{\dd}{2}} )
	\| \nabla \delta \uu^{k+1} \|_{L^1_t\BB_{2,1}^\frac{\dd}{2}}+
	\|\nabla \omega^{k+1}\|_{L^\infty_t\BB_{2,1}^\frac{\dd}{2}}(1+\|\m^{k+1}\|_{L^\infty_t\BB_{2,1}^\frac{\dd}{2}} ){\scriptstyle\times}\\
	{\scriptstyle\times}
	\| \nabla \delta \uu^{k+1} \|_{L^1_t\BB_{2,1}^\frac{\dd}{2}}
	(1+\|\omega^{k+1}\|_{L^\infty_t\BB_{2,1}^\frac{\dd}{2}})\|\nabla \m^{k+1}\|_{L^\infty_t\BB_{2,1}^\frac{\dd}{2}} 
	\| \nabla \delta \uu^{k+1} \|_{L^1_t\BB_{2,1}^\frac{\dd}{2}}+
	(1+\|\omega^{k+1}\|_{L^\infty_t\BB_{2,1}^\frac{\dd}{2}}+\\+\|\m^{k+1}\|_{L^\infty_t\BB_{2,1}^\frac{\dd}{2}} )
	\| \Delta \delta \uu^{k+1} \|_{L^1_t\BB_{2,1}^\frac{\dd}{2}}+
	(1+\|\omega^{k}\|_{L^\infty_t\BB_{2,1}^\frac{\dd}{2}})\|\delta \m^{k}\|_{L^\infty_t\BB_{2,1}^\frac{\dd}{2}} )
	\| \nabla \uu^{k} \|_{L^1_t\BB_{2,1}^\frac{\dd}{2}}+
	\|\nabla \omega^{k}\|_{L^\infty_t\BB_{2,1}^\frac{\dd}{2}}{\scriptstyle\times}\\
	{\scriptstyle\times}\|\delta \m^{k}\|_{L^\infty_t\BB_{2,1}^\frac{\dd}{2}} 
	\| \nabla \uu^{k} \|_{L^1_t\BB_{2,1}^\frac{\dd}{2}}+
	(1+\|\omega^{k}\|_{L^\infty_t\BB_{2,1}^\frac{\dd}{2}})\|\nabla \delta \m^{k}\|_{L^\infty_t\BB_{2,1}^\frac{\dd}{2}}\| \nabla \uu^{k} \|_{L^1_t\BB_{2,1}^\frac{\dd}{2}} +
	(1+\|\omega^{k}\|_{L^\infty_t\BB_{2,1}^\frac{\dd}{2}}){\scriptstyle\times}\\
	{\scriptstyle\times}\|\delta \m^{k}\|_{L^\infty_t\BB_{2,1}^\frac{\dd}{2}}\| \Delta \uu^{k} \|_{L^1_t\BB_{2,1}^\frac{\dd}{2}}
	\lesssim \ee^2\Big(	\| \delta \m^k \|_{\X_2} +\| \delta \omega^k \|_{\X_3} \Big) + \| \nabla \delta \uu^k \|_{L^1_t\BB_{2,1}^\frac{\dd}{2}\cap L^2\BB_{2,1}^{\frac{\dd}{2}+1}} \leq 
	\ee^{k+3},
\end{align*}
where we have assumed $\bar \alpha_4>1/\ee^3$ large enough in order to have
\begin{equation*}
	\| \nabla \delta \uu^k \|_{L^1_t\BB_{2,1}^\frac{\dd}{2}\cap L^2\BB_{2,1}^{\frac{\dd}{2}+1}}
	\leq \frac{1}{\bar \alpha_4 }\| \delta \uu^k \|_{\X_1} \leq \frac{1}{\bar \alpha_4 }\ee^{k}\leq \ee^{k+3}.
\end{equation*}
Similarly, we handle the next term through
\begin{align*}
	\Big\| \delta \gamma_{2}^k\Big[\mathbb{D}^{k+1}[\bar{\nn}+\m^{k+1}]-
						\big([\bar{\nn}+\m^{k+1}]\cdot \mathbb{D}^{k+1}[\bar{\nn}+\m^{k+1}]\big)[\bar{\nn}
						+\m^{k+1}]\Big]																	
	\Big\|_{L^1_t\BB_{2,1}^\frac{\dd}{2}\cap L^2\BB_{2,1}^{\frac{\dd}{2}+1}}\lesssim
	\\
	\lesssim
	\| \delta \omega^k \|_{L^\infty_t\BB_{2,1}^\frac{\dd}{2}}
	\| \nabla \uu^{k+1} \|_{L^1_t\BB_{2,1}^\frac{\dd}{2}}
	\Big( 1+ \|\m^{k+1}\|_{L^\infty_t\BB_{2,1}^\frac{\dd}{2}}^3
	\Big) +
	\| \nabla \delta \omega^k \|_{L^\infty_t\BB_{2,1}^\frac{\dd}{2}}
	\| \nabla \uu^{k+1} \|_{L^1_t\BB_{2,1}^\frac{\dd}{2}}{\scriptstyle\times}\\
	{\scriptstyle\times}
	\Big( 1+ \|\m^{k+1}\|_{L^\infty_t\BB_{2,1}^\frac{\dd}{2}}^3
	\Big)+
	\| \delta \omega^k \|_{L^\infty_t\BB_{2,1}^\frac{\dd}{2}}
	\| \Delta \uu^{k+1} \|_{L^1_t\BB_{2,1}^\frac{\dd}{2}}
	\Big( 1+ \|\m^{k+1}\|_{L^\infty_t\BB_{2,1}^\frac{\dd}{2}}^3
	\Big)+\\
	\| \delta \omega^k \|_{L^\infty_t\BB_{2,1}^\frac{\dd}{2}}
	\| \uu^{k+1} \|_{L^1_t\BB_{2,1}^\frac{\dd}{2}}
	\|\nabla \m^{k+1}\|_{L^\infty_t\BB_{2,1}^\frac{\dd}{2}}
	\Big( 1+ \|\m^{k+1}\|_{L^\infty_t\BB_{2,1}^\frac{\dd}{2}}^2
	\Big)
	\lesssim
	\ee^{k+3},
\end{align*}
together with
\begin{align*}
	\Big\| \gamma_{2}(\omega^k)\Big[
						\delta \mathbb{D}^{k}[\bar{\nn}+\m^{k+1}]+\mathbb{D}^k\delta \m^k\Big]
	\Big\|
	\lesssim
	\Big(1+\|\omega^k\|_{L^\infty_t\BB_{2,1}^\frac{\dd}{2}}\Big)
	\Big(1+\|\m^{k+1}\|_{L^\infty_t\BB_{2,1}^\frac{\dd}{2}}\Big)
	\| \nabla \delta \uu^k \|_{L^1_t\BB_{2,1}^\frac{\dd}{2}} +\\+
	\|\nabla \omega^k\|_{L^\infty_t\BB_{2,1}^\frac{\dd}{2}}
	\Big(1+\|\m^{k+1}\|_{L^\infty_t\BB_{2,1}^\frac{\dd}{2}}\Big)
	\| \nabla \delta \uu^k \|_{L^1_t\BB_{2,1}^\frac{\dd}{2}} + 
	\|\nabla \m^{k+1}\|_{L^\infty_t\BB_{2,1}^\frac{\dd}{2}}
	\Big(1+\|\omega^k\|_{L^\infty_t\BB_{2,1}^\frac{\dd}{2}}\Big){\scriptstyle\times}\\
	{\scriptstyle\times}
	\| \nabla \delta \uu^k \|_{L^1_t\BB_{2,1}^\frac{\dd}{2}}+
	\Big(1+\|\omega^k\|_{L^\infty_t\BB_{2,1}^\frac{\dd}{2}}\Big)
	\Big(1+\|\m^{k+1}\|_{L^\infty_t\BB_{2,1}^\frac{\dd}{2}}\Big)
	\| \Delta \delta \uu^k \|_{L^1_t\BB_{2,1}^\frac{\dd}{2}}+
	\Big(1+\|\omega^k\|_{L^\infty_t\BB_{2,1}^\frac{\dd}{2}}\Big){\scriptstyle\times}\\
	{\scriptstyle\times}
	\|\delta \m^k\|_{L^\infty_t\BB_{2,1}^\frac{\dd}{2}}
	\| \nabla \uu^k \|_{L^1_t\BB_{2,1}^\frac{\dd}{2}} +
	\|\nabla \omega^k\|_{L^\infty_t\BB_{2,1}^\frac{\dd}{2}}
	\|\delta \m^k\|_{L^\infty_t\BB_{2,1}^\frac{\dd}{2}}
	\| \nabla \uu^k \|_{L^1_t\BB_{2,1}^\frac{\dd}{2}} + 
	\|\nabla\delta  \m^k\|_{L^\infty_t\BB_{2,1}^\frac{\dd}{2}}{\scriptstyle\times}\\
	{\scriptstyle\times}
	\Big(1+\|\omega^k\|_{L^\infty_t\BB_{2,1}^\frac{\dd}{2}}\Big)
	\| \nabla  \uu^k \|_{L^1_t\BB_{2,1}^\frac{\dd}{2}}+
	\Big(1+\|\omega^k\|_{L^\infty_t\BB_{2,1}^\frac{\dd}{2}}\Big)
	\|\delta\m^k\|_{L^\infty_t\BB_{2,1}^\frac{\dd}{2}}
	\| \Delta \uu^k \|_{L^1_t\BB_{2,1}^\frac{\dd}{2}} \lesssim \ee^{k+3}.
\end{align*}
Similarly, the following inequality is satisfied:
\begin{align*}
	\Big\|\gamma_{2}(\omega^k)\Big[
						\big([\delta \m^k\cdot \mathbb{D}^{k+1}[\bar{\nn}+\m^{k+1}]\big)[\bar{\nn}
						+\m^{k+1}]-
						\big([\bar{\nn}+\m^k]\cdot \delta \mathbb{D}^k[\bar{\nn}+\m^{k+1}]\big)[\bar{\nn}
						+\m^{k+1}]-\\+
						\big([\bar{\nn}+\m^k]\cdot \mathbb{D}^k\delta \m^k\big)[\bar{\nn}
						+\m^{k+1}]\Big]	-
						\big([\bar{\nn}+\m^k]\cdot \mathbb{D}^k[\bar{\nn}+\m^k]\big)\delta \m^k
						\Big]\Big\|_{L^1_t\BB_{2,1}^\frac{\dd}{2}\cap \BB_{2,1}^{\frac{\dd}{2}+1}}\lesssim
						\ee^{k+3}.
\end{align*}
Thus, we finally deduce that the norm of $\delta \m^{k+1}$ fulfills
\begin{equation}\label{ineq:ang_mom_delta_m2}	
	\|\,\delta \m^{k+1}\,\|_{\X_2} \,\lesssim\, \ee^{k+3}\,+\,\|\,(\delta {\rm h}^k,\,\delta\tilde{\beta}^k(\bar{\nn}+\m^{k+1}),\,
	 \,\tilde{\beta}^k \delta \m^k,\,\delta f^k)\|_{L^1_t\BB_{2,1}^\frac{\dd}{2}\cap \BB_{2,1}^{\frac{\dd}{2}+1}}.
\end{equation}
We begin with $\delta {\rm h}^k={\rm h}^{k+1}-{\rm h}^k$. First we observe that 
\begin{align*}
	\Big\|
		\delta K_3^k(\nabla \m^{k+1}\odot \nabla \m^{k+1})(\bar \nn+\m^{k+1})	
	\Big\|_{L^1_t\BB_{2,1}^\frac{\dd}{2}\cap \BB_{2,1}^{\frac{\dd}{2}+1}}
	\lesssim
	\| \delta \omega^k \|_{L^\infty_t\BB_{2,1}^\frac{\dd}{2}}
	\| \nabla \m^{k+1}\|_{L^2_t\BB_{2,1}^\frac{\dd}{2}}^2
	(1+\\+ \|\m^{k+1}\|_{L^\infty_t\BB_{2,1}^\frac{\dd}{2}})+
	\| \nabla \delta \omega^k \|_{L^\infty_t\BB_{2,1}^\frac{\dd}{2}}
	\| \nabla \m^{k+1}\|_{L^2_t\BB_{2,1}^\frac{\dd}{2}}^2
	(1+ \|\m^{k+1}\|_{L^\infty_t\BB_{2,1}^\frac{\dd}{2}})	
	\| \delta \omega^k \|_{L^\infty_t\BB_{2,1}^\frac{\dd}{2}}{\scriptstyle\times}\\
	{\scriptstyle\times}
	\| \nabla \m^{k+1}\|_{L^2_t\BB_{2,1}^\frac{\dd}{2}}
	\| \Delta \m^{k+1}\|_{L^2_t\BB_{2,1}^\frac{\dd}{2}}
	(1+ \|\m^{k+1}\|_{L^\infty_t\BB_{2,1}^\frac{\dd}{2}})+
	\| \delta \omega^k \|_{L^\infty_t\BB_{2,1}^\frac{\dd}{2}}
	\| \nabla \m^{k+1}\|_{L^2_t\BB_{2,1}^\frac{\dd}{2}}^2{\scriptstyle\times}\\
	{\scriptstyle\times}
	\|\nabla \m^{k+1}\|_{L^\infty_t\BB_{2,1}^\frac{\dd}{2}}
	\lesssim \ee^3 \Big( \|\delta \m^k \|_{\X_2}+\|\delta \omega^k \|_{\X_3}\Big)
	\lesssim \ee^{k+3}.
\end{align*}
Moreover
\begin{align*}
	\Big\|
		K_3^k(\nabla \delta  \m^k\odot \nabla \m^{k+1})(\bar \nn+\m^{k+1})+
		K_3^k(\nabla \m^k\odot \nabla \delta \m^k)(\bar \nn+\m^{k+1})+
	\Big\|_{L^1_t\BB_{2,1}^\frac{\dd}{2}\cap \BB_{2,1}^{\frac{\dd}{2}+1}}
	\lesssim\\\lesssim
	(1+\|\omega^{k}\|_{L^\infty_t\BB_{2,1}^\frac{\dd}{2}})
	(1+\|\m^{k+1}\|_{L^\infty_t\BB_{2,1}^\frac{\dd}{2}})
	\| \nabla \delta \m^k \|_{L^2_t\BB_{2,1}^\frac{\dd}{2}}
	\| (\nabla \m^{k+1},\,\nabla \m^{k+1}) \|_{L^2_t\BB_{2,1}^\frac{\dd}{2}}+\\+
	\|\nabla \omega^{k}\|_{L^\infty_t\BB_{2,1}^\frac{\dd}{2}}
	(1+\|\m^{k+1}\|_{L^\infty_t\BB_{2,1}^\frac{\dd}{2}})
	\| \nabla \delta \m^k \|_{L^2_t\BB_{2,1}^\frac{\dd}{2}}
	\| (\nabla \m^{k+1},\,\nabla \m^{k+1}) \|_{L^2_t\BB_{2,1}^\frac{\dd}{2}}+\\+
	(1+\|\omega^{k}\|_{L^\infty_t\BB_{2,1}^\frac{\dd}{2}})
	\|\nabla \m^{k+1}\|_{L^\infty_t\BB_{2,1}^\frac{\dd}{2}})
	\| \nabla \delta \m^k \|_{L^2_t\BB_{2,1}^\frac{\dd}{2}}
	\| (\nabla \m^{k+1},\,\nabla \m^{k+1}) \|_{L^2_t\BB_{2,1}^\frac{\dd}{2}}+\\+
	(1+\|\omega^{k}\|_{L^\infty_t\BB_{2,1}^\frac{\dd}{2}})
	(1+\|\m^{k+1}\|_{L^\infty_t\BB_{2,1}^\frac{\dd}{2}})
	\| \Delta \delta \m^k \|_{L^2_t\BB_{2,1}^\frac{\dd}{2}}
	\| (\nabla \m^{k+1},\,\nabla \m^{k+1}) \|_{L^2_t\BB_{2,1}^\frac{\dd}{2}}+\\+
	(1+\|\omega^{k}\|_{L^\infty_t\BB_{2,1}^\frac{\dd}{2}})
	(1+\|\m^{k+1}\|_{L^\infty_t\BB_{2,1}^\frac{\dd}{2}})
	\| \nabla \delta \m^k \|_{L^2_t\BB_{2,1}^\frac{\dd}{2}}
	\| (\Delta \m^{k+1},\,\Delta \m^{k+1}) \|_{L^2_t\BB_{2,1}^\frac{\dd}{2}}\lesssim
	 \ee^{k+3}
\end{align*}
and
\begin{align*}
	\Big\|
			K_3^k(\nabla \m^k\odot \nabla \delta \m^k)\delta\m^{k}
	\Big\|_{L^1_t\BB_{2,1}^\frac{\dd}{2}\cap \BB_{2,1}^{\frac{\dd}{2}+1}}	
	\lesssim	
	(1+\|\omega^{k}\|_{L^\infty_t\BB_{2,1}^\frac{\dd}{2}})
	\| \nabla \delta \m^k \|_{L^2_t\BB_{2,1}^\frac{\dd}{2}}
	\| \nabla \m^{k} \|_{L^2_t\BB_{2,1}^\frac{\dd}{2}}{\scriptstyle\times}\\
	{\scriptstyle\times}
	\|\delta \m^{k}\|_{L^\infty_t\BB_{2,1}^\frac{\dd}{2}}+
	\|\nabla \omega^{k}\|_{L^\infty_t\BB_{2,1}^\frac{\dd}{2}}
	\| \nabla \delta \m^k \|_{L^2_t\BB_{2,1}^\frac{\dd}{2}}
	\| \nabla \m^{k} \|_{L^2_t\BB_{2,1}^\frac{\dd}{2}}
	\|\delta \m^{k}\|_{L^\infty_t\BB_{2,1}^\frac{\dd}{2}}+
	(1+\|\omega^{k}\|_{L^\infty_t\BB_{2,1}^\frac{\dd}{2}}){\scriptstyle\times}\\
	{\scriptstyle\times}
	\| \Delta \delta \m^k \|_{L^2_t\BB_{2,1}^\frac{\dd}{2}}
	\| \nabla \m^{k} \|_{L^2_t\BB_{2,1}^\frac{\dd}{2}}
	\|\delta \m^{k}\|_{L^\infty_t\BB_{2,1}^\frac{\dd}{2}}+
	(1+\|\omega^{k}\|_{L^\infty_t\BB_{2,1}^\frac{\dd}{2}}
	\| \nabla \delta \m^k \|_{L^2_t\BB_{2,1}^\frac{\dd}{2}}
	\| \Delta \m^{k} \|_{L^2_t\BB_{2,1}^\frac{\dd}{2}}{\scriptstyle\times}\\
	{\scriptstyle\times}
	\|\delta \m^{k}\|_{L^\infty_t\BB_{2,1}^\frac{\dd}{2}}+
	(1+\|\omega^{k}\|_{L^\infty_t\BB_{2,1}^\frac{\dd}{2}}
	\| \nabla \delta \m^k \|_{L^2_t\BB_{2,1}^\frac{\dd}{2}}
	\| \nabla \m^{k} \|_{L^2_t\BB_{2,1}^\frac{\dd}{2}}
	\| \nabla \delta \m^{k}\|_{L^\infty_t\BB_{2,1}^\frac{\dd}{2}}\lesssim
	\ee^{k+3}.
\end{align*}
Furthermore
\begin{align*}
	\Big\|
			\Div
								\Big\{
									\delta \tilde K_1^k \nabla \m^{k+1}+
									\tilde K_1^k \nabla  \delta \m^k+
									\delta \tilde K_4^k\tr \nabla \m^{k+1}+
									 \tilde K_4^k\tr \nabla  \delta\m^k 
	\Big\|_{L^1_t\BB_{2,1}^\frac{\dd}{2}\cap \BB_{2,1}^{\frac{\dd}{2}+1}}	
	\lesssim
	\| \nabla \delta \omega^k \|_{L^2_t\BB_{2,1}^\frac{\dd}{2}}{\scriptstyle\times}\\
	{\scriptstyle\times}
	\| \nabla  \m^{k+1} \|_{L^2_t\BB_{2,1}^\frac{\dd}{2}}+
	\|  \delta \omega^k \|_{L^2_t\BB_{2,1}^\frac{\dd}{2}}
	\| \Delta  \m^{k+1} \|_{L^2_t\BB_{2,1}^\frac{\dd}{2}}+
	\| \nabla \omega^k \|_{L^2_t\BB_{2,1}^\frac{\dd}{2}}
	\|  \nabla \delta \m^k \|_{L^2_t\BB_{2,1}^\frac{\dd}{2}}+
	\|   \omega^k \|_{L^\infty_t\BB_{2,1}^\frac{\dd}{2}}{\scriptstyle\times}\\
	{\scriptstyle\times}
	\| \Delta  \delta \m^{k} \|_{L^1_t\BB_{2,1}^\frac{\dd}{2}}+ 
	\| \Delta \delta \omega^k \|_{L^1_t\BB_{2,1}^\frac{\dd}{2}}
	\| \nabla  \m^{k+1} \|_{L^\infty_t\BB_{2,1}^\frac{\dd}{2}}+
	\| \nabla \delta \omega^k \|_{L^2_t\BB_{2,1}^\frac{\dd}{2}}
	\| \Delta  \m^{k+1} \|_{L^2_t\BB_{2,1}^\frac{\dd}{2}}+\\+
	\|  \nabla \delta \omega^k \|_{L^2_t\BB_{2,1}^\frac{\dd}{2}}
	\| \Delta  \m^{k+1} \|_{L^2_t\BB_{2,1}^\frac{\dd}{2}}+
	\|   \delta \omega^k \|_{L^\infty_t\BB_{2,1}^\frac{\dd}{2}}
	\| \nabla \Delta  \m^{k+1} \|_{L^1_t\BB_{2,1}^\frac{\dd}{2}}+
	\|  \nabla \omega^k \|_{L^2_t\BB_{2,1}^\frac{\dd}{2}}{\scriptstyle\times}\\
	{\scriptstyle\times}
	\| \Delta  \delta \m^{k} \|_{L^2_t\BB_{2,1}^\frac{\dd}{2}}+ 
	\|   \omega^k \|_{L^\infty_t\BB_{2,1}^\frac{\dd}{2}}
	\| \nabla\Delta  \delta \m^{k} \|_{L^1_t\BB_{2,1}^\frac{\dd}{2}}+ 
	\|  \nabla \omega^k \|_{L^2_t\BB_{2,1}^\frac{\dd}{2}}{\scriptstyle\times}\\
	{\scriptstyle\times}
	\| \Delta  \delta \m^{k} \|_{L^2_t\BB_{2,1}^\frac{\dd}{2}}+ 
	\|   \omega^k \|_{L^\infty_t\BB_{2,1}^\frac{\dd}{2}}
	\| \nabla \Delta  \delta \m^{k} \|_{L^1_t\BB_{2,1}^\frac{\dd}{2}}
	\lesssim
	\ee^{k+3}.
\end{align*}
Finally, the following inequality holds
\begin{align*}
	\Big\|
		\Div\,
		\Big\{
			\delta\tilde K_3^k[(\bar \nn+\m^{k+1})\cdot \nabla \m^{k+1}\otimes(\bar \nn+\m^{k+1})+
		\Big\}
	\Big\|_{L^1_t\BB_{2,1}^\frac{\dd}{2}\cap \BB_{2,1}^{\frac{\dd}{2}+1}}
	\lesssim
	\| \nabla \omega^k 	\|_{L^2_t\BB_{2,1}^\frac{\dd}{2}}
	\| \nabla \m^{k+1} 	\|_{L^2_t\BB_{2,1}^{\frac{\dd}{2}}}{\scriptstyle\times}\\
	{\scriptstyle\times}
	(1+\|\m^k\|_{L^\infty_t\BB_{2,1}^\frac{\dd}{2}}^2)+
	\| \delta \omega^k 	\|_{L^\infty_t\BB_{2,1}^\frac{\dd}{2}}
	\| \Delta \m^{k+1} 	\|_{L^1_t\BB_{2,1}^{\frac{\dd}{2}}}
	(1+\|\m^k\|_{L^\infty_t\BB_{2,1}^\frac{\dd}{2}}^2)+ 	
	\| \delta \omega^k 	\|_{L^\infty_t\BB_{2,1}^\frac{\dd}{2}}{\scriptstyle\times}\\
	{\scriptstyle\times}
	\| \nabla \m^{k+1} 	\|_{L^2_t\BB_{2,1}^{\frac{\dd}{2}}}
	\|\nabla \m^k\|_{L^2_t\BB_{2,1}^\frac{\dd}{2}}
	(1+\|\m^k\|_{L^\infty_t\BB_{2,1}^\frac{\dd}{2}})+
	\| \nabla \delta \omega^k 	\|_{L^2_t\BB_{2,1}^\frac{\dd}{2}}
	\| \nabla \m^{k+1} 	\|_{L^2_t\BB_{2,1}^{\frac{\dd}{2}}}{\scriptstyle\times}\\
	{\scriptstyle\times}
	(1+\|\m^k\|_{L^\infty_t\BB_{2,1}^\frac{\dd}{2}}^2)+
	\| \nabla \omega^k 	\|_{L^2_t\BB_{2,1}^\frac{\dd}{2}}
	\| \Delta \m^{k+1} 	\|_{L^2_t\BB_{2,1}^{\frac{\dd}{2}}}
	(1+\|\m^k\|_{L^\infty_t\BB_{2,1}^\frac{\dd}{2}}^2)+
	\| \nabla \omega^k 	\|_{L^2_t\BB_{2,1}^\frac{\dd}{2}}{\scriptstyle\times}\\
	{\scriptstyle\times}
	\| \nabla \m^{k+1} 	\|_{L^2_t\BB_{2,1}^{\frac{\dd}{2}}}
	\| \nabla \m^k 	\|_{L^\infty_t\BB_{2,1}^\frac{\dd}{2}}
	(1+\|\m^k\|_{L^\infty_t\BB_{2,1}^\frac{\dd}{2}})+
	\| \nabla \delta \omega^k 	\|_{L^\infty_t\BB_{2,1}^\frac{\dd}{2}}
	\| \Delta \m^{k+1} 	\|_{L^1_t\BB_{2,1}^{\frac{\dd}{2}}}{\scriptstyle\times}\\
	{\scriptstyle\times}
	(1+\|\m^k\|_{L^\infty_t\BB_{2,1}^\frac{\dd}{2}}^2)+ 	
	\| \nabla \delta \omega^k 	\|_{L^\infty_t\BB_{2,1}^\frac{\dd}{2}}
	\| \nabla \Delta \m^{k+1} 	\|_{L^1_t\BB_{2,1}^{\frac{\dd}{2}}}
	(1+\|\m^k\|_{L^\infty_t\BB_{2,1}^\frac{\dd}{2}}^2)+
	\| \nabla \delta \omega^k 	\|_{L^\infty_t\BB_{2,1}^\frac{\dd}{2}}{\scriptstyle\times}\\
	{\scriptstyle\times}
	\| \Delta \m^{k+1} 	\|_{L^1_t\BB_{2,1}^{\frac{\dd}{2}}}
	\|\nabla  \m^k\|_{L^\infty_t\BB_{2,1}^\frac{\dd}{2}}
	(1+\|\m^k\|_{L^\infty_t\BB_{2,1}^\frac{\dd}{2}})+
	\|\nabla  \delta \omega^k 	\|_{L^\infty_t\BB_{2,1}^\frac{\dd}{2}}
	\| \nabla \m^{k+1} 	\|_{L^2_t\BB_{2,1}^{\frac{\dd}{2}}}
	\|\nabla \m^k\|_{L^2_t\BB_{2,1}^\frac{\dd}{2}}{\scriptstyle\times}\\
	{\scriptstyle\times}
	(1+\|\m^k\|_{L^\infty_t\BB_{2,1}^\frac{\dd}{2}})+
	\| \delta \omega^k 	\|_{L^\infty_t\BB_{2,1}^\frac{\dd}{2}}
	\| \Delta \m^{k+1} 	\|_{L^2_t\BB_{2,1}^{\frac{\dd}{2}}}
	\|\nabla \m^k\|_{L^2_t\BB_{2,1}^\frac{\dd}{2}}
	(1+\|\m^k\|_{L^\infty_t\BB_{2,1}^\frac{\dd}{2}})+\\+
	\|\nabla  \delta \omega^k 	\|_{L^\infty_t\BB_{2,1}^\frac{\dd}{2}}
	\| \nabla \m^{k+1} 	\|_{L^2_t\BB_{2,1}^{\frac{\dd}{2}}}
	\|\Delta \m^k\|_{L^2_t\BB_{2,1}^\frac{\dd}{2}}{\scriptstyle\times}\
	(1+\|\m^k\|_{L^\infty_t\BB_{2,1}^\frac{\dd}{2}})+\\+
	\|\nabla  \delta \omega^k 	\|_{L^\infty_t\BB_{2,1}^\frac{\dd}{2}}
	\| \nabla \m^{k+1} 	\|_{L^2_t\BB_{2,1}^{\frac{\dd}{2}}}
	\|\Delta \m^k\|_{L^2_t\BB_{2,1}^\frac{\dd}{2}}
	\|\nabla \m^k\|_{L^\infty_t\BB_{2,1}^\frac{\dd}{2}}
	\lesssim \ee^{k+3},
\end{align*}
and with a similar approach also the following estimate is fulfilled
\begin{align*}
	\Big\|&
		\Div\,
		\Big\{
			\tilde K_3^k[\delta\m^k\cdot \nabla \m^{k+1}\otimes(\bar \nn+\m^{k+1})
			\tilde K_3^k[\delta\m^k\cdot \nabla \m^{k+1}\otimes(\bar \nn+\m^{k+1})+\\&+
			\tilde K_3^k[(\bar \nn+\m^k)\cdot \nabla\delta \m^k\otimes(\bar \nn+\m^{k+1})+
			\tilde K_3^k[(\bar \nn+\m^k)\cdot \nabla \m^k\otimes\delta\m^k
		\Big\}
	\Big\|_{L^1_t\BB_{2,1}^\frac{\dd}{2}\cap \BB_{2,1}^{\frac{\dd}{2}+1}}\lesssim \ee^{k+3}.
\end{align*}
Summarizing all the previous considerations leads to a bound for the molecular field ${\rm h^k}$:
\begin{equation} \label{ineq:ang_mom_delta_m2-h}
	\|\,\delta {\rm h}^k\|_{L^1_t\BB_{2,1}^\frac{\dd}{2}\cap \BB_{2,1}^{\frac{\dd}{2}+1}} \lesssim \ee^{k+3}.
\end{equation}
Now, we take into consideration $\delta \tilde \beta^k (\nn +\m^{k+1}) =(\tilde \beta^{k+1}-\tilde \beta^k) (\nn +\m^{k+1}) $ in \eqref{ineq:ang_mom_delta_m2}. We begin with analyzing the term $\delta \tilde \beta^k$, first by
\begin{align*}
	\big\|\,\Big(\delta 	K_1^k	|\nabla 			\m^{k+1}				|^2+
				K_1^k	\nabla \delta 	\m^k :\nabla 		\m^{k+1}+
				K_1^k	\nabla 			\m^k :\nabla \delta \m^k+
				(\delta  K_2^k+\delta K_4^k)|\Div\m^{k+1}|^2+ \\
				(  K_2^k+ K_4^k)\Div \delta \m^{k}\Div \m^{k+1}+ 
				(  K_2^k+ K_4^k)\Div  \m^{k}\Div \delta\m^k
			\big\|_{L^1_t\BB_{2,1}^\frac{\dd}{2}\cap \BB_{2,1}^{\frac{\dd}{2}+1}}+				
			\lesssim
		\|		\delta 	\omega^k		\|_{L^\infty_t\BB_{2,1}^\frac{\dd}{2}}{\scriptstyle\times}\\
		{\scriptstyle\times}
		\|	\nabla \m^{k+1}\|_{L^2_t\BB_{2,1}^\frac{\dd}{2}}^2+
		\|	\nabla 	\delta 	\omega^k		\|_{L^\infty_t\BB_{2,1}^\frac{\dd}{2}}
		\|	\nabla \m^{k+1}\|_{L^2_t\BB_{2,1}^\frac{\dd}{2}}^2+
		\|	\nabla 	\delta 	\omega^k		\|_{L^\infty_t\BB_{2,1}^\frac{\dd}{2}}
		\|	\Delta \m^{k+1}\|_{L^2_t\BB_{2,1}^\frac{\dd}{2}}{\scriptstyle\times}\\
		{\scriptstyle\times}
		\|	\nabla \m^{k+1}\|_{L^2_t\BB_{2,1}^\frac{\dd}{2}}+
	(1+\|\omega^k\|_{L^\infty_t\BB_{2,1}^\frac{\dd}{2}})
	\|	\nabla 	\delta 	\m^k		\|_{L^2_t\BB_{2,1}^\frac{\dd}{2}}
	\|	(\nabla 	\m^k,\,\nabla \m^{k+1})		\|_{L^\infty_t\BB_{2,1}^\frac{\dd}{2}}+\\+
	\|\nabla \omega^k\|_{L^\infty_t\BB_{2,1}^\frac{\dd}{2}}
	\|	\nabla 	\delta 	\m^k		\|_{L^2_t\BB_{2,1}^\frac{\dd}{2}}
	\|	(\nabla 	\m^k,\,\nabla \m^{k+1})		\|_{L^\infty_t\BB_{2,1}^\frac{\dd}{2}} + 
	\| 	 \omega^k\|_{L^\infty_t\BB_{2,1}^\frac{\dd}{2}}
	\|	\Delta	\delta 	\m^k		\|_{L^2_t\BB_{2,1}^\frac{\dd}{2}}{\scriptstyle\times}\\
		{\scriptstyle\times}
	\|	(\nabla 	\m^k,\,\nabla \m^{k+1})		\|_{L^\infty_t\BB_{2,1}^\frac{\dd}{2}}+
	\| 	  \omega^k\|_{L^\infty_t\BB_{2,1}^\frac{\dd}{2}}
	\|	\nabla	\delta 	\m^k		\|_{L^2_t\BB_{2,1}^\frac{\dd}{2}}
	\|	(\Delta 	\m^k,\,\Delta \m^{k+1})		\|_{L^\infty_t\BB_{2,1}^\frac{\dd}{2}}
	\lesssim \ee^{k+3},
\end{align*}
then, denoting by
\begin{align*}
	\I&:=
		\delta 	K_3^k|[\bar \nn+	\m^{k+1}]\cdot\nabla \m^{k+1}|^2+													
				K_3^k [	\delta	\m^k \cdot\nabla \m^{k+1}]:[(\bar \nn+	\m^{k+1})\cdot\nabla \m^{k+1}]+\\&+			
				K_3^k [	(\bar \nn+		\m^k )\cdot\nabla \delta \m^k]:[(\bar \nn+	\m^{k+1})\cdot\nabla \m^{k+1}]+	
				K_3^k [	(\bar \nn+		\m^k )\cdot\nabla \m^k]: [\delta	\m^k\cdot\nabla \m^{k+1}]+\\&\hspace{7cm}+	
				K_3^k [	(\bar \nn+		\m^k )\cdot\nabla \delta \m^k):[\bar \nn+	\m^k)\cdot\nabla \delta \m^k],	
\end{align*}
we gather that
\begin{align*}
	\|\,\I\,\|_{L^1_t\BB_{2,1}^\frac{\dd}{2}\cap \BB_{2,1}^{\frac{\dd}{2}+1}} \lesssim
	\| \delta \omega^k\|_{L^\infty_t\BB_{2,1}^\frac{\dd}{2}}
	(1+\|\m^{k+1}\|_{L^\infty_t\BB_{2,1}^\frac{\dd}{2}}^2)
	\|\nabla \m^{k+1}\|_{L^2_t\BB_{2,1}^\frac{\dd}{2}}^2+
	(1+\| \omega^k\|_{L^\infty_t\BB_{2,1}^\frac{\dd}{2}}){\scriptstyle\times}\\
	{\scriptstyle\times}
	\|\delta \m^{k}\|_{L^\infty_t\BB_{2,1}^\frac{\dd}{2}}
	(1+\|\m^{k+1}\|_{L^\infty_t\BB_{2,1}^\frac{\dd}{2}})
	\|(\nabla \m^{k},\,\nabla \m^{k+1})\|_{L^2_t\BB_{2,1}^\frac{\dd}{2}}^2+	
	(1+\| \omega^k\|_{L^\infty_t\BB_{2,1}^\frac{\dd}{2}}){\scriptstyle\times}\\
	{\scriptstyle\times}
	(1+\| (\m^k,\,\m^{k+1})\|_{L^\infty_t\BB_{2,1}^\frac{\dd}{2}})
	\|\nabla \delta \m^{k}	\|_{L^2_t\BB_{2,1}^\frac{\dd}{2}}
	\|(\nabla \m^{k},\,\nabla \m^{k+1})\|_{L^2_t\BB_{2,1}^\frac{\dd}{2}}+\\
	+\|\nabla \delta \omega^k\|_{L^\infty_t\BB_{2,1}^\frac{\dd}{2}}
	(1+\|\m^{k+1}\|_{L^\infty_t\BB_{2,1}^\frac{\dd}{2}}^2)
	\|\nabla \m^{k+1}\|_{L^2_t\BB_{2,1}^\frac{\dd}{2}}^2+
	+\| \delta \omega^k\|_{L^\infty_t\BB_{2,1}^\frac{\dd}{2}}
	\|\nabla \m^{k+1}\|_{L^\infty_t\BB_{2,1}^\frac{\dd}{2}}{\scriptstyle\times}\\
	{\scriptstyle\times}
	(1+\|\m^{k+1}\|_{L^\infty_t\BB_{2,1}^\frac{\dd}{2}})
	\|\nabla \m^{k+1}\|_{L^2_t\BB_{2,1}^\frac{\dd}{2}}^2+
	\|\nabla \delta \omega^k\|_{L^\infty_t\BB_{2,1}^\frac{\dd}{2}}
	(1+\|\m^{k+1}\|_{L^\infty_t\BB_{2,1}^\frac{\dd}{2}}^2){\scriptstyle\times}\\
	{\scriptstyle\times}
	\|\nabla \m^{k+1}\|_{L^2_t\BB_{2,1}^\frac{\dd}{2}}
	\|\Delta \m^{k+1}\|_{L^2_t\BB_{2,1}^\frac{\dd}{2}}+
	\|\nabla \omega^k\|_{L^\infty_t\BB_{2,1}^\frac{\dd}{2}}
	\|\delta \m^{k}\|_{L^\infty_t\BB_{2,1}^\frac{\dd}{2}}
	(1+\|\m^{k+1}\|_{L^\infty_t\BB_{2,1}^\frac{\dd}{2}}){\scriptstyle\times}\\
	{\scriptstyle\times}
	\|(\nabla \m^{k},\,\nabla \m^{k+1})\|_{L^2_t\BB_{2,1}^\frac{\dd}{2}}^2+	
	(1+\| \omega^k\|_{L^\infty_t\BB_{2,1}^\frac{\dd}{2}})
	\|\nabla \delta \m^{k}\|_{L^\infty_t\BB_{2,1}^\frac{\dd}{2}}
	(1+\|\m^{k+1}\|_{L^\infty_t\BB_{2,1}^\frac{\dd}{2}}){\scriptstyle\times}\\
	{\scriptstyle\times}
	\|(\nabla \m^{k},\,\nabla \m^{k+1})\|_{L^2_t\BB_{2,1}^\frac{\dd}{2}}^2+
	(1+\| \omega^k\|_{L^\infty_t\BB_{2,1}^\frac{\dd}{2}})
	\|\delta \m^{k}\|_{L^\infty_t\BB_{2,1}^\frac{\dd}{2}}
	\|\nabla \m^{k+1}\|_{L^\infty_t\BB_{2,1}^\frac{\dd}{2}}{\scriptstyle\times}\\
	{\scriptstyle\times}
	\|(\nabla \m^{k},\,\nabla \m^{k+1})\|_{L^2_t\BB_{2,1}^\frac{\dd}{2}}^2+	
	(1+\| \omega^k\|_{L^\infty_t\BB_{2,1}^\frac{\dd}{2}})
	\|\delta \m^{k}\|_{L^\infty_t\BB_{2,1}^\frac{\dd}{2}}
	(1+\|\m^{k+1}\|_{L^\infty_t\BB_{2,1}^\frac{\dd}{2}}){\scriptstyle\times}\\
	{\scriptstyle\times}
	\|(\Delta \m^{k},\,\Delta \m^{k+1})\|_{L^2_t\BB_{2,1}^\frac{\dd}{2}}
	\|(\nabla \m^{k},\,\nabla \m^{k+1})\|_{L^2_t\BB_{2,1}^\frac{\dd}{2}}+
	\|\nabla \omega^k\|_{L^\infty_t\BB_{2,1}^\frac{\dd}{2}}{\scriptstyle\times}\\
	{\scriptstyle\times}
	(1+\| (\m^k,\,\m^{k+1})\|_{L^\infty_t\BB_{2,1}^\frac{\dd}{2}})
	\|\nabla \delta \m^{k}	\|_{L^2_t\BB_{2,1}^\frac{\dd}{2}}
	\|(\nabla \m^{k},\,\nabla \m^{k+1})\|_{L^2_t\BB_{2,1}^\frac{\dd}{2}}+
	(1+\| \omega^k\|_{L^\infty_t\BB_{2,1}^\frac{\dd}{2}}){\scriptstyle\times}\\
	{\scriptstyle\times}
	\| (\nabla \m^k,\,\nabla \m^{k+1})\|_{L^\infty_t\BB_{2,1}^\frac{\dd}{2}}
	\|\nabla \delta \m^{k}	\|_{L^2_t\BB_{2,1}^\frac{\dd}{2}}
	\|(\nabla \m^{k},\,\nabla \m^{k+1})\|_{L^2_t\BB_{2,1}^\frac{\dd}{2}}+
	(1+\| \omega^k\|_{L^\infty_t\BB_{2,1}^\frac{\dd}{2}}){\scriptstyle\times}\\
	{\scriptstyle\times}
	(1+\| (\m^k,\,\m^{k+1})\|_{L^\infty_t\BB_{2,1}^\frac{\dd}{2}})
	\|\Delta \delta \m^{k}	\|_{L^2_t\BB_{2,1}^\frac{\dd}{2}}
	\|(\nabla \m^{k},\,\nabla \m^{k+1})\|_{L^2_t\BB_{2,1}^\frac{\dd}{2}}+
	(1+\| \omega^k\|_{L^\infty_t\BB_{2,1}^\frac{\dd}{2}}){\scriptstyle\times}\\
	{\scriptstyle\times}
	(1+\| (\m^k,\,\m^{k+1})\|_{L^\infty_t\BB_{2,1}^\frac{\dd}{2}})
	\|\nabla \delta \m^{k}	\|_{L^2_t\BB_{2,1}^\frac{\dd}{2}}
	\|(\Delta \m^{k},\,\Delta \m^{k+1})\|_{L^2_t\BB_{2,1}^\frac{\dd}{2}}
	\lesssim \ee^{k+3}.
\end{align*}
This finally leads to the following bound for $\delta \tilde \beta^k(\bar\nn + \m^{k+1})$:
\begin{equation}\label{ineq:ang_mom_delta_m2-beta1}
	\| \delta \tilde \beta^k(\bar\nn + \m^{k+1}) \|_{L^1_t\BB_{2,1}^\frac{\dd}{2}\cap \BB_{2,1}^{\frac{\dd}{2}+1}}\lesssim
	\|	\delta \tilde \beta^k \|_{L^1_t\BB_{2,1}^\frac{\dd}{2}\cap\BB_{2,1}^{\frac{\dd}{2}+1}}
	(1+\|\m^{k+1}\|_{L^\infty_t\BB_{2,1}^\frac{\dd}{2}\cap \BB_{2,1}^{\frac{\dd}{2}+1}})
	\lesssim \ee^{k+3}.
\end{equation}
Similarly, we observe that
\begin{equation}\label{ineq:ang_mom_delta_m2-beta2}
\begin{aligned}
	\| \tilde \beta^k \delta\m^k \|_{L^1_t\BB_{2,1}^\frac{\dd}{2}\cap \BB_{2,1}^{\frac{\dd}{2}+1}}
	\lesssim
	\| \tilde \beta^k  \|_{L^1_t\BB_{2,1}^\frac{\dd}{2}}
	\| \delta\m^k 	\|_{L^\infty_t\BB_{2,1}^\frac{\dd}{2}}+
	\| \nabla \tilde \beta^k  \|_{L^1_t\BB_{2,1}^\frac{\dd}{2}}
	\| \delta\m^k 	\|_{L^\infty_t\BB_{2,1}^\frac{\dd}{2}}+\\+
	\| \tilde \beta^k  \|_{L^1_t\BB_{2,1}^\frac{\dd}{2}}
	\| \nabla \delta\m^k 	\|_{L^\infty_t\BB_{2,1}^\frac{\dd}{2}}
	\lesssim 
	\ee^2 \| \delta \m^k \|_{\X_2}\lesssim \ee^{k+3}.
\end{aligned}
\end{equation}
It then remains to control the term $\delta f^k = f^{k+1}-f^k$. We begin with
\begin{align*}
	\I\I:=
	\Div\Big\{(\delta \tilde K_2^k	+\delta 	\tilde K_4^k)(\Div\m^{k+1})(\bar \nn+\m^{k+1})\Big\}(\bar \nn+\m^{k+1})+
	\Div\Big\{(	\tilde K_2^k	+	\tilde K_4^k)(\Div\delta \m^k)
	{\scriptstyle\times}\\{\scriptstyle\times}(\bar \nn+\m^{k+1})\Big\}(\bar \nn+\m^{k+1})+
	\Div\Big\{(	\tilde K_2^k	+	\tilde K_4^k)(\Div\m^k)\delta \m^k\Big\}(\bar \nn+\m^{k+1})+\\+
	\Div\Big\{(	\tilde K_2^k	+	\tilde K_4^k)(\Div\m^k)(\bar \nn+\m^k)\Big\}\delta \m^k.
\end{align*}
We observe that
\begin{align*}
	\| \I\I \|_{L^1_t\BB_{2,1}^\frac{\dd}{2}\cap \BB_{2,1}^{\frac{\dd}{2}+1}}
	\lesssim
	\| \nabla \delta \omega^k 	\|_{L^2_t\BB_{2,1}^\frac{\dd}{2}}
	\| \nabla \m^{k+1} 			\|_{L^2_t\BB_{2,1}^\frac{\dd}{2}}
	(1+\|\m^k					\|_{L^\infty_t\BB_{2,1}^\frac{\dd}{2}}^2)+
	\|  \delta \omega^k 			\|_{L^2_t\BB_{2,1}^\frac{\dd}{2}}{\scriptstyle\times}\\
	{\scriptstyle\times}
	\| \Delta \m^{k+1} 			\|_{L^2_t\BB_{2,1}^\frac{\dd}{2}}
	(1+\|\m^k					\|_{L^\infty_t\BB_{2,1}^\frac{\dd}{2}}^2)+
	\|  \delta \omega^k 			\|_{L^2_t\BB_{2,1}^\frac{\dd}{2}}
	\| \nabla  \m^{k+1} 			\|_{L^2_t\BB_{2,1}^\frac{\dd}{2}}
	\| \nabla \m^k 				\|_{L^\infty_t\BB_{2,1}^\frac{\dd}{2}}{\scriptstyle\times}\\
	{\scriptstyle\times}
	(1+\|\m^k\|_{L^\infty_t\BB_{2,1}^\frac{\dd}{2}})+
	\| \Delta \delta \omega^k 	\|_{L^2_t\BB_{2,1}^\frac{\dd}{2}}
	\| \nabla \m^{k+1} 			\|_{L^2_t\BB_{2,1}^\frac{\dd}{2}}
	(1+\|\m^k					\|_{L^\infty_t\BB_{2,1}^\frac{\dd}{2}}^2)+
	\| \nabla \delta \omega^k 	\|_{L^2_t\BB_{2,1}^\frac{\dd}{2}}{\scriptstyle\times}\\
	{\scriptstyle\times}
	\| \Delta \m^{k+1} 			\|_{L^2_t\BB_{2,1}^\frac{\dd}{2}}
	(1+\|\m^k					\|_{L^\infty_t\BB_{2,1}^\frac{\dd}{2}}^2)+
	\|  \nabla \delta \omega^k 	\|_{L^2_t\BB_{2,1}^\frac{\dd}{2}}
	\|  \nabla \m^{k+1} 			\|_{L^2_t\BB_{2,1}^\frac{\dd}{2}}
	\|  \nabla \m^k				\|_{L^\infty_t\BB_{2,1}^\frac{\dd}{2}}{\scriptstyle\times}\\
	{\scriptstyle\times}
	(1+\|\m^k					\|_{L^\infty_t\BB_{2,1}^\frac{\dd}{2}})+
	\| \nabla \omega^k 			\|_{L^2_t\BB_{2,1}^\frac{\dd}{2}}
	\| \nabla \delta \m^k 		\|_{L^2_t\BB_{2,1}^\frac{\dd}{2}}
	(1+\|\m^{k+1} \|_{L^\infty_t\BB_{2,1}^\frac{\dd}{2}}^2)+
	\| 		 \omega^k 			\|_{L^\infty_t\BB_{2,1}^\frac{\dd}{2}}{\scriptstyle\times}\\
	{\scriptstyle\times}
	\| \Delta \delta \m^k 		\|_{L^1_t\BB_{2,1}^\frac{\dd}{2}}
	(1+\|\m^{k+1} \|_{L^\infty_t\BB_{2,1}^\frac{\dd}{2}}^2)+
	\| 		 \omega^k 			\|_{L^\infty_t\BB_{2,1}^\frac{\dd}{2}}
	\| \nabla \delta \m^k 		\|_{L^2_t\BB_{2,1}^\frac{\dd}{2}}
	\| \nabla 	\m^{k+1} 		\|_{L^2_t\BB_{2,1}^\frac{\dd}{2}}{\scriptstyle\times}\\
	{\scriptstyle\times}
	(1+\|\m^{k+1} 				\|_{L^\infty_t\BB_{2,1}^\frac{\dd}{2}})+
	\| \nabla \omega^k 			\|_{L^2_t\BB_{2,1}^\frac{\dd}{2}}
	\| \nabla \delta \m^k 		\|_{L^2_t\BB_{2,1}^\frac{\dd}{2}}
	(1+\|\m^{k+1} \|_{L^\infty_t\BB_{2,1}^\frac{\dd}{2}}^2)+
	\| 		 \omega^k 			\|_{L^\infty_t\BB_{2,1}^\frac{\dd}{2}}{\scriptstyle\times}\\
	{\scriptstyle\times}
	\| \Delta \delta \m^k 		\|_{L^1_t\BB_{2,1}^\frac{\dd}{2}}
	(1+\|\m^{k+1} \|_{L^\infty_t\BB_{2,1}^\frac{\dd}{2}}^2)+
	\| 		 \omega^k 			\|_{L^\infty_t\BB_{2,1}^\frac{\dd}{2}}
	\| \nabla \delta \m^k 		\|_{L^2_t\BB_{2,1}^\frac{\dd}{2}}
	\| \nabla 	\m^{k+1} 		\|_{L^2_t\BB_{2,1}^\frac{\dd}{2}}{\scriptstyle\times}\\
	{\scriptstyle\times}
	(1+\|\m^{k+1} 				\|_{L^\infty_t\BB_{2,1}^\frac{\dd}{2}})+
	\| \nabla \delta \omega^k 			\|_{L^2_t\BB_{2,1}^\frac{\dd}{2}}
	\| \nabla  \m^{k+1} 			\|_{L^2_t\BB_{2,1}^\frac{\dd}{2}}
	\| \nabla \m^k 				\|_{L^\infty_t\BB_{2,1}^\frac{\dd}{2}}
	(1+\|\m^k\|_{L^\infty_t\BB_{2,1}^\frac{\dd}{2}})+	\\+
	\|  \delta \omega^k 			\|_{L^2_t\BB_{2,1}^\frac{\dd}{2}}
	\| \Delta  \m^{k+1} 			\|_{L^2_t\BB_{2,1}^\frac{\dd}{2}}
	\| \nabla \m^k 				\|_{L^\infty_t\BB_{2,1}^\frac{\dd}{2}}
	(1+\|\m^k\|_{L^\infty_t\BB_{2,1}^\frac{\dd}{2}})+
	\|  \delta \omega^k 			\|_{L^2_t\BB_{2,1}^\frac{\dd}{2}}{\scriptstyle\times}\\
	{\scriptstyle\times}
	\| \nabla  \m^{k+1} 			\|_{L^2_t\BB_{2,1}^\frac{\dd}{2}}
	\| \Delta \m^k 				\|_{L^\infty_t\BB_{2,1}^\frac{\dd}{2}}
	(1+\|\m^k\|_{L^\infty_t\BB_{2,1}^\frac{\dd}{2}})\lesssim \ee^{k+3},
\end{align*}
Furthermore, defining
\begin{align*}
	\J = 	\Div\Big\{(\Div\,\delta \m^{k})\m^{k+1}\Big\}\m^{k+1}+
			\Div\Big\{(\Div\, \m^{k})\delta\m^{k}\Big\}\m^{k+1}+\\+
			\Div\Big\{(\Div\, \m^{k})\m^{k}\Big\}\delta\m^{k}	
			+\Div\Big\{(\Div\delta \m^{k})\bar \nn\Big\}\m^{k+1}
			+\Div\Big\{(\Div\m^{k})\bar \nn\Big\}\delta \m^{k}+\\
			+\Div\Big\{(\Div\delta \m^{k})\m^{k+1}\Big\}\bar \nn
			+\Div\Big\{(\Div\m^{k})\delta \m^{k}\Big\}\bar \nn,
\end{align*}
we achieve the following inequality
\begin{align*}
	\| \J \|_{L^1_t\BB_{2,1}^\frac{\dd}{2}\cap \BB_{2,1}^{\frac{\dd}{2}+1}} \lesssim
	\| \Delta \delta	\m^k						\|_{L^1_t		\BB_{2,1}^\frac{\dd}{2}}
	\| 			\m^{k+1}						\|_{L^\infty_t	\BB_{2,1}^\frac{\dd}{2}}^2+
	\| \nabla \delta	\m^k						\|_{L^1_t		\BB_{2,1}^\frac{\dd}{2}}{\scriptstyle\times}\\
	{\scriptstyle\times}
	\| 		\nabla\m^{k+1}					\|_{L^\infty_t	\BB_{2,1}^\frac{\dd}{2}}
	\| 		\m^{k+1}							\|_{L^\infty_t	\BB_{2,1}^\frac{\dd}{2}}+
	\| \nabla \Delta \delta	\m^k						\|_{L^1_t		\BB_{2,1}^\frac{\dd}{2}}
	\| 		\m^{k+1}									\|_{L^\infty_t	\BB_{2,1}^\frac{\dd}{2}}^2+\\+
	\| \Delta \delta	\m^k								\|_{L^1_t		\BB_{2,1}^\frac{\dd}{2}}
	\| 					\nabla \m^{k+1}				\|_{L^\infty_t	\BB_{2,1}^\frac{\dd}{2}}
	\| 					\m^{k+1}						\|_{L^\infty_t	\BB_{2,1}^\frac{\dd}{2}}+
	\| \Delta 	\m^k						\|_{L^1_t		\BB_{2,1}^\frac{\dd}{2}}
	\| 			\delta\m^{k}						\|_{L^\infty_t	\BB_{2,1}^\frac{\dd}{2}}^2+\\+
	\| \nabla \m^k						\|_{L^1_t		\BB_{2,1}^\frac{\dd}{2}}
	\| 		\nabla\delta	\m^{k}					\|_{L^\infty_t	\BB_{2,1}^\frac{\dd}{2}}
	\| 		\m^{k+1}							\|_{L^\infty_t	\BB_{2,1}^\frac{\dd}{2}}+
	\| \nabla \Delta 	\m^k						\|_{L^1_t		\BB_{2,1}^\frac{\dd}{2}}
	\| 		\delta	\m^{k}									\|_{L^\infty_t	\BB_{2,1}^\frac{\dd}{2}}
	\|		\m^{k+1}									\|_{L^\infty_t	\BB_{2,1}^\frac{\dd}{2}}+\\+
	\| \Delta \m^k								\|_{L^1_t		\BB_{2,1}^\frac{\dd}{2}}
	\| 				\nabla \delta\m^{k}				\|_{L^\infty_t	\BB_{2,1}^\frac{\dd}{2}}
	\| 					\m^{k+1}						\|_{L^\infty_t	\BB_{2,1}^\frac{\dd}{2}}+
	\| \Delta 	\m^k						\|_{L^1_t		\BB_{2,1}^\frac{\dd}{2}}
	\|		\m^{k}						\|_{L^\infty_t	\BB_{2,1}^\frac{\dd}{2}}
	\| 			\delta\m^{k}						\|_{L^\infty_t	\BB_{2,1}^\frac{\dd}{2}}+\\+
	\| \nabla \m^k						\|_{L^1_t		\BB_{2,1}^\frac{\dd}{2}}
	\| 		\nabla\m^{k}					\|_{L^\infty_t	\BB_{2,1}^\frac{\dd}{2}}
	\| 		\delta	\m^{k}				\|_{L^\infty_t	\BB_{2,1}^\frac{\dd}{2}}+
	\| \nabla \Delta 	\m^k						\|_{L^1_t		\BB_{2,1}^\frac{\dd}{2}}
	\| 			\m^{k}									\|_{L^\infty_t	\BB_{2,1}^\frac{\dd}{2}}
	\|		\delta \m^{k}									\|_{L^\infty_t	\BB_{2,1}^\frac{\dd}{2}}+\\+
	\| \Delta \m^k								\|_{L^1_t		\BB_{2,1}^\frac{\dd}{2}}
	\| 				\nabla \m^{k}				\|_{L^\infty_t	\BB_{2,1}^\frac{\dd}{2}}
	\| 		\delta			\m^{k}						\|_{L^\infty_t	\BB_{2,1}^\frac{\dd}{2}}+
	\| \Delta \delta \m^k 		 \|_{L^1_t\BB_{2,1}^\frac{\dd}{2}}
	\| 		 (\m^k,\,\m^{k+1})	 \|_{L^\infty_t\BB_{2,1}^\frac{\dd}{2}}+\\+
	\| \nabla \delta \m^k 		 \|_{L^2_t\BB_{2,1}^\frac{\dd}{2}}
	\| 		 (\nabla \m^k,\,\nabla \m^{k+1})	 \|_{L^2_t\BB_{2,1}^\frac{\dd}{2}}+
	\| \nabla \Delta \delta \m^k 		 \|_{L^1_t\BB_{2,1}^\frac{\dd}{2}}
	\| 		 (\m^k,\,\m^{k+1})	 \|_{L^\infty_t\BB_{2,1}^\frac{\dd}{2}}+\\+
	\| \Delta \delta \m^k 		 \|_{L^2_t\BB_{2,1}^\frac{\dd}{2}}
	\| 		 (\nabla \m^k,\,\nabla \m^{k+1})	 \|_{L^2_t\BB_{2,1}^\frac{\dd}{2}}+
	\| \Delta \delta \m^k 		 \|_{L^1_t\BB_{2,1}^\frac{\dd}{2}}
	\| 		 (\nabla \m^k,\,\nabla \m^{k+1})	 \|_{L^\infty_t\BB_{2,1}^\frac{\dd}{2}}+\\+
	\| \nabla \delta \m^k 		 \|_{L^2_t\BB_{2,1}^\frac{\dd}{2}}
	\| 		 (\Delta \m^k,\,\Delta \m^{k+1})	 \|_{L^2_t\BB_{2,1}^\frac{\dd}{2}} 
	\lesssim \ee^{k+3}.
\end{align*}
Summarizing the previous considerations, we finally deduce that
\begin{equation}\label{ineq:ang_mom_delta_m2-f}
	\|\,\delta f^k\,\|_{L^1_t\BB_{2,1}^\frac{\dd}{2}\cap \BB_{2,1}^{\frac{\dd}{2}+1}}
	\lesssim 
	\ee^{k+3}.
\end{equation}
We then plug inequalities \eqref{ineq:ang_mom_delta_m2-h}, \eqref{ineq:ang_mom_delta_m2-beta1}, \eqref{ineq:ang_mom_delta_m2-beta2} and \eqref{ineq:ang_mom_delta_m2-f} into \eqref{ineq:ang_mom_delta_m2} to finally obtain
\begin{equation*}
	\|\,\delta \m^{k+1}\,\|_{\X_2}\,\lesssim\,\ee^{k+3} \quad\quad\Rightarrow\quad\quad \|\,\delta \m^{k+1} \,\|_{\X_2} \leq \ee^{k+2},
\end{equation*}
which concludes the proof of inequalities \eqref{angular_mom_ind}.

\subsection{The temperature equation}\label{thetemperatureequation} $\,$

\noindent We now deal with the approximate temperature given by $\Tt^k = \omega^k + \bar{\Tt}$ and we claim that the following inequalities hold by induction:
\begin{equation}\label{ineq:main-temp-ineq}
	\|\, 		\omega^{k+1} \,\|_{\X_3}\,\leq \, \ee^{3}\quad\quad\text{and}\quad\quad
	\|\, \delta \omega^{k+1} \,\|_{\X_3}\,\leq \, \ee^{k+2}.
\end{equation}
We assume that inequalities \eqref{prop:the_core_ineq1} and \eqref{prop:the_core_ineq2} are satisfied for a positive integer $k$. 
We then recall that $\omega^{k+1}$ is a classical solution of the following parabolic equation:
\begin{equation*}
	\partial_t \omega^{k+1}  \,-\,\Cc[\,\omega^{k+1}\,]\,=\,-\uu^k\cdot \nabla \omega^{k+1}\,+\,F^k_3,
\end{equation*}
where the forcing term $F^k_3$ is defined by means of
\begin{align*}
	F^k_3 = (1+\w^k)\Big[\partial_t  \frac{\partial {\rm W}^k_{{\rm F}}}{\partial\w}
							&+\uu^k\cdot\nabla{\rm \frac{\partial W_{{\rm F}}^k}{\partial\w}}\Big]
							+\Div\{\tilde \lambda^k_{1}\nabla \w^k\}
							+\\&+\Div\{\tilde \lambda^k_{2}([\bar \nn+\m^k]\cdot \nabla \w^k)[\bar{\nn}
							+\m^k]\}
							+\sigma^{{\rm L},k}:\mathbb{D}^k+{\rm g}^k\cdot \mcN^k.
\end{align*}
Thanks to Theorem \ref{thm-superjuve}, the following bound for the Besov-norm of the solution holds
\begin{align*}
	\| \omega^{k+1}\|_{\X_3}
	\,&\lesssim\,
	\| \,\omega_0\,\|_{\BB_{2,1}^{\frac{\dd}{2}-2}\cap \BB_{2,1}^\frac{\dd}{2}}\,+\,
	\| \,\uu^k\cdot \nabla\omega^{k+1}\,\|_{L^1_t\BB_{2,1}^{\frac{\dd}{2}-2}\cap \BB_{2,1}^\frac{\dd}{2}}\,+\,
	\| \,F^k_3\,\|_{L^1_t\BB_{2,1}^{\frac{\dd}{2}-2}\cap \BB_{2,1}^\frac{\dd}{2}}
	\\
	&\lesssim
	\ee^4\,+\,
	\|\,\uu^k				\,\|_{L^2_t\BB_{2,1}^{\frac{\dd}{2}}}
	\|\,\nabla \omega^{k+1}	\,\|_{L^2_t\BB_{2,1}^{\frac{\dd}{2}-2}\cap\BB_{2,1}^\frac{\dd}{2}}
	\,+\,
	\| \,F^k_3\,\|_{L^1_t\BB_{2,1}^{\frac{\dd}{2}-2}\cap \BB_{2,1}^\frac{\dd}{2}}\\
	&\lesssim\,
	\ee^4\,+\, \ee^2 \|\,\omega^{k+1}\,\|_{\X_3}\,+\,\| \,F^k_3\,\|_{L^1_t\BB_{2,1}^{\frac{\dd}{2}-2}\cap \BB_{2,1}^\frac{\dd}{2}},
\end{align*}
hence, assuming $\ee$ small enough, we deduce the following inequality:
\begin{equation}\label{ineq:temp-eq-main}
	\| \omega^{k+1}\|_{\X_3}\,\lesssim\, \ee^4\,+\,\| \,F^k_3\,\|_{L^1_t\BB_{2,1}^{\frac{\dd}{2}-2}\cap \BB_{2,1}^\frac{\dd}{2}}.
\end{equation}
We then control the norm of $F^k_3$ developing any term. We first observe that the explicit formula of $\partial_t \partial {\rm W}^k_{{\rm F}}/\partial\w$ is given by
\begin{align*}
	\partial_t \omega^k \bigg[\frac{K_1'(\omega^k)}{2}|\nabla \m^k|^2+\frac{K_2'(\omega^k)}{2}|\Div \m^k|^2+
								\frac{K_3'(\omega^k)}{2}|(\bar{\nn}+\m^k)\cdot \nabla \m^k|^2	 + \frac{K_4'(\omega^k)}{2}\trc\{(\nabla \m^k)^2\}
						\bigg] +\\+ K_1(\omega^k)\nabla \m^k:\nabla \partial_t \m^k + K_2(\omega^k)\Div\, \m^k\Div \partial_t \m^k +K_4(\omega^k)\trc\{\nabla \m^k\nabla \partial_t\m^k\}+ \\+
						K_3(\omega^k) \big[ \partial_t \m^k\cdot \nabla \m^k \big]:\big[ (\bar{\nn}+\m^k)\cdot\nabla \m^k\big]+
						K_3(\omega^k) \big[  (\bar{\nn}+\m^k)\cdot \nabla \partial_t\m^k \big]:\big[ (\bar{\nn}+\m^k)\cdot\nabla \m^k\big].
\end{align*}
hence, we gather
\begin{equation}\label{ineq:temp-est1}
\begin{aligned}
	\Big\| (1+\omega^k) \partial_t  \frac{\partial {\rm W}^k_{{\rm F}}}{\partial\w} \Big\|_{L^1_t\BB_{2,1}^{\frac{\dd}{2}-2}\cap \BB_{2,1}^\frac{\dd}{2}}
	\lesssim
	\Big( 1+ \|\omega^k\|_{L^\infty_t\BB_{2,1}^\frac{\dd}{2}}\Big)
	\Big\| \,\partial_t  \frac{\partial {\rm W}^k_{{\rm F}}}{\partial\w}\,\Big\|_{L^1_t\BB_{2,1}^{\frac{\dd}{2}-2}\cap \BB_{2,1}^\frac{\dd}{2}}
	\lesssim \Big( 1+ \|\omega^k\|_{L^\infty_t\BB_{2,1}^\frac{\dd}{2}}^2\Big){\scriptstyle\times}\\
	{\scriptstyle\times}
	\Big( 1+ \|\m^k\|_{L^\infty_t\BB_{2,1}^\frac{\dd}{2}}^2\Big)\|\partial_t\omega^k\|_{L^1_t\BB_{2,1}^{\frac{\dd}{2}-2}\cap \BB_{2,1}^\frac{\dd}{2}}\| \nabla \m^k \|_{L^\infty_t\BB_{2,1}^\frac{\dd}{2}}^2
	+ 
	\Big( 1+ \|\omega^k\|_{L^\infty_t\BB_{2,1}^\frac{\dd}{2}}^2\Big) \|\partial_t\m^k\|_{L^1_t\BB_{2,1}^\frac{\dd}{2}}{\scriptstyle\times}\\
	{\scriptstyle\times}\| \nabla \m^k \|_{L^\infty_t\BB_{2,1}^{\frac{\dd}{2}-1}\cap \BB_{2,1}^\frac{\dd}{2}}^2+
	\Big( 1+ \|\omega^k\|_{L^\infty_t\BB_{2,1}^\frac{\dd}{2}}^2\Big)\Big( 1+ \|\m^k\|_{L^\infty_t\BB_{2,1}^\frac{\dd}{2}}\Big){\scriptstyle\times}\\
	{\scriptstyle\times}\| \nabla \m^k \|_{L^\infty_t\BB_{2,1}^{\frac{\dd}{2}-1}\cap \BB_{2,1}^\frac{\dd}{2}}\|\partial_t \nabla \m^k \|_{L^1_t\BB_{2,1}^{\frac{\dd}{2}-1}\cap \BB_{2,1}^\frac{\dd}{2}}
	\,\lesssim\,\ee^4.
\end{aligned}
\end{equation}
On the other hand, a similar technique leads to
\begin{equation}\label{ineq:temp-est2}
\begin{aligned}
	\Big\|(1+\omega^k)\uu^k\cdot\nabla\frac{\partial {\rm W}^k_{{\rm F}}}{\partial\w} \Big\|_{L^1_t\BB_{2,1}^{\frac{\dd}{2}-2}\cap \BB_{2,1}^\frac{\dd}{2}}
	\lesssim
	\Big( 1+ \|\omega^k\|_{L^\infty_t\BB_{2,1}^\frac{\dd}{2}}\Big)\| \uu^k \|_{L^2_t\BB_{2,1}^\frac{\dd}{2}}
	\Big\| \nabla\frac{\partial {\rm W}^k_{{\rm F}}}{\partial\w} \Big\|_{L^2_t\BB_{2,1}^{\frac{\dd}{2}-2}\cap \BB_{2,1}^\frac{\dd}{2}}\lesssim\\
	\lesssim
	\Big( 1+ \|\omega^k\|_{L^\infty_t\BB_{2,1}^\frac{\dd}{2}}^2\Big)\| \uu^k \|_{L^2_t\BB_{2,1}^\frac{\dd}{2}}
	\Big( 1+ \|\m^k\|_{L^\infty_t\BB_{2,1}^\frac{\dd}{2}}^2\Big)
	\|\nabla \omega^k\|_{L^2_t\BB_{2,1}^{\frac{\dd}{2}-2}\cap \BB_{2,1}^\frac{\dd}{2}}\| \nabla \m^k \|_{L^\infty_t\BB_{2,1}^\frac{\dd}{2}}^2 +	\\
	+\Big( 1+ \|\omega^k\|_{L^\infty_t\BB_{2,1}^\frac{\dd}{2}}^2\Big)\Big( 1+ \|\m^k\|_{L^\infty_t\BB_{2,1}^\frac{\dd}{2}}\Big)
	\| \uu^k \|_{L^2_t\BB_{2,1}^\frac{\dd}{2}}
	\| \nabla \m^k \|_{L^\infty_t\BB_{2,1}^{\frac{\dd}{2}-1}\cap \BB_{2,1}^\frac{\dd}{2}}\|\Delta \m^k \|_{L^2_t\BB_{2,1}^{\frac{\dd}{2}-1}\cap \BB_{2,1}^\frac{\dd}{2}}+\\
	+\Big( 1+ \|\omega^k\|_{L^\infty_t\BB_{2,1}^\frac{\dd}{2}}^2\Big)\| \nabla \m^k\|_{L^\infty_t\BB_{2,1}^\frac{\dd}{2}}
	\| \uu^k \|_{L^2_t\BB_{2,1}^\frac{\dd}{2}}
	\| \nabla \m^k \|_{L^\infty_t\BB_{2,1}^{\frac{\dd}{2}-1}\cap \BB_{2,1}^\frac{\dd}{2}}\|\nabla \m^k \|_{L^2_t\BB_{2,1}^{\frac{\dd}{2}-1}\cap \BB_{2,1}^\frac{\dd}{2}}
	\lesssim \ee^4.
\end{aligned}
\end{equation}
We keep analyzing any term in $F^k_3$, and we proceed estimating 
\begin{align*}
	\| \Div\,\{\tilde \lambda_1^k \nabla \omega^k \}\|_{L^1_t\BB_{2,1}^{\frac{\dd}{2}-2}\cap \BB_{2,1}^\frac{\dd}{2}}
	\lesssim
	(1+\|\omega^k \|_{L^\infty_t\BB_{2,1}^\frac{\dd}{2}})
	\| \nabla 	\omega^k \|_{L^2_t\BB_{2,1}^{\frac{\dd}{2}}}
	\| \nabla 	\omega^k \|_{L^2_t\BB_{2,1}^{\frac{\dd}{2}-2}\cap \BB_{2,1}^\frac{\dd}{2}}
	+\\+
	\| 		 	\omega^k \|_{L^\infty_t\BB_{2,1}^\frac{\dd}{2}}
	\| \Delta 	\omega^k \|_{L^1_t\BB_{2,1}^{\frac{\dd}{2}-2}\cap \BB_{2,1}^\frac{\dd}{2}}
	\lesssim \ee^4,
\end{align*}
together with
\begin{align*}
	\| \Div\,\{\tilde \lambda^k_{2}([\bar \nn+\m^k]\cdot \nabla \w^k)[\bar{\nn}
							+\m^k]\} \|_{L^1_t\BB_{2,1}^{\frac{\dd}{2}-2}\cap \BB_{2,1}^\frac{\dd}{2}}
	\lesssim
	(1+\|\omega^k \|_{L^\infty_t\BB_{2,1}^\frac{\dd}{2}})
	(1+\|\m^k \|_{L^\infty_t\BB_{2,1}^\frac{\dd}{2}}^2){\scriptstyle\times}\\
	{\scriptstyle\times}
	\| \nabla 	\omega^k \|_{L^2_t \BB_{2,1}^\frac{\dd}{2}}
	\| \nabla 	\omega^k \|_{L^2_t\BB_{2,1}^{\frac{\dd}{2}-2}\cap \BB_{2,1}^\frac{\dd}{2}}+
	(1+\|\omega^k \|_{L^\infty_t\BB_{2,1}^\frac{\dd}{2}})
	(1+\|\m^k \|_{L^\infty_t\BB_{2,1}^\frac{\dd}{2}}^2)
	\| \Delta 	\omega^k \|_{L^1_t \BB_{2,1}^{\frac{\dd}{2}-2}\cap \BB_{2,1}^\frac{\dd}{2}}+\\
	+(1+\|\omega^k \|_{L^\infty_t\BB_{2,1}^\frac{\dd}{2}})
	(1+\|\m^k \|_{L^\infty_t\BB_{2,1}^\frac{\dd}{2}})
	\|\nabla \m^k \|_{L^2_t\BB_{2,1}^\frac{\dd}{2}}
	\| \nabla 	\omega^k \|_{L^2_t \BB_{2,1}^{\frac{\dd}{2}-2}\cap \BB_{2,1}^\frac{\dd}{2}}
	\lesssim \ee^4.
\end{align*}
It remains to control the viscous dissipation $\sigma^{{\rm\,L},\,k}:\mathbb{D}^k\,+\,{\rm g}^k\cdot \mcN^k\,$ whose explicit formula is 
\begin{align*}
	\alpha_1^k |(\bar{\nn}+\m^k)\cdot { \mathbb{D}^k}(\bar{\nn}+\m^k)|^2&+(\alpha_2^k+\alpha_3^k+\alpha_5^k-\alpha_6^k)\mcN^k\cdot { \mathbb{D}}^k(\bar{\nn}+\m^k)+\\&+
	\alpha_4^k|{ \mathbb{D}}^k|^2 +(\alpha_5^k+\alpha_6^k) |{ \mathbb{D}}^k(\bar{\nn}+\m^k)|^2+
	(\alpha_3^k -\alpha_2^k)|\mcN^k|^2.
\end{align*}
We then deduce that
\begin{align*}
	\|\sigma^{{\rm\,L},\,k}:\mathbb{D}^k\,+\,{\rm g}^k\cdot \mcN^k\,\|_{L^1_t\BB_{2,1}^{\frac{\dd}{2}-2}\cap \BB_{2,1}^\frac{\dd}{2}}
	\lesssim
	(1+\|\omega^k \|_{L^\infty_t\BB_{2,1}^\frac{\dd}{2}})
	\Big[
		(1+\|\m^k \|_{L^\infty_t\BB_{2,1}^\frac{\dd}{2}}^4)
		\|\nabla \uu^k \|_{L^2_t\BB_{2,1}^{\frac{\dd}{2}-1}\cap \BB_{2,1}^\frac{\dd}{2}}^2+\\
		+(1+\|\m^k \|_{L^\infty_t\BB_{2,1}^\frac{\dd}{2}})
		\| \mcN^k \|_{L^2_t\BB_{2,1}^{\frac{\dd}{2}-1}\cap \BB_{2,1}^\frac{\dd}{2}}
		\| \nabla \uu^k \|_{L^2_t\BB_{2,1}^{\frac{\dd}{2}-1}\cap \BB_{2,1}^\frac{\dd}{2}}+
		\| \mcN^k \|_{L^2_t\BB_{2,1}^{\frac{\dd}{2}-1}\cap \BB_{2,1}^\frac{\dd}{2}}^2
	\Big]
	\lesssim \ee^4,
\end{align*}
where we have used the following bound for the co-rotational time flux $\mcN^k$
\begin{align*}
 \| \mcN^k \|_{L^2_t\BB_{2,1}^{\frac{\dd}{2}-1}\cap \BB_{2,1}^\frac{\dd}{2}} =
 \| \partial_t \m^k + \uu^k\cdot \nabla \m^k - \Omega^k \bar \nn - \Omega^k \m^k \|_{L^2_t\BB_{2,1}^{\frac{\dd}{2}-1}\cap \BB_{2,1}^\frac{\dd}{2}}
 \lesssim
 \| \partial_t \m^k \|_{L^2_t\BB_{2,1}^{\frac{\dd}{2}-1}\cap \BB_{2,1}^\frac{\dd}{2}}+\\+
 \| \uu^k \|_{L^2_t\BB_{2,1}^\frac{\dd}{2}} 
 \| \nabla \m^k \|_{L^\infty_t\BB_{2,1}^{\frac{\dd}{2}-1}\cap \BB_{2,1}^\frac{\dd}{2}}
 + \| \nabla \uu^k \|_{L^2_t\BB_{2,1}^{\frac{\dd}{2}-1}\cap \BB_{2,1}^\frac{\dd}{2}}
 (1+\|\m^k\|_{L^\infty_t\BB_{2,1}^\frac{\dd}{2}}) \lesssim \ee^3.
\end{align*}
This concludes the proof of the first inequality of \eqref{ineq:main-temp-ineq}.

\smallskip
\noindent We now take into account the difference between two consecutive approximate temperatures, more precisely $\delta \omega^k=\omega^{k+1}-\omega^k$ and we aim to prove by induction the second inequality 
in \eqref{ineq:main-temp-ineq}. We observe that $\delta \omega^{k+1}$ is solution of the following parabolic equation:
\begin{equation*}
	\partial_t \delta \omega^{k+1}  \,-\,\Cc[\, \delta \omega^{k+1}\,]\,=\,-\uu^{k+1}\cdot \nabla \delta \omega^{k+1}\,-\delta \uu^k\cdot \nabla \omega^{k+1}\,+\,\delta F^k_3,
\end{equation*}
with $\delta F^k_3 =F^{k+1}_3 - F^k_3$. Thanks to Theorem \ref{thm-superjuve}, we gather a first estimate of the $\X_3$-norm for $\delta \omega^k$: 
\begin{align*}
	\| \delta \omega^{k+1} \|_{\X_3}
	&\lesssim
	\| -\uu^{k+1}\cdot \nabla \delta \omega^{k+1}-\delta \uu^k\cdot \nabla \omega^{k+1}+\delta F^k_3 \|_{L^1_t\BB_{2,1}^{\frac{\dd}{2}-2}\cap \BB_{2,1}^\frac{\dd}{2}} \\
	&\lesssim
	\| \uu^{k+1} 					\|_{L^\infty_t\BB_{2,1}^{\frac{\dd}{2}-1}}
	\| \nabla \delta \omega^{k+1} 	\|_{L^1_t\BB_{2,1}^{\frac{\dd}{2}-1}		}+
	\| \uu^{k+1} 					\|_{L^2_t\BB_{2,1}^{\frac{\dd}{2}}		}
	\| \nabla \delta \omega^{k+1} 	\|_{L^2_t\BB_{2,1}^{\frac{\dd}{2}}		}+ \\ 
	&\hspace{0.5cm}+
	\| \delta \uu^{k} 				\|_{L^\infty_t\BB_{2,1}^{\frac{\dd}{2}-1}}
	\| \nabla \omega^{k+1} 			\|_{L^1_t\BB_{2,1}^{\frac{\dd}{2}-1}		}+
	\| \delta \uu^{k}				\|_{L^2_t\BB_{2,1}^{\frac{\dd}{2}}		}
	\| \nabla \omega^{k+1}		 	\|_{L^2_t\BB_{2,1}^{\frac{\dd}{2}}		}+
	\| \delta F^k_3 \|_{L^1_t\BB_{2,1}^{\frac{\dd}{2}-2}\cap \BB_{2,1}^\frac{\dd}{2}} \\
	&\lesssim 
	\ee^2 \| \delta \omega^{k+1} \|_{\X_3} + \ee^{k+3} + \| \delta F^k_3 \|_{L^1_t\BB_{2,1}^{\frac{\dd}{2}-2}\cap \BB_{2,1}^\frac{\dd}{2}}.
\end{align*}
We then assume $\ee$ small enough in order to absorb the first term on the right-hand side of the above inequality. Hence, we achieve
\begin{equation}\label{ineq:the-end}
	\|\, \delta \omega^{k+1}\, \|_{\X_3}\lesssim \ee^{k+3} + \| \,\delta F^k_3\, \|_{L^1_t\BB_{2,1}^{\frac{\dd}{2}-2}\cap \BB_{2,1}^\frac{\dd}{2}}.
\end{equation}
We then proceed analyzing each term in $\delta F^3_k$. Denoting by  $\h^k:= {\partial {\rm W}^k_{{\rm F}}}/{\partial\w} $, the explicit formula of $\delta F^3_k$ is given by
\begin{equation}\label{def:deltaF3k}
\begin{aligned}
	\delta F^3_k = \delta \omega^k\Big[\partial_t  \h^{k+1}+\uu^{k+1}\cdot\nabla \h^{k+1}\Big] +
	(1+\w^k)\Big[\partial_t \delta \mathbb{H}^k	+ \delta \uu^{k}\cdot\nabla \h^{k+1}+ \uu^{k}\cdot\nabla \delta \h^{k}\Big] + \\+
	\Div \{\tilde \delta \lambda^k_{1}\nabla \w^{k+1}\} + \Div\{\tilde \lambda^k_{1}\nabla \delta \w^k\} + 
	\Div\{ \delta \tilde \lambda^k_{2}([\bar \nn+\m^{k+1}]\cdot \nabla \w^{k+1})[\bar{\nn} +\m^{k+1}]\} +\\+
	\Div\{  \tilde \lambda^k_{2}(\delta\m^{k}\cdot \nabla \w^{k+1})[\bar{\nn} +\m^{k+1}]\} +
	\Div\{ \delta \tilde \lambda^k_{2}([\bar \nn+\m^{k}]\cdot \nabla \delta \w^{k})[\bar{\nn} +\m^{k+1}]\} + \\ +
	\Div\{ \delta \tilde \lambda^k_{2}([\bar \nn+\m^{k}]\cdot \nabla  \w^{k})\delta\m^{k}\}  + 
	\delta \sigma^{{\rm L},k}:\mathbb{D}^{k+1}+\sigma^{{\rm L},k}:\delta \mathbb{D}^k +
	\delta {\rm g}^k\cdot \mcN^{k+1}+{\rm g}^k\cdot \delta \mcN^k.
\end{aligned}
\end{equation}
Proceeding as for proving \eqref{ineq:temp-est1}, we first gather that
\begin{align*}
	\| \delta \omega^k\,\partial_t  \h^{k+1} \|_{L^1_t\BB_{2,1}^{\frac{\dd}{2}-2}\cap \BB_{2,1}^\frac{\dd}{2}}
	\lesssim
	\|\delta \omega^k \|_{L^1_t\BB_{2,1}^\frac{\dd}{2}}\Big\|\partial_t  \frac{\partial {\rm W}^{k+1}_{{\rm F}}}{\partial\w}\,\Big\|_{L^1_t\BB_{2,1}^{\frac{\dd}{2}-2}\cap \BB_{2,1}^\frac{\dd}{2}}
	\lesssim
	\ee^4\| \delta \omega^k \|_{\X_3}\lesssim
	\ee^{k+3},
\end{align*}
while a similar approach to the one used in \eqref{ineq:temp-est2} leads to
\begin{align*}
	\| \delta \omega^k\,\uu^{k+1}\cdot\nabla \h^{k+1} \|_{L^1_t\BB_{2,1}^{\frac{\dd}{2}-2}\cap \BB_{2,1}^\frac{\dd}{2}} \lesssim
	\ee^4\| \delta \omega^k \|_{\X_3}\lesssim
	\ee^{k+3}.
\end{align*}
Recalling that $\nn^k = \bar{\nn} + \m^k$, we now observe that
\begin{align*}
	\partial_t \delta \mathbb{H}^k = 
	\partial_t \delta \omega^k 
	\bigg[
		\frac{{K_1'}^{k+1}}{2}|\nabla \m^{k+1}|^2+\frac{{K_2'}^{k+1}}{2}|\Div \m^{k+1}|^2+\frac{{K_3'}^{k+1}}{2}|\nn^{k+1}\cdot \nabla \m^{k+1}|^2+
	\\+
		\frac{{K_4'}^{k+1}}{2}\trc\{(\nabla \m^{k+1})^2\}
	\bigg] + 
	\partial_t \omega^k
	\bigg\{
		\frac{\delta {K_1'}^{k}}{2}|\nabla \m^{k+1}|^2+\frac{\delta{K_2'}^{k}}{2}|\Div \m^{k+1}|^2+
		\frac{\delta{K_3'}^{k}}{2}|\nn^{k+1}\cdot \nabla \m^{k+1}|^2+
	\\+
		\frac{\delta{K_4'}^{k}}{2}\trc\{(\nabla \m^{k+1})^2\}+
		\frac{{K_1'}^{k}}{2}\nabla\delta  \m^k:\nabla \m^{k+1}+
		\frac{{K_2'}^{k}}{2}\Div\delta \m^k\Div \m^{k+1}+
		\frac{{K_4'}^{k}}{2}\trc\{\nabla \delta \m^k\nabla \m^{k+1}\}+	
	\\+
		\frac{{K_3'}^{k}}{2}	 \big[ \delta \m^k\cdot \nabla \m^{k+1} \big]:\big[ \nn^{k+1}\cdot\nabla \m^{k+1}\big]+
		\frac{{K_1'}^{k}}{2}\nabla  \m^k:\nabla\delta \m^{k}+
		\frac{{K_2'}^{k}}{2}\Div \m^k\Div \delta\m^{k}+\\
		\frac{{K_4'}^{k}}{2}\trc\{\nabla  \m^k\nabla \delta\m^{k}\}+
		\frac{{K_3'}^{k}}{2}	 \big[ \nn^k\cdot \nabla \delta \m^{k} \big]:\big[ \nn^{k+1}\cdot\nabla \m^{k+1}\big]+
		\frac{{K_3'}^{k}}{2}	 \big[ \nn^k\cdot \nabla \m^{k} \big]:\big[ \delta \m^{k}\cdot\nabla \m^{k+1}\big]+
	\\+
	   	\frac{{K_3'}^{k}}{2}	 \big[ \nn^k\cdot \nabla \m^{k} \big]:\big[  \nn^{k}\cdot\nabla \delta \m^{k}\big]
	\bigg\} + 
		\delta K_1^k\nabla \m^{k+1}:\partial_t\nabla \m^{k+1} +
		\delta K_2^k\Div  \m^{k+1}\Div \partial_t \m^{k+1}+
	\\+
		\delta K_3^k\big[ \partial_t \m^{k+1}\cdot \nabla \m^{k+1} \big]:\big[ \nn^{k+1}\cdot\nabla \m^{k+1}\big]+
		\delta K_4^k\trc\{\nabla \m^{k+1}\nabla \partial_t\m^{k+1}\}+
	\\+
		 K_1^k\nabla \delta\m^{k}:\partial_t\nabla \m^{k+1} +
		 K_2^k\Div  \delta\m^{k}\Div \partial_t \m^{k+1}+
		 K_3^k\big[ \partial_t \delta\m^{k}\cdot \nabla \m^{k+1} \big]:\\:\big[ \nn^{k+1}\cdot\nabla \m^{k+1}\big]+
		 K_4^k\trc\{\nabla \delta\m^{k}\nabla \partial_t\m^{k+1}\}+
		 K_1^k\nabla \delta\m^{k}:\partial_t\nabla \m^{k+1} +
		 K_2^k\Div  \delta\m^{k}\Div \partial_t \m^{k+1}+
	\\+
		 K_3^k\big[ \partial_t \delta\m^{k}\cdot \nabla \m^{k+1} \big]:\big[ \nn^{k+1}\cdot\nabla \m^{k+1}\big]+
		 K_4^k\trc\{\nabla \delta\m^{k}\nabla \partial_t\m^{k+1}\}+
		 K_1^k\nabla \m^{k}:\partial_t\nabla \delta\m^{k} +
	\\+
		 K_2^k\Div  \m^{k}\Div \partial_t \delta\m^{k}+
		 K_3^k\big[ \partial_t \m^{k}\cdot \nabla \delta\m^{k} \big]:\big[ \nn^{k+1}\cdot\nabla \m^{k+1}\big]+
		 K_4^k\trc\{\nabla \m^{k}\nabla \partial_t\delta\m^{k}\}+	
	\\+	  
		 K_3^k\big[ \partial_t \m^{k}\cdot \nabla \m^{k} \big]:\big[ \delta\m^{k}\cdot\nabla \m^{k+1}\big]+
		 K_3^k\big[ \partial_t \m^{k}\cdot \nabla \m^{k} \big]:\big[ \nn^{k}\cdot\nabla  \delta\m^{k}\big]+
	\\+
		 \delta K_3^k \big[  \nn^{k+1}\cdot \nabla \partial_t\m^{k+1} \big]:\big[ \nn^{k+1}\cdot\nabla \m^{k+1}\big]+	 	
		 K_3^k \big[\delta  \m^{k}\cdot \nabla \partial_t\m^{k+1} \big]:\big[ \nn^{k+1}\cdot\nabla \m^{k+1}\big]+
	\\+
		 K_3^k \big[	 \m^{k}\cdot \nabla \partial_t\delta \m^{k} \big]:\big[ \nn^{k+1}\cdot\nabla \m^{k+1}\big]+
		 K_3^k \big[	 \m^{k}\cdot \nabla \partial_t		\m^{k} \big]:\big[ \delta \m^{k}\cdot\nabla \m^{k+1}\big]+
	\\+
		 K_3^k \big[	 \m^{k}\cdot \nabla \partial_t		\m^{k} \big]:\big[  \nn^{k}\cdot\nabla\delta \m^{k}\big].		
\end{align*}
Thus, we deduce that
\begin{align*}
	\| (1+\omega^k) \partial_t \delta \h^k\|_{L^1_t\BB_{2,1}^{\frac{\dd}{2}-2}\cap \BB_{2,1}^\frac{\dd}{2}}
	\lesssim
	(1+\| \omega^k \|_{L^\infty_t\BB_{2,1}^\frac{\dd}{2}})
	\| \partial_t \delta \h^k\|_{L^1_t\BB_{2,1}^{\frac{\dd}{2}-2}\cap \BB_{2,1}^\frac{\dd}{2}}
	\lesssim
	(1+\\+\|( \omega^k,\,\omega^{k+1})\|_{L^\infty_t\BB_{2,1}^\frac{\dd}{2}}^2)
	\| \partial_t \delta \omega^k \|_{L^1_t\BB_{2,1}^{\frac{\dd}{2}-2}\cap \BB_{2,1}^\frac{\dd}{2}}
	\| (\nabla\m^k,\,\nabla \m^{k+1}) \|_{L^\infty_t\BB_{2,1}^\frac{\dd}{2}}^2
	(1+\|( \m^k,\,\m^{k+1})\|_{L^\infty_t\BB_{2,1}^\frac{\dd}{2}}^2)+\\
	+
	(1+\|( \omega^k,\,\omega^{k+1})\|_{L^\infty_t\BB_{2,1}^\frac{\dd}{2}})
	\| \partial_t \omega^k \|_{L^1_t\BB_{2,1}^{\frac{\dd}{2}-2}\cap \BB_{2,1}^\frac{\dd}{2}}
	\| \delta \omega^k \|_{L^\infty_t\BB_{2,1}^\frac{\dd}{2}}\| (\nabla\m^k,\,\nabla \m^{k+1}) \|_{L^\infty_t\BB_{2,1}^\frac{\dd}{2}}^2{\scriptstyle\times}\\
	{\scriptstyle\times}
	(1+\|( \m^k,\,\m^{k+1})\|_{L^\infty_t\BB_{2,1}^\frac{\dd}{2}}^2) + 
	(1+\|( \omega^k,\,\omega^{k+1})\|_{L^\infty_t\BB_{2,1}^\frac{\dd}{2}}^2)
	\| \partial_t \omega^k \|_{L^1_t\BB_{2,1}^{\frac{\dd}{2}-2}\cap \BB_{2,1}^\frac{\dd}{2}}
	\|\delta\m^k\|_{L^\infty_t\BB_{2,1}^\frac{\dd}{2}}{\scriptstyle\times}\\
	{\scriptstyle\times}
	(1+\|( \m^k,\,\m^{k+1})\|_{L^\infty_t\BB_{2,1}^\frac{\dd}{2}}) +
	(1+\|( \omega^k,\,\omega^{k+1})\|_{L^\infty_t\BB_{2,1}^\frac{\dd}{2}})
	\| \delta \omega^k \|_{L^\infty_t\BB_{2,1}^\frac{\dd}{2}}
	\| \nabla \m^k \|_{L^\infty_t\BB_{2,1}^{\frac{\dd}{2}-1}\cap \BB_{2,1}^\frac{\dd}{2}}{\scriptstyle\times}\\
	{\scriptstyle\times}
	\| \partial_t \nabla \m^k \|_{L^1_t\BB_{2,1}^{\frac{\dd}{2}-1}\cap \BB_{2,1}^\frac{\dd}{2}}
	(1+\|( \m^k,\,\m^{k+1})\|_{L^\infty_t\BB_{2,1}^\frac{\dd}{2}}^2)+
	(1+\|( \omega^k,\,\omega^{k+1})\|_{L^\infty_t\BB_{2,1}^\frac{\dd}{2}})
	\| \delta \omega^k \|_{L^\infty_t\BB_{2,1}^\frac{\dd}{2}}{\scriptstyle\times}\\
	{\scriptstyle\times}
	\| \nabla ( \m^k,\,\m^{k+1}) \|_{L^\infty_t\BB_{2,1}^{\frac{\dd}{2}-1}\cap \BB_{2,1}^\frac{\dd}{2}}^2
	\| \partial_t  \m^k \|_{L^1_t \BB_{2,1}^\frac{\dd}{2}}
	(1+\|( \m^k,\,\m^{k+1})\|_{L^\infty_t\BB_{2,1}^\frac{\dd}{2}})				+\\+
	(1+\|( \omega^k,\,\omega^{k+1})\|_{L^\infty_t\BB_{2,1}^\frac{\dd}{2}}^2)
	\| \omega^k \|_{L^\infty_t\BB_{2,1}^\frac{\dd}{2}}
	\| \nabla  \delta \m^k \|_{L^\infty_t\BB_{2,1}^{\frac{\dd}{2}-1}\cap \BB_{2,1}^\frac{\dd}{2}}
	\| \partial_t\nabla  \m^{k+1} \|_{L^1_t\BB_{2,1}^{\frac{\dd}{2}-1}\cap \BB_{2,1}^\frac{\dd}{2}}{\scriptstyle\times}\\
	{\scriptstyle\times}
	(1+\|( \m^k,\,\m^{k+1})\|_{L^\infty_t\BB_{2,1}^\frac{\dd}{2}}^2)+
	(1+\|( \omega^k,\,\omega^{k+1})\|_{L^\infty_t\BB_{2,1}^\frac{\dd}{2}}^2)
	\| \nabla ( \m^k,\,\m^{k+1}) \|_{L^\infty_t\BB_{2,1}^{\frac{\dd}{2}-1}\cap \BB_{2,1}^\frac{\dd}{2}}{\scriptstyle\times}\\
	{\scriptstyle\times}
	\| \nabla \delta \m^k \|_{L^\infty_t\BB_{2,1}^{\frac{\dd}{2}-1}\cap \BB_{2,1}^\frac{\dd}{2}}
	\| \partial_t  \m^k \|_{L^1_t \BB_{2,1}^\frac{\dd}{2}}
	(1+\|( \m^k,\,\m^{k+1})\|_{L^\infty_t\BB_{2,1}^\frac{\dd}{2}})				+\\
	+
	(1+\|( \omega^k,\,\omega^{k+1})\|_{L^\infty_t\BB_{2,1}^\frac{\dd}{2}}^2)
	\| \omega^k \|_{L^\infty_t\BB_{2,1}^\frac{\dd}{2}}
	\| \nabla   \m^k \|_{L^\infty_t\BB_{2,1}^{\frac{\dd}{2}-1}\cap \BB_{2,1}^\frac{\dd}{2}}
	\| \partial_t\nabla \delta \m^{k} \|_{L^1_t\BB_{2,1}^{\frac{\dd}{2}-1}\cap \BB_{2,1}^\frac{\dd}{2}}{\scriptstyle\times}\\
	{\scriptstyle\times}
	(1+\|( \m^k,\,\m^{k+1})\|_{L^\infty_t\BB_{2,1}^\frac{\dd}{2}}^2)+
	(1+\|( \omega^k,\,\omega^{k+1})\|_{L^\infty_t\BB_{2,1}^\frac{\dd}{2}}^2)
	\| \nabla ( \m^k,\,\m^{k+1}) \|_{L^\infty_t\BB_{2,1}^{\frac{\dd}{2}-1}\cap \BB_{2,1}^\frac{\dd}{2}}{\scriptstyle\times}\\
	{\scriptstyle\times}
	\| \nabla  \m^k \|_{L^\infty_t\BB_{2,1}^{\frac{\dd}{2}-1}\cap \BB_{2,1}^\frac{\dd}{2}}
	\| \partial_t \delta \m^k \|_{L^1_t \BB_{2,1}^\frac{\dd}{2}}
	(1+\|( \m^k,\,\m^{k+1})\|_{L^\infty_t\BB_{2,1}^\frac{\dd}{2}})	
	\lesssim \ee^{k+3}.
\end{align*}
A similar approach leads the following inequalities to be satisfied:
\begin{align*}
	\| (1+\omega^k) \delta \uu^k\cdot \nabla \h^{k+1}\|_{L^1_t\BB_{2,1}^{\frac{\dd}{2}-2}\cap \BB_{2,1}^\frac{\dd}{2}}
	\lesssim
	(1+\|( \omega^k,\,\omega^{k+1})\|_{L^\infty_t\BB_{2,1}^\frac{\dd}{2}}^2)
	\| \delta \uu^k \|_{L^\infty_t\BB_{2,1}^{\frac{\dd}{2}-1}\cap\BB_{2,1}^{\frac{\dd}{2}}}{\scriptstyle\times}\\
	{\scriptstyle\times}
	\| \nabla \h^{k+1} \|_{L^1_t\BB_{2,1}^{\frac{\dd}{2}-1}\cap\BB_{2,1}^{\frac{\dd}{2}} }
	\lesssim 
	\ee^3 \| \delta \uu^k \|_{\X_1}
	\lesssim \ee^{k+3},
\end{align*}
together with
\begin{equation*}
\begin{aligned}
	\| (1+\omega^k) \uu^k\cdot \nabla \delta \h^{k}\|_{L^1_t\BB_{2,1}^{\frac{\dd}{2}-2}\cap \BB_{2,1}^\frac{\dd}{2}}
	\lesssim
	(1+\|( \omega^k,\,\omega^{k+1})\|_{L^\infty_t\BB_{2,1}^\frac{\dd}{2}}^2)
	\|  \uu^k \|_{L^\infty_t\BB_{2,1}^{\frac{\dd}{2}-1}\cap\BB_{2,1}^{\frac{\dd}{2}}}{\scriptstyle\times}\\
	{\scriptstyle\times}
	\| \nabla\delta \h^{k} \|_{L^1_t\BB_{2,1}^{\frac{\dd}{2}-1}\cap\BB_{2,1}^{\frac{\dd}{2}} }
	\lesssim 
	\ee^{k+3}.
\end{aligned}
\end{equation*}
Furthermore, we observe that
\begin{align*}
	\| \Div \{\tilde \delta \lambda^k_{1}\nabla \w^{k+1}\}+\Div\{\tilde \lambda^k_{1}\nabla \delta \w^k\}\|_{L^1_t\BB_{2,1}^{\frac{\dd}{2}-2}\cap\BB_{2,1}^{\frac{\dd}{2}}}
	\lesssim 
	\| \delta \omega^k \|_{L^\infty_t\BB_{2,1}^\frac{\dd}{2}}
	\| \Delta \omega^{k+1} \|_{L^1_t\BB_{2,1}^{\frac{\dd}{2}-2}\cap\BB_{2,1}^{\frac{\dd}{2}}}+\\+
	\| 		 \omega^k \|_{L^\infty_t\BB_{2,1}^\frac{\dd}{2}}
	\| \Delta \delta\omega^{k} \|_{L^1_t\BB_{2,1}^{\frac{\dd}{2}-2}\cap\BB_{2,1}^{\frac{\dd}{2}}} +
	\| \nabla \delta \omega^k \|_{L^2_t\BB_{2,1}^\frac{\dd}{2}}
	\| \nabla \omega^{k+1} \|_{L^2_t\BB_{2,1}^{\frac{\dd}{2}-2}\cap\BB_{2,1}^{\frac{\dd}{2}}}+
	\| 	\nabla	 \omega^k \|_{L^2_t\BB_{2,1}^\frac{\dd}{2}}{\scriptstyle\times}\\{\scriptstyle\times}
	\| \nabla \delta\omega^{k} \|_{L^2_t\BB_{2,1}^{\frac{\dd}{2}-2}\cap\BB_{2,1}^{\frac{\dd}{2}}} 
	\lesssim 
	\ee^3 \|\,\delta \omega^k \,\|_{\X_3}\lesssim
	\ee^{k+3},
\end{align*}
and also
\begin{align*}
	\| \Div\{ \delta \tilde \lambda^k_{2}([\bar \nn+\m^{k+1}]\cdot \nabla \w^{k+1})[\bar{\nn} +\m^{k+1}]\}+
	\|_{L^1_t\BB_{2,1}^{\frac{\dd}{2}-2}\cap\BB_{2,1}^{\frac{\dd}{2}}}
	\lesssim
	\| \delta \omega^k \|_{L^\infty_t\BB_{2,1}^\frac{\dd}{2}}
	(1+\| \m^{k+1}\|_{L^\infty_t\BB_{2,1}^\frac{\dd}{2}}^2){\scriptstyle\times}\\
	{\scriptstyle\times}\| \Delta \omega^{k+1} \|_{L^1_t\BB_{2,1}^{\frac{\dd}{2}-2}\cap \BB_{2,1}^\frac{\dd}{2}}
	+
	\| \nabla \delta \omega^k \|_{L^2_t\BB_{2,1}^\frac{\dd}{2}}
	(1+\| \m^{k+1}\|_{L^\infty_t\BB_{2,1}^\frac{\dd}{2}}^2)
	\| \nabla \omega^{k+1} \|_{L^2_t\BB_{2,1}^{\frac{\dd}{2}-2}\cap \BB_{2,1}^\frac{\dd}{2}}+
	\| \delta \omega^k \|_{L^\infty_t\BB_{2,1}^\frac{\dd}{2}}{\scriptstyle\times}\\
	{\scriptstyle\times}
	(1+\| \m^{k+1}\|_{L^\infty_t\BB_{2,1}^\frac{\dd}{2}})
	\| \nabla \m^{k+1} \|_{L^2_t\BB_{2,1}^\frac{\dd}{2}}
	\| \nabla \omega^{k+1} \|_{L^2_t\BB_{2,1}^{\frac{\dd}{2}-2}\cap \BB_{2,1}^\frac{\dd}{2}}
	\lesssim 
	\ee^3 \|\,\delta \omega^k\,\|_{\X_3}\,\lesssim \ee^{k+3}.
\end{align*}
A similar estimate holds also for
\begin{align*}
	\| \Div\{  \tilde \lambda^k_{2}(\delta\m^{k}\cdot \nabla \w^{k+1})[\bar{\nn} +\m^{k+1}]\}\|_{L^1_t\BB_{2,1}^{\frac{\dd}{2}-2}\cap\BB_{2,1}^{\frac{\dd}{2}}}
	\lesssim
	\|  \omega^k \|_{L^\infty_t\BB_{2,1}^\frac{\dd}{2}}
	(1+\| \m^{k+1}\|_{L^\infty_t\BB_{2,1}^\frac{\dd}{2}}){\scriptstyle\times}\\
	{\scriptstyle\times}
	\|\delta  \m^{k}\|_{L^\infty_t\BB_{2,1}^\frac{\dd}{2}}
	\| \Delta \omega^{k+1} \|_{L^1_t\BB_{2,1}^{\frac{\dd}{2}-2}\cap \BB_{2,1}^\frac{\dd}{2}}
	+
	\| \nabla  \omega^k \|_{L^2_t\BB_{2,1}^\frac{\dd}{2}}
	(1+\| \m^{k+1}\|_{L^\infty_t\BB_{2,1}^\frac{\dd}{2}})
	\| \delta \m^{k}\|_{L^\infty_t\BB_{2,1}^\frac{\dd}{2}}{\scriptstyle\times}\\
	{\scriptstyle\times}
	\| \nabla \omega^{k+1} \|_{L^2_t\BB_{2,1}^{\frac{\dd}{2}-2}\cap \BB_{2,1}^\frac{\dd}{2}}+
	\|  \omega^k \|_{L^\infty_t\BB_{2,1}^\frac{\dd}{2}}
	(1+\| \m^{k+1}\|_{L^\infty_t\BB_{2,1}^\frac{\dd}{2}})
	\| \nabla \delta \m^{k} \|_{L^2_t\BB_{2,1}^{\frac{\dd}{2}-2}\cap \BB_{2,1}^\frac{\dd}{2}}+\\+
	\|  \omega^k \|_{L^\infty_t\BB_{2,1}^\frac{\dd}{2}}
	\|\nabla  \m^{k+1}\|_{L^2_t\BB_{2,1}^\frac{\dd}{2}})
	\|  \delta \m^{k} \|_{L^\infty_t\BB_{2,1}^{\frac{\dd}{2}-2}\cap \BB_{2,1}^\frac{\dd}{2}}
	\lesssim 
	\ee^3 \|\,\delta \m^k\,\|_{\X_2}\,\lesssim \ee^{k+3},
\end{align*}
together with
\begin{align*}
	\| \Div\{ \tilde \lambda^k_{2}([\bar \nn+\m^{k}]\cdot \nabla \delta \w^{k})[\bar{\nn} +\m^{k+1}]\} \|_{L^1_t\BB_{2,1}^{\frac{\dd}{2}-2}\cap\BB_{2,1}^{\frac{\dd}{2}}}
	\lesssim 
	\|  \omega^k \|_{L^\infty_t\BB_{2,1}^\frac{\dd}{2}}{\scriptstyle\times}\\
	{\scriptstyle\times}
	(1+\| (\m^k,\, \m^{k+1})\|_{L^\infty_t\BB_{2,1}^\frac{\dd}{2}}^2)
	\| \Delta \delta\omega^{k} \|_{L^1_t\BB_{2,1}^{\frac{\dd}{2}-2}\cap \BB_{2,1}^\frac{\dd}{2}}
	+
	\| \nabla \delta \omega^k \|_{L^2_t\BB_{2,1}^\frac{\dd}{2}}
	(1+\| (\m^k,\, \m^{k+1})\|_{L^\infty_t\BB_{2,1}^\frac{\dd}{2}}^2){\scriptstyle\times}\\
	{\scriptstyle\times}
	\| \nabla\delta \omega^{k} \|_{L^2_t\BB_{2,1}^{\frac{\dd}{2}-2}\cap \BB_{2,1}^\frac{\dd}{2}}+
	\|  \omega^k \|_{L^\infty_t\BB_{2,1}^\frac{\dd}{2}}
	(1+\|(\m^k,\, \m^{k+1})\|_{L^\infty_t\BB_{2,1}^\frac{\dd}{2}})
	\| \nabla (\m^k,\, \m^{k+1}) \|_{L^2_t\BB_{2,1}^\frac{\dd}{2}}{\scriptstyle\times}\\
	{\scriptstyle\times}
	\| \nabla\delta \omega^{k} \|_{L^2_t\BB_{2,1}^{\frac{\dd}{2}-2}\cap \BB_{2,1}^\frac{\dd}{2}}
	\lesssim 
	\ee^3 \|\,\delta \omega^k\,\|_{\X_3}\,\lesssim \ee^{k+3}.
\end{align*}
Moreover, 
\begin{align*}
	\| \Div\{ \tilde \lambda^k_{2}([\bar \nn+\m^{k}]\cdot \nabla  \w^{k})\delta\m^{k}\}\|_{L^1_t\BB_{2,1}^{\frac{\dd}{2}-2}\cap\BB_{2,1}^{\frac{\dd}{2}}}
	\lesssim
	\|  \omega^k \|_{L^\infty_t\BB_{2,1}^\frac{\dd}{2}}
	(1+\| \m^{k}\|_{L^\infty_t\BB_{2,1}^\frac{\dd}{2}}){\scriptstyle\times}\\
	{\scriptstyle\times}
	\|\delta  \m^{k}\|_{L^\infty_t\BB_{2,1}^\frac{\dd}{2}}
	\| \Delta \omega^{k} \|_{L^1_t\BB_{2,1}^{\frac{\dd}{2}-2}\cap \BB_{2,1}^\frac{\dd}{2}}
	+
	\| \nabla  \omega^k \|_{L^2_t\BB_{2,1}^\frac{\dd}{2}}
	(1+\| \m^{k}\|_{L^\infty_t\BB_{2,1}^\frac{\dd}{2}})
	\| \delta \m^{k}\|_{L^\infty_t\BB_{2,1}^\frac{\dd}{2}}{\scriptstyle\times}\\
	{\scriptstyle\times}
	\| \nabla \omega^{k} \|_{L^2_t\BB_{2,1}^{\frac{\dd}{2}-2}\cap \BB_{2,1}^\frac{\dd}{2}}+
	\|  \omega^k \|_{L^\infty_t\BB_{2,1}^\frac{\dd}{2}}
	(1+\| \m^{k}\|_{L^\infty_t\BB_{2,1}^\frac{\dd}{2}})
	\| \nabla \delta \m^{k} \|_{L^2_t\BB_{2,1}^{\frac{\dd}{2}-2}\cap \BB_{2,1}^\frac{\dd}{2}}+\\+
	\|  \omega^k \|_{L^\infty_t\BB_{2,1}^\frac{\dd}{2}}
	\|\nabla  \m^{k}\|_{L^2_t\BB_{2,1}^\frac{\dd}{2}})
	\|  \delta \m^{k} \|_{L^\infty_t\BB_{2,1}^{\frac{\dd}{2}-2}\cap \BB_{2,1}^\frac{\dd}{2}}
	\lesssim 
	\ee^3 \|\,\delta \m^k\,\|_{\X_2}\,\lesssim \ee^{k+3}.
\end{align*}
It then remains to control the difference between the two consecutive viscous dissipation, more precisely
\begin{align*}
	\delta \sigma^{{\rm L},k}:\mathbb{D}^{k+1}+\sigma^{{\rm L},k}:\delta \mathbb{D}^k +
	\delta {\rm g}^k\cdot \mcN^{k+1}+{\rm g}^k\cdot \delta \mcN^k =
	\delta \alpha_1^k |\nn^{k+1}\cdot { \mathbb{D}^{k+1}}\nn^{k+1}|^2 +\\+
	\alpha_1^k (\delta \m^{k}\cdot { \mathbb{D}^{k+1}}\nn^{k+1})
				(\nn^{k+1}\cdot { \mathbb{D}^{k+1}}\nn^{k+1})+
	\alpha_1^k ( \nn^{k}\cdot { \delta \mathbb{D}^{k}}\nn^{k+1})
				(\nn^{k+1}\cdot { \mathbb{D}^{k+1}}\nn^{k+1})+\\+		
	\alpha_1^k ( \nn^{k}\cdot { \mathbb{D}^{k}}\delta \m^{k})
				(\nn^{k+1}\cdot { \mathbb{D}^{k+1}}\nn^{k+1})+				
	\alpha_1^k ( \nn^{k}\cdot { \mathbb{D}^{k}} \nn^{k})
				(\delta \m^{k}\cdot { \mathbb{D}^{k+1}}\nn^{k+1})+\\+
	\alpha_1^k ( \nn^{k}\cdot { \mathbb{D}^{k}} \nn^{k})
				(\nn^{k}\cdot { \delta \mathbb{D}^{k}}\nn^{k+1})	+	
	\alpha_1^k ( \nn^{k}\cdot { \mathbb{D}^{k}} \nn^{k})
				(\nn^{k}\cdot {  \mathbb{D}^{k}}\delta\m^{k})	+				
	(\delta \alpha_2^k+\delta \alpha_3^k+\delta \alpha_5^k-\delta \alpha_6^k){\scriptstyle\times}\\
	{\scriptstyle\times}\mcN^{k+1}\cdot { \mathbb{D}}^{k+1}\nn^{k+1}+
	( \alpha_2^k+ \alpha_3^k+ \alpha_5^k- \alpha_6^k)\delta \mcN^{k}\cdot { \mathbb{D}}^{k+1}\nn^{k+1}+
	( \alpha_2^k+ \alpha_3^k+ \alpha_5^k- \alpha_6^k){\scriptstyle\times}\\
	{\scriptstyle\times}\mcN^{k}\cdot {\delta  \mathbb{D}}^k\nn^{k+1}+	
	( \alpha_2^k+ \alpha_3^k+ \alpha_5^k- \alpha_6^k)
	\mcN^{k}\cdot { \mathbb{D}}^k\delta \m^{k}+
	\delta \alpha_4^k|{ \mathbb{D}}^{k+1}|^2+
	\alpha_4^k{\delta \mathbb{D}}^k:{\mathbb{D}}^{k+1}+\\+
	\alpha_4^k{\mathbb{D}}^k:{\delta \mathbb{D}}^{k}+
	(\delta \alpha_5^k+\delta \alpha_6^k) |{ \mathbb{D}}^{k+1}\nn^{k+1}|^2+
	( \alpha_5^k+ \alpha_6^k) \delta \mathbb{D}^k\nn^{k+1}\cdot  \mathbb{D}^{k+1}\nn^{k+1}+	
	( \alpha_5^k+ \alpha_6^k) {\scriptstyle\times}\\
	{\scriptstyle\times} \mathbb{D}^k\delta\m^{k}\cdot  \mathbb{D}^{k+1}\nn^{k+1}+
	( \alpha_5^k+ \alpha_6^k) \mathbb{D}^k\nn^{k}\cdot \delta  \mathbb{D}^{k}\nn^{k+1}+	
	( \alpha_5^k+ \alpha_6^k) \mathbb{D}^k\nn^{k}\cdot \mathbb{D}^{k}\delta \m^k+		\\+
	(\delta \alpha_3^k -\delta \alpha_2^k)\mcN^{k+1}\cdot \mcN^k+
	( \alpha_3^k - \alpha_2^k)\delta \mcN^{k}\cdot \mcN^k+
	( \alpha_3^k - \alpha_2^k) \mcN^{k}\cdot \delta\mcN^k.
\end{align*}
We first remark that
\begin{align*}
	\| \delta \mcN^k 			\|_{L^2_t\BB_{2,1}^{\frac{\dd}{2}-1}\cap \BB_{2,1}^\frac{\dd}{2}} \lesssim
	\| \partial_t \delta \m^k 	\|_{L^2_t\BB_{2,1}^{\frac{\dd}{2}-1}\cap \BB_{2,1}^\frac{\dd}{2}}		+
	\| \delta \uu^k				\|_{L^\infty_t\BB_{2,1}^{\frac{\dd}{2}-1}\cap \BB_{2,1}^\frac{\dd}{2}} 
	\| \nabla \m^{k+1}			\|_{L^2_t\BB_{2,1}^{\frac{\dd}{2}-1}\cap \BB_{2,1}^\frac{\dd}{2}}		+\\+
	\| \uu^k						\|_{L^\infty_t\BB_{2,1}^{\frac{\dd}{2}-1}\cap \BB_{2,1}^\frac{\dd}{2}} 
	\| \nabla \delta \m^{k+1}	\|_{L^2_t\BB_{2,1}^{\frac{\dd}{2}-1}\cap \BB_{2,1}^\frac{\dd}{2}}		+
	\| \delta \Omega^k			\|_{L^2_t\BB_{2,1}^{\frac{\dd}{2}-1}\cap \BB_{2,1}^\frac{\dd}{2}}		+
	\|		  \Omega^{k+1}		\|_{L^2_t\BB_{2,1}^{\frac{\dd}{2}-1}\cap \BB_{2,1}^\frac{\dd}{2}}{\scriptstyle\times}\\
	{\scriptstyle\times}
	\| \delta	\m^k				\|_{L^\infty_t\BB_{2,1}^\frac{\dd}{2}}
	\lesssim
	\frac{1}{\bar{\alpha}_4}\|\delta \uu^k\|_{\X_1} + \| \delta \m^k \|_{\X_2}
	\lesssim
	\ee^3\|\delta \uu^k\|_{\X_1} + \| \delta \m^k \|_{\X_2}.
\end{align*}
Hence, a direct computation leads to the following inequality
\begin{align*}
	\| \delta \sigma^{{\rm L},k}:\mathbb{D}^{k+1}+\sigma^{{\rm L},k}:\delta \mathbb{D}^k \|_{L^1_t\BB_{2,1}^{\frac{\dd}{2}-2}\cap \BB_{2,1}^\frac{\dd}{2}}
	\lesssim 
	\| \delta \omega^k \|_{L^\infty_t\BB_{2,1}^\frac{\dd}{2}}
	\Big\{
		(1+\|(\m^k,\,\m^{k+1})\|_{L^\infty_t\BB_{2,1}^\frac{\dd}{2}}^2){\scriptstyle\times}\\
	{\scriptstyle\times}
	(1+\| (\mcN^k,\,\mcN^{k+1} ) \|_{L^2_t\BB_{2,1}^{\frac{\dd}{2}-1}\cap \BB_{2,1}^\frac{\dd}{2}}^2)
	\| (\nabla \uu^k,\,\nabla \uu^{k+1})\|_{L^2_t\cap \BB_{2,1}^{\frac{\dd}{2}-1}\cap\BB_{2,1}^\frac{\dd}{2}}+
	\| (\mcN^k,\,\mcN^{k+1} ) \|_{L^2_t\BB_{2,1}^{\frac{\dd}{2}-1}\cap \BB_{2,1}^\frac{\dd}{2}}^2
	\Big\}+\\+
	\| \delta \m^k \|_{L^\infty_t\BB_{2,1}^\frac{\dd}{2}}
	(1+\|\omega^k\|_{L^\infty_t\BB_{2,1}^\frac{\dd}{2}})
	\Big\{
		(1+\| (\m^k,\,\m^{k+1} ) \|_{L^\infty_t \BB_{2,1}^\frac{\dd}{2}}^3)\| (\nabla \uu^k,\,\nabla \uu^{k+1})\|_{L^2_t\cap \BB_{2,1}^{\frac{\dd}{2}-2}\cap\BB_{2,1}^\frac{\dd}{2}}^2+\\+
		\| (\nabla \uu^k,\,\nabla \uu^{k+1})\|_{L^2_t\cap \BB_{2,1}^{\frac{\dd}{2}-1}\cap\BB_{2,1}^\frac{\dd}{2}}
		\| (\mcN^k,\,\mcN^{k+1} ) \|_{L^2_t\BB_{2,1}^{\frac{\dd}{2}-1}\cap \BB_{2,1}^\frac{\dd}{2}}
	\Big\}+	
	(1+\|\omega^k\|_{L^\infty_t\BB_{2,1}^\frac{\dd}{2}}){\scriptstyle\times}\\
	{\scriptstyle\times}
	\| \nabla \delta \uu^k \|_{L^2_t\cap \BB_{2,1}^{\frac{\dd}{2}-1}\cap\BB_{2,1}^\frac{\dd}{2}} (1+\| (\m^k,\,\m^{k+1} ) \|_{L^\infty_t \BB_{2,1}^\frac{\dd}{2}}^4)
	\| (\nabla \uu^k,\,\nabla \uu^{k+1},\,\mcN^k,\,\mcN^{k+1})\|_{L^2_t\cap \BB_{2,1}^{\frac{\dd}{2}-1}\cap\BB_{2,1}^\frac{\dd}{2}} + 	\\+
	\|  \delta \mcN^k \|_{L^2_t\cap \BB_{2,1}^{\frac{\dd}{2}-1}\cap\BB_{2,1}^\frac{\dd}{2}}
	(1+\|\omega^k\|_{L^\infty_t\BB_{2,1}^\frac{\dd}{2}})\Big\{
	\|(\mcN^k,\,\mcN^{k+1})\|_{L^2_t\cap \BB_{2,1}^{\frac{\dd}{2}-1}\cap\BB_{2,1}^\frac{\dd}{2}} +\\+
	(1+\| (\m^k,\,\m^{k+1} ) \|_{L^\infty_t \BB_{2,1}^\frac{\dd}{2}})
	\| (\nabla \uu^k,\,\nabla \uu^{k+1},\,\mcN^k,\,\mcN^{k+1})\|_{L^2_t\cap \BB_{2,1}^{\frac{\dd}{2}-1}\cap\BB_{2,1}^\frac{\dd}{2}}
	\lesssim\\\lesssim
	\ee^2\Big\{\ee^3\|\delta \uu^k\|_{\X_1} + \| \delta \m^k \|_{\X_2} + \| \delta \omega\|_{\X_3}\Big\}
	\lesssim
	\ee^{k+3}.
\end{align*}
Combining all the previous inequalities together with \eqref{ineq:the-end} finally leads to
\begin{equation*}
	\|\, \delta \omega^{k+1}\, \|_{\X_3}\lesssim \ee^{k+3},
\end{equation*}
which concludes the proof of inequalities in \eqref{ineq:main-temp-ineq}.
\subsection{Passage to the limit}\label{sec:passagetothelimit}$\,$

\smallskip\noindent
We claim that the uniform estimates given by Proposition \ref{prop:the_core} allow us to pass to the limit as $k$ goes to $\infty$. Indeed, thanks to inequality \eqref{prop:the_core_ineq2}, the sequence $(\uu^k,\,\m^k,\omega^k)_\NN$ is a Cauchy sequence in the Banach space $\X_1\times\X_2\times\X_3$. We then achieve the following strong convergences:
\begin{equation*}
\begin{alignedat}{800}
	\uu^k \;&\rightarrow \; \uu\quad&&\text{in}\quad \Big(\,L^\infty_t\BB_{2,1}^{\frac{\dd}{2}-1}&&&&\cap \BB_{2,1}^\frac{\dd}{2}\,&&&&&&&&\Big)
	,\;
	(\,\partial_t\uu^k,\,&&&&&&&&&&&&&&&&\Delta \uu^k\,&&&&&&&&&&&&&&&&&&&&&&&&&&&&&&&&)\;\rightarrow \;
	(\,\partial_t\uu,\,&&&&&&&&&&&&&&&&&&&&&&&&&&&&&&&&&&&&&&&&&&&&&&&&&&&&&&&&&&&&&&&&\Delta \uu\,&&&&&&&&&&&&&&&&&&&&&&&&&&&&&&&&&&&&&&&&&&&&&&&&&&&&&&&&&&&&&&&&&&&&&&&&&&&&&&&&&&&&&&&&&&&&&&&&&&&&&&&&&&&&&&&&&&&&&&&&&&&&&&&&)
	\quad \text{in}\quad \Big(\,L^1_t\BB_{2,1}^{\frac{\dd}{2}-1}
	&&&&&&&&&&&&&&&&&&&&&&&&&&&&&&&&&&&&&&&&&&&&&&&&&&&&&&&&&&&&&&&&&&&&&&&&&&&&&&&&&&&&&&&&&&&&&&&&&&&&&&&&&&&&&&&&&&&&&&&&&&&&&&&&&&&&&&&&&&&&&&&&&&&&&&&&&&&&&&&&&&&&&&&&&&&&&&&&&&&&&&&&&&&&&&&&&&&&&&&&&&&&&&&&&&&&&&&&&&&&&&&&&&&&&&&&&&&&&&&&&&&&&&&&&&&&&&&&\cap \BB_{2,1}^\frac{\dd}{2}\,
	&&&&&&&&&&&&&&&&&&&&&&&&&&&&&&&&&&&&&&&&&&&&&&&&&&&&&&&&&&&&&&&&&&&&&&&&&&&&&&&&&&&&&&&&&&&&&&&&&&&&&&&&&&&&&&&&&&&&&&&&&&&&&&&&&&&&&&&&&&&&&&&&&&&&&&&&&&&&&&&&&&&&&&&&&&&&&&&&&&&&&&&&&&&&&&&&&&&&&&&&&&&&&&&&&&&&&&&&&&&&&&&&&&&&&&&&&&&&&&&&&&&&&&&&&&&&&&&&&&&&&&&&&&&&&&&&&&&&&&&&&&&&&&&&&&&&&&&&&&&&&&&&&&&&&&&&&&&&&&&&&&&&&&&&&&&&&&&&&&&&&&&&&&&&&&&&&&&&&&&&&&&&&&&&&&&&&&&&&&&&&&&&&&&&&&&&&&&&&&&&&&&&&&&&&&&&&&&&&&&&&&&&&&&&&&&&&&&&&&&&&&&&&&&&&&&&&&&&&&&&&&&&&&&&&&&&&&&&&&&&&&&&&&&&&&&&&&&&&&&&&&&&&&&&&&&&\Big),\\
	\m^k \;&\rightarrow \; \m\quad&&\text{in}\quad \Big(\,L^\infty_t\BB_{2,1}^{\frac{\dd}{2}}&&&&\cap \BB_{2,1}^{\frac{\dd}{2}+1}\,&&&&&&&&\Big)
	,\;
	(\,\partial_t\m^k,\,&&&&&&&&&&&&&&&&\Delta \m^k\,&&&&&&&&&&&&&&&&&&&&&&&&&&&&&&&&)\;\rightarrow \;
	(\,\partial_t\m,\,&&&&&&&&&&&&&&&&&&&&&&&&&&&&&&&&&&&&&&&&&&&&&&&&&&&&&&&&&&&&&&&&\Delta \m\,&&&&&&&&&&&&&&&&&&&&&&&&&&&&&&&&&&&&&&&&&&&&&&&&&&&&&&&&&&&&&&&&&&&&&&&&&&&&&&&&&&&&&&&&&&&&&&&&&&&&&&&&&&&&&&&&&&&&&&&&&&&&&&&&)
	\quad \text{in}\quad \Big(\,L^1_t\BB_{2,1}^{\frac{\dd}{2}}
	&&&&&&&&&&&&&&&&&&&&&&&&&&&&&&&&&&&&&&&&&&&&&&&&&&&&&&&&&&&&&&&&&&&&&&&&&&&&&&&&&&&&&&&&&&&&&&&&&&&&&&&&&&&&&&&&&&&&&&&&&&&&&&&&&&&&&&&&&&&&&&&&&&&&&&&&&&&&&&&&&&&&&&&&&&&&&&&&&&&&&&&&&&&&&&&&&&&&&&&&&&&&&&&&&&&&&&&&&&&&&&&&&&&&&&&&&&&&&&&&&&&&&&&&&&&&&&&&\cap \BB_{2,1}^{\frac{\dd}{2}+1}\,&&&&&&&&&&&&&&&&&&&&&&&&&&&&&&&&&&&&&&&&&&&&&&&&&&&&&&&&&&&&&&&&&&&&&&&&&&&&&&&&&&&&&&&&&&&&&&&&&&&&&&&&&&&&&&&&&&&&&&&&&&&&&&&&&&&&&&&&&&&&&&&&&&&&&&&&&&&&&&&&&&&&&&&&&&&&&&&&&&&&&&&&&&&&&&&&&&&&&&&&&&&&&&&&&&&&&&&&&&&&&&&&&&&&&&&&&&&&&&&&&&&&&&&&&&&&&&&&&&&&&&&&&&&&&&&&&&&&&&&&&&&&&&&&&&&&&&&&&&&&&&&&&&&&&&&&&&&&&&&&&&&&&&&&&&&&&&&&&&&&&&&&&&&&&&&&&&&&&&&&&&&&&&&&&&&&&&&&&&&&&&&&&&&&&&&&&&&&&&&&&&&&&&&&&&&&&&&&&&&&&&&&&&&&&&&&&&&&&&&&&&&&&&&&&&&&&&&&&&&&&&&&&&&&&&&&&&&&&&&&&&&&&&&&&&&&&&&&&&&&&&&&&&&&&&&&\Big),\\
	\omega^k \;&\rightarrow \; \omega\quad&&\text{in}\quad \Big(\,L^\infty_t\BB_{2,1}^{\frac{\dd}{2}-2}&&&&\cap \BB_{2,1}^\frac{\dd}{2}\,&&&&&&&&\Big)
	,\;
	(\,\partial_t\omega,\,&&&&&&&&&&&&&&&&\Delta \omega^k\,&&&&&&&&&&&&&&&&&&&&&&&&&&&&&&&&)\;\rightarrow \; 
	(\,\partial_t\omega,\,&&&&&&&&&&&&&&&&&&&&&&&&&&&&&&&&&&&&&&&&&&&&&&&&&&&&&&&&&&&&&&&&\Delta \omega&&&&&&&&&&&&&&&&&&&&&&&&&&&&&&&&&&&&&&&&&&&&&&&&&&&&&&&&&&&&&&&&&&&&&&&&&&&&&&&&&&&&&&&&&&&&&&&&&&&&&&&&&&&&&&&&&&&&&&&&&&&&&&&&)	
	\quad \text{in}\quad \Big(\,L^1_t\BB_{2,1}^{\frac{\dd}{2}-2}
	&&&&&&&&&&&&&&&&&&&&&&&&&&&&&&&&&&&&&&&&&&&&&&&&&&&&&&&&&&&&&&&&&&&&&&&&&&&&&&&&&&&&&&&&&&&&&&&&&&&&&&&&&&&&&&&&&&&&&&&&&&&&&&&&&&&&&&&&&&&&&&&&&&&&&&&&&&&&&&&&&&&&&&&&&&&&&&&&&&&&&&&&&&&&&&&&&&&&&&&&&&&&&&&&&&&&&&&&&&&&&&&&&&&&&&&&&&&&&&&&&&&&&&&&&&&&&&&&\cap \BB_{2,1}^{\frac{\dd}{2}}\,&&&&&&&&&&&&&&&&&&&&&&&&&&&&&&&&&&&&&&&&&&&&&&&&&&&&&&&&&&&&&&&&&&&&&&&&&&&&&&&&&&&&&&&&&&&&&&&&&&&&&&&&&&&&&&&&&&&&&&&&&&&&&&&&&&&&&&&&&&&&&&&&&&&&&&&&&&&&&&&&&&&&&&&&&&&&&&&&&&&&&&&&&&&&&&&&&&&&&&&&&&&&&&&&&&&&&&&&&&&&&&&&&&&&&&&&&&&&&&&&&&&&&&&&&&&&&&&&&&&&&&&&&&&&&&&&&&&&&&&&&&&&&&&&&&&&&&&&&&&&&&&&&&&&&&&&&&&&&&&&&&&&&&&&&&&&&&&&&&&&&&&&&&&&&&&&&&&&&&&&&&&&&&&&&&&&&&&&&&&&&&&&&&&&&&&&&&&&&&&&&&&&&&&&&&&&&&&&&&&&&&&&&&&&&&&&&&&&&&&&&&&&&&&&&&&&&&&&&&&&&&&&&&&&&&&&&&&&&&&&&&&&&&&&&&&&&&&&&&&&&&&&&&&&&&&&\Big),
\end{alignedat}
\end{equation*}
for a set of functions $(\uu,\,\m,\,\omega)$ in $\X_1\times \X_2\times \X_3$.
Furthermore, any Oseen-Frank coefficient and Leslie viscosity strongly converges in $L^\infty_t\BB_{2,1}^{\dd/2}$ to the respective term.
More precisely, we remark that for any smooth function $f$, the sequence $(\,f(\omega^k)\,)_\NN$ strongly converges to 
$f(\omega)$ in $L^\infty_t\BB_{2,1}^{\dd/2}$, since there exists a positive constant $C$, which does not depend on $k$, such that
\begin{equation*}
	\| f(\,\omega^{k}\,) - f(\,\omega\,) \|_{L^\infty_t\BB_{2,1}^\frac{\dd}{2}}\leq C\| \omega^{k} - \omega \|_{L^\infty_t\BB_{2,1}^\frac{\dd}{2}}\quad \rightarrow 0\quad \text{as}\quad k\rightarrow \infty.
\end{equation*}
We gather that coefficients $\alpha_i(\omega^k)$ and $K_i(\omega^k)$ strongly converge in $L^\infty_t\BB_{2,1}^{\dd/2}$ to $\alpha_i(\omega)$ and $K_i(\omega)$, respectively and for any index $i$. 

\noindent\smallskip
Thus, by passing to the limit as $k\rightarrow+\infty$, it readily follows that $(\uu,\,\nn,\,\Tt) = (\uu,\,\bar{\nn}+\m,\,\bar{\Tt}+\omega)$ is a classical solution of \eqref{intro:main_system-wp}, with the exception of the unitary constraint $|\nn(t,x)|=1$. Indeed any non-linear term depending on $(\uu^k,\,\m^k,\,\omega^k)$ strongly converges to the corresponding non-linear term in $(\uu,\,\m,\,\omega)$, in the class of affinity given by $\X_1\times \X_2\times \X_3$.

\smallskip\noindent
It remains to prove the unitary constraint on the director field $|\nn(t,x)|=1$, for any $(t,x)\in \RR_+\times\RR^\dd$. Recalling that the Lagrangian multiplier $\beta$ in the angular momentum of \eqref{intro:main_system-wp} is defined as  
$\beta = {\rm h} \cdot \nn = \nn\cdot \delta \FF/\delta \nn$, we multiply the director equation in \eqref{intro:main_system-wp} by $\nn$, obtaining
\begin{equation}
	\gamma_1\Big(\partial_t \frac{|\nn|^2-1}{2} + \uu\cdot \nabla \frac{|\nn|^2-1}{2}\Big) - 2\beta \frac{|\nn|^2-1}{2}=0,\quad \text{with}\quad \big( |\nn|^2-1\big)\big|_{t=0} = 0.
\end{equation}
Since $|\nn|^2-1$ satisfies a linear transport equation with damping term, the uniqueness of the equation yields $|\nn(t,x)|^2-1=0$ everywhere, from which we recover the unitary constraint $\nn\cdot \nn=1$.
\subsection{Uniqueness} $\,$

\noindent
The scope of the present paragraph is to state the uniqueness of solutions to system \eqref{intro:main_system-wp}, in the class of affinity provided by Theorem \ref{main-thm3}. A tedious but straightforward computation yields the following proposition to be satisfied:
\begin{prop}\label{prop:uniqueness}
	We consider a set of initial data $(\uu_0,\,\nn_0,\,\Tt_0)$ as in Theorem \ref{main-thm3} and we denote by $(\uu^j,\,\nn^j,\,\Tt^j)$  two classical solutions to system \eqref{intro:main_system-wp}, which belong to $\X_1\times (\bar{\nn}+\X_2)\times (\bar{\Tt}+\X_3)$, for a positive constant $\bar{\Tt}$ and a fixed unit 
	vector $\bar \nn$. Defining $\delta \uu= \uu^1-\uu^2$, $\delta \nn = \nn^1 - \nn^2$ and $\delta \Tt = \Tt^1 - \Tt^2$, the following inequality holds:
	\begin{equation}\label{ineq:uniqueness}
		\|\,\delta \uu\,\|_{\X_{1}} + \|\,\delta \nn\,\|_{\X_{2}} + \|\,\delta \Tt\,\|_{\X_{3}} \lesssim \ee^2\Big[\|\,\delta \uu\,\|_{\X_{1}} + \|\,\delta \nn\,\|_{\X_{2}} + \|\,\delta \Tt\,\|_{\X_{3}}\Big].
	\end{equation}
\end{prop}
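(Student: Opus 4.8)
The strategy is to subtract the two copies of system \eqref{intro:main_system-wp} satisfied by $(\uu^j,\,\nn^j,\,\Tt^j)$, $j=1,2$, and to set up a fixed-point/stability estimate for the difference $(\delta\uu,\,\delta\nn,\,\delta\Tt)$ exactly as in the iteration scheme of Proposition \ref{prop:the_core}, but now comparing two genuine solutions rather than two consecutive Picard iterates. First I would write the equations for $\delta\uu$, $\delta\nn=\delta\m$ and $\delta\Tt=\delta\omega$; each one is a linear parabolic (resp. Stokes) equation with zero initial datum, driven by the operators $\Aa$, $\Bb$, $\Cc$ on the left and on the right by a difference of the nonlinear right-hand sides. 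For instance $\delta\uu$ solves
\begin{equation*}
	\partial_t\delta\uu + \uu^1\cdot\nabla\delta\uu + \delta\uu\cdot\nabla\uu^2 - \Aa[\delta\uu] + \nabla\delta\pre = \Div\{\delta\sigma^{\rm E} + \delta\tilde\sigma^{\rm L}\},
\end{equation*}
and similarly $\delta\m$, $\delta\omega$ satisfy parabolic equations with forcing $\delta F_2$, $\delta F_3$ of the form already decomposed in \eqref{def:deltaF2k} and \eqref{def:deltaF3k}, except that every occurrence of a ``$k$'' or ``$k+1$'' superscript is replaced by a ``$1$'' or ``$2$''. Then I would apply Theorem \ref{thm-superjuve-stokes} to the $\delta\uu$-equation and Theorem \ref{thm-superjuve} (together with Theorem \ref{thm-superjuve2} for the $L^2_t$-control of $\partial_t\delta\m$ needed inside $\delta\mcN$) to the $\delta\m$- and $\delta\omega$-equations, obtaining
\begin{equation*}
	\|\delta\uu\|_{\X_1} + \|\delta\m\|_{\X_2} + \|\delta\omega\|_{\X_3} \lesssim \big\|(\uu^1\cdot\nabla\delta\uu,\ \delta\uu\cdot\nabla\uu^2,\ \delta F_2,\ \delta F_3,\ \dots)\big\|_{(\text{suitable }L^1_t\text{ Besov norms})}.
\end{equation*}

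\textbf{Key steps.} The transport terms are handled by the product laws of Proposition \ref{prop-hom-bes} and the embeddings \eqref{pruseful}: $\|\uu^1\cdot\nabla\delta\uu\|_{L^1_t\BB_{2,1}^{\dd/2-1}\cap\BB_{2,1}^{\dd/2}}\lesssim\|\uu^1\|_{L^2_t\BB_{2,1}^{\dd/2}\cap\BB_{2,1}^{\dd/2+1}}\|\nabla\delta\uu\|_{L^2_t\BB_{2,1}^{\dd/2}\cap\BB_{2,1}^{\dd/2+1}}\lesssim\ee^2\|\delta\uu\|_{\X_1}$, using $\|\uu^j\|_{\X_1}\le\ee^2$, and likewise for $\delta\uu\cdot\nabla\uu^2$. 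The genuinely laborious part is bounding $\delta\sigma^{\rm E}$, $\delta\tilde\sigma^{\rm L}$, $\delta F_2$, $\delta F_3$ by $\ee^2\big(\|\delta\uu\|_{\X_1}+\|\delta\m\|_{\X_2}+\|\delta\omega\|_{\X_3}\big)$; but this is precisely the content of the estimates already carried out in Sections \ref{thebalanceoflinearmomentum}, \ref{thebalanceofangularmomentum} and \ref{thetemperatureequation} for $\delta F_i^k$, where each nonlinear difference is split via the telescoping identity $ab-a'b'=(a-a')b+a'(b-b')$, the smooth-function differences $f(\omega^1)-f(\omega^2)$ are controlled by $C\|\delta\omega\|_{L^\infty_t\BB_{2,1}^{\dd/2}}$ through Lemma \ref{thm:composition-besov} and its corollary, and every resulting product carries at least two ``small'' factors (each bounded by some power of $\ee$) together with one factor from the difference. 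The only subtle point is the term $\delta\mcN = \partial_t\delta\m + \uu^1\cdot\nabla\delta\m + \delta\uu\cdot\nabla\m^2 - \Omega^1\delta\m - \delta\Omega\,(\bar\nn+\m^2)$, whose $L^2_t$-bound requires $\frac{1}{\bar\alpha_4}\|\delta\uu\|_{\X_1}\le\ee^3\|\delta\uu\|_{\X_1}$, i.e.\ it reuses the assumption that $\bar\alpha_4>1/\ee^3$ is large.

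\textbf{Conclusion and main obstacle.} Collecting all the pieces yields \eqref{ineq:uniqueness}; choosing $\ee$ so small that $C\ee^2<1$ forces $\|\delta\uu\|_{\X_1}+\|\delta\nn\|_{\X_2}+\|\delta\Tt\|_{\X_3}=0$, hence $\uu^1=\uu^2$, $\nn^1=\nn^2$, $\Tt^1=\Tt^2$, and uniqueness follows. I do not expect any conceptual obstacle here: the machinery is identical to that of Proposition \ref{prop:the_core}, and the estimates on $\delta F_i$ are literally the $\delta F_i^k$ bounds with the iteration index removed. The main nuisance will be bookkeeping --- verifying term by term that no piece of $\delta F_2$ or $\delta F_3$ produces a factor of $\delta\uu,\delta\m,\delta\omega$ \emph{without} an accompanying $\ee$, which is exactly why the large-viscosity hypothesis on $\bar\alpha_4$ (used to absorb the linear-in-$\delta\uu$ contributions coming from $\Omega$ and $\mathbb D$ acting on $\bar\nn$) is indispensable, just as it was for the convergence of the scheme.
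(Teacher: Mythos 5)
Your proposal is correct and follows exactly the same route as the paper: subtract the two copies of the system, observe that the difference $(\delta\uu,\,\delta\m,\,\delta\omega)$ solves the same linear system with zero initial data and forcing $\delta F_1,\,\delta F_2,\,\delta F_3$ obtained from the iteration-scheme expressions by replacing the indices $k,\,k+1$ with $2,\,1$, apply the maximal-regularity theorems, and reuse the $\delta F_i^k$ estimates together with the smallness of $\|(\uu^j,\m^j,\omega^j)\|$ and the large-$\bar\alpha_4$ hypothesis to extract the factor $\ee^2$. The paper itself simply asserts that the argument is ``equivalent to that of inequality \eqref{prop:the_core_ineq2}'' and records the final chain of bounds, so your proof is if anything slightly more explicit about where the telescoping and composition lemmas enter.
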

\noindent
The proof of Proposition \ref{prop:uniqueness} is equivalent to that of inequality \eqref{prop:the_core_ineq2} in Proposition \ref{prop:the_core}. Indeed we remark that $(\delta \uu,\,\delta \m,\,\delta \omega)$ is a solution of
\begin{equation*}
	\begin{cases}
		\,\partial_t \delta\uu\,+\,\uu^1\cdot \nabla \delta \uu\,+\,\delta \uu\cdot \nabla  \uu^{2}\,-\Aa[\,\delta \uu\,]\,+\nabla \delta \pre\,=\,\delta F_1, \\
		\,\Div\,\delta\uu = 0,\\
		\,\partial_t \delta\m\,+\,\uu^1\cdot \nabla \delta \m\,+\,\delta \uu\cdot \nabla  \m^{2}\,-\Bb[\,\delta \m\,]\,=\,\delta F_2,\\
		\,\partial_t \delta \omega  \,+\,\uu^{1}\cdot \nabla \delta \omega\,+\,\delta \uu\cdot \nabla \omega^{2}-\,\Cc[\, \delta \omega\,]\,=\,\delta F_3,\\
		\,(\,\uu,\,\w,\,\m\,)
							\big|_{\,t=0}=(\,0,\,0,\,0\,),
	\end{cases}
\end{equation*}
where $\delta F_1=\Div\,\{\,\delta\sigma^{{\rm E}}\,+\,\delta \sigma^{{\rm L}}\}$ and the driving terms $\delta F_2$ and $\delta F_3$ are defined by means of \eqref{def:deltaF2k} and \eqref{def:deltaF3k}, respectively,
replacing $( \uu^{k+1},\,\m^{k+1},\, \omega^{k+1})$ by $( \uu^1,\, \m^1,\, \omega^1)$ and 
$( \uu^{k},\, \m^{k},\, \omega^{k})$ by $( \uu^{2},\, \m^{2},\, \omega^{2})$. With similar arguments as the ones used for proving inequality \eqref{prop:the_core_ineq2}, we gather
\begin{align*}
	\|\,\delta \uu\,\|_{\X_{1}}\,+\,\| \,\nabla \delta \pre\,\|_{L^1_t\BB_{2,1}^{\frac{\dd}{2}-1}\cap\BB_{2,1}^\frac{\dd}{2}} +\, \|\,\delta \nn\,\|_{\X_{2}} \,+\, \|\,\delta \Tt\,\|_{\X_{3}} \lesssim 
	\|\, \delta F_1 \, \|_{L^1_t\BB_{2,1}^{\frac{\dd}{2}-1}\cap \BB_{2,1}^\frac{\dd}{2}}\,+\\+
	\|\, \delta F_2 \, \|_{L^1_t\BB_{2,1}^{\frac{\dd}{2}}\cap \BB_{2,1}^{\frac{\dd}{2}-1}}\,+\,
	\|\, \delta F_3 \, \|_{L^1_t\BB_{2,1}^{\frac{\dd}{2}-1}\cap \BB_{2,1}^\frac{\dd}{2}}
	\lesssim \Big[ \|\, (\uu^1,\,\uu^2\,)\,\|_{\X_1}\,+\,\|\, (\m^1,\,\m^2\,)\,\|_{\X_2}\,+\\+\,\|\, (\omega^1,\,\omega^2\,)\,\|_{\X_3}\,\Big]
	\Big[
		\|\,\delta \uu\,\|_{\X_{1}}\, +\, \|\,\delta \nn\,\|_{\X_{2}} \,+\, \|\,\delta \Tt\,\|_{\X_{3}} 
	\Big].
\end{align*}
Since $(\uu^1,\,\m^1,\,\omega^1)=(\uu^1,\,\nn^1-\bar \nn,\,\Tt^1-\bar \Tt)$ and $(\uu^2,\,\m^2,\,\omega^2)=(\uu^2,\,\nn^2-\bar \nn,\,\Tt^2-\bar \Tt)$ satisfy the smallness condition \eqref{ineq:smallness-condition-main-thm}, then inequality \eqref{ineq:uniqueness} holds. Hence, assuming the positive constant $\ee$ small enough, we finally achieve
\begin{equation*}
		\|\,\delta \uu\,\|_{\X_{1}} + \|\,\delta \nn\,\|_{\X_{2}} + \|\,\delta \Tt\,\|_{\X_{3}} =0,
\end{equation*}
which finally leads to the uniqueness result of Theorem \ref{main-thm3}.
\addtocontents{toc}{\protect\setcounter{tocdepth}{1}}
\section{Appendix}\label{Appendix}

\subsection{Proof of inequality \eqref{ineq:lemma1ELtensors}}$\,$

\noindent
This paragraph is devoted to  the proof of inequality \ref{ineq:lemma1ELtensors}. Thanks to Definition \eqref{def:E-stress}, the Ericksen stress tensor $\sigma^{{\rm E},\, k}$ can be formulated as follows:
\begin{equation*}
\begin{aligned}
	\sigma^{\,{\rm E},\, k}\,=\,\tr\Big[ \nabla \m^k\,\Big]\frac{\partial {\rm W}^k_{\rm F}}{\partial \nabla \nn}=\,
	&-\,K_1^k\, \nabla \m^k\odot \nabla \m^k\,-\,K_2^k\,\Div\,\m^k\,\tr \nabla \m^k\,-
	\\&-\,K_3^k \Big[\,\Big(\,\nabla \m^k\odot\nabla\m^k\,\Big)(\bar{\nn}\,+\,\m^k)\,\Big]\otimes (\bar{\nn}\,+\,\m^k)
	\,-\,K^k_4\,\, \tr\big[\,(\,\nabla \m^k\,)^2\big].
\end{aligned}
\end{equation*}
Hence, we infer that
\begin{equation*}
	\|\Div\,\sigma^{\rm\, E,\,k}\|_{L^1_t \BB_{2,1}^{\frac{\dd}{2}-1}}
	=
	\|\sigma^{\rm\, E,\,k}\|_{L^1_t \BB_{2,1}^{\frac{\dd}{2}}}
	\lesssim
	\big(\,1\,+\,\|\,\w^k\|_{L^\infty_t\BB_{2,1}^\frac{\dd}{2}}\big)
	\big(1\,+\,
	\|\,\m^k\,\|_{L^\infty_t	\BB_{2,1}^\frac{\dd}{2}}^2\big)
	\|\,\nabla \m^k\,\|_{L^2_t\BB_{2,1}^\frac{\dd}{2}}^2
	\lesssim
	\ee^2,
\end{equation*}
together with
\begin{equation*}
\begin{aligned}
	\|\Div\,\sigma^{\rm\, E,\,k}\|_{L^1_t \BB_{2,1}^{\frac{\dd}{2}}}
	\lesssim
	\big(1\,+\,
	\|\,\m^k\,\|_{L^\infty_t	\BB_{2,1}^\frac{\dd}{2}}\big)
	\|\,\nabla \w^k\,\|_{L^2_t\BB_{2,1}^\frac{\dd}{2}}
	\|\,\nabla \m^k\,\|_{L^2_t\BB_{2,1}^\frac{\dd}{2}}+\\
	+
	\big(\,1\,+\,\|\,\w^k\|_{L^\infty_t\BB_{2,1}^\frac{\dd}{2}}\big)
	\|\,\nabla\m^k\,\|_{L^\infty_t	\BB_{2,1}^\frac{\dd}{2}}
	\big(1\,+\,
	\|\,\m^k\,\|_{L^\infty_t	\BB_{2,1}^\frac{\dd}{2}}\big)
	\|\,\nabla \m^k\,\|_{L^2_t\BB_{2,1}^\frac{\dd}{2}}^2+\\+
	\big(\,1\,+\,\|\,\w^k\|_{L^\infty_t\BB_{2,1}^\frac{\dd}{2}}\big)
	\big(1\,+\,
	\|\,\m^k\,\|_{L^\infty_t	\BB_{2,1}^\frac{\dd}{2}}^2\big)
	\|\,\nabla \m^k\,\|_{L^2_t\BB_{2,1}^\frac{\dd}{2}}
	\|\,\Delta \m^k\,\|_{L^2_t\BB_{2,1}^\frac{\dd}{2}}
	\lesssim
	\ee^3.
\end{aligned}
\end{equation*}
We now take into account the term related to the Leslie stress tensor $\sigma^{\rm\,L,\,k}$. 
We decompose such a tensor into six terms $\sigma^{\rm\,L,\,k}\,=\,\sum_{i=1}^6\sigma^{\rm\,L,\,k}_i$, where we denote by $\sigma^{\rm\,L,\,k}_i$ the tensor related to the coefficient $\alpha_i$, for any $i=1,\dots,6$.
An explicit formulation of these tensors is provided in what follows. We begin with $\sigma^{\rm\,L,\,k}_1$ which stands for
\begin{equation*}
\begin{alignedat}{8}
	\tilde \sigma^{\rm L}_1\,=
	\,\bar \alpha_1 \Big[\,\bar \nn \cdot \mathbb{D}\,\m\,+\,\m \cdot \mathbb{D}\,\bar \nn+
	\m \cdot \mathbb{D}\, \m\,\Big]\nn\otimes \nn\,
	+\,\bar \alpha_1 \Big[\,\bar \nn \cdot \mathbb{D}\,\m\,+\,\m \cdot \mathbb{D}\,\bar \nn\,+
	\m \cdot \mathbb{D}\, \m\,\Big]\m\otimes \nn+\\
	+\,\bar \alpha_1 \Big[\, \bar \nn \cdot \mathbb{D}\,\m\,+\,\m \cdot \mathbb{D}\,\bar \nn\,+\,
	\m \cdot \mathbb{D}\, \m\,\Big]\nn\otimes \m\,
	+\,\bar \alpha_1 \Big[\,\bar \nn \cdot \mathbb{D}\,\m\,+\,\m \cdot \mathbb{D}\,\bar \nn\,+\,
	\m \cdot \mathbb{D}\, \m\,\Big]\m\otimes \m\,+\\
	+\,\tilde \alpha_1(\w) [\,\nn \cdot \mathbb{D}\,\nn\,]\nn\otimes\nn.\,
\end{alignedat}
\end{equation*}
replacing $(\uu,\,\omega,\,\m) $ by  $(\uu^k,\,\omega^k,\,\m^k)$. We recall that $\alpha_i(\omega) =\bar{\alpha}_i\,+\tilde{\alpha}_i(\omega)$, for any $i=1,\dots,6$, where $\bar{\alpha}_i$ is a 
real constant and $\tilde{\alpha}_i(\omega)$ is a smooth function depending on $\omega$ such that $\tilde{\alpha}_i(0)=0$.
Hence, we infer that the approximate stress tensor $\sigma^{\rm\,L,\,k}_1$ satisfies the following inequalities:
\begin{equation*}
\begin{alignedat}{8}
 	\|\,\sigma^{\rm\,L,\,k}_1\,\|_{L^1_t \BB_{2,1}^\frac{\dd}{2}}\,
 	\lesssim
 	\Big[\,1+\|\,\m^k\,\|_{L^\infty_t\BB_{2,1}^\frac{\dd}{2}}^2\,\Big]
 	\Big[\,
	\,\|\,\m^k\,\|_{L^\infty_t\BB_{2,1}^\frac{\dd}{2}}\,+\|\,\m^k\,\|_{L^\infty_t\BB_{2,1}^\frac{\dd}{2}}^2\,
	\Big]
	\|\,\nabla \uu^k\,\|_{L^1_t \BB_{2,1}^{\frac{\dd}{2}}}\,
	+\,\\+\,
	\Big[\,1+\|\,\m^k\,\|_{L^\infty_t\BB_{2,1}^{\frac{\dd}{2}}}\,\Big]^4
		\|\,\w^k\,\|_{L^\infty_t\BB_{2,1}^{\frac{\dd}{2}}}	\|\,\nabla \uu^k\,\|_{L^1_t \BB_{2,1}^{\frac{\dd}{2}}}
		\lesssim \ee^3,
\end{alignedat}
\end{equation*}
together with 
\begin{equation*}
\begin{alignedat}{8}
 	\|\,\Div\,\sigma^{\rm\,L,\,k}_1\,\|_{L^1_t \BB_{2,1}^\frac{\dd}{2}}\,
 	\lesssim
 	\Big[\,1\,+\,\|\,\m^k\,\|_{L^\infty_t\BB_{2,1}^\frac{\dd}{2}}^3\,
	\Big]
	\|\,\nabla \m^k\,\|_{L^\infty_t\BB_{2,1}^\frac{\dd}{2}}
	\|\,\nabla \uu^k\,\|_{L^1_t \BB_{2,1}^{\frac{\dd}{2}}}\,
	+\,\\
	\Big[\,1+\|\,\m^k\,\|_{L^\infty_t\BB_{2,1}^\frac{\dd}{2}}^2\,\Big]
 	\Big[\,
	\,\|\,\m^k\,\|_{L^\infty_t\BB_{2,1}^\frac{\dd}{2}}\,+\|\,\m^k\,\|_{L^\infty_t\BB_{2,1}^\frac{\dd}{2}}^2\,
	\Big]
	\|\,\Delta \uu^k\,\|_{L^1_t \BB_{2,1}^{\frac{\dd}{2}}}
	\\+\,
	\Big[\,1+\|\,\m^k\,\|_{L^\infty_t\BB_{2,1}^{\frac{\dd}{2}}}\,\Big]^3
		\|\,\w^k\,\|_{L^\infty_t\BB_{2,1}^{\frac{\dd}{2}}}	\|\,\nabla \m^k\,\|_{L^\infty_t\BB_{2,1}^\frac{\dd}{2}}\|\,\nabla \uu^k\,\|_{L^1_t \BB_{2,1}^{\frac{\dd}{2}}}	
		+\\+
		\Big[\,1+\|\,\m^k\,\|_{L^\infty_t\BB_{2,1}^{\frac{\dd}{2}}}\,\Big]^4
		\|\,\nabla \w^k\,\|_{L^\infty_t\BB_{2,1}^{\frac{\dd}{2}}}\|\,\nabla \uu^k\,\|_{L^1_t \BB_{2,1}^{\frac{\dd}{2}}}	+\,\\+
		\Big[\,1+\|\,\m^k\,\|_{L^\infty_t\BB_{2,1}^{\frac{\dd}{2}}}\,\Big]^4
		\|\,\w^k\,\|_{L^\infty_t\BB_{2,1}^{\frac{\dd}{2}}}	\|\,\Delta \uu^k\,\|_{L^1_t \BB_{2,1}^{\frac{\dd}{2}}}		
		\lesssim \ee^3,
\end{alignedat}
\end{equation*}
From the previous relations we conclude that
\begin{equation}\label{bound2}
	\|\,\Div\,\sigma^{\rm\,L,\,k}_1\,\|_{L^1_t \BB_{2,1}^{\frac{\dd}{2}-1}\cap \BB_{2,1}^{\frac{\dd}{2}}}
	\, \lesssim \, \ee^3.
\end{equation}
We now take into account the second stress tensor $\sigma_2^{\rm L,k}$ , which is defined by means of
\begin{equation*}
\begin{aligned}
	\sigma_2^{\rm L,k}\,&=\,\bar{\alpha}_2^k\,\Big[\partial_t \m^k\,+\,\uu^k\cdot \nabla \m^k\,-\Omega\m^k\,\Big]\otimes \Big[ \,\bar \nn\,+\,\m^k\,\Big]\,+\\&+
	\tilde \alpha_2(\omega^k)\Big[\,\partial_t\m^k\,+\,\uu^k\cdot \nabla	\m^k\,+\Omega^k\,\m^k\,\Big]\otimes\big[\,\bar \nn\,+\,\m^k\,\big]\,+\,
	\big[\bar{\alpha}_2^k\,+\,\tilde{\alpha}_2(\omega^k)\big]\Omega\bar \nn\otimes\m^k.
\end{aligned}
\end{equation*}
We observe that the following estimates are fulfilled:
\begin{equation*}
\begin{aligned}
	\|\,\sigma^{\rm\,L,\,k}_2\,\|_{L^1_t \BB_{2,1}^\frac{\dd}{2}}
	\lesssim
	\Big[\,
		\|\,\partial_t \m^k\,\|_{L^1_t\BB_{2,1}^\frac{\dd}{2}}\,&+\,
		\|\,\uu^k\,\|_{L^2_t\BB_{2,1}^\frac{\dd}{2}}
		\|\,\nabla \m^k\,\|_{L^2_t\BB_{2,1}^\frac{\dd}{2}}\,+\\&+\,
		\|\,\nabla \uu^k\,\|_{L^1_t\BB_{2,1}^\frac{\dd}{2}}
	\Big]
	\Big[\,1+\|\,\m^k\,\|_{L^\infty_t\BB_{2,1}^\frac{\dd}{2}}\,\Big]
	\lesssim \ee^3
\end{aligned}
\end{equation*}
together with
\begin{equation*}
\begin{aligned}
	\|\,\Div\,\sigma^{\rm\,L,\,k}_2\,\|_{L^1_t \BB_{2,1}^\frac{\dd}{2}}
	\lesssim
	\Big[\,
		\|\,\partial_t \m^k\,\|_{L^1_t\BB_{2,1}^\frac{\dd}{2}}+
		\|\,\uu^k\,\|_{L^2_t\BB_{2,1}^\frac{\dd}{2}}
		\|\,\nabla \m^k\,\|_{L^2_t\BB_{2,1}^\frac{\dd}{2}}
		\|\,\nabla \uu^k\,\|_{L^1_t\BB_{2,1}^\frac{\dd}{2}}
	\Big]\|\,\nabla \m^k\,\|_{L^\infty_t\BB_{2,1}^\frac{\dd}{2}}+\\+\,
	\Big[\,
		\|\,\partial_t \nabla \m^k\,\|_{L^1_t\BB_{2,1}^\frac{\dd}{2}}\,+\,
		\|\,\nabla \uu^k\,\|_{L^2_t\BB_{2,1}^\frac{\dd}{2}}
		\|\,\nabla \m^k\,\|_{L^2_t\BB_{2,1}^\frac{\dd}{2}}\,+\,
		\|\, \uu^k\,\|_{L^2_t\BB_{2,1}^\frac{\dd}{2}}
		\|\,\Delta \m^k\,\|_{L^2_t\BB_{2,1}^\frac{\dd}{2}}\,+\\+\,
		\|\,\nabla \uu^k\,\|_{L^1_t\BB_{2,1}^\frac{\dd}{2}}
	\Big]
	\Big[\,1+\|\,\m^k\,\|_{L^\infty_t\BB_{2,1}^\frac{\dd}{2}}\,\Big]
	\lesssim \ee^3.
\end{aligned}
\end{equation*}
Hence we conclude that
\begin{equation}\label{bound3}
	\|\,\Div\,\sigma^{\rm\,L,\,k}_2\,\|_{L^1_t \BB_{2,1}^{\frac{\dd}{2}-1}\cap \BB_{2,1}^{\frac{\dd}{2}}}\lesssim \ee^3
\end{equation}
The third stress tensor $\sigma^{{\rm\,L},\,k}_3$ is defined by means of
\begin{equation*}
	\begin{aligned}
	\sigma_3^{\rm L\,k}\,&=\,\bar{\alpha}_3^k\,\Big[ \,\bar \nn\,+\,\m^k\,\Big]\otimes\Big[\partial_t \m^k\,+\,\uu^k\cdot \nabla \m^k\,-\Omega^k\m^k\,\Big]\,+\\&+
	\tilde \alpha_3(\omega^k)\big[\,\bar \nn\,+\,\m^k\,\big]\otimes\Big[\,\partial_t\m^k\,+\,\uu^k\cdot \nabla	\m^k\,+\Omega^k\,\m^k\,\Big]\,+\,
	\big[\bar{\alpha}_3\,+\,\tilde{\alpha}_3(\omega^k)\big]\m^k \otimes\Omega^k\bar \nn.
	\end{aligned}
\end{equation*}
Proceeding as for proving \eqref{bound3}, we infer that
\begin{equation}\label{bound4}
	\|\,\Div\,\sigma^{\rm\,L,\,k}_3\,\|_{L^1_t \BB_{2,1}^{\frac{\dd}{2}-1}\cap \BB_{2,1}^{\frac{\dd}{2}}}\lesssim \ee^3.
\end{equation}
Furthermore, by the definition of $\sigma^{{\rm L},\,k}_4\,:=\,\tilde{\alpha}_4(\omega)\mathbb{D}$, with $\tilde{\alpha}_4(0)=0$, we gather that
\begin{equation*}
	\|\,\Div\,\sigma^{\rm\,L,\,k}_4\,\|_{L^1_t \BB_{2,1}^{\frac{\dd}{2}-1}\cap \BB_{2,1}^{\frac{\dd}{2}}}
	\lesssim \|\,\omega^k\,\|_{L^\infty_t \BB_{2,1}^\frac{\dd}{2}}\|\,\nabla \uu^k\,\|_{L^1_t \BB_{2,1}^{\frac{\dd}{2}-1}\cap \BB_{2,1}^{\frac{\dd}{2}}}
	\lesssim \ee^3.
\end{equation*}
We now take into account the thensor $\sigma^{{\rm L}, k}_5$ whose explicit formula is given by
\begin{equation*}
\begin{aligned}
	\sigma^{{\rm L}, k}_5\,:=\,\tilde{\alpha}_5(\omega^k)\,\big[\bar \nn\,+\m^k\,\big]\otimes\mathbb{D}\big[\bar \nn\,+\m^k\,\big]
	\,+\,\bar \alpha_5\Big[\,\m^k\otimes	\mathbb{D}^k\m^k\,+\,\bar{\nn}\otimes \mathbb{D}^k\m^k\,+\,\m^k\otimes \mathbb{D}^k\bar{\nn}\,\Big],
\end{aligned}
\end{equation*}
from which we deduce
\begin{equation*}
\begin{aligned}
	\|\,\Div\,\sigma^{\rm\,L,\,k}_5\,\|_{L^1_t \BB_{2,1}^{\frac{\dd}{2}-1}}\lesssim 
	\|\,\sigma^{\rm\,L,\,k}_5\,\|_{L^1_t \BB_{2,1}^{\frac{\dd}{2}}}\lesssim	
	\Big(\,1\,+\,\|\,\m^k\,\|_{L^\infty_t \BB_{2,1}^\frac{\dd}{2}}\Big)^2
	\|\,\omega^k\,\|_{L^\infty_t \BB_{2,1}^\frac{\dd}{2}}\|\,\nabla \uu^k\,\|_{L^1_t \BB_{2,1}^{\frac{\dd}{2}}}\,+\\
	\,+\,
	\Big(\,\|\,\m^k\,\|_{L^\infty_t\BB_{2,1}^\frac{\dd}{2}}\,+\,\|\,\m^k\,\|_{L^\infty_t\BB_{2,1}^\frac{\dd}{2}}^2\Big)\|\,\nabla \uu^k\,\|_{L^1_t \BB_{2,1}^{\frac{\dd}{2}}}
	\lesssim \ee^3,
\end{aligned}
\end{equation*}
together with
\begin{equation*}
\begin{aligned}
	\|\,\Div\,\sigma^{\rm\,L,\,k}_5\,\|_{L^1_t \BB_{2,1}^{\frac{\dd}{2}}}
	\lesssim
	\Big(\,1\,+\,\|\,\m^k\,\|_{L^\infty_t \BB_{2,1}^\frac{\dd}{2}}\Big)^2
	\Big(\,1\,+\,\|\,\omega^k\,\|_{L^\infty_t \BB_{2,1}^\frac{\dd}{2}}\Big)
	\| \,\nabla \omega^k\,\|_{L^\infty_t \BB_{2,1}^\frac{\dd}{2}}
	\|\,\nabla \uu^k\,\|_{L^1_t \BB_{2,1}^{\frac{\dd}{2}}}
	\,+\\+\,	
	\Big(\,1\,+\,\|\,\m^k\,\|_{L^\infty_t \BB_{2,1}^\frac{\dd}{2}}\Big)
		\|\,\nabla \m^k\,\|_{L^\infty_t \BB_{2,1}^\frac{\dd}{2}}
		\|\,\omega^k\,\|_{L^\infty_t \BB_{2,1}^\frac{\dd}{2}}\|\,\nabla \uu^k\,\|_{L^1_t \BB_{2,1}^{\frac{\dd}{2}}}\,+\\
	\Big(\,1\,+\,\|\,\m^k\,\|_{L^\infty_t \BB_{2,1}^\frac{\dd}{2}}\Big)^2
	\|\,\omega^k\,\|_{L^\infty_t \BB_{2,1}^\frac{\dd}{2}}\|\,\Delta \uu^k\,\|_{L^1_t \BB_{2,1}^{\frac{\dd}{2}}}
	\lesssim 
	\ee^3.
\end{aligned}
\end{equation*}
Summarizing the previous two inequalities we finally get that
\begin{equation}\label{bound5}
		\|	\,	\Div\,\sigma^{\rm\,L,\,k}_5	\,	\|_{L^1_t \BB_{2,1}^{\frac{\dd}{2}-1}\cap \BB_{2,1}^{\frac{\dd}{2}}} \,\lesssim \, \ee^3.
\end{equation}
Similarly as for proving the above inequality, we deduce that the tensor $\sigma^{\rm\,L,\,k}_6$ 
\begin{equation*}
	\begin{aligned}
		\sigma^{{\rm L}, k}_6\,:=\,\tilde{\alpha}_6(\omega^k)\,\big[\bar \nn\,+\m^k\,\big]\otimes\mathbb{D}^k\big[\bar \nn\,+\m^k\,\big]
		\,+\,\bar \alpha_5\Big[\,\mathbb{D}^k\m^k\otimes \m^k\,+\,\mathbb{D}^k\bar{\nn}\otimes \m^k\,+\,\mathbb{D}^k\bar{\nn}\otimes \m^k\,\Big].
	\end{aligned}
\end{equation*}
fulfills the following inequality
\begin{equation}\label{bound6}
		\|	\,	\Div\,\sigma^{\rm\,L,\,k}_6	\,	\|_{L^1_t \BB_{2,1}^{\frac{\dd}{2}-1}\cap \BB_{2,1}^{\frac{\dd}{2}}} \,\lesssim \, \ee^3.
\end{equation}
Finally, summarizing inequalities \eqref{bound2}, \eqref{bound3}, \eqref{bound4}, \eqref{bound5} and \eqref{bound6}, the approximate Leslie stress tensor $\sigma^{{\rm L},k}$ fulfills the following inequality
\begin{equation}\label{bound7}
	\|	\,	\Div\,\sigma^{\rm\,L,\,k}	\,	\|_{L^1_t \BB_{2,1}^{\frac{\dd}{2}-1}\cap \BB_{2,1}^{\frac{\dd}{2}}} \,\lesssim \, \ee^3.
\end{equation}
This concludes the proof of inequality \eqref{ineq:lemma1ELtensors}.
\subsection{Proof of inequality \eqref{est:momeqL}}$\,$

\noindent
This paragraph is devoted to  the proof of inequality \eqref{est:momeqL}.
We first take into account the difference between two consecutive Ericksen Leslie tensors, namely
\begin{equation*}
\begin{aligned}
		\delta \sigma^{\,{\rm E},\, k}\,=\,-
		\,\delta 	K_1^k\, \nabla 			\m^{k+1	}\odot \nabla 			\m^{k+1	}\,-
		\,		 	K_1^k\, \nabla \delta 	\m^{k 	}\odot \nabla 			\m^{k+1	}\,-
		\,			K_1^k\, \nabla 			\m^{k	}\odot \nabla \delta 	\m^{k	}\,\\
		-\,\delta 	K_2^k\,\Div\,	 	\m^{k+1	}\,\tr \nabla 		\m^{k+1	}\,
		-\,			K_2^k\,\Div\,\delta	\m^{k	}\,\tr \nabla 		\m^{k+1	}\,
		-\,			K_2^k\,\Div\,		\m^{k	}\,\tr \nabla \delta	\m^{k	}\,	-\\
		-\,\delta 	K_3^k \Big[\,\Big(\,\nabla 			\m^{k+1	}	\odot\nabla			\m^{k+1	}\,	\Big)(\bar{\nn}\,+\,\m^{k+1	})\,\Big]\otimes (\bar{\nn}\,+\,\m^{k+1	})\\
		-\,			K_3^k \Big[\,\Big(\,\nabla \delta  	\m^k			\odot\nabla			\m^{k+1	}\,	\Big)(\bar{\nn}\,+\,\m^{k+1	})\,\Big]\otimes (\bar{\nn}\,+\,\m^{k+1	})\\
		-\,			K_3^k \Big[\,\Big(\,\nabla 			\m^k			\odot\nabla \delta 	\m^k\,		\Big)(\bar{\nn}\,+\,\m^{k+1	})\,\Big]\otimes (\bar{\nn}\,+\,\m^{k+1	})\\
		-\,			K_3^k \Big[\,\Big(\,\nabla 			\m^k			\odot\nabla			\m^k	\,		\Big)(\delta \m^k)\,\Big]\otimes (\bar{\nn}\,+\,\m^{k+1	})
		-\,			K_3^k \Big[\,\Big(\,\nabla 			\m^k			\odot\nabla			\m^k	\,		\Big)(\bar{\nn}\,+\,\m^{k})\,\Big]\otimes (\delta \m^{k	})\\
		-\,\delta 	K^k_4\,\, \tr\big[\,\nabla 			\m^{k+1}\,	\nabla 			\m^{k+1}\,\big]
		-\,			K^k_4\,\, \tr\big[\,\nabla\delta  	\m^k\,		\nabla 			\m^{k+1}\,\big]
		-\,			K^k_4\,\, \tr\big[\,\nabla 			\m^k\,		\nabla	\delta  \m^k\,\big].
\end{aligned}
\end{equation*}
Then a direct computation leads to
\begin{equation*}
\begin{aligned}
	\|\,\Div\,\delta \sigma^{\,{\rm E},\, k}\,\|_{L^1_t\BB_{2,1}^{\frac{\dd}{2}-1}}	
	\lesssim	
	\|\, \delta \omega^k\,\|_{L^\infty_t\BB_{2,1}^\frac{\dd}{2}}
	\|\, \nabla \m^{k+1}\,\|_{L^2_t\BB_{2,1}^\frac{\dd}{2}}^2\,+\,
	\|\, \omega^k\,\|_{L^\infty_t\BB_{2,1}^\frac{\dd}{2}}\|\, \nabla (\m^k,\,\m^{k+1})\,\|_{L^2_t\BB_{2,1}^\frac{\dd}{2}}{\scriptstyle\times}\\{\scriptstyle \times}
	\|\, \nabla \delta \m^{k}\,\|_{L^2_t\BB_{2,1}^\frac{\dd}{2}}\,+\,
	\|\,\delta \omega^k\,\|_{L^\infty\BB_{2,1}^\frac{\dd}{2}}\Big(\,1\,+\,\|\,(\,\m^k,\,\m^{k+1}\,)\,\|_{L^\infty_t\BB_{2,1}^\frac{\dd}{2}}\Big)^2\|\, \nabla \m^{k+1}\,\|_{L^2_t\BB_{2,1}^\frac{\dd}{2}}^2\,+\\
	+\,\|\,\omega^k\,\|_{L^\infty\BB_{2,1}^\frac{\dd}{2}}\Big(\,1\,+\,\|\,(\,\m^k,\,\m^{k+1}\,)\,\|_{L^\infty_t\BB_{2,1}^\frac{\dd}{2}}\Big)^2
	\|\, \nabla \delta \m^{k}\,\|_{L^2_t\BB_{2,1}^\frac{\dd}{2}}\|\, \nabla (\,\m^k,\,\m^{k+1}\,)\,\|_{L^2_t\BB_{2,1}^\frac{\dd}{2}}\,+\\
	+\,\|\,\omega^k\,\|_{L^\infty\BB_{2,1}^\frac{\dd}{2}}\Big(\,1\,+\,\|\,(\,\m^k,\,\m^{k+1}\,)\,\|_{L^\infty_t\BB_{2,1}^\frac{\dd}{2}}\Big)
	\|\,  \delta \m^{k}\,\|_{L^\infty_t\BB_{2,1}^\frac{\dd}{2}}\|\, \nabla (\,\m^k,\,\m^{k+1}\,)\,\|_{L^2_t\BB_{2,1}^\frac{\dd}{2}}^2,
\end{aligned}
\end{equation*}
which yields
\begin{equation*}
	\|\,\Div\,\delta \sigma^{\,{\rm E},\, k}\,\|_{L^1_t\BB_{2,1}^{\frac{\dd}{2}-1}}	\lesssim \ee^4 \Big[\,\|\,\delta \omega^k\,\|_{\X_2}\,+\,\|\,\delta \m^k\,\|_{\X_3}\,\Big]\leq \ee^{k+2}.
\end{equation*}
A similar result holds for the $L^1_t\BB_{2,1}^{\dd/2}$-norm of $\Div\,\delta \sigma^{\,{\rm E},\, k}$:
\begin{equation}\label{est:momeqE}
	\|\,\Div\,\delta \sigma^{\,{\rm E},\, k}\,\|_{L^1_t\BB_{2,1}^{\frac{\dd}{2}}}	\lesssim \ee^4 \Big[\,\|\,\delta \omega^k\,\|_{\X_2}\,+\,\|\,\delta \m^k\,\|_{\X_3}\,\Big]\leq \ee^{k+2}.
\end{equation}
We then focus on the difference between two consecutive Leslie stress tensor $\Div\,\tilde \sigma^{\,{\rm L},\,k}$. We proceed similarly as for proving \eqref{ineq:lemma1ELtensors} and we split $\tilde \sigma^{\,{\rm L},\,k}$ by $\tilde \sigma^{\,{\rm L},\,k}=\sum_{i=1}^6\tilde \sigma^{\,{\rm L},\,k}_i$. We first define $\delta \tilde  \sigma^{\,{\rm L},\,k}_1$ by
\begin{align*}
	\delta \tilde\sigma^{\rm L}_1\,=
	\,\bar \alpha_1 \Big[
	\,\bar \nn \cdot \delta \mathbb{D}^k\,\m^{k+1}\,+\,
	\,\bar \nn \cdot \mathbb{D}^k\,\delta \m^k\,+\,	
	\delta \m^k \cdot \mathbb{D}^{k+1}\,\bar \nn+
	\m^{k} \cdot \delta  \mathbb{D}^k\,\bar \nn+
	\delta \m^k \cdot \mathbb{D}^{k+1}\, \m^{k+1}\,\,+\\+
	\m^k \cdot \delta \mathbb{D}^{k}\, \m^{k+1}\,\,+
	\delta \m^k \cdot \mathbb{D}^k\, \delta \m^{k}\,\,
	\Big]\nn^{k+1}\otimes \nn^{k+1}\,+
	\,\bar \alpha_1 \Big[\,\bar \nn \cdot \mathbb{D}^k\,\m^k\,+\,\m^k \cdot \mathbb{D}\,\bar \nn+\\+
	\m^k \cdot \mathbb{D}^k\, \m^k\,\Big]\delta \m^k\otimes \nn^{k+1}\,+
	\,\bar \alpha_1 \Big[\,\bar \nn \cdot \mathbb{D}^k\,\m^k\,+\,\m^k \cdot \mathbb{D}\,\bar \nn+
	\m^k \cdot \mathbb{D}^k\, \m^k\,\Big]\nn^k\otimes \delta \m^{k+1}\,+\\
	\,\bar \alpha_1 \Big[
	\,\bar \nn \cdot \delta \mathbb{D}^k\,\m^{k+1}\,+\,
	\,\bar \nn \cdot \mathbb{D}^k\,\delta \m^k\,+\,	
	\delta \m^k \cdot \mathbb{D}^{k+1}\,\bar \nn+
	\m^{k} \cdot \delta  \mathbb{D}^k\,\bar \nn+
	\delta \m^k \cdot \mathbb{D}^{k+1}\, \m^{k+1}\,\,+\\+
	\m^k \cdot \delta \mathbb{D}^{k}\, \m^{k+1}\,\,+
	\delta \m^k \cdot \mathbb{D}^k\, \delta \m^{k}\,\,
	\Big]\m^{k+1}\otimes \nn^{k+1}\,+
	\,\bar \alpha_1 \Big[\,\bar \nn \cdot \mathbb{D}^k\,\m^k\,+\,\m^k \cdot \mathbb{D}\,\bar \nn+\\+
	\m^k \cdot \mathbb{D}^k\, \m^k\,\Big]\delta \m^k\otimes \nn^{k+1}\,+
	\,\bar \alpha_1 \Big[\,\bar \nn \cdot \mathbb{D}^k\,\m^k\,+\,\m^k \cdot \mathbb{D}\,\bar \nn+
	\m^k \cdot \mathbb{D}^k\, \m^k\,\Big]\m^k\otimes \delta \m^{k+1}\,+\\
	\,\bar \alpha_1 \Big[
	\,\bar \nn \cdot \delta \mathbb{D}^k\,\m^{k+1}\,+\,
	\,\bar \nn \cdot \mathbb{D}^k\,\delta \m^k\,+\,	
	\delta \m^k \cdot \mathbb{D}^{k+1}\,\bar \nn+
	\m^{k} \cdot \delta  \mathbb{D}^k\,\bar \nn+
	\delta \m^k \cdot \mathbb{D}^{k+1}\, \m^{k+1}\,\,+\\+
	\m^k \cdot \delta \mathbb{D}^{k}\, \m^{k+1}\,\,+
	\delta \m^k \cdot \mathbb{D}^k\, \delta \m^{k}\,\,
	\Big]\nn^{k+1}\otimes \m^{k+1}\,+
	\,\bar \alpha_1 \Big[\,\bar \nn \cdot \mathbb{D}^k\,\m^k\,+\,\m^k \cdot \mathbb{D}^k\,\bar \nn+\\+
	\m^k \cdot \mathbb{D}^k\, \m^k\,\Big]\delta \m^k\otimes \m^{k+1}\,+
	\,\bar \alpha_1 \Big[\,\bar \nn \cdot \mathbb{D}^k\,\m^k\,+\,\m^k \cdot \mathbb{D}\,\bar \nn+
	\m^k \cdot \mathbb{D}^k\, \m^k\,\Big]\m^k\otimes \delta \m^{k+1}\,+\\
	\,\bar \alpha_1 \Big[
	\,\bar \nn \cdot \delta \mathbb{D}^k\,\m^{k+1}\,+\,
	\,\bar \nn \cdot \mathbb{D}^k\,\delta \m^k\,+\,	
	\delta \m^k \cdot \mathbb{D}^{k+1}\,\bar \nn+
	\m^{k} \cdot \delta  \mathbb{D}^k\,\bar \nn+
	\delta \m^k \cdot \mathbb{D}^{k+1}\, \m^{k+1}\,\,+\\+
	\m^k \cdot \delta \mathbb{D}^{k}\, \m^{k+1}\,\,+
	\m^k \cdot \mathbb{D}^k\, \delta \m^{k}\,\,
	\Big]\m^{k+1}\otimes \m^{k+1}\,+
	\,\bar \alpha_1 \Big[\,\bar \nn \cdot \mathbb{D}^k\,\m^k\,+\,\m^k \cdot \mathbb{D}\,\bar \nn+\\+
	\m^k \cdot \mathbb{D}^k\, \m^k\,\Big]\delta \m^k\otimes \m^{k+1}\,+
	\,\bar \alpha_1 \Big[\,\bar \nn \cdot \mathbb{D}^k\,\m^k\,+\,\m^k \cdot \mathbb{D}\,\bar \nn+
	\m^k \cdot \mathbb{D}^k\, \m^k\,\Big]\m^k\otimes \delta \m^{k+1}\,+\\
	+\,\delta 	\tilde 	\alpha_1^k [\,			\nn^{k+1} \cdot			\mathbb{D}^{k+1}\,			\nn^{k+1}\,]			\nn^{k+1}\otimes			\nn^{k+1}\,
	+\, 			\tilde 	\alpha_1^k [\,	\delta 	\m^{k  } \cdot 			\mathbb{D}^{k+1}\,			\nn^{k+1}\,]			\nn^{k+1}\otimes			\nn^{k+1}\,+\\
	+\, 			\tilde 	\alpha_1^k [\,			\nn^{k  } \cdot \delta 	\mathbb{D}^{k  }\,			\nn^{k+1}\,]			\nn^{k+1}\otimes			\nn^{k+1}\,
	+\, 			\tilde 	\alpha_1^k [\,			\nn^{k  } \cdot 			\mathbb{D}^{k  }\, \delta 	\m^{k  }\,]			\nn^{k+1}\otimes			\nn^{k+1}\,+\\
	+\, 			\tilde 	\alpha_1^k [\,			\nn^{k  } \cdot 			\mathbb{D}^{k  }\,			\nn^{k  }\,]	\delta 	\m^{k  }\otimes			\nn^{k+1}\,
	+\,			\tilde 	\alpha_1^k [\,			\nn^{k  } \cdot 			\mathbb{D}^{k  }\,			\nn^{k  }\,]			\nn^{k  }\otimes	\delta 	\m^{k  }.
\end{align*}
A direct computation then leads to
\begin{equation*}
\begin{split}
 \|\,\Div\,\sigma^{\,{\rm L}, k}_1\,\|_{L^1_t\BB_{2,1}^{\frac{\dd}{2}-1}}\,\lesssim\,
 \|\,\sigma^{\,{\rm L}, k}_1\,\|_{L^1_t\BB_{2,1}^{\frac{\dd}{2}}}\,\lesssim\,
 \|\,\nabla \delta \uu^k\,\|_{L^1_t\BB_{2,1}^\frac{\dd}{2}}\Big[\,
 \|\,(\,\m^k,\,\m^{k+1}\,)\,\|_{L^\infty_t\BB_{2,1}^\frac{\dd}{2}}\,+\\+
 \|\,(\,\m^k,\,\m^{k+1}\,)\,\|_{L^\infty_t\BB_{2,1}^\frac{\dd}{2}}^4
 \Big]
 \,+
 \|\,\nabla (\,\uu^k,\,\uu^{k+1}\,)\,\|_{L^1_t\BB_{2,1}^\frac{\dd}{2}}
 \Big(\,
 	1+\|\,(\,\m^k,\,\m^{k+1}\,)\,\|_{L^\infty_t\BB_{2,1}^\frac{\dd}{2}}^3\,
 \Big)
 \|\,\delta \m^k\,\|_{L^\infty_t\BB_{2,1}^\frac{\dd}{2}}\,+\\+\, 
 \|\,\delta \omega^k\,\|_{L^\infty_t\BB_{2,1}^\frac{\dd}{2}}
 \|\,\nabla \uu^{k+1}\,\|_{L^1_t\BB_{2,1}^\frac{\dd}{2}}
 \lesssim \ee^2\Big[\,\|\,\delta \uu^k\,\|_{\X_1}\,+\,\|\,\delta \omega^k\,\|_{\X_2}\,+\,\|\,\delta \m^k\,\|_{\X_3}\,\Big]
 \lesssim \ee^{k+2}.
\end{split}
\end{equation*}
Similarly, the $L^1_t\BB_{2,1}^{\frac{\dd}{2}}$-norm of $\Div\,\sigma^{\,{\rm L}, k}_1$ is treated by
\begin{align*}
 \|\,\Div\,\sigma^{\,{\rm L}, k}_1\,\|_{L^1_t\BB_{2,1}^{\frac{\dd}{2}}}\,\lesssim\,
 \|\,\Delta \delta \uu^k\,\|_{L^1_t\BB_{2,1}^\frac{\dd}{2}}\|\,(\,\m^k,\,\m^{k+1}\,)\,\|_{L^\infty_t\BB_{2,1}^\frac{\dd}{2}}
 \Big[
 	\,1\,+\|\,(\,\m^k,\,\m^{k+1}\,)\,\|_{L^\infty_t\BB_{2,1}^\frac{\dd}{2}}^3
 \,\Big]
 \,+\\+\,								
 \|\,\nabla \delta \uu^k\,\|_{L^1_t\BB_{2,1}^\frac{\dd}{2}}
 \|\,\nabla (\,\m^k,\,\m^{k+1}\,)\,\|_{L^\infty_t\BB_{2,1}^\frac{\dd}{2}}
 \Big[
 	\,1\,+\|\,(\,\m^k,\,\m^{k+1}\,)\,\|_{L^\infty_t\BB_{2,1}^\frac{\dd}{2}}^3
 \,\Big]\,+\\+
 \|\,\Delta (\,\uu^k,\,\uu^{k+1}\,)\,\|_{L^1_t\BB_{2,1}^\frac{\dd}{2}}
 \Big(\,
 	1\,+\,\|\,(\,\m^k,\,\m^{k+1}\,)\,\|_{L^\infty_t\BB_{2,1}^\frac{\dd}{2}}^3\,
 \Big)
 \|\,\delta \m^k\,\|_{L^\infty_t\BB_{2,1}^\frac{\dd}{2}}\,+\,
 \|\,\nabla (\,\uu^k,\,\uu^{k+1}\,)\,\|_{L^1_t\BB_{2,1}^\frac{\dd}{2}}
	{\scriptstyle\times}\\{\scriptstyle\times}
 \Big(\,
 	1\,+\,\|\,(\,\m^k,\,\m^{k+1}\,)\,\|_{L^\infty_t\BB_{2,1}^\frac{\dd}{2}}^3\,
 \Big)
 \|\,\nabla (\,\m^k,\,\m^{k+1}\,)\,\|_{L^\infty_t\BB_{2,1}^\frac{\dd}{2}}
 \|\,\delta \m^k\,\|_{L^\infty_t\BB_{2,1}^\frac{\dd}{2}}\,+\,
 \|\,\nabla (\,\uu^k,\,\uu^{k+1}\,)\,\|_{L^1_t\BB_{2,1}^\frac{\dd}{2}}
	{\scriptstyle\times}\\{\scriptstyle\times}
 \Big(\,
 	1\,+\,\|\,(\,\m^k,\,\m^{k+1}\,)\,\|_{L^\infty_t\BB_{2,1}^\frac{\dd}{2}}^3\,
 \Big)
 \|\,\nabla\delta \m^k\,\|_{L^\infty_t\BB_{2,1}^\frac{\dd}{2}}\,+\,
 \|\, \nabla \delta \omega^k\,\|_{L^\infty_t\BB_{2,1}^\frac{\dd}{2}}
 \|\,\nabla \uu^{k+1}\,\|_{L^1_t\BB_{2,1}^\frac{\dd}{2}}\,+\\+\,
 \|\, \delta \omega^k\,\|_{L^\infty_t\BB_{2,1}^\frac{\dd}{2}}
 \|\,\Delta \uu^{k+1}\,\|_{L^1_t\BB_{2,1}^\frac{\dd}{2}}\,+\,
 \|\, \delta \omega^k\,\|_{L^\infty_t\BB_{2,1}^\frac{\dd}{2}}
 \|\,\nabla \m^k\,\|_{L^\infty_t\BB_{2,1}^\frac{\dd}{2}}
 \|\,\nabla \uu^{k+1}\,\|_{L^1_t\BB_{2,1}^\frac{\dd}{2}}\\
 \lesssim \ee^2\Big[\,\|\,\delta \uu^k\,\|_{\X_1}\,+\,\|\,\delta \omega^k\,\|_{\X_2}\,+\,\|\,\delta \m^k\,\|_{\X_3}\,\Big]
 \lesssim \ee^{k+2}.
\end{align*}

\smallskip\noindent
The second stress tensor $\delta \sigma^{{\rm L},\,k}_2$ is defined as
\begin{equation*}
\begin{aligned}
	\delta \sigma_2^{\rm L,k}\,=
	\,\bar{\alpha}_2\,
	\Big[\,
			\partial_t \delta \m^{k}+\delta \uu^{k}\cdot \nabla \m^{k+1}	+
			\uu^{k}\cdot \nabla \delta  \m^{k}-\delta \Omega^{k}  \m^{k+1}-\Omega^{k}\delta\m^{k}
	\Big]
	\otimes 
	\Big[ \,\bar \nn\,+\,\m^{k+1}\,\Big]\,+
	\\+
	\delta \tilde \alpha_2^{k}\Big[\,\partial_t		   \m^{k+1}+		   \uu^{k+1}\cdot \nabla	\m^{k+1}+\Omega^{k+1}\,\m^{k+1}\,\Big]\otimes\big[\,\bar \nn\,+\,\m^{k+1}\,\big]\,+
	\tilde \alpha_2^{k}\Big[\,\partial_t \delta  \m^k+	\delta  \uu^{k}\cdot \nabla	\m^{k+1}+\\ + 
	\uu^{k}\cdot \nabla\delta\m^{k}+\delta \Omega^k\m^{k+1}+\,\Omega^{k}\,\delta \m^{k}\,\Big]\otimes\big[\bar \nn+\m^{k+1}\big]\,+
	\tilde \alpha_2^{k}\Big[\partial_t		   \m^k	 +	 \uu^k\cdot \nabla	\m^k+\Omega^k\m^k\Big]\otimes\delta \m^k+\\+
	\delta \tilde{\alpha}^k\Omega^{k+1}\bar \nn\otimes\m^{k+1}\,+\,
	\big[\bar{\alpha}_2\,+\,\tilde{\alpha}_2(\omega^k)\big]\delta \Omega^k\bar \nn\otimes\m^{k+1}\,+
	\big[\bar{\alpha}_2\,+\,\tilde{\alpha}_2(\omega^k)\big]\Omega^k\bar \nn\otimes\delta \m^k.
\end{aligned}
\end{equation*}
Thus, we first observe that
\begin{equation*}
\begin{aligned}
	\|\,\Div\,\delta \sigma_2^{\rm L,k}\,\|_{L^1_t\BB_{2,1}^{\frac{\dd}{2}-1}}
	\lesssim
	\Big[\,1+\|  \omega^k			\|_{L^\infty_t\BB_{2,1}^\frac{\dd}{2}}\,\Big]
	\Big[
	\|\,\delta \sigma_2^{\rm L,k}\,\|_{L^1_t\BB_{2,1}^{\frac{\dd}{2}}}
	\lesssim
	\|\partial_t \delta \m^{k}\|_{L^1_t\BB_{2,1}^\frac{\dd}{2}}+
	\| \delta \uu^{k}			\|_{L^2_t\BB_{2,1}^\frac{\dd}{2}}{\scriptstyle\times}\\{\scriptstyle\times}
	\| \nabla 		 \m^{k+1}	\|_{L^2_t\BB_{2,1}^\frac{\dd}{2}}+	
	\| \uu^{k}					\|_{L^2_t\BB_{2,1}^\frac{\dd}{2}}
	\| \nabla \delta  \m^{k}		\|_{L^2_t\BB_{2,1}^\frac{\dd}{2}}+
	\| \nabla \delta \uu^{k}		\|_{L^1_t\BB_{2,1}^\frac{\dd}{2}}
	\|  		 \m^{k+1}			\|_{L^\infty_t\BB_{2,1}^\frac{\dd}{2}}+	
	\| \nabla \uu^{k}			\|_{L^1_t\BB_{2,1}^\frac{\dd}{2}}{\scriptstyle\times}\\{\scriptstyle\times}
	\|		  \delta  \m^{k}		\|_{L^\infty_t\BB_{2,1}^\frac{\dd}{2}}
	\Big]
	+
	\| \delta \omega^k			\|_{L^\infty_t\BB_{2,1}^\frac{\dd}{2}}
	\Big[
		\|\partial_t 	\m^{k+1}\|_{L^1_t\BB_{2,1}^\frac{\dd}{2}}\hspace{-0.1cm}+
		\| 		 \uu^{k+1}		\|_{L^2_t\BB_{2,1}^\frac{\dd}{2}}
		\| \nabla 	 \m^{k+1}	\|_{L^2_t\BB_{2,1}^\frac{\dd}{2}}+
		\| \nabla  \uu^{k+1}		\|_{L^1_t\BB_{2,1}^\frac{\dd}{2}}{\scriptstyle\times}\\{\scriptstyle\times}
		\|  		 \m^{k+1}		\|_{L^\infty_t\BB_{2,1}^\frac{\dd}{2}}
	\Big]+
	\|  \omega^k			\|_{L^\infty_t\BB_{2,1}^\frac{\dd}{2}}
	\Big[\,
		\|\partial_t 	\m^{k}\|_{L^1_t\BB_{2,1}^\frac{\dd}{2}}+
		\| 		 \uu^{k}		\|_{L^2_t\BB_{2,1}^\frac{\dd}{2}}
		\| \nabla 	 \m^{k}	\|_{L^2_t\BB_{2,1}^\frac{\dd}{2}}+
		\| \nabla  \uu^{k}		\|_{L^1_t\BB_{2,1}^\frac{\dd}{2}}{\scriptstyle\times}\\{\scriptstyle\times}
		\|  		 \m^{k}		\|_{L^\infty_t\BB_{2,1}^\frac{\dd}{2}}
	\Big]
	\|\,\delta \m^k\,\|_{L^\infty_t\BB_{2,1}^\frac{\dd}{2}}
	\,+\,
	\|\,\nabla \delta \uu^k\,\|_{L^1_t\BB_{2,1}^\frac{\dd}{2}}
	\|\,\m^{k+1}\,\|_{L^\infty_t\BB_{2,1}^\frac{\dd}{2}}\,+\,
	\|\,\nabla  \uu^k\,\|_{L^1_t\BB_{2,1}^\frac{\dd}{2}}
	\|\,\delta \m^{k}\,\|_{L^\infty_t\BB_{2,1}^\frac{\dd}{2}}\\
	\lesssim
	\ee^2
	\Big[\,\|\,\delta \uu^k\,\|_{\X_1}\,+\,\|\,\delta \omega^k\,\|_{\X_2}\,+\,\|\,\delta \m^k\,\|_{\X_3}\,\Big]
	 \lesssim \ee^{k+2}.
\end{aligned}
\end{equation*}
Moreover
\begin{align*}
	\|\,\Div\,\delta \sigma_2^{\rm L,k}\,\|_{L^1_t\BB_{2,1}^{\frac{\dd}{2}}}
	\Big[\,1+\|  \omega^k			\|_{L^\infty_t\BB_{2,1}^\frac{\dd}{2}}\,\Big]
	\Big[
	\|\,\delta \sigma_2^{\rm L,k}\,\|_{L^1_t\BB_{2,1}^{\frac{\dd}{2}}}
	\lesssim
	\|\partial_t \nabla \delta \m^{k}\|_{L^1_t\BB_{2,1}^\frac{\dd}{2}}+
	\| \nabla \delta \uu^{k}			\|_{L^2_t\BB_{2,1}^\frac{\dd}{2}}{\scriptstyle\times}\\{\scriptstyle\times}
	\| \nabla 		 \m^{k+1}	\|_{L^2_t\BB_{2,1}^\frac{\dd}{2}}+	
	\|  \delta \uu^{k}			\|_{L^2_t\BB_{2,1}^\frac{\dd}{2}}
	\| \Delta 		 \m^{k+1}	\|_{L^2_t\BB_{2,1}^\frac{\dd}{2}}+	
	\| \nabla \uu^{k}					\|_{L^2_t\BB_{2,1}^\frac{\dd}{2}}
	\| \nabla \delta  \m^{k}		\|_{L^2_t\BB_{2,1}^\frac{\dd}{2}}+\\
	\| \uu^{k}					\|_{L^2_t\BB_{2,1}^\frac{\dd}{2}}
	\| \Delta \delta  \m^{k}		\|_{L^2_t\BB_{2,1}^\frac{\dd}{2}}+
	\| \Delta \delta \uu^{k}		\|_{L^1_t\BB_{2,1}^\frac{\dd}{2}}
	\|  		 \m^{k+1}			\|_{L^\infty_t\BB_{2,1}^\frac{\dd}{2}}+	
	\| \nabla \delta \uu^{k}		\|_{L^1_t\BB_{2,1}^\frac{\dd}{2}}
	\|  \nabla	 \m^{k+1}			\|_{L^\infty_t\BB_{2,1}^\frac{\dd}{2}}+	\\+
	\| \Delta \uu^{k}			\|_{L^1_t\BB_{2,1}^\frac{\dd}{2}}
	\|		  \delta  \m^{k}		\|_{L^\infty_t\BB_{2,1}^\frac{\dd}{2}}+
	\| \nabla 	\uu^{k}			\|_{L^1_t\BB_{2,1}^\frac{\dd}{2}}
	\|	\nabla  \delta  \m^{k}		\|_{L^\infty_t\BB_{2,1}^\frac{\dd}{2}}
	\Big]
	+\\+
	\| \delta \omega^k			\|_{L^\infty_t\BB_{2,1}^\frac{\dd}{2}}
	\Big[\,
		\|\partial_t 	\m^{k+1}\|_{L^1_t\BB_{2,1}^\frac{\dd}{2}}+
		\| 		 \uu^{k+1}		\|_{L^2_t\BB_{2,1}^\frac{\dd}{2}}
		\| \nabla 	 \m^{k+1}	\|_{L^2_t\BB_{2,1}^\frac{\dd}{2}}+
		\| \nabla  \uu^{k+1}		\|_{L^1_t\BB_{2,1}^\frac{\dd}{2}}{\scriptstyle\times}\\{\scriptstyle\times}
		\|  		 \m^{k+1}		\|_{L^\infty_t\BB_{2,1}^\frac{\dd}{2}}
	\Big]+
	\|  \omega^k			\|_{L^\infty_t\BB_{2,1}^\frac{\dd}{2}}
	\Big[\,
		\|\partial_t 	\m^{k}\|_{L^1_t\BB_{2,1}^\frac{\dd}{2}}+
		\| 		 \uu^{k}		\|_{L^2_t\BB_{2,1}^\frac{\dd}{2}}
		\| \nabla 	 \m^{k}	\|_{L^2_t\BB_{2,1}^\frac{\dd}{2}}+
		\| \nabla  \uu^{k}		\|_{L^1_t\BB_{2,1}^\frac{\dd}{2}}{\scriptstyle\times}\\{\scriptstyle\times}
		\|  		 \m^{k}		\|_{L^\infty_t\BB_{2,1}^\frac{\dd}{2}}
	\Big]
	\|\,\delta \m^k\,\|_{L^\infty_t\BB_{2,1}^\frac{\dd}{2}}
	\,+\,
	\|\,\nabla \delta \uu^k\,\|_{L^1_t\BB_{2,1}^\frac{\dd}{2}}
	\|\,\m^{k+1}\,\|_{L^\infty_t\BB_{2,1}^\frac{\dd}{2}}\,+\,
	\|\,\nabla  \uu^k\,\|_{L^1_t\BB_{2,1}^\frac{\dd}{2}}
	\|\,\delta \m^{k}\,\|_{L^\infty_t\BB_{2,1}^\frac{\dd}{2}}.\\
\end{align*}
Hence, we finally achieve that
\begin{equation}\label{est:1deltasigma2}
	\|\,\Div\,\delta \sigma_2^{{\,\rm L},\,k}\,\|_{L^1_t\BB_{2,1}^{\frac{\dd}{2}-1}\cap \BB_{2,1}^{\frac{\dd}{2}}}\,\lesssim \,\ee^2
	\Big[\,\|\,\delta \uu^k\,\|_{\X_1}\,+\,\|\,\delta \omega^k\,\|_{\X_2}\,+\,\|\,\delta \m^k\,\|_{\X_3}\,\Big]
	 \lesssim \ee^{k+2}.
\end{equation}
The tensor $\delta \sigma^{{\rm\,\,L},k}_3$ corresponds to the transpose tensor of $\delta \sigma^{{\rm\,L},k}_2$, replacing $\alpha_2$ by $\alpha_3$. Thus with proceeding as for proving \eqref{est:1deltasigma2}, 
we deduce that
\begin{equation}\label{est:2deltasigma2}
	\|\,\Div\,\delta \sigma_3^{{\,\rm L},\,k}\,\|_{L^1_t\BB_{2,1}^{\frac{\dd}{2}-1}\cap \BB_{2,1}^{\frac{\dd}{2}}}\,\lesssim\,
	\ee^2
	\Big[\,\|\,\delta \uu^k\,\|_{\X_1}\,+\,\|\,\delta \omega^k\,\|_{\X_2}\,+\,\|\,\delta \m^k\,\|_{\X_3}\,\Big]
	 \lesssim \ee^{k+2}.
\end{equation}
We now consider $\delta \sigma^{\,{\rm L},\,k}_4$, defined by $\delta \sigma^{\,{\rm L},\,k}_4\,:=\,\delta \tilde \alpha_4^{k}\,\mathbb{D}^{k+1}\,+\, \tilde \alpha_4(\omega^k)\,\delta \mathbb{D}^k$, hence
\begin{equation}\label{est:deltasigma3}
	\begin{aligned}
		\|\,\delta \sigma_4^{{\,\rm L},\,k}\|_{L^1_t\BB_{2,1}^{\frac{\dd}{2}}}+
		\|\,\Div\,\delta \sigma_4^{{\,\rm L},\,k}\|_{L^1_t\BB_{2,1}^{\frac{\dd}{2}}}\,
		\lesssim\,
		\|\delta 	\omega^k\|_{L^\infty_t\BB_{2,1}^\frac{\dd}{2}}\|\,\nabla  \uu^{k+1}		\|_{L^1_t\BB_{2,1}^\frac{\dd}{2}}+
		\|\, 		\omega^k\|_{L^\infty_t\BB_{2,1}^\frac{\dd}{2}}\|\,\nabla \delta \uu^k	\|_{L^1_t\BB_{2,1}^\frac{\dd}{2}}+\\
		+
		\|\nabla \delta 	\omega^k	\|_{L^\infty_t\BB_{2,1}^\frac{\dd}{2}}
		\|\nabla  \uu^{k+1}		\|_{L^1_t\BB_{2,1}^\frac{\dd}{2}}+
		\|\,		\delta 	\omega^k\,		\|_{L^\infty_t\BB_{2,1}^\frac{\dd}{2}}\|\Delta  \uu^{k+1}		\|_{L^1_t\BB_{2,1}^\frac{\dd}{2}}+
		\|\, \nabla	\omega^k\,\|_{L^\infty_t\BB_{2,1}^\frac{\dd}{2}}
		\|\nabla \delta \uu^k\,	\|_{L^1_t\BB_{2,1}^\frac{\dd}{2}}+\\+
		\|\,			 	\omega^k\,		\|_{L^\infty_t\BB_{2,1}^\frac{\dd}{2}}\|\,\Delta \delta  \uu^{k}\,		\|_{L^1_t\BB_{2,1}^\frac{\dd}{2}}
		\,\lesssim\,\ee^2
		\Big[\,
			\|\,\delta \uu^k		\,\|_{\X_1}	\,+\,
			\|\,\delta \omega^k	\,\|_{\X_2}	\,+\,
			\|\,\delta \m^k		\,\|_{\X_3}	\,
		\Big]\,\leq\,\ee^{k+2}.
	\end{aligned}
\end{equation}
It remains to control $\delta \tilde \sigma^{{\rm\,L}\,k}_5$ together with $\delta \tilde \sigma^{{\rm\,L}\,k}_6$. We focus on $\delta \tilde \sigma^{{\rm\,L}\,k}_5$, since a similar result holds for $\delta \tilde \sigma^{{\rm\,L}\,k}_6$. Such a tensor is defined by means of
\begin{equation*}
\begin{aligned}
	\delta \sigma^{{\rm L}, k}_5\,:=\,
	\delta \tilde{\alpha}_5^k\,\big[\bar \nn\,+\m^{k+1}\,\big]\otimes\mathbb{D}^{k+1}\big[\bar \nn\,+\m^{k+1}\,\big]\,+\,
		   \tilde{\alpha}_5^k\,\delta \m^{k}\otimes\mathbb{D}^{k+1}\big[\bar \nn\,+\m^{k+1}\,\big]\,+\\+\,
		   \tilde{\alpha}_5^k\,\big[\bar \nn\,+\m^{k}\,\big]\otimes\delta \mathbb{D}^{k}\big[\bar \nn\,+\m^{k+1}\,\big]\,+
		   \tilde{\alpha}_5^k\,\big[\bar \nn\,+\m^{k}\,\big]\otimes\mathbb{D}^{k}\delta \m^k\,+
	\bar \alpha_5
	\Big[\,
		\delta 	\m^k\otimes			\mathbb{D}^{k+1}		\m^{k+1}\,+\\\,
				\m^k\otimes	\delta 	\mathbb{D}^k			\m^{k+1}\,+
				\m^k\otimes			\mathbb{D}^k \delta 	\m^k\,+\,
	\bar{\nn}\otimes \delta 	\mathbb{D}^{k}		\m^{k+1}\,+\,
	\bar{\nn}\otimes  		\mathbb{D}^{k}\delta	\m^{k  }\,+\,
	\delta \m^{k}\otimes \mathbb{D}^{k+1}\bar{\nn}\,\Big].
\end{aligned}
\end{equation*}
We then deduce that
\begin{equation*}
\begin{aligned}
	\|\,		\,\sigma^{{\rm L}, k}_5\,\|_{L^1_t\BB_{2,1}^{\frac{\dd}{2}	}}\lesssim
	\|\,\delta \omega^k\,\|_{L^\infty_t\BB_{2,1}^{\frac{\dd}{2}}}
	\|\,\nabla \uu^{k+1}\,\|_{L^1_t\BB_{2,1}^\frac{\dd}{2}}+
	\|\,	 \omega^k\,\|_{L^\infty_t\BB_{2,1}^{\frac{\dd}{2}}}
	\|\, \delta \m^k\,\|_{L^\infty_t\BB_{2,1}^\frac{\dd}{2}}{\scriptstyle\times}
	\|\,\nabla (\uu^k,\,\uu^{k+1})\,\|_{L^1_t\BB_{2,1}^\frac{\dd}{2}}\\
	\,+\,
	\|\,	 \omega^k\,\|_{L^\infty_t\BB_{2,1}^{\frac{\dd}{2}}}
	\Big(	1\,+\,\|\, 	 \m^k\,\|_{L^\infty_t\BB_{2,1}^\frac{\dd}{2}}\Big)
	\|\,\nabla \delta \uu^k\,\|_{L^1_t\BB_{2,1}^\frac{\dd}{2}}+
	\Big(1+\|	 \omega^k\,\|_{L^\infty_t\BB_{2,1}^{\frac{\dd}{2}}}\,\Big)
	\|\, (	 \m^k,\,\m^{k+1})\,\|_{L^\infty_t\BB_{2,1}^\frac{\dd}{2}}{\scriptstyle\times}\\{\scriptstyle\times}
	\|\,\nabla (\uu^k,\uu^{k+1})\|_{L^1_t\BB_{2,1}^\frac{\dd}{2}}
	\|\delta \m^k\|_{L^\infty_t\BB_{2,1}^\frac{\dd}{2}}\hspace{-0.1cm}+
	\|\delta \m^k\|_{L^\infty_t\BB_{2,1}^{\frac{\dd}{2}}}
	\Big(1+\|\,( \m^k,\m^{k+1})\|_{L^\infty_t\BB_{2,1}^{\frac{\dd}{2}}}\Big)
	\|\nabla (\uu^k,\uu^{k+1})\|_{L^1_t\BB_{2,1}^\frac{\dd}{2}}\\
	\lesssim
	\ee^2\Big[\,
			\|\,\delta \uu^k		\,\|_{\X_1}	\,+\,
			\|\,\delta \omega^k	\,\|_{\X_2}	\,+\,
			\|\,\delta \m^k		\,\|_{\X_3}	\,
		\Big]\,\leq\,\ee^{k+2}.
\end{aligned}
\end{equation*}
Proceeding as for proving \eqref{est:1deltasigma2}, the $L^1_t\BB_{2,1}^{\frac{\dd}{2}	}$-norm of $\Div\,\sigma^{{\rm L}, k}_5$ is bounded by
\begin{equation}
	\|\,		\Div\,\sigma^{{\rm L}, k}_5\,\|_{L^1_t\BB_{2,1}^{\frac{\dd}{2}	}}
	\lesssim
	\ee^2\Big[\,
			\|\,\delta \uu^k		\,\|_{\X_1}	\,+\,
			\|\,\delta \omega^k	\,\|_{\X_2}	\,+\,
			\|\,\delta \m^k		\,\|_{\X_3}	\,
	\Big]\,\leq\,\ee^{k+2},
\end{equation}
which concludes the proof of inequality \eqref{est:momeqL}.

\vspace{0.5cm}
\noindent
{\bf Acknowledgment} 
The authors express their sincere appreciation to Professor Marius Paicu and Professor  Arghir Zarnescu for constructive
suggestions and discussions. The work proceeded substantially at the the Department of
Mathematics of the Penn State University. We thank deeply the Department of Mathematics for their generous support and for providing 
a stimulating environment in which to work. 
The work of the second author has been partially supported by the NSF (grants DMS-1714401 and DMS-1412005).


{

}
\end{document}